\newtheorem{theorem}{Theorem}
\newtheorem{definition}{Definition}
\newtheorem{lemma}{Lemma}
\newtheorem{cor}{Corollary}
\newtheorem{remark}{Remark}
\newtheorem{conjecture}{Conjecture}
\newtheorem{prop}{Proposition}
\newtheorem{problem}{Problem}
\theoremstyle{remark}
\newtheorem{example}{\bf Example}
\newcommand{\be}{\begin{enumerate}}
\newcommand{\ee}{\end{enumerate}}
\newcommand{\beq}{\begin{equation}}
\newcommand{\eeq}{\end{equation}}
\newcommand{\OR}{\mathcal{O}}
\newcommand{\UTR}{\textrm{UT}_n(R)}
\newcommand{\TO}{\textrm{T}_n(\OR)}
\newcommand{\TR}{\textrm{T}_n(R)}
\newcommand{\TRf}{\textrm{T}_n(R,f)}
\newcommand{\Th}{\textrm{Th}}
\newcommand{\SLZ}{\textrm{SL}_n(\Z)}
\newcommand{\SLR}{\textrm{SL}_n(R)}
\newcommand{\SLO}{\textrm{SL}_n(\mathcal{O})}
\newcommand{\GLO}{\textrm{GL}_n(\mathcal{O})}
\newcommand{\GLR}{\textrm{GL}_n(R)}
\newcommand{\RM}{R^{\times}}
\def\N{{\mathbb{N}}}
\def\Z{{\mathbb{Z}}}
\def\R{{\mathbb{R}}}
\def\Q{{\mathbb{Q}}}
\def\MN{{\mathbb{N}}}
\def\MZ{{\mathbb{Z}}}
\def\MQ{{\mathbb{Q}}}
\def\MA{{\mathbb{A}}}
\def\MB{{\mathbb{B}}}
\def\MC{{\mathbb{C}}}
\newcommand{\OM}{\mathcal{O}^{\times}}
\def\A{{\mathcal{A}}}
\def\S{{\mathcal{S}}}
\def\F{{\mathcal{F}}}
\def\barA{\hat{A}}
\def\barB{\hat{B}}
\def\bara{\hat{a}}
\def\barx{\hat{x}}
\def\bary{\hat{y}}
\newcommand{\one}{{\bf 1}}
{}
{}
{}
\newcommand{\atreyer}[1]{{\bf \color{orange} [AT: #1]}\color{black}}
\title{Rich groups, weak second order logic, and applications}
\author{Olga Kharlampovich\footnote{Hunter College, CUNY; Supported by the grant 422503 from the Simons Foundation.} , Alexei Myasnikov, Mahmood Sohrabi \footnote{Stevens Institute of Technology.}} 
\date{}
\begin{document}

\maketitle

\begin{abstract} 
In this paper we initiate a study  of first-order rich groups, i.e., groups where the first-order logic has the same power as the weak second order logic. Surprisingly, there are quite a lot of finitely generated rich groups, they  are somewhere in between hyperbolic and nilpotent groups (these ones are not rich).  We provide some methods to prove that groups (and other structures) are rich and describe some of their properties. As corollaries we look at  Malcev's problems in various groups.

\end{abstract}

\tableofcontents

\section{Introduction} 
 The study of efficiency of the first-order logic goes back to Hilbert and Godel, and later, where algebra concerns, to Tarski and Malcev. For a group (or a ring,  or any structure) $A$  the first-order theory $Th(A)$ is the set of all sentences in group (ring, etc.) language that are true in $A$, so $Th(A)$ is all the possible information about $A$ expressible in the first-order logic of group theory. Two groups (rings, etc.) $A$ and $B$ are called elementarily equivalent ($A \equiv B$) if $Th(A) = Th(B)$, i.e., they are indistinguishable in the first-order logic. Tarski and Malcev pushed forward a problem of describing groups and  rings (in some natural classes)  that are elementarily equivalent. If $A$ and $B$ are isomorphic  then, obviously, $A \equiv B$. The most interesting question here is if $A \equiv B$ then how close to being isomorphic $A$ and $B$ could be, i.e., how good the description by $Th(A)$ of $A$ is? It was soon understood (L\"owenheim-Skolem theorem) that for any infinite structure $A$ there is a structure $B$ such that $A \equiv B$ and they have different cardinalities, so in the question above one may want to assume that $A$ and $B$ have the same cardinality. This leads to the notion of categoricity, and stability, and some other important developments in modern model theory.  But categoricity is a rather rare phenomenon. The following seems to be the general picture. Given a finitely generated group (or a ring or a structure) $A$ there is in general little hope to characterize all arbitrary countable groups $B$ with $A \equiv B$. But, rather often  there is a decent chance to describe such $B$ if it is assumed to be finitely generated. Algebraically, it makes sense to consider finitely generated objects, rather then countable ones. In the opposite direction (towards uncountable objects)  one can start with a group or 
 a ring $A$ and ``complete'' it by adding to $A$ solutions to various classes of equations until it is algebraically complete (with respect to some theory).
 The typical picture here is arithmetic, i.e., the ring of integers $\mathbb{Z}$: for every finitely generated ring $B$ if $\mathbb{Z} \equiv B$ then $\mathbb{Z} \cong B$; there are countable non-isomorphic such $B$ (``non-standard models'' of arithmetic); towards uncountable models one can go from $\mathbb{Z}$ to  $\mathbb{Q}$ and then to completions $\MC$, or $\mathbb{R}$, or $\mathbb{Q}_p$,  which are tame from the view-point of the first-order logic.  Since Tarski and Malcev there has been many interesting results about elementary equivalence of finitely generated groups and rings. A question dominating research in this area has been if and when elementary equivalence between finitely generated  groups (rings) implies isomorphism. Recently, Avni et. al. \cite{ALM} coined the term {\em first-order rigidity}: a finitely generated group (ring) $A$ is first-order rigid if any other finitely generated group (ring) elementarily equivalent to $A$ is isomorphic to $A$.   
 
 In another direction Malcev proposed to test the  expressive power of the first-order logic by studying definable subgroups of a given group. A subgroup $K$ of a group $A$ is definable in $A$ if there is a first-order formula $\phi(x)$ without parameters such that $K$ consists precisely of those elements in $A$ that make $\phi(x)$ true. The main general question here is  how rich are definable subsets in a group or algebra. 
  In particular, in 1965 Malcev asked which  subgroups of free groups $F$ are definable in the first-order logic.  This problem was solved in  \cite{KMdef} and  \cite{PPST}. It turned out that among proper subgroups of  $F$ only cyclic ones are definable. The same holds in  arbitrary non-elementary torsion-free hyperbolic groups. It follows that the first-order theory is not quite adequate in describing subgroups in these groups. On the other hand there are groups which have interesting definable subgroups (nilpotent, metabelian, etc.).    However, till now there were no known infinite groups where all subgroups were {\em uniformly definable}, i.e., groups $G$ where for each natural $n$ there is a formula $\Phi_n(x_1, \ldots,x_n,y)$ which holds in $G$ on elements $a_1, \ldots, a_n, b$ if and only if $b$ belongs to the subgroup generated by  $a_1, \ldots,a_n$. This is a very strong definability property which is not amenable to  any known algebraic techniques.

  We intend to address the first-order rigidity and Malcev's question on definability of subgroups in a wide class of groups and rings. 
This approach allows one to show that the language of the first-order logic  in  various  classical groups and algebras has the same expressive power as the language of the weak second-order logic. Such groups include 
 $GL_n(\mathbb Z), SL_n(\mathbb Z), T_n(\mathbb Z),  n\geq 3$ \cite{MS2019}, various finitely generated metabelian groups (for example, the free nonabelian ones), many polycyclic groups, free associative algebras, free group algebras over infinite fields,  and many others. 
To prove this we show that the group (ring) $G$ in question is bi-interpretable with $HF(G)$~--- the superstructure of the hereditary finite sets over $G$. In this case the weak second-order logic over $G$ is interpretable in $G$. We term  such a group (or any such structure) $G$ {\it rich}. In fact, to show that $G$ is rich it suffices to bi-interpret it with any rich structure. Notice, that the arithmetic $\mathbb{Z}$ is rich, so to prove that $G$ is rich it suffices to show that it is bi-interpretable with the ring $\mathbb{Z}$.

   Many objects associated to a rich group (or ring) $G$, such as finitely generated subgroups (subrings, ideals), the geometry of its Cayley graph, as well as many others are uniformly definable in it. We observe, that rich groups are first-order rigid. Furthermore, finitely generated rich structures are completely characterized by a single  axiom. Interestingly, as we mentioned above free and torsion-free hyperbolic groups are not rich. Actually they are very far from being rich, however, their group algebras over infinite fields are rich~\cite{KMga}. This shows  how much more expressive is the first order ring language  of  a group algebra of a  free group compared with  the first order language of the group.

  There are many examples of first-order rigid groups and algebras. For instance,   Avni, Lubotzky and  Meiri ~\cite{ALM} showed that non-uniform higher dimensional lattices are first-order rigid, as well as finitely generated profinite groups~\cite{Lub}. Lasserre showed that under some natural conditions  polycyclic groups~\cite{Lasserre} are also first-order rigid, etc.  Some of these groups are rich and some not. It turns out that studying richness or lack there of is also very useful in studying arbitrary groups (rings) elementarily equivalent to a given one. For example, when a finitely generated  group $G$ is bi-interpretable with $\Z$, in many cases arbitrary groups that are elementarily equivalent to $G$ seem to have a very particular structure, they are kind of  ``completions'' or ``closures'' of $G$ with respect to a ring $R$ elementarily equivalent to $\Z$. When dealing with classical groups or algebras such notions of completion or closure coincide with the classical ones, where completions have the same ``algebraic scheme'', but the points are over the ring $R$ as above. The typical example is the group $G = \SLZ$, it is rich (See (1) in Theorem~\ref{sln-biinter:thm} below), so it is first-order rigid, moreover, any other group $H$ with $G \equiv H$ is isomorphic to $\SLR$ with $R \equiv \mathbb{Z}$.  On the other hand the ``extent'' to which a group $G$ is not rich also often seems to affect the structure of arbitrary groups elementarily equivalent to $G$. Again it seems such groups are ``deformations'' of ``exact completions'' or ``exact closures'' of $G$ over a ring $R$ as above. It only seems proper that these deformations can usually be captured by cohomological data. For example, the group $\TO$ is not rich, if the ring $\OR$ of integers of a number field has an infinite group of units (though $\TO$ and $\OR$ are mutually interpretable in each other). In this case the lack of richness is modulo the infinite center, therefore any group $H$ with $H \equiv \TO$ is an abelian deformation of a group $\TR$ where $R \equiv \OR$ (See (3) in Theorem~\ref{sln-biinter:thm} below). It is very interesting to study groups  which are rich but there is no any obvious ``algebraic scheme'' lying around, for example,  finitely generated  free metabelian groups. 
   In this case the completions still exist, but they are not described by any algebraic schemes, the schemes here are more general.

\section{Interpretability and bi-iterpretagbility}

\subsection{Interpretability}

The model-theoretic technique of interpretation or definability is crucial in our considerations. Because of that we remind here some precise definitions and several known facts that may not be very familiar to algebraists. 

A \emph{language} (or \emph{a signature}) $L$ is a triple $(Fun, Pr, C)$, where $Fun = \{f, \ldots \}$ is a  set of functional (or operational) symbols $f$ coming together with their arities  $n_f \in \mathbb{N}$,  $Pr$ is  a set of relation (or predicate) symbols $Pr= \{P, \ldots \}$ coming together with their arities  $n_P \in \mathbb{N}$,  and a set of constant symbols $C = \{c, \ldots\}$. Sometimes we write $f(x_1, \ldots,x_n)$ or $P(x_1, \ldots,x_n)$ to show that $n_f = n$ or $n_P = n$.  Usually we denote variables by small letters $x,y,z, a,b, u,v, \ldots$, while the same symbols with bars $\bar x, \bar y,  \ldots$ denote tuples of the corresponding variables, say   $\bar x = (x_1, \ldots,x_n)$.  In this paper we always assume, if not said otherwise, that the languages we consider are finite. The following languages appear frequently throughout the text:  the language of semigroups $\{\cdot\}$, where $\cdot$ is the binary multiplication symbol; the language of monoids $\{\cdot, 1\}$, where 1 is the constant symbol for the identity element; the language of groups $\{ \cdot, ^{-1}, 1\}$,  where $^{-1}$ is the symbol of inversion; and the language of rings $\{+, \cdot, 0\}$ with the standard symbols for addition, multiplication, and the additive identity $0$.  Sometimes we add the constant $1$ to form the language of  unitary rings (a priori, our rings  are not unitary).

An interpretation of a constant symbol $c$ in a set $A$ is an element $c^A\in A$. For a functional symbol $f$ an interpretation in $A$ is a function $f^A\colon A^{n_f}\to A$, and for a predicate $P$ it is a set $P^A\subseteq A^{n_P}$. 

A structure in the language $L$ (an $L$-structure) with the base set $A$, sometimes denoted by $\MA = \langle A; L\rangle$ or simply by 
$\MA = \langle A; f, \ldots, P, \ldots,c, \ldots \rangle$, is the set $A$ together with interpretations $f^A,\ldots,P^A,\ldots,c^A,\ldots$ Sometimes we will denote this structure also by $\MA = \langle A; f^A, \ldots, P^A, \ldots,c^A, \ldots \rangle$. For a given structure $\MA$ by $L(\MA)$ we denote the language of $\MA$.   When the language $L$ is clear from the context, we follow the standard algebraic practice and denote the structure $\MA = \langle A; L\rangle$ simply by $A$. For example, we refer to a field  $\mathbb{F}  = \langle F; +,\cdot,0,1 \rangle$ simply  as $F$, or to a group $\mathbb{G} = \langle G; \cdot,^{-1},1\rangle$ as $G$, etc. Sometimes we refer to a first-order formula in a language $L$ as to an $L$-formula and denote by  $\F_L$ the set of all $L$-formulas. 

The graph of a constant $c$ in $A$ is the set $\{c^A\}$, the graph of a function $f$ in $A$ is $\{(b_1,\ldots,b_{n_f},b) \in A^{n_f+1}\mid  f^A(b_1,\ldots,b_{n_f})=b, b_1,\ldots,b_{n_f}\in A\}$, and the graph of a predicate $P$ in $A$ is $P^A$ or $\{(b_1,\ldots,b_{n_P})\in A^{n_P}\mid \MA\models P^A(b_1,\ldots ,b_{n_P}) \}$.

Let $\MB = \langle B ; L(\MB)\rangle$ be a structure. A subset $A \subseteq B^n$ is called {\em definable}\index{definable subset} ({\em $0$-definable}, or {\em absolutely definable}, or {\em definable without parameters}) in $\MB$ if there is a formula $\phi(x_1, \ldots,x_n)$ (without parameters) in $L(\MB)$ such that  $A = \{(b_1,\ldots,b_n) \in B^n \mid \MB \models \phi(b_1, \ldots,b_n)\}$. In this case we denote $A$ by $\phi(B^n)$ or $\phi(\MB)$ and  say that  \emph{$\phi$ defines $A$} in $\MB$. If $\psi(x_1,\ldots,x_n,y_1,\ldots,y_k)$ is a formula in $L(\MB)$ and $\bar{p}=(p_1,\ldots,p_k)$ is a tuple of elements from $B$, then the set $\{(b_1,\ldots,b_n) \in B^n \mid \MB \models \psi(b_1, \ldots,b_n, p_1,\ldots,p_k)\}$ is called \emph{definable} in $\MB$ \emph{with parameters} $\bar{p}$  and denoted by $\psi(B^n,\bar p)$ or $\psi(\MB,\bar p)$. 

Let $c$, $f$ and $P$ be a constant, an operator and a predicate from some language $L$ (may be $L\ne L(\MB)$) which have  interpretations $c^A$, $f^A$, $P^A$ on the set $A\subseteq B^n$. The interpretations $c^A$, $f^A$ and $P^A$ are \emph{definable} in $\MB$ if their graphs are definable in $\MB$. Sometimes in this case we say that the constant $c$, the operator $f$ and the predicate $P$ are definable on $A$ in $\MB$.

In the same vein  an algebraic structure $\MA = \langle A ;f, \ldots, P, \ldots, c, \ldots\rangle$ is \emph{definable} (or $0$-\emph{definable}, or {\em absolutely definable}) in $\MB$ if there is a definable subset $A^\ast  \subseteq  B^n$ and interpretations $f^A, P^A, c^A$ on $A^\ast $ of the symbols $f, P, c$ all definable in $\MB$, such that the structure $\MA^\ast  = \langle A^\ast ; f^A, \ldots, P^A, \ldots,c^A, \ldots \rangle$ is isomorphic to $\MA$. (Notice, that constants $c,\ldots $ belong to the language of  $\MA$, they are not parameters.) 
For example, if $Z(G)$ is the center of a group $G$  then it is definable as a group  in $G$. 

One can do a bit more in terms of definability. In the notation above if $\sim$ is a definable  equivalence relation on a definable subset $A^\ast  \subseteq B^n$ then we say that the quotient set $A^\ast /\sim$ is {\em interpretable}\index{interpretable} in $\MB$. Furthermore, interpretations $f^A, P^A,  c^A$ of symbols $f, P, c\in L$ on the quotient set $A^\ast /\sim$ are {\em interpretable} in $\MB$ if the full preimages of their graphs in $A^\ast $ are definable in $\MB$. 

\begin{definition} \label{de:interpretable} An algebraic  structure $\MA = \langle A ;f, \ldots, P, \ldots, c, \ldots\rangle$  is absolutely interpretable (or $0$-interpretable)  in a structure $\MB = \langle B ; L(\MB)\rangle$ if there is a subset $A^\ast  \subseteq B^n$  definable in $\MB$, there is an equivalence relation $\sim$ on $A^\ast $ definable in $\MB$, and there are interpretations $f^A$, $P^A$, $c^A$ of the symbols $f, P, c$ on the quotient set $A^\ast /{\sim}$ all interpretable in $\MB$, such that the structure $\MA^\ast  = \langle A^\ast /{\sim}; f^A, \ldots, P^A, \ldots,c^A, \ldots \rangle$ is isomorphic to $\MA$.
 \end{definition}
 
 For example, if $N$ is a normal definable subgroup of a group $G$, then the equivalence relation $x \sim y$ on $G$ given by $xN = yN$ is definable in $G$, so the quotient set $G/N$ of  all right cosets of $N$ is interpretable in $G$. It is easy to see that the multiplication induced 
from $G$ on $G/N$ is also interpretable in $G$. This show that the quotient group $G/N$ is interpretable in $G$. 

Now we introduce some useful notation. An interpretation  of $\MA$ in $\MB$ is described  by the following set of formulas in the language $L(\MB)$
$$
\Gamma  = \{U_\Gamma(\bar x), E_\Gamma(\bar x_1, \bar x_2), Q_\Gamma(\bar x_1, \ldots,\bar x_{t_Q}) \mid Q  \in L(\MA)\}
$$
(here $\bar x$ and $\bar x_i$ are $n$-tuples of variables)  which  interpret $\MA$ in $\MB$ (as in the Definition \ref{de:interpretable} above). Namely,  $U_\Gamma$  defines in $\MB$ a subset  $A_\Gamma  = U_\Gamma(B^n)  \subseteq B^n$, $E_\Gamma$  defines an  equivalence relation $\sim_\Gamma$ on $A_\Gamma$, and the formulas $Q_\Gamma$ define preimages of graphs for constants, functions, predicates $Q\in L(\MA)$ on the quotient set $A_\Gamma/\sim_\Gamma$ in such a way that the structure $\Gamma(\MB) = \langle A_\Gamma/\sim_\Gamma; L(\MA) \rangle $ is isomorphic to $\MA$.  Note, that   we interpret a constant $c \in L(\MA)$ in the structure $\Gamma(\MB)$ by the $\sim_\Gamma$-equivalence  class of some tuple $\bar b_c \in A_\Gamma$ defined in $\MB$ by the formula $c_\Gamma(\bar x)$. We refer to $\Gamma$ as an {\em interpretation code}\index{interpretation code} or just {\em code}. The number $n$ is called the {\em dimension} of $\Gamma$, denoted $n = \dim \Gamma$. And we write $\MA \simeq \Gamma(\MB)$ if the code $\Gamma$ interprets $\MA$ in $\MB$ as described above. By $\mu_\Gamma$ we denote a surjective map $A_\Gamma \to A$ (here $\MA = \langle A;L(\MA)\rangle$) that gives rise to an isomorphism   $\bar \mu_\Gamma\colon \Gamma(\MB) \to \MA$. We refer to  this map $\mu_\Gamma$ as the \emph{coordinate map}\index{coordinate map} of the interpretation $\Gamma$. Sometimes we call the relation $\sim_\Gamma$  the \emph{kernel}  of the coordinate map $\mu_\Gamma$ and denote it by $\ker(\mu_\Gamma)$. Finally, notation $\mu\colon \MB \rightsquigarrow \MA$  means that  $\MA$ is interpretable in $\MB$ with the coordinate map $\mu$.
We use this notation throughout the paper.

More generally, the formulas that interpret $\MA$ in $\MB$ may contain elements from $\MB$ that are not in the language $L(\MB)$, i.e., some parameters, say $p_1, \ldots,p_k \in B$.  In this case we assume that all the formulas from the code $\Gamma$ have a tuple  of extra variables $\bar y = (y_1, \ldots,y_k)$  for parameters in $\MB$: 
\begin{equation} \label{eq:code}
\Gamma =  \{U_\Gamma(\bar x,\bar y), E_\Gamma(\bar x_1, \bar x_2,\bar y), Q_\Gamma(\bar x_1, \ldots,\bar x_{t_Q},\bar y) \mid Q \in L(\MA)\}
\end{equation}
so that after the assignment $y_1 \to p_1, \ldots,y_k \to p_k$ the code $\Gamma$ interprets $\MA$ in $\MB$.  In this event we write $\MA \simeq \Gamma(\MB,\bar p)$ (here $\bar p = (p_1, \ldots,p_k)$), and say that $\MA$ is interpretable in $\MB$  by the code $\Gamma$ with parameters $\bar p$. We refer to $k$ as the {\em parameter dimension} of $\Gamma$ and denote it by $\dim_{par}\Gamma$. In the case when $\bar p = \emptyset$ one gets again the absolute interpretability. The coordinate map $U_\Gamma(\MB,\bar p)\to\MA$ we will also denote by $\mu_{\Gamma,\bar p}$, if needed, and will write $\mu_{\Gamma,\bar p}\colon \MB \rightsquigarrow \MA$ to emphasize that $\MA$ is interpretable in $\MB$ with the code $\Gamma$, parameters $\bar p$ and coordinate map $\mu_{\Gamma,\bar p}$. Sometimes we will also write $\MA\simeq_{\mu_{\Gamma,\bar p}} \Gamma(\MB, \bar p)$ (or $\MA\simeq_{\mu_{\Gamma}} \Gamma(\MB, \bar p)$, or  $\MA\simeq_\mu \Gamma(\MB, \bar p)$) in this case.

Often together with a map $\mu\colon U_\Gamma \to A$ we will consider $\mu^m\colon U^m_\Gamma\to A^m$ the Cartesian power of $\mu$. When it is clear from the context we will omit $m$ in the notation $\mu^m$.

We will say that a subset $D \subseteq A_\Gamma/\sim_\Gamma$ is definable in $\MB$ if its full preimage in $ A_\Gamma$ is definable in $\MB$. More generally, a subset $D \subseteq (A_\Gamma/\sim_\Gamma)^m$ is definable in $\MB$ if its full preimage in $A_\Gamma^m$ under the natural projection $A_\Gamma^m \to (A_\Gamma/\sim_\Gamma)^m$ is definable in $\MB$.

We say that a structure $\MA$ is interpreted in a given structure $\MB$ \emph{uniformly}\index{uniform interpretation} with respect to a subset $D \subseteq B^k$ if  there is a code $\Gamma$ such that $\MA \simeq \Gamma(\MB,\bar p)$ for every tuple of parameters $\bar p \in D$. If $\MA$ is interpreted in  $\MB$ uniformly with respect to a $0$-definable subset $D \subseteq B^k$ then we say that $\MA$ is \emph{regularly interpretable}\index{regular interpretation} in $\MB$ and write in this case $\MA \simeq \Gamma(\MB,\phi)$, provided  $D$ is defined by $\phi$ in $\MB$. 
Note that the absolute interpretability is a particular case of the regular interpretability where the set  $D$ is empty. 

We  need one more uniformity condition. Absolute interpretation of $\MA$ in a class of structures $\mathcal{C}$ in a language $L_C$ is    {\em uniform}  if there is an interpretation code $ \Gamma$ without parameters such that $\MA \simeq \Gamma(\MB)$  for every  structure $\MB$ from $\mathcal{C}$. Similarly, one defines uniform regular interpretation of $\MA$ in $\mathcal{C}$ if there is a code $\Gamma$ and a formula $\phi$ in the language $L_C$ such that $\MA \simeq \Gamma(\MB,\phi)$ for every $\MB \in \mathcal C$.  

Note that we may consider the code $\Gamma$ from~\eqref{eq:code} as a special set of $L(\MB)$-formulas without requiring that $\MA$ is interpretable in $\MB$. Let us define the {\em translation} or {\em reduction} $\psi \to \psi_\Gamma$ for formulas in the language $L(\MA)$ into formulas in the language $L(\MB)$ depending on the code $\Gamma$. Take a formula $\psi(x_1,\ldots,x_m)$ in $L(\MA)$ with free variables $x_1,\ldots, x_m$. For our purposes it is enough to define the translation on formulas $\psi$ that is obtained from unnested atomic formulas by logical connectives and quantifires and written in prenex normal form as usual. Here by an unnested atomic formula we understand formulas either of the type $P(x_1, \ldots,x_{n_P})$ or $f(x_1, \ldots,x_{n_f}) = x_0$, $x_i = x_j$, $x=c$, where $P$ is a predicate symbol from $L(\MA)$,  $f$ is a functional symbol from $L(\MA)$, $c$ is a constant from $L(\MA)$ and $x_i$ are variables. Indeed, in general every formula can be effectively rewritten into an equivalent formula which has the form described above. We begin by defining a map $\psi \to \psi'$ as follows. Firstly, we replace every variable $x_i$ by an $n$-tuple of variables $\bar x_i$, where $n = \dim\Gamma$. Then we define the map $\psi \to \psi'$ on unnested atomic formulas as 
 $$
 P(x_1, \ldots,x_{n_P}) \;\; \to \;\; P_\Gamma(\bar x_1, \ldots, \bar x_{n_P},\bar y), 
 $$ 
 
 $$
 f(x_1, \ldots,x_{n_f}) = x_0\;\; \to \;\; f_\Gamma (\bar x_1, \ldots, \bar x_{n_f},\bar x_0,\bar y) 
 $$ 
 and 
  $$ 
  x_i = x_j \;\;\to \;\; E_\Gamma(\bar x_i,\bar x_j,\bar y),
  $$
  $$
   x_i = c \;\; \to \;\; c_\Gamma(\bar x_i, \bar y)
  $$
 (here $P_\Gamma, f_\Gamma, c_\Gamma, E_\Gamma$ are formulas from the code $\Gamma$). Furthermore, we put 
 $$(\psi_1 \vee \psi_2)' = \psi'_1 \vee \psi'_2,  \ \ (\psi_1 \wedge \psi_2)' = \psi'_1 \wedge \psi'_2, 
 $$
 $$ 
 (\psi_1 \to \psi_2)' = \psi'_1\to \psi'_2,  \ \ (\neg \psi_1)' = \neg \psi'_1.
 $$
 Now for quantifiers we define (below $\forall \bar x $ denotes $\forall x_1 \ldots \forall x_n$ and $\exists \bar x$ denotes $\exists x_1 \ldots \exists x_n$,  provided $\bar x = (x_1, \ldots,x_n)$)
 $$\exists x \: \psi_0  \;\;\to\;\; \exists \bar x\: (U_\Gamma(\bar x, \bar y) \wedge \psi'_0),  \ \ \forall x \: \psi_0 \;\;\to\;\; \forall \bar x \:(U_\Gamma(\bar x,\bar y) \to \psi'_0).
 $$
 And finally, we define the $\Gamma$-translation by
 $$
 \psi(x_1,\ldots,x_m) \;\; \to \;\; \psi_\Gamma(\bar x_1,\ldots, \bar x_m, \bar y)= \psi'(\bar x_1,\ldots, \bar x_m, \bar y) \wedge \bigwedge\limits_{i=1}^m U_\Gamma(\bar x_i, \bar y).
 $$

We use the following denotation below. Assume that $\MA$ is interpretable in $\MB$  with parameters $\bar p$ and code $\Gamma$, and $\mu_\Gamma\colon U_\Gamma(B^n,\bar p) \to A$ is the coordinate map of the interpretation $\Gamma(\MB,\bar p)$, where $n=\dim \Gamma$. For any map $\psi \to \psi^+$ translating formulas $\psi(x_1,\ldots,x_m)$ in $L(\MA)$ into formulas $\psi^+(\bar x_1,\ldots,\bar x_m,\bar y)$ in $L(\MB)$ (it could be $\Gamma$-translation or something else) we will write 
$$
 \MA\models \psi\Longleftrightarrow_{\mu_\Gamma} \MB\models \psi^+
$$
to express the following statement: for any elements $a_1,\ldots,a_m\in A$ and  $\bar b_i\in \mu_\Gamma^{-1}(a_i)$ if $\MA\models \psi(a_1,\ldots,a_m)$ then $\MB\models \psi^+(\bar b_1,\ldots, \bar b_m,\bar p)$,
and inversely for any $\bar b_1,\ldots,\bar b_m\in B^n$ if $\MB\models \psi^+(\bar b_1,\ldots, \bar b_m,\bar p)$ then there are $a_1,\ldots,a_m\in A$, such that $\bar b_i\in \mu_\Gamma^{-1}(a_i)$ and $\MA\models \psi(a_1,\ldots,a_m)$.

The following is a principal result on interpretability (in various forms it appears in the literature, see, for example~\cite{Hodges} or~\cite{Marker}). 

\begin{lemma}  \label{le:interpr_corol} 
Let $\MA$ be interpretable in $\MB$ with parameters $\bar p$, so $\MA \simeq  \Gamma(\MB,\bar p)$ for some code $\Gamma$, $n=\dim \Gamma$, and $\mu_\Gamma$ be the corresponding coordinate map. Then in the notations above one has:
\begin{enumerate}
\item[1)] For any formula $\psi(x_1,\ldots,x_m)$ of $L(\MA)$ the translation $\psi_\Gamma$ is effectively constructed, and 
$$
 \MA\models \psi\Longleftrightarrow_{\mu_\Gamma} \MB\models \psi_\Gamma.
$$
In particular, if $\MA$ is $0$-interpretable in $\MB$, then the formula $\psi_\Gamma$ does not contain parameters from $\MB$.
\item[2)] If $S\subseteq A^m$ is a definable with parameters subset then $\mu_\Gamma^{-1}(S)\subseteq B^{m\cdot n}$ is definable with parameters. Namely, if $S\subseteq A^m$ defines by a formula \ $\psi(x_1, \ldots, x_m, a_1, \ldots, a_q)$, where $a_i\in A$, then $\mu_\Gamma^{-1}(S)$ defines by a formula $\psi_\Gamma(\bar x_1, \ldots, \bar x_m, \bar b_1, \ldots, \bar b_q, \bar p)$ with $\bar b_i\in \mu_\Gamma^{-1}(a_i)$. In particular, if $\MA$ is $0$-interpretable in $\MB$ and $S\subseteq A^m$ is definable (without parameters), then $\mu_\Gamma^{-1}(S)$ is definable (without parameters).
\item[3)]
    $$
 \MA\models \psi\Longleftrightarrow_{\mu_\Gamma} \MB\models \psi^+,
$$
    where $\psi(x_1,\ldots,x_m)$ is any $\forall\exists$-formula in $L(\MA)$ of the type
    \begin{multline*}
     \forall \: t_1\ldots \forall \: t_s  \; (\bigwedge\limits_{i=1}^{r} \varphi^i(t_1,\ldots,t_s,x_1,\ldots,x_m) \:\longrightarrow\\ \longrightarrow \: \exists z_1 \ldots \exists \: z_q \; \bigwedge\limits_{j=1}^{d} \psi^j(z_1,\ldots,z_q,t_1,\ldots,t_s,x_1,\ldots,x_m) \,)
    \end{multline*}
    and $\psi^+(\bar x_1,\ldots,\bar x_m,\bar y)$ is an $L(\MB)$-formula defined by
    \begin{multline*}
    (\forall \: \bar t_1 \ldots \forall\: \bar t_s \; (\bigwedge\limits_{i=1}^{r} \varphi^i_\Gamma(\bar t_1,\ldots,\bar t_s,\bar x_1,\ldots,\bar x_m, \bar y) \:\longrightarrow\\ \longrightarrow \: \exists \bar z_1 \ldots\exists\: \bar z_q\; \bigwedge\limits_{j=1}^{d}{\psi^j_\Gamma(\bar z_1,\ldots,\bar z_q,\bar t_1,\ldots,\bar t_s,\bar x_1,\ldots,\bar x_m, \bar y)} \,)\,)\;\wedge\\
    \;\wedge\bigwedge\limits_{i=1}^m U_\Gamma(\bar x_i, \bar y). 
    \end{multline*}
\end{enumerate}
\end{lemma}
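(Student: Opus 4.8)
The plan is to prove all three parts by a single induction on the structure of the formula $\psi$, with parts (1)–(3) being successive refinements of the same idea. I would first treat part (1), which is the heart of the matter, and then deduce (2) and (3) as essentially bookkeeping consequences.

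\textbf{Part (1): the translation lemma.} The key step is to establish, by induction on the construction of $\psi$ from unnested atomic formulas via connectives and quantifiers in prenex form, the equivalence
$$
\MA \models \psi \Longleftrightarrow_{\mu_\Gamma} \MB \models \psi_\Gamma.
$$
For the base case I would check the four types of unnested atomic formulas. For $x_i = x_j$, unwinding the definition of $\mu_\Gamma$: $a_i = a_j$ in $\MA$ iff $\bar b_i \sim_\Gamma \bar b_j$ (since $\mu_\Gamma$ induces the isomorphism $\bar\mu_\Gamma$ with kernel $\sim_\Gamma$), which by definition of the code is $\MB \models E_\Gamma(\bar b_i, \bar b_j, \bar p)$. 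For $P(x_1,\dots,x_{n_P})$, $x = c$, and $f(\bar x) = x_0$ the argument is identical, using that $P_\Gamma$, $c_\Gamma$, $f_\Gamma$ define (the preimages of) the corresponding graphs in $\Gamma(\MB,\bar p)$ and that $\bar\mu_\Gamma$ is an isomorphism. The Boolean connectives are immediate from the inductive hypothesis since the translation commutes with $\vee, \wedge, \neg, \to$. For the quantifier step, say $\psi = \exists x\, \psi_0$: if $\MA \models \psi(a_1,\dots,a_m)$ then there is $a \in A$ with $\MA \models \psi_0(a, a_1,\dots,a_m)$; pick any $\bar b \in \mu_\Gamma^{-1}(a)$ — which is nonempty and satisfies $U_\Gamma(\bar b,\bar p)$ since $\mu_\Gamma$ is onto $U_\Gamma(B^n,\bar p)$ — and apply the inductive hypothesis to get $\MB \models \psi_0'(\bar b, \bar b_1,\dots,\bar b_m,\bar p)$, hence $\MB \models \exists \bar x\,(U_\Gamma(\bar x,\bar p) \wedge \psi_0')$. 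Conversely, from a witness $\bar b$ satisfying $U_\Gamma(\bar b,\bar p) \wedge \psi_0'$ we set $a = \mu_\Gamma(\bar b)$ and invoke the inductive hypothesis in the other direction; the universal quantifier is dual. The conjunct $\bigwedge_i U_\Gamma(\bar x_i,\bar y)$ in the definition of $\psi_\Gamma$ is exactly what is needed to make the free-variable side of $\Longleftrightarrow_{\mu_\Gamma}$ work, namely that the $\bar b_i$ on the $\MB$-side genuinely come from $\mu_\Gamma^{-1}$ of something. Effectivity of $\psi \mapsto \psi_\Gamma$ is clear from the recursion, and when $\bar p = \emptyset$ no parameters enter, giving the last sentence of (1).

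\textbf{Parts (2) and (3).} Part (2) is a direct application of (1): given $S = \{\bar a : \MA \models \psi(\bar a, a_1,\dots,a_q)\}$, add the parameters $a_1,\dots,a_q$ to the language (as new constants, or just carry them as extra free variables) and apply (1) to the formula $\psi(x_1,\dots,x_m,x_1',\dots,x_q')$; the translation $\psi_\Gamma$ with $\bar x_i'$ instantiated at any $\bar b_i \in \mu_\Gamma^{-1}(a_i)$ defines $\mu_\Gamma^{-1}(S)$ by the $\Longleftrightarrow_{\mu_\Gamma}$ statement, and when everything is parameter-free the conclusion is parameter-free. For part (3), the point is that the displayed $\psi^+$ is \emph{not literally} $\psi_\Gamma$ — the inner $U_\Gamma$-guards on the bound variables $\bar t_i, \bar z_j$ have been dropped from the matrix — but one shows $\psi^+$ is logically equivalent, over $\MB$, to $\psi_\Gamma$ \emph{relativized appropriately}, or more directly one reruns the quantifier induction of part (1) specialized to the $\forall\exists$ prefix: for the outer $\forall \bar t_i$ one needs $U_\Gamma(\bar t_i,\bar y) \to (\dots)$, but since the premise $\bigwedge_i \varphi^i_\Gamma$ already contains $\bigwedge U_\Gamma(\bar t_i,\bar y)$ as a conjunct (by definition of the $\Gamma$-translation of each $\varphi^i$), the guard can be absorbed into the implication's hypothesis; dually, for $\exists \bar z_j$ the guard $U_\Gamma(\bar z_j,\bar y)$ sits inside $\bigwedge_j \psi^j_\Gamma$ in the conclusion, so it is automatically witnessed. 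Thus $\psi^+$ and $\psi_\Gamma$ agree on $\MB$, and (3) follows from (1). I would remark that this absorption is precisely why the simplified form in (3) is stated only for formulas of that specific syntactic shape.

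\textbf{Main obstacle.} The genuine content is entirely in the quantifier step of part (1) and in getting the two directions of the relation $\Longleftrightarrow_{\mu_\Gamma}$ exactly right — in particular keeping straight that on the $\MB \to \MA$ direction one is \emph{given} arbitrary tuples $\bar b_i \in B^n$ and must produce the $a_i$, which forces the $U_\Gamma$-conjuncts into the definition of $\psi_\Gamma$. Everything else (the atomic cases, the Boolean cases, effectivity, and the reductions (2), (3)) is routine unwinding of the definitions of the code $\Gamma$, the coordinate map $\mu_\Gamma$, and the induced isomorphism $\bar\mu_\Gamma \colon \Gamma(\MB,\bar p) \to \MA$. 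I would also note explicitly that the whole argument is uniform in $\bar p$, so it applies verbatim to regular and uniform interpretations.
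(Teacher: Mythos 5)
Your proposal is correct and follows essentially the same route as the paper, whose proof is just the remark that statement 1) follows by induction on the construction of $\psi$ together with the definition of the interpretation, and that 2) and 3) follow from 1); you have simply filled in the details of that induction (atomic cases, connectives, quantifier step, and the absorption of the $U_\Gamma$-guards in 3)) in the intended way.
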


\begin{proof}
The statement~1) follows by induction on construction of $\psi$ and the definition of the interpretation, while 2), 3) follow from~1).
\end{proof}
 
 The result above shows that every set definable in $\Gamma(\MB,\bar p)$ is definable with parameters in $\MB$. However, the converse is not  true in general, i.e., a subset of $\Gamma(\MB,\bar p)$ definable  in $\MB$ may not be definable in $\Gamma(\MB,\bar p)$. Following~\cite{Marker} we say that interpretation $\MA \simeq \Gamma(\MB, \bar p)$ is \emph{pure} if every subset of $\Gamma(\MB,\bar p)$ definable in $\MB$ is definable in $\Gamma(\MB,\bar p)$. We will say more on pure interpretations in Section~\ref{se:bi-int}.
 
Recall that a {\em sentence}\index{sentence} in $L$ is a formula without free variables. 

\begin{lemma}  \label{le:interpr_corol_2} 
Let $\MA$ be regularly  interpretable in $\MB$, so $\MA \simeq  \Gamma(\MB,\phi)$ for some code $ \Gamma$ and formula $\phi(\bar y)$. Then for every sentence  $\psi$ of $L(\MA)$ one can effectively construct  a sentence  $\psi_{\Gamma,\phi}$ of $L(\MB)$ such that 
  $$
  \MA\models \psi \iff \MB\models \psi_{\Gamma,\phi}.
  $$
\end{lemma}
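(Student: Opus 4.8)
The plan is to reduce the case of a regular interpretation to the case of an interpretation with fixed parameters, handled by part~1) of Lemma~\ref{le:interpr_corol}, by quantifying over the admissible parameter tuples. Recall that $\MA \simeq \Gamma(\MB,\bar p)$ holds for \emph{every} tuple $\bar p \in D$, where $D = \phi(\MB)$ is the $0$-definable set of admissible parameters. So for each such $\bar p$, part~1) gives us the translated sentence $\psi_\Gamma$, which is an $L(\MB)$-formula with free variables $\bar y$ (the parameter variables), satisfying $\MA \models \psi \iff \MB \models \psi_\Gamma(\bar p)$. The natural candidate is therefore
$$
\psi_{\Gamma,\phi} \;:=\; \exists \bar y\, \bigl( \phi(\bar y) \wedge \psi_\Gamma(\bar y) \bigr),
$$
or equivalently, the universally quantified version $\forall \bar y\,(\phi(\bar y) \to \psi_\Gamma(\bar y))$; since we are translating a \emph{sentence} and the interpretation works uniformly for all $\bar p \in D$, these two are equivalent modulo $\Th(\MB)$, provided $D$ is nonempty.

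First I would invoke Lemma~\ref{le:interpr_corol}(1) applied to the code $\Gamma$ regarded as a parametrized code as in~\eqref{eq:code}: for the sentence $\psi$ of $L(\MA)$ we obtain the $L(\MB)$-formula $\psi_\Gamma(\bar y)$, effectively constructed, such that for every $\bar p \in D$ and every $\bar b \in \mu_{\Gamma,\bar p}^{-1}$ of the (empty tuple of) free variables, $\MA \models \psi$ iff $\MB \models \psi_\Gamma(\bar p)$. Since $\psi$ is a sentence there are no free variables to instantiate, so this simply reads: for all $\bar p \in D$, $\MA \models \psi \iff \MB \models \psi_\Gamma(\bar p)$. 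Next I would set $\psi_{\Gamma,\phi} := \exists \bar y\,(\phi(\bar y) \wedge \psi_\Gamma(\bar y))$ and verify the equivalence $\MA \models \psi \iff \MB \models \psi_{\Gamma,\phi}$ in both directions. For the forward direction: if $\MA \models \psi$, pick any $\bar p \in D$ (the set $D$ is nonempty because the interpretation $\Gamma(\MB,\bar p)$ exists for parameters in $D$, which is part of the hypothesis that $\MA$ is regularly interpretable); then $\MB \models \phi(\bar p) \wedge \psi_\Gamma(\bar p)$, so $\MB \models \psi_{\Gamma,\phi}$. For the converse: if $\MB \models \psi_{\Gamma,\phi}$, then there is some $\bar p$ with $\MB \models \phi(\bar p)$, i.e. $\bar p \in D$, and $\MB \models \psi_\Gamma(\bar p)$; by Lemma~\ref{le:interpr_corol}(1) for this $\bar p$ we conclude $\MA \models \psi$.

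The one genuine point requiring care — and the main (minor) obstacle — is the nonemptiness of $D$, since if $D = \emptyset$ the sentence $\exists\bar y(\phi(\bar y)\wedge\psi_\Gamma(\bar y))$ is false regardless of whether $\MA \models \psi$. However, by the very definition of regular interpretability given in the excerpt, $\MA \simeq \Gamma(\MB,\bar p)$ for every $\bar p \in D$, and an interpretation must produce a structure, so implicitly $D \neq \emptyset$ (alternatively one stipulates this; the absolute case $D = \emptyset$ being treated separately by Lemma~\ref{le:interpr_corol}(1) with no parameters). A secondary subtlety is the tacit uniformity: we are using that the \emph{same} code $\Gamma$ works for all $\bar p \in D$, which is exactly what "$\MA \simeq \Gamma(\MB,\phi)$" encodes, so the translation $\psi_\Gamma(\bar y)$ is a single formula independent of the choice of $\bar p$. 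Finally, effectiveness: $\psi_\Gamma$ is effectively constructed from $\psi$ and $\Gamma$ by Lemma~\ref{le:interpr_corol}(1), and prepending $\exists\bar y\,(\phi(\bar y)\wedge{-})$ is effective, so $\psi_{\Gamma,\phi}$ is effectively constructed, completing the proof.
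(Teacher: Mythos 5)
Your proposal is correct and follows essentially the same route as the paper: take the $\Gamma$-translation $\psi_\Gamma(\bar y)$ from Lemma~\ref{le:interpr_corol}(1) and relativize the parameter variables to $\phi$, the paper using $\forall \bar y(\phi(\bar y)\to\psi_\Gamma(\bar y))$ and noting the existential variant you chose also works. Your extra remark about the nonemptiness of $D=\phi(\MB)$ is a legitimate point the paper leaves tacit, and handling it as you do is fine.
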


\begin{proof}
Let $\psi$ be a sentence in the language $L(\MA)$. To construct $\psi_{\Gamma,\phi}$  take the $\Gamma$-translation $\psi_\Gamma(\bar y)$ and put
$$
\psi_{\Gamma,\phi}= \forall \bar y (\phi(\bar y) \rightarrow  \psi_\Gamma(\bar y)).
$$
Also we may take $\psi_{\Gamma,\phi}= \exists \bar y (\phi(\bar y) \rightarrow  \psi_\Gamma(\bar y))$.
\end{proof}

Again, we refer to the map $\psi \to \psi_{\Gamma,\phi}$ as the \emph{translation}  or \emph{reduction} with respect to the code $\Gamma$ and formula $\phi$.

{\bf Admissibility conditions for an interpretation $\Gamma$:} Let $\MA \simeq \Gamma(\MB,\bar p)$, where $\Gamma$ is the code~\eqref{eq:code}. Then there is a set $\mathcal{AC}_\Gamma$ of formulas in the language $L(\MB)$ in variables $\bar y=(y_1, \ldots,y_k)$ (where $k$ is the length $|\bar p|$ of the tuple $\bar p$, $k=\dim_{par}\Gamma$), such that for any $L(\MB)$-structure $\MB^\prime$ and any tuple $\bar q$ of elements of $\MB^\prime$ with  $|\bar q| = k$ which satisfies in $\MB^\prime$ all of the formulas in $\mathcal{AC}_\Gamma$ the code $\Gamma$ with parameters $\bar q$ interprets in $\MB^\prime$ an $L(\MA)$-structure $\Gamma(\MB^\prime,\bar q)$. The set $\mathcal{AC}_\Gamma(\bar y)$ is called the \emph{admissibility conditions for $\Gamma$}. The  sentences in $\mathcal{AC}_\Gamma$ say that:
\begin{enumerate} 
\item $U_\Gamma(\MB^\prime,\bar q)\ne\emptyset$:
$$
\exists \, \bar x \; U_\Gamma(\bar x, \bar y);
$$
\item $E_{\Gamma}(\bar x_1, \bar x_2,\bar q)$ defines an equivalence relation $\sim$ on $U_\Gamma(\MB^\prime,\bar q)$:
$$
    \forall \, \bar x\; (U_\Gamma(\bar x, \bar y) \,\longrightarrow\, E_\Gamma (\bar x, \bar x, \bar y)),
    $$
    $$
    \forall\, \bar x_1 \forall\, \bar x_2 \; (U_\Gamma(\bar x_1,\bar y) \wedge U_\Gamma(\bar x_2,\bar y) \wedge E_\Gamma(\bar x_1,\bar x_2, \bar y)\, \longrightarrow \, E_\Gamma (\bar x_2, \bar x_1, \bar y)),
    $$
\begin{multline*}
 \forall \, \bar x_1 \forall \,\bar x_2 \forall \, \bar x_3 \; (U_\Gamma(\bar x_1,\bar y) \wedge U_\Gamma (\bar x_2,\bar y) \wedge U_\Gamma (\bar x_3,\bar y)\wedge \\
    \wedge\, E_\Gamma(\bar x_1,\bar x_2,\bar y)\wedge E_\Gamma (\bar x_2, \bar x_3,\bar y) \, \longrightarrow \, E_\Gamma(\bar x_1,\bar x_3,\bar y));   
\end{multline*}
\item Every constant $c\in L(\MA)$ has a correct interpretation on $U_\Gamma(\MB^\prime,\bar q)/\sim$:
$$
\exists \, \bar x\; (U_\Gamma (\bar x,\bar y)\wedge c_\Gamma (\bar x, \bar y)),
$$
\begin{multline*}
        \forall \, \bar x_1 \forall \, \bar x_2 \; (U_\Gamma(\bar x_1,\bar y)\wedge U_\Gamma (\bar x_2, \bar y) \,\wedge \,c_\Gamma (\bar x_1,\bar y) \,\wedge \,c_\Gamma (\bar x_2, \bar y)\,\longrightarrow\\
        \longrightarrow\,E_\Gamma(\bar x_1, \bar x_2, \bar y));
\end{multline*}
\item Every function $f\in L(\MA)$ has a correct interpretation on $U_\Gamma(\MB^\prime,\bar q)/\sim$:
$$
    \forall \, \bar x_1 \ldots \forall \,\bar x_{n_f}\; (\bigwedge\limits_{i=1}^{n_f}U_\Gamma (\bar x_i,\bar y) \,\longrightarrow\, \exists \, \bar z \;(U_\Gamma(\bar z,\bar y) \wedge f_\Gamma (\bar x_1, \ldots, \bar x_{n_f},\bar z, \bar y))),
$$
\begin{multline*}
 \forall \, \bar x_1 \ldots \forall \,\bar x_{n_f}\forall \, \bar x_1^\ast \ldots \forall \,\bar x_{n_f}^\ast \forall \, \bar z\: \forall \, \bar z^\ast\; ( U_\Gamma (\bar z,\bar y) \,\wedge \,U_\Gamma (\bar z^\ast,\bar y)
  \,  \wedge \\
    \wedge \,\bigwedge\limits_{i=1}^{n_f}(U_\Gamma (\bar x_i,\bar y)\,\wedge \,U_\Gamma (\bar x_i^\ast,\bar y)\,\wedge\, E_\Gamma (\bar x_i,\bar x_i^\ast,\bar y)) \,\wedge\, \\
    \wedge\, f_\Gamma (\bar x_1, \ldots, \bar x_{n_f},\bar z, \bar y)
    \,\wedge 
    \, f_\Gamma (\bar x_1^\ast, \ldots, \bar x_{n_f}^\ast,\bar z^\ast, \bar y)\,\longrightarrow\, E_\Gamma(\bar z, \bar z^\ast,\bar y));   
\end{multline*}
    
    \item Every predicate $P\in L(\MA)$ has a correct interpretation on $U_\Gamma(\MB^\prime,\bar q)/\sim$:
\begin{multline*}
    \forall \, \bar x_1 \ldots \forall \,\bar x_{n_P}\forall \, \bar x_1^\ast \ldots \forall\, \bar x_{n_P}^\ast \\ (\bigwedge\limits_{i=1}^{n_P}(U_\Gamma (\bar x_i,\bar y)\,\wedge\, U_\Gamma (\bar x_i^\ast,\bar y)\,\wedge\, E_\Gamma (\bar x_i,\bar x_i^\ast,\bar y)) \,\longrightarrow\, \\
    \,\longrightarrow\, (P_\Gamma (\bar x_1, \ldots, \bar x_{n_P}, \bar y) \longleftrightarrow P_\Gamma (\bar x_1^\ast, \ldots, \bar x_{n_P}^\ast, \bar y))).
\end{multline*}
\end{enumerate}
It is clear that if the language $L(\MA)$ contains at least one constant $c$ then the item~1 above can be omitted. In this way $\MB^\prime\models\mathcal{AC}_\Gamma(\bar q)$ if and only if $L(\MA)$-structure $\Gamma(\MB^\prime,\bar q)$ is well-defined and it is interpreted in $\MB$. (However, we cannot assert that $\Gamma(\MB^\prime,\bar q)\simeq \MA$!)

Note since the language $L(\MA)$ is finite then the set $\mathcal{AC}_\Gamma$ is also finite. In this case  one can take a single formula $\Theta_\Gamma(\bar y)=\bigwedge \{\theta(\bar y) \mid \theta(\bar y) \in \mathcal{AC}_\Gamma\}$, such that for any $L(\MB)$-structure $\MB^\prime$ one has $\MB^\prime\models\Theta_\Gamma(\bar q)$ if and only if  $\Gamma(\MB^\prime,\bar q)$ is well-defined.

Now, let $\MA \simeq \Gamma(\MB,\phi)$ be a regular interpretation of $\MA$ in $\MB$, i.e., $\MA \simeq \Gamma(\MB, \bar p)$ for any tuple $\bar p \in \phi(\MB)$. Consider the set $\mathcal{AC}_\Gamma = \{\theta_i(\bar y) \mid  i \in I\}$ of admissible conditions. 
Then for every $i \in I$ the sentence  
$$
\theta_{i,\phi} = \forall \,\bar y\:( \phi(\bar y) \,\to\, \theta_i(\bar y))
$$
holds in $\MB$. Let $\mathcal{AC}_{\Gamma, \phi} = \{\theta_{i,\phi} \mid i \in I\} \cup\{\exists \bar y \,\phi(\bar y)\}$.
Then for every $L(\MB)$-structure $\MB^\prime$ if $\MB^\prime \models \mathcal{AC}_{\Gamma, \phi}$ then the set $\phi(\MB^\prime)$ is not empty, and for every tuple $q \in \phi(\MB^\prime)$ the $L(\MA)$-structure $\Gamma(\MB^\prime,\bar q)$ is well-defined. Again, since  the language $L(\MA)$ by our assumptions is finite  then the set of sentences $\mathcal{AC}_{\Gamma, \phi}$ is also finite. In this case the set of admissible conditions can be described  by a single sentence $\Theta_{\Gamma,\phi}$~--- the conjunction of the sentences in $\mathcal{AC}_{\Gamma, \phi}$. We will use this in our study of quasi-finitely  axiomatizable structures.

{\bf Composition of interpretations:} Let $\MA, \MB$ and $\MC$ be algebraic structures. To find out how the composition of interpretations $\mu_\Gamma\colon \MB \rightsquigarrow \MA$ and $\mu_\Delta\colon \MC \rightsquigarrow \MB$ works, we define previously the composition of codes. Suppose that $\Gamma$ and $\Delta$ are codes, such that $\Gamma$ is a set of $L(\MB)$-formulas as in~\eqref{eq:code} and $\Delta$ is a set of $L(\MC)$-formulas:
$$
\Delta =  \{U_\Delta(\bar x,\bar z), E_\Delta(\bar x_1, \bar x_2,\bar z), Q_\Delta(\bar x_1, \ldots,\bar x_{t_Q},\bar z) \mid Q \in L(\MB)\},
$$
where $|\bar x|=|\bar x_i|=\dim \Delta$, $|\bar z|=\dim_{par}\Delta$. Then the {\em composition} of codes $\Gamma$ and $\Delta$ is the code
$$
 \Gamma \circ \Delta = \{U_{\Gamma\circ\Delta}, E_{\Gamma\circ\Delta}, Q_{\Gamma\circ\Delta} \mid Q \in L(\MA)\}=\{(U_\Gamma)_\Delta, (E_\Gamma)_\Delta, (Q_\Gamma)_\Delta \mid Q \in L(\MA)\}.
$$
Note that 
\begin{gather*}
\dim \Gamma\circ \Delta =   \dim \Gamma\cdot\dim \Delta,\\
\dim_{par}\Gamma\circ\Delta = \dim_{par}\Gamma\cdot \dim\Delta+\dim_{par}\Delta.
\end{gather*}

The following are two important technical results about the transitivity of interpretations, that we use throughout the paper.

\begin{lemma}  \label{le:int-transitivity}
Let $\MA, \MB$ and $\MC$ be algebraic structures. Then the following hold:
\begin{itemize}
\item [1)] If $\MA$ is interpretable with parameters (absolutely interpretable) in $\MB$ and $\MB$ is interpretable with parameters (absolutely interpretable) in $\MC$, then $\MA$ is interpretable with parameters  (absolutely interpretable) in $\MC$. In more detail, if $\mu_{\Gamma,\bar p}\colon \MB \rightsquigarrow \MA$ and $\mu_{\Delta, \bar q}\colon \MC \rightsquigarrow \MB$ then $\MA\simeq \Gamma\circ\Delta (\MC, \bar p^\ast)$, where $\bar p^\ast=(\bar p_0,\bar q)$ with $\bar p_0 \in \mu_{\Delta, \bar q}^{-1}(\bar p)$ (and the $L(\MA)$-structure $\Gamma\circ\Delta (\MC, \bar p^\ast)$ does not depend on the choice of $\bar p_0 \in \mu_{\Delta,\bar q}^{-1}(\bar p)$).
\item [2)] If $\MA$ is regularly interpretable in $\MB$ and $\MB$ is regularly interpretable in $\MC$, then $\MA$ is regularly interpretable in $\MC$. In more detail, if $\MA\simeq\Gamma(\MB,\phi)$ and $\MB\simeq\Delta(\MC,\psi)$ then $\MA\simeq \Gamma\circ\Delta (\MC, \phi^\ast)$, where $\phi^\ast=\phi_\Delta \wedge \psi$. Moreover, $\bar p^\ast \in \phi^\ast(\MC)$ if and only if there exist $\bar p\in \phi (\MB)$, $\bar q\in \psi (\MC)$ (and consequently
there exists an interpretation $\MB\simeq_{\mu_{\Delta,\bar q}} \Delta(\MC,\bar q)$) and $\bar p_0\in \mu^{-1}_{\Delta,\bar q}(\bar p)$ such that $\bar p^\ast=(\bar p_0, \bar q)$.
\end{itemize}
\end{lemma}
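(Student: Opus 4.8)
The plan is to take the composed code $\Gamma\circ\Delta$ defined just above the statement and verify directly that, with an appropriate choice of parameters over $\MC$, it interprets $\MA$ in $\MC$ with coordinate map the composition of the two given ones. Write $n=\dim\Gamma$, $m=\dim\Delta$, $k=\dim_{par}\Gamma$, $l=\dim_{par}\Delta$, and let $\mu_\Gamma\colon U_\Gamma(B^n,\bar p)\to A$ and $\mu_\Delta\colon U_\Delta(C^m,\bar q)\to B$ be the given coordinate maps (the $\mu_{\Gamma,\bar p}$ and $\mu_{\Delta,\bar q}$ of the statement). First I would fix the parameters over $\MC$: writing $\bar p=(p_1,\dots,p_k)$, choose for each $j$ an $m$-tuple $\bar p_{0,j}\in\mu_\Delta^{-1}(p_j)\subseteq C^m$, set $\bar p_0=(\bar p_{0,1},\dots,\bar p_{0,k})\in C^{km}$ and $\bar p^\ast=(\bar p_0,\bar q)$; then $|\bar p^\ast|=km+l=\dim_{par}(\Gamma\circ\Delta)$, as it must be. Next I would introduce the candidate coordinate map $\nu=\mu_\Gamma\circ\mu_\Delta^n$, a partial map $C^{nm}\to A$: split an $nm$-tuple $\bar c$ into $n$ blocks of length $m$, apply $\mu_\Delta$ to each block (where defined) to land in $B^n$, then apply $\mu_\Gamma$.

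The heart of the argument is the identity
$$
U_{\Gamma\circ\Delta}(C^{nm},\bar p^\ast)=(\mu_\Delta^n)^{-1}\bigl(U_\Gamma(B^n,\bar p)\bigr)
$$
together with its analogues for $E$ and for each $Q\in L(\MA)$. To obtain these I would apply Lemma~\ref{le:interpr_corol}(1) to the interpretation $\MB\simeq\Delta(\MC,\bar q)$, taking as the ``source formula'' the $L(\MB)$-formula $U_\Gamma$ (resp.\ $E_\Gamma$, $Q_\Gamma$) with \emph{all} of its variables, including the parameter variables $\bar y$ of the code $\Gamma$, treated simply as free $L(\MB)$-variables; then $(U_\Gamma)_\Delta$ is exactly the $\Delta$-translation, so $\MB\models U_\Gamma\Longleftrightarrow_{\mu_\Delta}\MC\models(U_\Gamma)_\Delta$. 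Substituting $\bar p_0$ for the variables replacing $\bar y$ and $\bar q$ for the parameter variables of $\Delta$, and using $\mu_\Delta(\bar p_{0,j})=p_j$, yields the displayed equality; since the right-hand side mentions only $\bar p$ and not the chosen lift $\bar p_0$, this also establishes the asserted independence of $\Gamma\circ\Delta(\MC,\bar p^\ast)$ from $\bar p_0$. The same reductions applied to $E_\Gamma$ and the $Q_\Gamma$ show that $E_{\Gamma\circ\Delta}(\cdot,\cdot,\bar p^\ast)$ is precisely $\ker(\nu)$ on $U_{\Gamma\circ\Delta}(C^{nm},\bar p^\ast)$ (in particular an equivalence relation, so the admissibility conditions for $\Gamma\circ\Delta$ hold there) and that each $Q_{\Gamma\circ\Delta}(\cdot,\bar p^\ast)$ is the $\ker(\nu)$-saturated preimage of the graph of $Q^\MA$. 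Since $\bar\mu_\Gamma$ and $\bar\mu_\Delta$ are isomorphisms and $\nu$ is their composite on tuples, the induced map $\bar\nu\colon U_{\Gamma\circ\Delta}(C^{nm},\bar p^\ast)/\ker(\nu)\to A$ is an isomorphism $\Gamma\circ\Delta(\MC,\bar p^\ast)\to\MA$; this proves part~1), and when $\bar p=\bar q=\emptyset$ one has $\bar p^\ast=\emptyset$, giving the absolutely interpretable case.

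For part~2), assume $\MA\simeq\Gamma(\MB,\phi)$ and $\MB\simeq\Delta(\MC,\psi)$ with $\phi,\psi$ parameter-free, and put $\phi^\ast=\phi_\Delta\wedge\psi$, a parameter-free $L(\MC)$-formula (here $\phi_\Delta$ is the $\Delta$-translation of $\phi$, whose free variables are the $km$ variables replacing $\bar y$ together with the $l$ parameter variables of $\Delta$). I would show that $\bar p^\ast\in\phi^\ast(\MC)$ holds exactly when $\bar p^\ast=(\bar p_0,\bar q)$ with $\bar q\in\psi(\MC)$ and $\bar p_0\in\mu_{\Delta,\bar q}^{-1}(\bar p)$ for some $\bar p\in\phi(\MB)$: indeed $\MC\models\psi(\bar q)$ gives, by regularity of the $\Delta$-interpretation, an interpretation $\MB\simeq_{\mu_{\Delta,\bar q}}\Delta(\MC,\bar q)$, and then Lemma~\ref{le:interpr_corol}(1) for that interpretation shows $\MC\models\phi_\Delta(\bar p_0,\bar q)$ iff $\mu_{\Delta,\bar q}^k(\bar p_0)\in\phi(\MB)$. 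Setting $\bar p=\mu_{\Delta,\bar q}^k(\bar p_0)$ and applying part~1) with these data gives $\MA\simeq\Gamma\circ\Delta(\MC,\bar p^\ast)$; conversely any $\bar p\in\phi(\MB)$, $\bar q\in\psi(\MC)$, $\bar p_0\in\mu_{\Delta,\bar q}^{-1}(\bar p)$ produce a tuple of $\phi^\ast(\MC)$ by running the equivalence backwards. Hence $\MA\simeq\Gamma\circ\Delta(\MC,\phi^\ast)$, with $\phi^\ast$ as described, which is the claim.

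I expect the only genuine work to be bookkeeping: matching which blocks of the $(nm)$- and $(km)$-tuples over $\MC$ correspond to which variables of $\Gamma$, and checking that the $\bigwedge U_\Delta$-conjuncts introduced by the $\Delta$-translation enforce precisely ``each block lies in $U_\Delta(\MC,\bar q)$'', so that $\mu_\Delta^n$ and $\mu_{\Delta,\bar q}^k$ are defined wherever the translated formulas hold. No new idea beyond Lemma~\ref{le:interpr_corol}(1) is needed; the one point that must be dispatched early is the independence of $\Gamma\circ\Delta(\MC,\bar p^\ast)$ from the lift $\bar p_0$ (so that ``the'' structure is well-defined), but this falls out of the same translation computation used for the main identity.
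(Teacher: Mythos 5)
Your proposal is correct and follows essentially the same route as the paper: compose the codes, apply Lemma~\ref{le:interpr_corol}(1) to the $L(\MB)$-formulas $U_\Gamma$, $E_\Gamma$, $Q_\Gamma$ (with the parameter variables of $\Gamma$ treated as free variables) to get the key preimage equivalences, take $\mu_\Gamma\circ\mu_\Delta$ as the coordinate map (which also gives independence from the lift $\bar p_0$), and derive part~2) from part~1) via the characterization of $\phi^\ast(\MC)=(\phi_\Delta\wedge\psi)(\MC)$. The only cosmetic difference is that the paper verifies the admissibility conditions $\mathcal{AC}_{\Gamma\circ\Delta}$ by translating the $\forall\exists$ admissibility sentences through Lemma~\ref{le:interpr_corol}(3), whereas you read them off directly from the semantic identities (kernel of $\nu$ and saturated preimages of graphs) --- the same content, packaged differently.
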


\begin{proof}
1) By Lemma~\ref{le:interpr_corol}  for any tuples $\bar b, \bar b_1, \ldots, \bar b_i, \ldots, $ of length $n=\dim\Gamma$ of elements of $\MB$, any their preimages $\bar c, \bar c_1, \ldots, \bar c_i, \ldots,$ under $\mu_{\Delta,\bar q}$ (that are tuples of length $m=\dim\Gamma\cdot \dim\Delta$ from $\MC$) and any $\bar p_0\in\mu_{\Delta,\bar q}^{-1}(\bar p)$ of length $|p|\cdot \dim\Delta$ of elements of $\MC$ one has 
\begin{equation}\label{eq:U}
  \MB\models U_\Gamma (\bar b,\bar p) \iff \MC\models U_{\Gamma\circ\Delta}(\bar c,\bar p_0,\bar q);
  \end{equation}
\begin{equation}\label{eq:E}
  \MB\models E_\Gamma (\bar b_1,\bar b_2,\bar p) \iff \MC\models E_{\Gamma\circ\Delta}(\bar c_1,\bar c_2, \bar p_0,\bar q);
  \end{equation}
\begin{equation}\label{eq:Q}
  \MB\models Q_\Gamma (\bar b_1,\ldots, \bar b_{t_Q},\bar p) \iff \MC\models Q_{\Gamma\circ\Delta}(\bar c_1, \ldots,\bar c_{t_Q}, \bar p_0,\bar q).
  \end{equation}
Here if $\bar c_i\in C^m$ and the right parts of~\eqref{eq:U}, \eqref{eq:E}, \eqref{eq:Q} are true, then $\bar c_i \in \mu_{\Delta,\bar q}^{-1}(B^n)$ and the left parts are true as well.

Furthermore, all formulas from the admissibility conditions $\mathcal{AC}_\Gamma$ are equivalent to $\forall\exists$-formulas of the type that in Lemma~\ref{le:interpr_corol}. Since $\MB\models\mathcal{AC}_\Gamma(\bar p)$, so Lemma~\ref{le:interpr_corol} proves that the algebraic structure $\MC$ satisfies the admissibility conditions $\mathcal{AC}_{\Gamma\circ\Delta}$ on parameters $\bar p^\ast=(\bar p_0,\bar q)$. Thus $L(\MA)$-structure $\Gamma\circ \Delta(\MC,\bar p^\ast)$ is well-defined and it does not depend on the choice of $\bar p_0\in \mu_{\Delta,\bar q}^{-1}(\bar p)$. 

Consider the map $\mu_{\Gamma\circ\Delta}\colon U_{\Gamma\circ\Delta}(\MC,\bar p^\ast)\to A$, $\mu_{\Gamma\circ\Delta}=\mu_{\Gamma,\bar p}\circ\mu_{\Delta, \bar q}$. As $\mu_{\Gamma, \bar p}$ is surjective and~\eqref{eq:U} holds, $\mu_{\Gamma\circ \Delta}$ is well-defined and also surjective. Because $\bar\mu_{\Gamma,\bar p}\colon U_\Gamma(\MB,\bar p)/\sim_\Gamma\, \to A$ is bijective and~\eqref{eq:E} holds, $\bar\mu_{\Gamma\circ\Delta}\colon U_{\Gamma\circ\Delta}(\MC,\bar p^\ast)/\sim_{\Gamma\circ\Delta} \,\to A$ is well-defined and bijective too. Finally, since $\bar\mu_{\Gamma,\bar p}$ is an $L(\MA)$-isomorphism and due to~\eqref{eq:Q} $\bar\mu_{\Gamma\circ\Delta}\colon \Gamma\circ \Delta(\MC,\bar p^\ast) \to \MA$ is an $L(\MA)$-isomorphism as well. Thereby, $\MA$ is interpretable in $\MC$ with the code $\Gamma\circ \Delta$ and parameters $\bar p^\ast$. It is clear that if $\bar p=\emptyset$ and $\bar q=\emptyset$ then $\bar p^\ast=\emptyset$.

2) Suppose now that $\MA\simeq\Gamma(\MB,\phi)$ and $\MB\simeq\Delta(\MC,\psi)$. Take any tuples $\bar p\in \phi (\MB)$ and $\bar q\in \psi (\MC)$. Then 
there exist interpretations $\MA\simeq_{\mu_{\Gamma,\bar p}} \Gamma(\MB,\bar p)$ and $\MB\simeq_{\mu_{\Delta,\bar q}} \Delta(\MC,\bar q)$. Take any $\bar p_0\in \mu^{-1}_{\Delta,\bar q}(\bar p)$ and put $\bar p^\ast=(\bar p_0, \bar q)$. We obtain that there exists an interpretation $\MA\simeq\Gamma\circ\Delta(\MC,\bar p^\ast)$, moreover, by Lemma~\ref{le:interpr_corol} parameters $\bar p^\ast$ belong to the definable set $\phi^\ast(\MC)$, $\phi^\ast=\phi_\Delta\,\wedge\,\psi$. Conversely, if we take any $\bar p^\ast\in \phi^\ast(\MC)$ and write $\bar p^\ast=(\bar p_0, \bar q)$, then $\bar q\in \psi (\MC)$ and there exists an interpretation $\MB\simeq_{\mu_{\Delta,\bar q}} \Delta(\MC,\bar q)$. And since $\bar p_0\in \phi_\Delta(\MC,\bar q)$, then by Lemma~\ref{le:interpr_corol} there exists $\bar p\in \phi(\MB)$ with $\bar p_0\in \mu^{-1}_{\Delta,\bar q}(\bar p)$. Thus for any $\bar p^\ast\in\phi^\ast(\MC)$ there exists an interpretation $\MA\simeq\Gamma\circ\Delta(\MC,\bar p^\ast)$, i.e., $\MA\simeq\Gamma\circ\Delta(\MC,\phi^\ast)$.
\end{proof}

\subsection{Properties preserved under interpretations}

Interpretations provide a useful tool to show that algebraic structures satisfy some  particular properties, that are inherited under interpretations. We say that a property $P$ is \emph{preserved  (or inherited) under interpretations} if for any structures $\MA$ and $\MB$ such that $\MA$ is interpreted in $\MB$ if $P$ holds in $\MB$ then $P$ holds in $\MA$.  Some of such properties $P$ are preserved only under absolute or regular interpretations, while others are preserved under interpretations with parameters. In this section we mention some of these properties that we  use in the sequel.

\begin{lemma} \label{co:interp} 
\begin{itemize}  The following holds:
\item  [1)] Let $\MA \simeq \Gamma(\MB,\phi)$ for some code $\Gamma$ and a formula $\phi$, i.e.,  $\MA$ is regularly interpretable in $\MB$. Then if   the first-order theory $Th(\MB)$ is decidable then  $Th(\MA)$ is also decidable. 
\item  [2)] Let $\MA_1,\MA_2$ be $L(\MA)$-structures and $\MB_1, \MB_2$ be $L(\MB)$-structures. If $\MA_i = \Gamma(\MB_i, \phi)$,  $i= 1,2$,  for some code $\Gamma$ and a formula $\phi$, then  $\MB_1 \equiv \MB_2$ implies $\MA_1 \equiv \MA_2$.
\item [3)] If $\MA = \Gamma(\MB, \bar p)$ and $\MB \preceq \MB_1$ is an elementary extension of $\MB$ then $\MB_1$ satisfies the admissibility conditions $\mathcal{AC}_\Gamma$ on the tuple $\bar p$,  $\Gamma(\MB, \bar p)$ is a substructure of $\MA_1 = \Gamma(\MB_1, \bar p)$   and $\MA  \preceq \MA_1$ is an elementary extension of $\MA$.
\end{itemize}
\end{lemma}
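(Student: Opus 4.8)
The plan is to reduce all three statements to the translation machinery of Lemmas~\ref{le:interpr_corol} and~\ref{le:interpr_corol_2}, together with the key observation that the $\Gamma$-translation $\psi\mapsto\psi_\Gamma$ (resp. $\psi\mapsto\psi_{\Gamma,\phi}$) is built purely from the code $\Gamma$ (and $\phi$) and does not depend on the structure into which one interprets. For 1), given a decision algorithm for $Th(\MB)$, to decide whether a sentence $\psi$ of $L(\MA)$ lies in $Th(\MA)$ one effectively forms $\psi_{\Gamma,\phi}$ as in Lemma~\ref{le:interpr_corol_2} and queries whether $\MB\models\psi_{\Gamma,\phi}$; by that lemma this answers whether $\MA\models\psi$, and since the translation is effective, $Th(\MA)$ is decidable. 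For 2), apply Lemma~\ref{le:interpr_corol_2} to each interpretation $\MA_i\simeq\Gamma(\MB_i,\phi)$ to get $\MA_i\models\psi\iff\MB_i\models\psi_{\Gamma,\phi}$ with the \emph{same} sentence $\psi_{\Gamma,\phi}$; since $\MB_1\equiv\MB_2$ the right-hand sides agree, hence so do the left-hand sides, and as $\psi$ was arbitrary $\MA_1\equiv\MA_2$.

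For 3) I would proceed in three steps. First, the admissibility conditions $\mathcal{AC}_\Gamma(\bar y)$ form a finite set of $L(\MB)$-formulas; since $\Gamma(\MB,\bar p)$ is well-defined we have $\MB\models\mathcal{AC}_\Gamma(\bar p)$, and because $\MB\preceq\MB_1$ with the tuple $\bar p$ lying in $\MB\subseteq\MB_1$, also $\MB_1\models\mathcal{AC}_\Gamma(\bar p)$; hence $\MA_1:=\Gamma(\MB_1,\bar p)$ is a well-defined $L(\MA)$-structure. Second, elementarity of $\MB\preceq\MB_1$ gives $U_\Gamma(B^n,\bar p)=U_\Gamma(B_1^n,\bar p)\cap B^n$, and on tuples from $B^n$ the relation $E_\Gamma(\cdot,\cdot,\bar p)$ and the graph-defining formulas $Q_\Gamma$ are evaluated in $\MB_1$ exactly as in $\MB$; therefore the map sending the $\sim_\Gamma$-class of $\bar b\in U_\Gamma(B^n,\bar p)$ computed in $\MB$ to its $\sim_\Gamma$-class computed in $\MB_1$ is a well-defined embedding that commutes with all interpreted constants, functions and predicates. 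Identifying $\MA$ with $\Gamma(\MB,\bar p)$ via the coordinate isomorphism $\bar\mu_\Gamma$, this exhibits $\MA$ as a substructure of $\MA_1$. Third, for elementarity take any $L(\MA)$-formula $\psi(x_1,\dots,x_m)$ and elements $a_1,\dots,a_m\in A$ with preimages $\bar b_i\in U_\Gamma(B^n,\bar p)$; Lemma~\ref{le:interpr_corol}(1) yields
$$\MA\models\psi(a_1,\dots,a_m)\iff\MB\models\psi_\Gamma(\bar b_1,\dots,\bar b_m,\bar p),\qquad \MA_1\models\psi(a_1,\dots,a_m)\iff\MB_1\models\psi_\Gamma(\bar b_1,\dots,\bar b_m,\bar p),$$
and the two right-hand sides agree since $\MB\preceq\MB_1$ and $\bar b_1,\dots,\bar b_m,\bar p$ are tuples from $\MB$; hence $\MA\preceq\MA_1$.

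I expect the only genuinely delicate point to be the substructure claim in 3): one must verify that passing to the quotient by $\sim_\Gamma$ is compatible with the elementary extension, i.e.\ that two distinct $\sim_\Gamma$-classes of $\MB$ are not amalgamated in $\MB_1$ and that the interpreted operations on the smaller quotient are restrictions of those on the larger. Both facts follow from applying elementarity of $\MB\preceq\MB_1$ to the formulas $E_\Gamma$ and $Q_\Gamma$ on tuples from $\MB$, but they should be written out carefully, as should the fact that every element of $U_\Gamma(B^n,\bar p)$ keeps the same image in $A$ under the coordinate map when viewed inside $\MB_1$. Everything else is a routine unwinding of the definitions and the translation lemmas.
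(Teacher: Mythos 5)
Your proposal is correct and follows essentially the same route as the paper: parts 1) and 2) via the effective, structure-independent translation of Lemma~\ref{le:interpr_corol_2}, and part 3) by transferring the admissibility conditions along $\MB\preceq\MB_1$ and then using the $\Gamma$-translation of Lemma~\ref{le:interpr_corol} together with elementarity on tuples from $\MB$. Your extra care about the quotient-by-$\sim_\Gamma$ substructure claim (no amalgamation of classes, compatibility of the interpreted operations) is exactly the point the paper leaves implicit, and your justification of it by applying elementarity to $E_\Gamma$ and the $Q_\Gamma$ is the right one.
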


\begin{proof}
If $\MA \simeq \Gamma(\MB,\phi)$ for some code $\Gamma$ and a formula $\phi$, then by Lemma \ref{le:interpr_corol_2} there is an algorithm that for any sentence $\psi$ in the language $L(\MA)$ constructs a sentence $\psi_{\Gamma,\phi}$ in the language $L(\MB)$ such that 
$$
\MA \models \psi \Longleftrightarrow \MB \models \psi_{\Gamma,\phi}.
$$
Therefore, if $Th(\MB)$ is decidable then $Th(\MA)$ is decidable. This proves 1).
 
 Almost the same  argument works for 2), here one only need to notice that the sentence $\psi_{\Gamma,\phi}$ depends only on $\psi, \Gamma$, and $\phi$. 
 
 3) Since $\MB_1$ is an elementary extension of $\MB$  the admissibility condition $\mathcal{AC}_{\Gamma}$ holds in $\MB_1$ on the tuple $\bar p$. Hence $(\Gamma,\bar p)$ interprets in $\MB_1$ an $L(\MA)$-structure $\Gamma(\MB_1, \bar p)$, which has $\Gamma(\MB,\bar p)$ as a substructure. Let $\psi(x_1, \ldots,x_m)$ be a formula in $L(\MA)$ and $\psi_\Gamma(\bar x_1,\ldots,\bar x_m,\bar y)$ its $\Gamma$-translation.  Now take a tuple of elements  $\bar a= (a_1, \ldots,a_m)$ from the structure  $\Gamma(\MB,\bar p)$. We intend to show that $\psi(\bar a)$ holds in $\Gamma(\MB,\bar p)$ if and only if it holds in $\Gamma(\MB_1,\bar p)$. Consider a tuple $\bar b = (b_1, \ldots,b_m)$, where $b_i$ is a tuple from  $A_\Gamma = U_\Gamma(B^n)$ such that $a_i$ is the $\sim_\Gamma$-equivalence class of $b_i$, $i = 1, \ldots, m$. Then by Lemma~\ref{le:interpr_corol}  $\psi(\bar a)$ holds in $\MA \simeq \Gamma(\MB,\bar p)$  if and  only if $\psi_\Gamma(\bar b, \bar p)$ holds in $\MB$. Since $\MB \preceq \MB_1$ the formula $\psi_\Gamma(\bar b, \bar p)$ holds in $\MB$  if and only if it holds in $\MB_1$. Then, again by Lemma~\ref{le:interpr_corol} $\psi_\Gamma(\bar b, \bar p)$ holds in $\MB_1$ if and only if $\psi(\bar a)$ holds in $\Gamma(\MB_1,\bar p)$.  Hence 
 $\Gamma(\MB_1,\bar p)$ is an elementary extension of $\Gamma(\MB,\bar p)$, as claimed. This proves the lemma.
 \end{proof}

We also mention the following basic result. Let $\MN$ be the set of natural numbers.  Then   $\MN = \langle \MN; +,\cdot,0,1\rangle$ is called the \emph{arithmetic} or the \emph{standard arithmetic}. The G\"odel first incompleteness theorem (more precisely, the G\"odel's argument there) implies that the first-order theory $Th(\MN)$ of $\MN$ is undecidable. We state this in the form of the following  result.

\begin{lemma} \label{le:undec-Z} \cite{TRM,Ershov-Lavrov}
If the arithmetic $\MN$  is interpretable (with parameters) in a structure $\MB$ then the first-order theory $Th(\MB)$ is undecidable.
\end{lemma}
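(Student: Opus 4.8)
If $\MN$ is $0$-interpretable in $\MB$ the claim is immediate: by Lemma~\ref{co:interp}(1) a decision procedure for $Th(\MB)$ would yield one for $Th(\MN)$, which is impossible by G\"odel's argument. So the substance is the case of a genuine parameter tuple $\bar p=(p_1,\dots,p_k)$ with $\MN\simeq\Gamma(\MB,\bar p)$. The plan is \emph{not} to reduce the (non-axiomatizable) theory $Th(\MN)$ itself, but a fixed finitely axiomatizable, essentially undecidable subtheory of it --- Robinson's arithmetic $Q$, given by a single sentence $\sigma_Q$; its essential undecidability (every consistent theory containing $Q$ is undecidable) is the sharp form of G\"odel's argument, see~\cite{TRM,Ershov-Lavrov}.

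First I would fix the admissibility sentence $\Theta_\Gamma(\bar y)$ of $\Gamma$ (a single formula, since $L(\MN)$ is finite) and, for each $L(\MN)$-sentence $\psi$, the $\Gamma$-translations $(\sigma_Q)_\Gamma(\bar y)$ and $\psi_\Gamma(\bar y)$ of Lemma~\ref{le:interpr_corol}; all of these are produced effectively. Since $\MN\models Q$ and $\MN\simeq\Gamma(\MB,\bar p)$, we have $\MB\models\Theta_\Gamma(\bar p)\wedge(\sigma_Q)_\Gamma(\bar p)$. I would then associate to $\psi$ the $L(\MB)$-sentence
\[
\psi^{\dagger}\ :=\ \forall\,\bar y\;\bigl(\;\Theta_\Gamma(\bar y)\wedge(\sigma_Q)_\Gamma(\bar y)\ \longrightarrow\ \psi_\Gamma(\bar y)\;\bigr),
\]
whose intended meaning is ``$\psi$ holds in every structure $\Gamma(\MB,\bar q)$ that happens to be a model of $Q$'', and put $T^{\dagger}=\{\psi\mid\MB\models\psi^{\dagger}\}$.

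The verification then breaks into three short steps. (i) $T^{\dagger}$ is closed under logical consequence: if $\psi_1,\dots,\psi_m\in T^{\dagger}$ and $\psi_1,\dots,\psi_m\vdash\chi$, then for every $\bar q$ with $\MB\models\Theta_\Gamma(\bar q)\wedge(\sigma_Q)_\Gamma(\bar q)$ the interpreted structure $\Gamma(\MB,\bar q)$ is well defined and, by Lemma~\ref{le:interpr_corol}, models each $\psi_i$, hence $\chi$; translating back, $\MB\models\chi_\Gamma(\bar q)$ for all such $\bar q$, i.e.\ $\chi\in T^{\dagger}$. (ii) $Q\subseteq T^{\dagger}$, because $(\sigma_Q)^{\dagger}$ is logically valid. (iii) $T^{\dagger}$ is consistent, because instantiating $\bar y$ at $\bar p$ gives $T^{\dagger}\subseteq Th(\MN)$. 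Thus $T^{\dagger}$ is a consistent theory extending $Q$, hence undecidable; but $\psi\mapsto\psi^{\dagger}$ is computable, so a decision procedure for $Th(\MB)$ would decide $T^{\dagger}$ --- a contradiction. The one place I expect real friction is precisely the move from the single tuple $\bar p$ to something uniform over all parameter tuples: $\Theta_\Gamma$ alone does not force $\Gamma(\MB,\bar q)$ to resemble $\MN$ at all, so naively quantifying the parameters away only traps the true arithmetic sentences between two decidable sets and proves nothing; inserting the relativizing conjunct $(\sigma_Q)_\Gamma(\bar y)$ --- restricting to those $\bar q$ for which $\Gamma(\MB,\bar q)\models Q$ --- is exactly what repairs the argument.
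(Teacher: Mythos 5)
Your proposal is correct, and it is essentially the argument the paper is pointing to: the paper does not prove this lemma at all but cites it to Tarski--Mostowski--Robinson and Ershov--Lavrov, and the classical proof there is exactly your route --- pass from $Th(\MN)$ to the finitely axiomatized, essentially undecidable theory $Q$, and handle the parameters by quantifying them out relative to the admissibility sentence $\Theta_\Gamma$ conjoined with the translated axiom $(\sigma_Q)_\Gamma$, so that the computable map $\psi\mapsto\psi^{\dagger}$ reduces a consistent deductively closed extension of $Q$ to $Th(\MB)$. Your closing remark correctly identifies the only delicate point (without the $(\sigma_Q)_\Gamma$ conjunct the quantification over parameters proves nothing), so there is nothing to add.
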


Note that $\MN$ is $0$-interpretable in the ring of integers $\MZ = \langle \MZ; +,\cdot,0,1\rangle$. Indeed, by Lagrange’s Four Squares Theorem every integer is a sum of four squares, so the formula 
$$
\exists x_1\exists x_2\exists x_3 \exists x_4(y = x_1^2 +x_2^2 +x_3^2 +x_4^2 )
$$
defines $\MN$ in $\MZ$. Hence, if $\MZ$ is interpretable (with parameters) in $\MA$ then $Th(\MA)$ is undecidable.   The following theorem, due to J. Robinson, shows this result can be extended to arbitrary rings of algebraic integers or algebraic number fields.

\begin{theorem} \label{th:J-Rob}  \cite{J-Robinson}
The following holds:
\begin{itemize}
    \item [1)] The ring $\MZ$ is $0$-definable in any ring of algebraic integers $\mathcal{O}$.
    \item [2)] The ring of algebraic integers $\mathcal{O}$ is $0$-definable in its field of fractions.
    \item [3)] It follows from 1) and 2) that the ring $\MZ$ is $0$-definable in any algebraic number field $F$.
    \end{itemize}
\end{theorem}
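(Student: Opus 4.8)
The plan is to deduce 3) formally from 1) and 2), and to treat 1) and 2) as the two substantive statements; both are due to J.~Robinson and both rest on the arithmetic of quadratic forms over the completions of $F$ together with the Hasse--Minkowski local--global principle. For 3): if $\zeta(x)$ is a ring formula defining $\Z$ in $\OR$ and $\iota(x)$ is a ring formula defining $\OR$ in $F$ --- note that $\OR$ carries exactly the operations restricted from $F$, so this is an absolute interpretation of dimension $1$ with trivial kernel --- then relativising each quantifier of $\zeta$ to $\iota$ produces a ring formula $\zeta^{\iota}(x)$ with $\zeta^{\iota}(F)=\Z$; equivalently, compose the two one-dimensional parameter-free codes via Lemma~\ref{le:int-transitivity}(1) and observe that a one-dimensional $0$-interpretation with trivial kernel is the same thing as a $0$-definition. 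So the whole content is in 1) and 2).

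For 2), I would realise $\OR$ as the set of elements of $F$ that are integral at every non-archimedean place, $\OR=\bigcap_{v}\OR_{v}$, and then capture this intersection by a single parameter-free formula. For a non-dyadic $v$, a Hensel's-lemma computation shows that ``$v(t)\ge 0$'' is equivalent to solvability in $F$ of a suitable ternary (or quaternary) quadratic equation whose coefficients are polynomials in $t$ and two auxiliary parameters; writing $\varphi(a,b)$ for the ring-definable predicate ``the conic $aX^{2}+bY^{2}=Z^{2}$ has a nontrivial $F$-point'', Robinson's device is a Hilbert-symbol bookkeeping identity that lets one quantify \emph{universally} over the two parameters so that the resulting statement about $t$ forces $v(t)\ge 0$ simultaneously at \emph{all} non-dyadic $v$. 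The finitely many dyadic places are then dealt with by a separate clause of the same shape, and the conjunction is the desired $\iota$ with $\OR=\iota(F)$.

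For 1), Robinson's route is to write a predicate $\zeta(n)$ over $\OR$ of the form
\[
\forall a\,\forall b\;\bigl(\,\chi(a,b,0)\wedge\forall x\,(\chi(a,b,x)\rightarrow\chi(a,b,x+1))\ \longrightarrow\ \chi(a,b,n)\,\bigr),
\]
where $\chi$ is again built from the isotropy predicate $\varphi$ so that, as $a,b$ range over $\OR$, the hypotheses express ``$n$ is compatible with the $p$-adic integers at every rational prime $p$'', which pins $n$ down to $\Z\subseteq\OR$ (one may first define $\Z_{\ge 0}$ this way and then obtain $\Z$ as its set of differences). A more economical variant uses 2): since $F=\mathrm{Frac}(\OR)$ is absolutely interpretable in $\OR$ and the image of $\OR$ in this interpreted $F$ is definable (it is $\{a/b : b\mid a \text{ in }\OR\}$), it suffices to define $\Q$ inside $F$, which one does by a quadratic-form predicate analogous to Robinson's 1949 construction, and then take $\Z$ to be the intersection of that set with the copy of $\OR$.

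The step I expect to be the main obstacle is precisely the quadratic-form analysis underlying both parts: verifying that the chosen isotropy conditions are genuinely equivalent to integrality at a place, and --- the truly delicate point --- that one first-order formula can range over \emph{all} places at once. The dyadic places are where the clean ``odd-prime Hensel'' argument breaks down and the finer structure of quadratic forms over $\Q_{2}$ has to be invoked by hand; this is the technical heart of Robinson's papers. Everything else --- the relativisation in 3), the $\mathrm{Frac}$-interpretation in 1), and the reduction of the congruence conditions to the predicate $\varphi$ --- is routine once that core is in place.
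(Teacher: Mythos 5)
First, a point of comparison: the paper does not prove Theorem~\ref{th:J-Rob} at all --- it is quoted with the citation to Julia Robinson and used as a black box --- so there is no in-paper argument to measure your proposal against; what you have written is a reconstruction of Robinson's own proof. Your treatment of 3) (relativising the quantifiers of the formula defining $\Z$ in $\OR$ to the formula defining $\OR$ in $F$, i.e.\ composing two one-dimensional parameter-free interpretations with trivial kernel, as in Lemma~\ref{le:int-transitivity}) is correct and is exactly how the two parts are meant to be combined. Your sketch of 2) --- integrality at each non-archimedean place expressed through isotropy of quadratic forms, Hensel's lemma at the non-dyadic places, a separate clause for the dyadic ones, and a universal quantification over the auxiliary parameters so that a single formula covers all places at once --- is a fair outline of Robinson's construction, and you correctly locate the technical heart (the dyadic places and the uniformity over all places).

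In 1), however, your explanation of why the induction-style formula works is wrong as stated. You say the hypotheses on $\chi(a,b,\cdot)$ express that ``$n$ is compatible with the $p$-adic integers at every rational prime $p$,'' and that this pins $n$ down to $\Z\subseteq\OR$. It does not: every element of $\OR$ is integral at every finite place, so local integrality conditions cannot separate $\Z$ from the rest of $\OR$. What the argument actually requires, and what your sketch omits, is a definable family of sets $D_{a,b}\subseteq\OR$ (built in Robinson's work from divisibility and total-positivity/quadratic-form conditions) such that for every $t\in\OR\setminus\N$ some member of the family contains $0$, is closed under $x\mapsto x+1$ (hence contains all of $\N$) and omits $t$; only then does the intersection encoded by your formula collapse to $\N$, after which $\Z$ is obtained as the set of differences. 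Producing such a family is the substantive content of part 1) and is not a routine consequence of the local analysis used for 2). Your ``more economical variant'' suffers from a related problem: defining $\Q$ inside $F$ from scratch is essentially the same difficulty as defining $\Z$ in $F$, so it does not shortcut anything.
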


Scanlon extended the result above as follows.
\begin{theorem} \cite{Scanlon} \label{th:Scanlon}
The ring $\MZ$ is interpretable in any finitely generated infinite field $F$.
\end{theorem}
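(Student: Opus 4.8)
\emph{Overview of the plan.} The strategy is an induction on the transcendence degree $d$ of $F$ over its prime field $k$, using the classification of finitely generated fields: $F$ is a finite extension of a purely transcendental extension $k(t_1,\dots,t_d)$. Since the only finitely generated fields that happen to be finite are the $\mathbb{F}_q$ themselves, the hypothesis that $F$ is infinite means that either $k=\MQ$ (any $d\ge 0$), or $k=\mathbb{F}_p$ with $d\ge 1$. In each case the goal is to produce a first-order definable (or at least interpretable) copy of $\MZ$ inside $F$ and then to package it, by the routine bookkeeping of Lemma~\ref{le:interpr_corol}, as an honest interpretation code.

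\emph{Base cases.} If $d=0$ then $\mathrm{char}\,F=0$ and $F$ is an algebraic number field; here Theorem~\ref{th:J-Rob} already gives that $\MZ$ is $0$-definable, hence interpretable, in $F$. If $\mathrm{char}\,F=p$ the smallest possibility is $d=1$, i.e.\ $F$ is a global function field (a finite extension of $\mathbb{F}_p(t)$); one cannot pass to a smaller constant field — it is finite and carries no copy of $\MZ$ — so this case must be handled directly, by the classical definability theorems for global function fields (Rumely, Pheidas, Videla, Shlapentokh, Eisenträger), which exhibit a first-order copy of $\MZ$. This is also where the notorious characteristic-$2$ obstruction has to be treated separately.

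\emph{Inductive step.} Let $d\ge 1$ (with $\mathrm{char}\,F=0$, or $\mathrm{char}\,F=p$ and $d\ge 2$), and assume the theorem for all infinite finitely generated fields of transcendence degree $<d$. Regard $F$ as a one-variable function field over its exact field of constants $C$, the relative algebraic closure in $F$ of $k(t_1,\dots,t_{d-1})$; then $C$ is an infinite finitely generated field of transcendence degree $d-1$ (a number field when $d=1$ and $\mathrm{char}=0$). It would suffice to exhibit, by a formula of $F$, a valuation ring $O_v$ of some place $v$ of $F/C$: its residue field is a finite extension of $C$, hence an infinite finitely generated field of transcendence degree $d-1$, in which the inductive hypothesis interprets $\MZ$; since that residue field is in turn interpretable in $F$, transitivity of interpretations (Lemma~\ref{le:int-transitivity}) interprets $\MZ$ in $F$. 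The known proofs in fact short-circuit the constant field and write down a formula defining $\MZ$ in $F$ through the arithmetic of an auxiliary curve: one chooses an elliptic curve over $F$ (or a suitable conic / quaternion-algebra configuration) carrying an $F$-rational point $P$ of infinite order, so that the subgroup generated by $P$ is a definable copy of $\MZ$, the work being to single out the multiples of $P$ among all points by a first-order condition — the same mechanism behind Poonen's uniform definitions in finitely generated fields and Koenigsmann's definition of $\MZ$ in $\MQ$, transported to the function-field setting.

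\emph{Main obstacle.} The genuinely hard step is the one just highlighted: a pure field comes with no distinguished subring or valuation, so one must manufacture a definable valuation ring — equivalently, a definable model of arithmetic — out of nothing but the field operations. All the depth of the statement is concentrated there, and it is precisely the content of the Hilbert's-Tenth-Problem literature for function fields on which Scanlon's proof in \cite{Scanlon} rests; by contrast, the reduction through the classification of finitely generated fields and the final assembly of the interpretation code (via Lemmas~\ref{le:interpr_corol} and~\ref{le:int-transitivity}) are comparatively mechanical.
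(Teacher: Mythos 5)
The paper offers no proof of this statement --- it is quoted from \cite{Scanlon} --- so the only comparison available is with the known argument, whose architecture your sketch does reproduce: induction on transcendence degree, global fields (Theorem~\ref{th:J-Rob}, Rumely) as the base, and in the inductive step a presentation of $F$ as a one-variable function field over its exact constant field $C$, with $\MZ$ pulled back from a residue field of smaller transcendence degree via transitivity of interpretations (Lemma~\ref{le:int-transitivity}). But as a proof the proposal has a genuine gap, and you locate it yourself: the entire content of the theorem is the production of a definable (with parameters) valuation ring of $F/C$ --- uniformly enough to run the induction --- together with a genuinely \emph{first-order} definition of arithmetic in global function fields at the base. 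Appealing to ``the Hilbert's-Tenth-Problem literature'' does not close this: that literature is mostly about \emph{existential} (Diophantine) definability, which is a different and in places still open problem; what Scanlon actually uses is Rumely's first-order definability theorem for global fields together with Pop's quadratic-form/Brauer-group machinery (as in Poonen's uniform definitions) yielding definable families of valuation rings in higher transcendence degree. Until you state precisely which definability theorem you invoke and check that it produces an interpretation code in the sense of Definition~\ref{de:interpretable}, the argument is missing exactly the step where the difficulty lives.

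Two subsidiary points. The elliptic-curve remark is not a viable shortcut: having a rational point $P$ of infinite order does not make the set $\{nP : n\in\MZ\}$ first-order definable --- singling out the multiples of $P$ by a formula is essentially as hard as the theorem itself, and Koenigsmann's and Poonen's definitions of $\MZ$ in $\MQ$ and in finitely generated fields proceed through quaternion algebras and definable valuations, not through cyclic subgroups of elliptic curves; so this paragraph cannot substitute for the valuation-theoretic step. Also, the characteristic-$2$/resolution-of-singularities caveat you mention belongs to the later bi-interpretability refinement of Dittmann--Pop \cite{DP} quoted elsewhere in the paper, not to interpretability of $\MZ$: Rumely's theorem for global function fields carries no such restriction, so no separate treatment of characteristic $2$ is needed for the statement at hand.
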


\begin{cor} \label{co-undec-essential}
If any of the rings $\mathcal O$ or $F$  from Theorems \ref{th:J-Rob} and \ref{th:Scanlon} are interpretable in a structure $\MA$ then the first-order theory $Th(\MA)$ is undecidable.
\end{cor}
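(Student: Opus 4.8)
The plan is to chain together the results immediately preceding the statement via the transitivity of interpretations. First I would recall that by Theorem~\ref{th:J-Rob} the ring $\MZ$ is $0$-definable, hence $0$-interpretable, in any ring of algebraic integers $\mathcal{O}$ and in any algebraic number field $F$, and that by Theorem~\ref{th:Scanlon} the ring $\MZ$ is interpretable (possibly with parameters) in any finitely generated infinite field $F$. Thus in every case covered by the hypothesis, $\MZ$ is interpretable --- absolutely or with parameters --- in the ring or field under consideration.

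Next I would invoke the hypothesis that $\mathcal{O}$ (resp.\ $F$) is interpretable in $\MA$, and apply Lemma~\ref{le:int-transitivity}(1): interpretability with parameters is transitive, so the composition of the relevant codes interprets $\MZ$ in $\MA$ with an appropriate parameter tuple. Since $\MN$ is $0$-interpretable in $\MZ$ (via Lagrange's Four Squares Theorem, as recorded just after Lemma~\ref{le:undec-Z}), one further application of transitivity shows that the standard arithmetic $\MN$ is interpretable with parameters in $\MA$. Equivalently, one may simply appeal to the parenthetical consequence of Lemma~\ref{le:undec-Z} already stated in the text --- that if $\MZ$ is interpretable (with parameters) in $\MA$ then $Th(\MA)$ is undecidable --- which packages the last two steps.

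Finally, Lemma~\ref{le:undec-Z} (the interpretability form of G\"odel's incompleteness, following \cite{TRM,Ershov-Lavrov}) gives that $Th(\MA)$ is undecidable, completing the argument. I expect no real obstacle here: every ingredient has been stated, and the only point requiring care is the bookkeeping of parameters when composing codes --- but Lemma~\ref{le:int-transitivity}(1) is designed precisely for this, ensuring that the resulting interpretation of $\MN$ in $\MA$ is again an interpretation with finitely many parameters, which is all that Lemma~\ref{le:undec-Z} demands.
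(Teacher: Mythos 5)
Your argument is correct and is exactly the one the paper intends: the corollary is a direct consequence of composing the interpretations of $\MZ$ in $\mathcal{O}$ or $F$ (Theorems~\ref{th:J-Rob} and~\ref{th:Scanlon}) with the hypothesized interpretation in $\MA$ via Lemma~\ref{le:int-transitivity}, and then invoking the remark following Lemma~\ref{le:undec-Z} (via the Four Squares definition of $\MN$ in $\MZ$) to conclude undecidability of $Th(\MA)$. No gaps; the parameter bookkeeping is handled precisely as the paper's transitivity lemma prescribes.
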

At present,  there are known many rings, groups, and monoids $R$ where the arithmetic $\MN$ is interpretable (see, for instance, the Sections 6-14 below), so  for all of them Corollary \ref{co-undec-essential} holds when $\mathcal{O}$ or $F$  is replaced by $R$.  However, the  rings $\mathcal{O}$ and  $F$ above  most often  appear as interpretable in other structures, especially, the rings of algebraic integers $\mathcal{O}$ typically  occur via interpretations in finitely-generated groups (see, for example, \cite{GMO,GMO2,GMO3}, where the rings $\mathcal{O}$ are interpreted in groups and rings by means of equations).  
In this direction we would like to mention the following general result.

\begin{theorem} [Noskov \cite{Noskov}]
Let $G$ be a finitely generated soluble group. Then if $G$ is  non-virtually abelian then $\MZ$ is interpretable in $G$.
\end{theorem}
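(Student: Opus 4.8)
The plan is to interpret the standard arithmetic $\MN$ (equivalently the ring $\MZ$) in $G$ by exhibiting, inside a controlled definable section of $G$, an explicit ``arithmetic configuration''. Two reductions will be used throughout, each costing only parameters: if $N\trianglelefteq G$ is definable then $G/N$ is interpretable in $G$ (as noted after Definition~\ref{de:interpretable}), and it is a standard fact that a finite-index subgroup $H\le G$ of a finitely generated group is interpretable in $G$ with parameters. Combined with transitivity of interpretations (Lemma~\ref{le:int-transitivity}), this means interpreting $\MZ$ in any definable-with-parameters section of $G$ --- in particular in any finite-index subgroup --- yields $\MZ$ interpretable in $G$. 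Since ``not virtually abelian'' splits into ``virtually nilpotent but not virtually abelian'' and ``not virtually nilpotent'', I would treat these two cases separately.

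In the virtually nilpotent case, pass to a finite-index subgroup $H\le G$ that is torsion-free nilpotent (such $H$ exists), necessarily of nilpotency class $c\ge 2$ since $G$ is not virtually abelian. Then $\gamma_c(H)$ is free abelian of positive rank and central in $H$, and the iterated commutator maps give definable $\MZ$-bilinear pairings, for instance $H/\gamma_2(H)\times\gamma_{c-1}(H)/\gamma_c(H)\to\gamma_c(H)$, which are nondegenerate on an infinite piece; choosing $u,v$ with $[u,v]$ of infinite order, the assignment $n\mapsto[u^n,v]=[u,v]^n$ identifies an infinite cyclic subgroup with $\MZ$ as an additive group, and a second such pairing supplies the multiplication. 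This is the classical interpretation of $\MZ$ in a finitely generated non-abelian nilpotent group; by the reductions above $\MZ$ is then interpretable in $G$.

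If $G$ is not virtually nilpotent I would instead invoke the structure theory of finitely generated soluble groups (P.\ Hall; Bieri--Strebel). Failure of virtual nilpotence forces an abelian section $B=G^{(i)}/G^{(i+1)}$ --- definable in $G$ with parameters --- which, as a finitely generated module over the group ring $\MZ[G/G^{(i)}]$, is not locally nilpotent under the augmentation ideal; from this one extracts an element $g\in G$ and a definable abelian subquotient $A$ of $B$ on which conjugation by $g$ generates an \emph{infinite} subring $R\subseteq\mathrm{End}(A)$, and this is exactly where the hypothesis is spent, since a finite bound on $R$ would force $G$ to be virtually abelian. Now $A$ is interpretable in $G$ with its addition, conjugation by $g$ is a definable automorphism of $A$, and --- because $A$ is a finitely generated module --- scalar multiplication by the elements of $R=\MZ[g^{\pm1}]$ is uniformly definable with parameters, so the finitely generated commutative ring $R$ (a quotient of the Laurent polynomial ring $\MZ[x^{\pm1}]$) is interpretable in $G$. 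Finally, $R$ is infinite and finitely generated, and it is a standard fact --- resting on the methods behind Theorems~\ref{th:J-Rob} and~\ref{th:Scanlon} --- that such a ring interprets $\MZ$ (if the image of $x$ is transcendental then $R\cong\MZ[x^{\pm1}]$, in which $\MZ$ is definable; otherwise $R$ is, up to a nilpotent ideal, commensurable with a product of orders in number fields, and one concludes by Theorem~\ref{th:J-Rob}); composing interpretations via Lemma~\ref{le:int-transitivity} then puts $\MZ$ inside $G$.

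The hard part is the structural reduction of the third paragraph: one must show that lack of virtual abelianness genuinely forces one of the two configurations to appear \emph{definably} (or interpretably with parameters, after the controlled passages to finite-index subgroups and definable quotients), which amounts to locating the abelian normal section $A$, the element $g$ acting with infinite-order action, and verifying the uniform definability of $R$-scalar multiplication on the finitely generated module $A$. By contrast, the nilpotent commutator-pairing argument and the final step ``a finitely generated infinite commutative ring interprets $\MZ$'' are by now routine.
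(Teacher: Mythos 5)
First, a point of comparison: the paper does not prove this statement at all --- it is quoted from Noskov's paper --- so your sketch has to stand on its own, and as written it has two genuine gaps. The first is in your preliminary reductions. Interpreting the quotient of $G$ by a \emph{definable} normal subgroup is fine, but your other ``standard fact'' is not one: a finite-index subgroup of a finitely generated group is not in general definable or interpretable in it with parameters (there is no core or finite-width argument available in this generality), so ``interpret $\MZ$ in a torsion-free nilpotent finite-index subgroup and pull back to $G$'' is not licensed by Lemma~\ref{le:int-transitivity}. In the virtually nilpotent case this is probably repairable by working with definable objects of $G$ itself (iterated centers and centralizers, quotients by verbal subgroups, which do have finite width in the nilpotent/polycyclic setting, and then the bilinear-map machinery of Proposition~\ref{myasnik:bilinT}), but the repair has to be made; as stated the reduction is unsupported.

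The second and more serious gap is your third paragraph, which you rightly flag as the hard part but then treat as a black box. In a general finitely generated soluble group the derived subgroups $G^{(i)}$ are \emph{not} definable --- verbal subgroups need not have finite width outside the nilpotent/polycyclic case --- so the abelian section $B$, the subquotient $A$ and the acting element $g$ are not known to be available definably; and even granting a definable abelian section with a definable automorphism, the uniform definability of scalar multiplication by arbitrary elements of the infinite ring $R=\MZ[g^{\pm 1}]$ is precisely the crux, not a consequence of $A$ being a finitely generated module: elements of $R$ are sums of unboundedly many monomials, and defining their action uniformly is the kind of thing that, even in the very special case of free metabelian groups, occupies all of Section~\ref{se:free-metabelian} of this paper (Mal'cev's centralizer description just to define $G'$, the Magnus embedding, normal forms, a discrimination argument). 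Your closing step is fine --- an infinite finitely generated commutative ring interprets $\MZ$, e.g.\ by passing to an infinite integral domain quotient by a finitely generated (hence definable with parameters) prime ideal and invoking results of the type of Theorems~\ref{th:J-Rob} and~\ref{th:Scanlon} --- but the definable production of the ring-module configuration is exactly the content of Noskov's theorem, and your proposal restates it rather than proves it.
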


There are many properties  that are preserved under interpretations: $\lambda$-stability, being $\lambda$-saturation, $\omega$-categoricity, independence property, and finite Morley rank, to mention a few (for the proof about Morley rank we refer to~\cite{C-R}, the others are rather easy to prove using the translation $\phi \to \phi_\Gamma$ above).

\subsection{Bi-interpretability}
\label{se:bi-int}

In this section we discuss a very strong version of  mutual interpretability of two structures, so-called {\em bi-interpretability}. 

\begin{definition} \label{de:bi-inter}
Two algebraic structures  $\MA$ and $\MB$ are called {\em bi-interpretable}\index{bi-interpretable} (with parameters) in each other  if  the following conditions hold:
\begin{itemize}
\item [1)] $\MA$ and $\MB$ are interpretable (with parameters) in each other, so $\MA \simeq \Gamma(\MB,\bar p)$ and $\MB \simeq \Delta(\MA,\bar q)$ for some codes $\Gamma$ and $\Delta$ and tuples of parameters $\bar p, \bar q$. By transitivity  $\MA$, as well as  $\MB$,  is    interpretable (with parameters) in itself, so $\MA \simeq \Gamma \circ \Delta(\MA,\bar p^\ast)$ and $\MB \simeq \Delta \circ \Gamma(\MB,\bar q^\ast)$, where $\circ$ denotes composition of interpretations and $\bar p^\ast$,  $\bar q^\ast$  the corresponding parameters.
\item [2)]  There is a formula $\theta_\MA(\bar u, x, \bar s)$ in the language $L(\MA)$ such that $\theta_\MA(\bar u, x, \bar p^\ast)$ defines in $\MA$ an isomorphism $\bar \mu_\MA\colon \Gamma \circ \Delta(\MA,\bar p^\ast)  \to  \MA$ (more precisely, it defines the coordinate map $\mu_\MA\colon A_{\Gamma \circ \Delta} \to A$). Similarly, there is a formula $\theta_\MB(\bar v, x, \bar t)$ in the language $L(\MB)$ such that $\theta_\MB(\bar v, x,  \bar q^\ast)$ defines in $\MB$ an isomorphism $\bar\mu_\MB\colon \Delta \circ \Gamma(\MB,\bar q^\ast)\to \MB$ (more precisely, it defines the coordinate map $\mu_\MB\colon B_{\Delta \circ \Gamma} \to B$).
\end{itemize}
Here $|\bar p^\ast|=\dim_{par}\Gamma\circ\Delta=|\bar p|\cdot\dim \Delta+|\bar q|$, $|\bar q^\ast|=\dim_{par}\Delta\circ\Gamma=|\bar q|\cdot\dim \Gamma+|\bar p|$ and $|\bar u|=|\bar v|=\dim\Delta\circ\Gamma=\dim\Gamma\circ\Delta=\dim\Gamma\cdot\dim\Delta$.
\end{definition}

\begin{remark} \label{re:1}
On the definition of bi-interpretability:
\begin{itemize}
    \item [a)] 
In the Definition \ref{de:bi-inter} the parameters $\bar p^\ast$ and $\bar q^\ast $ can be explicitly  expressed from $\bar p$ and $\bar q$ using the coordinate maps $\mu_\Gamma \colon A_\Gamma \to A$  and  $\mu_\Delta\colon B_\Delta \to B$ of the interpretations $\MA \simeq \Gamma(\MB, \bar p)$ and $\MB \simeq \Delta(\MA, \bar q)$. Indeed, following the construction of the compositions $\Gamma \circ \Delta$ and $\Delta \circ \Gamma$ one can see that $\bar p^\ast = (\bar p_0, \bar q)$ and $\bar q^\ast = (\bar q_0,\bar p)$, where $\bar p_0
\in \mu^{-1}_\Delta (\bar p)$ and $\bar q_0\in \mu^{-1}_\Gamma(\bar q)$ (see Lemma~\ref{le:int-transitivity}).

\item [b)] Note that the tuples $\bar p_0, \bar q_0$ (and consequently $\bar p^\ast, \bar q^\ast$) from above are not unique, they are defined up to the equivalence relations $\sim_\Delta$ and $\sim_\Gamma$. Nevertheless, all these tuples define the same interpretations $\Gamma \circ \Delta(\MA,\bar p^\ast)$ and $\Delta \circ \Gamma(\MB,\bar q^\ast)$ (see Lemma~\ref{le:int-transitivity}). 
\item [c)] The requirement in 2) that the formula  $\theta_\MA(\bar u, x, \bar p^\ast )$ defines in $\MA$ a coordinate map $\mu_\MA\colon A_{\Gamma \circ \Delta} \to A$ can be strengthened to the requirement that it defines the composition $\mu_{\Gamma\circ\Delta}=\mu_\Gamma\circ\mu_\Delta$. Indeed,  we can redefine the isomorphism $\bar \mu_\Gamma\colon\Gamma(\MB,\bar p)\to\MA$ by taking $\alpha\circ \bar \mu_\Gamma$, where $\alpha=\bar\mu_\MA\circ \bar \mu^{-1}_{\Gamma\circ \Delta}$ is the automorphism of $\MA$. So we can always assume, adjusting $\bar \mu_{\Gamma}$ if necessary, that $\theta_\MA(\bar u, x, \bar p^\ast )$ defines in $\MA$ the isomorphism  $\bar \mu_{\Gamma \circ \Delta}$.
\end{itemize}
\end{remark}

In the case when the formula $\theta_\MA$ above defines the coordinate map \linebreak $\mu_{\Gamma\circ \Delta}\colon A_{\Gamma\circ\Delta}\to A$, we will refer to the isomorphism $\bar \mu_\Gamma\colon\Gamma(\MB,\bar p)\to\MA$ and its coordinate map $\mu_\Gamma\colon A_\Gamma\to A$ as {\em compatible with the formula} $\theta_\MA(\bar u,x,\bar p^\ast)$. Item~c) in Remark~\ref{re:1} tells that such isomorphism always exists. 

Usually in examples of bi-interpretable algebraic structures one has the coordinate map $\mu_\Gamma$  compatible with the formula $\theta_\MA$ and simultaneously the coordinate map $\mu_\Delta$  compatible with the formula $\theta_\MB$. In this instance the formula $\theta_\MA$ defines the composition $\mu_\Gamma\circ\mu_\Delta$ and $\theta_\MB$ defines $\mu_\Delta\circ\mu_\Gamma$.

\begin{lemma} \label{le:bi-interpret-6}
Let $\MA$ and $\MB$ be bi-interpretable (with parameters) in each other with respect to interpretations  $\MA \simeq \Gamma(\MB, \bar p)$ and $\MB \simeq \Delta(\MA,\bar q)$ for some codes $\Gamma$ and $\Delta$ and tuples of parameters $\bar p, \bar q$. Then the following holds:
\begin{itemize}
    \item [1)] There is a formula $\sigma_\MA(\bar y, \bar z)$ in the language $L(\MA)$ with $|\bar y|=|\bar p|\cdot\dim \Delta$, $|\bar z|=|\bar q|$, which defines in $\MA$ a non-empty set of tuples $D_{\sigma_\MA}$ such that in the notation in Definition~\ref{de:bi-inter} and Remark~\ref{re:1} $\bar p^\ast  =  (\bar p_0, \bar q) \in D_{\sigma_\MA}$ and for any tuple $\bar a  = (\bar a_0, \bar c)$ over $\MA$ with $|\bar a_0|=|\bar y|, |\bar c| = |\bar z|$ if $\bar a \in D_{\sigma_\MA}$ then:
           \begin{itemize}
               \item the $L(\MA)$-structure $\Delta(\MA,\bar c)$ is well-defined and $\bar a_0\in \Delta(\MA,\bar c)$ (more precisely, $\bar a_0\in U_\Delta(\MA,\bar c)$);
               \item the $L(\MA)$-structure $\Gamma \circ \Delta(\MA,\bar a)$ is well-defined; 
               \item  the formula $\theta_\MA(\bar u,x, \bar a)$ defines in $\MA$ an isomorphism  $\bar\mu\colon\Gamma \circ \Delta(\MA,\bar a)  \to \MA$ (more precisely, a coordinate map $\mu\colon U_{\Gamma\circ\Delta}(\MA,\bar a)\to A$).
           \end{itemize}
    
     \item [2)] Similarly, there is a formula $\sigma_\MB(\bar s, \bar t)$ in the language $L(\MB)$ 
     with $|\bar s|=|\bar q|\cdot\dim \Gamma$, $|\bar t|=|\bar p|$, which defines in $\MB$ a non-empty set of tuples $D_{\sigma_\MB}$ such that $\bar q^\ast  =  (\bar q_0, \bar p)$ is in $D_{\sigma_\MB}$ and for any tuple $\bar b  = (\bar b_0, \bar d) \in D_{\sigma_\MB}$, $|\bar b_0|=|\bar s|, |\bar d| = |\bar t|$, the $L(\MB)$-structures $\Gamma(\MB,\bar d)$ and $\Delta \circ \Gamma(\MB,\bar b)$ are well-defined, $\bar b_0\in \Gamma(\MB,\bar d)$, and the formula $\theta_\MB(\bar v,x, \bar b)$ defines an isomorphism $\Delta \circ \Gamma(\MB,\bar b)  \to \MB$.
    \end{itemize}
\end{lemma}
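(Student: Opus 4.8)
The idea is that the parameters $\bar p^\ast=(\bar p_0,\bar q)$ and $\bar q^\ast=(\bar q_0,\bar p)$ are not isolated points: the statement ``$(\Gamma\circ\Delta,\bar a)$ is a well-defined interpretation and $\theta_\MA(\bar u,x,\bar a)$ defines an isomorphism onto $\MA$'' is itself first-order expressible with $\bar a$ ranging over tuples in $\MA$, and hence the set of all such $\bar a$ is definable. So I would build $\sigma_\MA(\bar y,\bar z)$ as an explicit conjunction. First I would write down, using the admissibility-conditions machinery developed above: the formula $\Theta_\Delta(\bar z)$ asserting that $\Delta(\MA,\bar c)$ is well-defined; the formula $U_\Delta(\bar y,\bar z)$ asserting $\bar a_0\in U_\Delta(\MA,\bar c)$; and $\Theta_{\Gamma\circ\Delta}(\bar a)$ asserting that the composed code $\Gamma\circ\Delta$ with parameters $\bar a=(\bar a_0,\bar c)$ is well-defined (note $\dim_{par}\Gamma\circ\Delta=|\bar p|\cdot\dim\Delta+|\bar q|=|\bar y|+|\bar z|$, so the arities match). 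These are finite conjunctions of $L(\MA)$-formulas by the discussion preceding Lemma~\ref{le:int-transitivity}.

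Next I would add the clause expressing that $\theta_\MA(\bar u,x,\bar a)$ defines an isomorphism $\bar\mu\colon\Gamma\circ\Delta(\MA,\bar a)\to\MA$. Unwinding this: $\theta_\MA(\bar u,x,\bar a)$ should define (i) a well-defined map on $U_{\Gamma\circ\Delta}(\MA,\bar a)$ respecting $E_{\Gamma\circ\Delta}$ — i.e.\ for each $\bar u$ in the domain there is a unique (up to nothing, since we land in $\MA$) $x$ with $\theta_\MA(\bar u,x,\bar a)$, and $E_{\Gamma\circ\Delta}(\bar u_1,\bar u_2,\bar a)$ implies the outputs agree; (ii) surjectivity onto $A$; (iii) injectivity modulo $E_{\Gamma\circ\Delta}$; and (iv) compatibility with each symbol $Q\in L(\MA)$, i.e.\ it intertwines $Q_{\Gamma\circ\Delta}$ with $Q^\MA$. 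Each of these is a single first-order $L(\MA)$-formula in the free variables $\bar a=(\bar y,\bar z)$, again because $L(\MA)$ is finite. Taking $\sigma_\MA$ to be the conjunction of all the above yields the desired formula, and $\bar p^\ast\in D_{\sigma_\MA}$ holds because by Definition~\ref{de:bi-inter}(2) and Remark~\ref{re:1}(a),(c) all these clauses are true at $\bar a=\bar p^\ast=(\bar p_0,\bar q)$. Conversely, for any $\bar a=(\bar a_0,\bar c)\in D_{\sigma_\MA}$ the listed consequences hold essentially by construction: satisfaction of $\Theta_\Delta(\bar c)$ gives the first bullet, satisfaction of $\Theta_{\Gamma\circ\Delta}(\bar a)$ gives the second, and satisfaction of the ``isomorphism'' clause gives the third. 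Part~2) is proved by the symmetric construction, swapping the roles of $\MA,\MB$, $\Gamma,\Delta$, $\bar p,\bar q$, $\theta_\MA,\theta_\MB$, with $\dim_{par}\Delta\circ\Gamma=|\bar q|\cdot\dim\Gamma+|\bar p|=|\bar s|+|\bar t|$.

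The only genuinely delicate point is to make sure every clause in $\sigma_\MA$ is a bona fide formula \emph{with $\bar a$ as its only parameters/free variables} — in particular that ``$\theta_\MA$ defines an isomorphism'' does not secretly quantify over formulas or structures. This is where one must be careful: $\Gamma\circ\Delta$ is a \emph{fixed} finite code (computed once and for all from $\Gamma$ and $\Delta$), so $U_{\Gamma\circ\Delta}$, $E_{\Gamma\circ\Delta}$, and the finitely many $Q_{\Gamma\circ\Delta}$ are concrete $L(\MA)$-formulas with parameter variables $\bar a$; and $\theta_\MA$ is a fixed formula; hence the bijectivity and intertwining assertions are finite Boolean combinations of such formulas with the appropriate quantifiers over $\bar u$'s and $x$'s. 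Once this bookkeeping is in place — and it is the same bookkeeping already used to produce $\Theta_\Gamma$ and $\Theta_{\Gamma,\phi}$ earlier — nothing else is needed. I would not belabour the explicit writing-out of each clause; referencing the admissibility-conditions list and Lemma~\ref{le:interpr_corol} suffices.
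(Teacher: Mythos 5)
Your proposal is correct and follows essentially the same route as the paper: take $\sigma_\MA$ to be the conjunction of the admissibility conditions $\Theta_\Delta(\bar z)$ and $\Theta_{\Gamma\circ\Delta}(\bar y,\bar z)$ with first-order clauses stating that $\theta_\MA(\bar u,x,\bar a)$ defines a surjective, $E_{\Gamma\circ\Delta}$-compatible, injective-modulo-$\sim$ map intertwining each symbol of the finite language $L(\MA)$, then observe $\bar p^\ast\in D_{\sigma_\MA}$ and argue symmetrically for part~2). The only cosmetic difference is that you add $\bar a_0\in U_\Delta(\MA,\bar c)$ as an explicit conjunct, whereas the paper derives it from $\Theta_{\Gamma\circ\Delta}$ via the fact that $U_{\Gamma\circ\Delta}$ is the $\Delta$-translation of $U_\Gamma$.
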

\begin{proof}
We use  notation from  Definition~\ref{de:bi-inter} and Remark~\ref{re:1}.  

It suffices to prove~1). To construct the required formula $\sigma_\MA(\bar y, \bar z)$ we will use the formulas $\Theta_\Delta(\bar z)$ (that says that $\bar c$ satisfies in $\MA$ the admissibility conditions $\mathcal{AC}_\Delta(\bar z)$) and $\Theta_{\Gamma\circ\Delta}(\bar y,\bar z)$ (that says that $\bar a$ satisfies in $\MA$ the admissibility conditions $\mathcal{AC}_{\Gamma\circ\Delta}(\bar y,\bar z)$). Thus $\MA\models\Theta_\Delta(\bar c)\wedge \Theta_{\Gamma\circ\Delta}(\bar a)$ if and only if the $L(\MA)$-structures $\Delta(\MA,\bar c)$ and $\Gamma \circ \Delta(\MA,\bar a)$ are well-defined, in particularly, when $\bar a=\bar p^\ast$. Note that if $\bar a\in \Theta_{\Gamma\circ\Delta}(\MA)$ then there exists $\bar x$ such that $\A\models U_{\Gamma\circ \Delta}(\bar x,\bar a)$, and since $U_{\Gamma\circ\Delta}$ is the $\Delta$-translation of the formula $U_\Gamma$, so $\bar a_0\in U_\Delta(\MA,\bar c)$ (see the algorithm of $\Delta$-translation for details).  

Assume now that the $L(\MA)$-structure $\Gamma \circ \Delta(\MA,\bar a)$ is well-defined. The formula $\theta_\MA(\bar u, x, \bar a)$ defines in $\MA$ an isomorphism  $\Gamma \circ \Delta(\MA,\bar a)  \to \MA$ if $\bar a=\bar p^\ast$. We are going to impose restrictions on $\bar a$ such that $\theta_\MA(\bar u, x, \bar a)$ defines an isomorphism for all $\bar a\in D_{\sigma_\MA}$. Let us write the following formulas to say that
\begin{enumerate}
    \item[(i)] a morphism $\mu\colon U_{\Gamma\circ\Delta}(\MA,\bar a)\to A$ exists:
    $$
    \theta_1(\bar y, \bar z)\:=\:\forall \,\bar u \; (U_{\Gamma\circ\Delta}(\bar u,\bar y, \bar z)\:\longleftrightarrow\: \exists \,x \;\theta_\MA(\bar u, x, \bar y, \bar z));
    $$
    \item[(ii)] the morphism $\mu$ is well-defined and the morphism $\bar \mu$ is injective:
    \begin{multline*}
      \theta_2(\bar y, \bar z)\:=\:\forall \,\bar u_1 \:\forall \,\bar u_2 \:\forall \, x_1 \: \forall \, x_2 \; (U_{\Gamma\circ\Delta}(\bar u_1,\bar y, \bar z)\,\wedge\, U_{\Gamma\circ\Delta}(\bar u_2,\bar y, \bar z)\,\wedge\\
      \wedge \,\theta_\MA(\bar u_1, x_1, \bar y, \bar z)\,\wedge\, \theta_\MA(\bar u_2, x_2, \bar y,\bar z) \:\longrightarrow\\
      \longrightarrow\: (E_{\Gamma\circ\Delta}(\bar u_1,\bar u_2, \bar y,\bar z)\,\longleftrightarrow\, (x_1=x_2)\,)\,);  
    \end{multline*}
    \item[(iii)] the morphism $\mu$ is surjective:
    $$
    \theta_3(\bar y, \bar z)\:=\:\forall \,x\: \exists \,\bar u \;  \theta_\MA(\bar u, x, \bar y,\bar z);
    $$
    \item[(iv)] the morphism $\mu$ preserves constants form $L(\MA)$: for each $c\in L(\MA)$
    $$
    \theta_c(\bar y, \bar z)\:=\:\forall \, \bar u \; (c_{\Gamma\circ\Delta}(\bar u, \bar y,\bar z)\, \longrightarrow \, \theta_\MA(\bar u, c, \bar y, \bar z));
    $$
    \item[(v)] the morphism $\mu$ preserves functions form $L(\MA)$: for each $f\in L(\MA)$
    \begin{multline*}
      \theta_f(\bar y, \bar z)\:=\:\forall \, \bar u_0\:\forall \, \bar u_1\ldots\forall\, \bar u_{n_f}\:\forall \, x_0\:\forall \, x_1\ldots\forall\, x_{n_f} \;
      (\bigwedge\limits_{i=0}^{n_f} \theta_\MA(\bar u_i, x_i,\bar y, \bar z)\,\wedge\\ \wedge\, f_{\Gamma\circ\Delta}(\bar u_1,\ldots,\bar u_{n_f},\bar u_0, \bar y, \bar z) \, \longrightarrow\, (f(x_1,\ldots,x_{n_f})=x_0)\,);    
    \end{multline*}
    \item[(vi)] the morphism $\mu$ preserves predicates form $L(\MA)$: for each $P\in L(\MA)$
    \begin{multline*}
      \theta_P(\bar y, \bar z)\:=\:\forall \, \bar u_1\ldots\forall\, \bar u_{n_P}\:\forall \, x_1\ldots\forall\, x_{n_P} \;
      (\bigwedge\limits_{i=1}^{n_P} \theta_\MA(\bar u_i, x_i,\bar y, \bar z)\, \longrightarrow\\ 
      \longrightarrow\, (P_{\Gamma\circ\Delta}(\bar u_1,\ldots,\bar u_{n_P},\bar y, \bar z) \, \longleftrightarrow\, P(x_1,\ldots,x_{n_P})\,)\,).   
    \end{multline*}
\end{enumerate}
Now, to write down the formula $\sigma_\MA(\bar y, \bar z)$  from 1) it suffices to take conjunction of the formulas $\Theta_{\Delta}(\bar z)$, $\Theta_{\Gamma\circ\Delta}(\bar y,\bar z)$, $\theta_1(\bar y, \bar z)$, $\theta_2(\bar y, \bar z)$, $\theta_3(\bar y, \bar z)$, $\theta_c(\bar y, \bar z)$, $\theta_f(\bar y, \bar z)$, $\theta_P(\bar y, \bar z)$ for all $c,f,P\in L(\MA)$. This proves 1) and the lemma.
\end{proof}

\begin{cor}
Let $\MA$ and $\MB$ be bi-interpretable (with parameters) in each other with respect to interpretations  $\MA \simeq \Gamma(\MB,\bar p)$ and $\MB \simeq \Delta(\MA,\bar q)$ for some codes $\Gamma$ and $\Delta$ and tuples of parameters $\bar p, \bar q$. Then in the notation of Lemma~\ref{le:bi-interpret-6} the interpretation $\Gamma \circ \Delta$ together with the formula $\sigma_\MA$ regularly interprets $\MA$ in $\MA$, i.e., $\MA \simeq \Gamma \circ \Delta(\MA,\sigma_\MA)$. Furthermore, for any tuples $\bar a_1$ and $\bar a_2$ which satisfy $\sigma_\MA$ in $\MA$ the formula 
$\Theta_{\bar a_1,\bar a_2}(\bar u_1, \bar u_2)= \exists \,x\: (\theta_\MA(\bar u_1,x,\bar a_1) \wedge \theta_\MA(\bar u_2,x,\bar a_2))$ defines an isomorphism $\lambda_{\bar a_1,\bar a_2}\colon \Gamma \circ \Delta(\MA,\bar a_1) \to \Gamma \circ \Delta(\MA, \bar a_2)$ such that ${\lambda_{\bar a_1,\bar a_2}(\bar u_1/\sim_{(\Gamma\circ\Delta,\bar a_1)})} = \bar u_2/\sim_{(\Gamma\circ\Delta,\bar a_2)}$. A similar statement holds for the interpretation $\Delta\circ \Gamma$.
\end{cor}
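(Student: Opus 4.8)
The plan is to read off both assertions essentially directly from Lemma~\ref{le:bi-interpret-6}, the only genuine work being to unwind the phrase ``$\theta_\MA$ defines the coordinate map'' and to track the difference between a coordinate map and the induced isomorphism on quotients.

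First I would handle the regular interpretation $\MA \simeq \Gamma\circ\Delta(\MA,\sigma_\MA)$. By the definition of regular interpretability from Section~\ref{se:bi-int}, this means precisely that $\sigma_\MA(\MA)$ is a non-empty subset of the relevant Cartesian power of $A$ which is $0$-definable in $\MA$, and that $\MA \simeq \Gamma\circ\Delta(\MA,\bar a)$ for every $\bar a \in \sigma_\MA(\MA)$. Now $\sigma_\MA(\bar y,\bar z)$ is, by its construction in the proof of Lemma~\ref{le:bi-interpret-6}, a conjunction of $L(\MA)$-formulas ($\Theta_\Delta$, $\Theta_{\Gamma\circ\Delta}$, $\theta_1,\theta_2,\theta_3$, and the $\theta_c,\theta_f,\theta_P$) in which $\bar y,\bar z$ occur as free variables and which contain no parameters; hence $D_{\sigma_\MA}=\sigma_\MA(\MA)$ is $0$-definable. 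Non-emptiness is recorded in Lemma~\ref{le:bi-interpret-6}~1), since $\bar p^\ast=(\bar p_0,\bar q)\in D_{\sigma_\MA}$. Finally, for any fixed $\bar a\in D_{\sigma_\MA}$, Lemma~\ref{le:bi-interpret-6}~1) says that $\Gamma\circ\Delta(\MA,\bar a)$ is well-defined and that $\theta_\MA(\bar u,x,\bar a)$ defines in $\MA$ an isomorphism $\Gamma\circ\Delta(\MA,\bar a)\to\MA$; in particular $\MA\simeq\Gamma\circ\Delta(\MA,\bar a)$. This gives the first assertion, and the symmetric argument with $\sigma_\MB$, $\theta_\MB$, $\Delta\circ\Gamma$ gives the analogous statement for $\Delta\circ\Gamma$.

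Next I would treat the statement about $\Theta_{\bar a_1,\bar a_2}$. Fix $\bar a_1,\bar a_2\in\sigma_\MA(\MA)$. By Lemma~\ref{le:bi-interpret-6}~1), for $i=1,2$ the formula $\theta_\MA(\bar u,x,\bar a_i)$ defines in $\MA$ a coordinate map $\mu_i\colon U_{\Gamma\circ\Delta}(\MA,\bar a_i)\to A$ whose induced map $\bar\mu_i\colon\Gamma\circ\Delta(\MA,\bar a_i)\to\MA$ on $\sim_{(\Gamma\circ\Delta,\bar a_i)}$-classes is an isomorphism. Set $\lambda_{\bar a_1,\bar a_2}=\bar\mu_2^{-1}\circ\bar\mu_1$; as a composition of isomorphisms it is an isomorphism $\Gamma\circ\Delta(\MA,\bar a_1)\to\Gamma\circ\Delta(\MA,\bar a_2)$. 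It remains to check that $\Theta_{\bar a_1,\bar a_2}$ defines it. Unwinding the definitions: for $\bar u_i\in U_{\Gamma\circ\Delta}(\MA,\bar a_i)$, $\MA\models\Theta_{\bar a_1,\bar a_2}(\bar u_1,\bar u_2)$ holds iff there is $x\in A$ with $\mu_1(\bar u_1)=x=\mu_2(\bar u_2)$, i.e.\ iff $\bar\mu_1(\bar u_1/{\sim})=\bar\mu_2(\bar u_2/{\sim})$, i.e.\ iff $\bar u_2/{\sim}=\bar\mu_2^{-1}\bar\mu_1(\bar u_1/{\sim})=\lambda_{\bar a_1,\bar a_2}(\bar u_1/{\sim})$. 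Thus $\Theta_{\bar a_1,\bar a_2}$ defines the graph of $\lambda_{\bar a_1,\bar a_2}$ at the level of equivalence classes, and in particular $\lambda_{\bar a_1,\bar a_2}(\bar u_1/{\sim})=\bar u_2/{\sim}$ whenever $\Theta_{\bar a_1,\bar a_2}(\bar u_1,\bar u_2)$ holds; the fact that this relation is a well-defined bijective function on classes is exactly the bijectivity of $\bar\mu_1$ and $\bar\mu_2$. The statement for $\Delta\circ\Gamma$ follows by the symmetric argument with $\theta_\MB$, $\sigma_\MB$.

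The only delicate point — hence the ``main obstacle'', though it is a light one — is the bookkeeping between the coordinate map $\mu_i$ (a surjection of a definable subset of a Cartesian power of $A$ onto $A$) and the induced isomorphism $\bar\mu_i$ on the quotient, together with the observation that the existential quantifier over $x$ in $\Theta_{\bar a_1,\bar a_2}$ is exactly what realizes the composition $\bar\mu_2^{-1}\circ\bar\mu_1$ on classes. Once this is made explicit everything is formal, and no further use of the structure $\MB$ is needed beyond the non-emptiness of $D_{\sigma_\MA}$ already guaranteed by Lemma~\ref{le:bi-interpret-6}.
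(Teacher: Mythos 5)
Your proposal is correct and follows exactly the intended route: the paper states this corollary without proof as an immediate consequence of Lemma~\ref{le:bi-interpret-6}, and your argument just unwinds that lemma — $0$-definability and non-emptiness of $D_{\sigma_\MA}$ give the regular interpretation, and setting $\lambda_{\bar a_1,\bar a_2}=\bar\mu_2^{-1}\circ\bar\mu_1$ with the existential quantifier over $x$ realizing the composition on classes gives the second assertion. Your bookkeeping on coordinate maps versus induced quotient isomorphisms (well-definedness and bijectivity via the clauses $\theta_1,\theta_2,\theta_3$ of $\sigma_\MA$) is exactly what is needed, so nothing is missing.
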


Algebraic structures  $\MA$ and $\MB$ are called {\em $0$-bi-interpretable} or {\em absolulutely  bi-interpretable}\index{absolulutely  bi-interpretable} in each other if in the definition above the tuples of parameters  $\bar p$ and $\bar q$ are empty.  

Unfortunately, $0$-bi-interpretability is rather rare. Indeed, the following result puts quite strong restrictions on such structures.

\begin{lemma}\label{autiso:lem} \cite[Section~5.4, Ex.~8 (b)]{Hodges}
If $\MA$ and $\MB$ are $0$-bi-interpretable in each other then their groups of automorphisms are isomorphic.
\end{lemma}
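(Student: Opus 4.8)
The plan is to build the isomorphism $\mathrm{Aut}(\MA) \to \mathrm{Aut}(\MB)$ directly from the interpretation codes, using the fact that in the $0$-bi-interpretable situation everything in Definition~\ref{de:bi-inter} is parameter-free. First I would recall the basic functoriality of interpretation: if $\MA \simeq \Gamma(\MB)$ with coordinate map $\mu_\Gamma\colon U_\Gamma(B^n) \to A$, then any automorphism $\beta \in \mathrm{Aut}(\MB)$ acts coordinatewise on $B^n$, preserves the $0$-definable sets $U_\Gamma(B^n)$ and the $0$-definable relation $\sim_\Gamma$ (here I use Lemma~\ref{le:interpr_corol}, item~2, with no parameters), and hence induces an automorphism of the quotient structure $\Gamma(\MB)$, which via $\bar\mu_\Gamma$ transports to an automorphism $\Gamma_*(\beta) \in \mathrm{Aut}(\MA)$. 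Concretely $\Gamma_*(\beta)$ is characterized by $\Gamma_*(\beta)(\mu_\Gamma(\bar b)) = \mu_\Gamma(\beta^n(\bar b))$ for all $\bar b \in U_\Gamma(B^n)$. The same construction applied to $\Delta$ gives a map $\Delta_*\colon \mathrm{Aut}(\MA) \to \mathrm{Aut}(\MB)$. Both $\Gamma_*$ and $\Delta_*$ are clearly group homomorphisms (the coordinatewise action is functorial in $\beta$).

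The substance of the argument is to show $\Gamma_*$ and $\Delta_*$ are mutually inverse, and this is exactly where condition~2) of Definition~\ref{de:bi-inter} enters. Consider the composition $\Delta_* \circ \Gamma_* \colon \mathrm{Aut}(\MB) \to \mathrm{Aut}(\MB)$. Unwinding the two coordinatewise actions, $(\Delta_* \circ \Gamma_*)(\beta)$ is the automorphism of $\MB$ obtained by pushing $\beta$ through the self-interpretation $\Delta \circ \Gamma(\MB) \simeq \MB$; explicitly it sends $\mu_{\Delta\circ\Gamma}(\bar b) \mapsto \mu_{\Delta\circ\Gamma}(\beta^{n^2}(\bar b))$ where $\mu_{\Delta\circ\Gamma} = \mu_\Delta \circ \mu_\Gamma$ is the coordinate map $B_{\Delta\circ\Gamma} \to B$. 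Now the key point: since $\MA$ and $\MB$ are $0$-bi-interpretable, the formula $\theta_\MB(\bar v, x)$ (no parameters, by Remark~\ref{re:1}c we may take $\mu_\MB = \mu_{\Delta\circ\Gamma}$) defines the coordinate map $\mu_{\Delta\circ\Gamma}$ \emph{inside} $\MB$. Because $\beta$ is an automorphism of $\MB$ it preserves every $0$-definable relation, in particular the graph of $\mu_{\Delta\circ\Gamma}$: $\MB \models \theta_\MB(\bar b, c)$ iff $\MB \models \theta_\MB(\beta^{n^2}(\bar b), \beta(c))$. Combined with the displayed formula for $(\Delta_*\circ\Gamma_*)(\beta)$, this forces $(\Delta_*\circ\Gamma_*)(\beta)(c) = \beta(c)$ for every $c \in B$, i.e.\ $\Delta_* \circ \Gamma_* = \mathrm{id}_{\mathrm{Aut}(\MB)}$. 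Symmetrically, using $\theta_\MA$ and the $0$-definability of $\mu_{\Gamma\circ\Delta} = \mu_\MA$ in $\MA$, one gets $\Gamma_* \circ \Delta_* = \mathrm{id}_{\mathrm{Aut}(\MA)}$. Hence $\Gamma_*$ is a group isomorphism $\mathrm{Aut}(\MB) \to \mathrm{Aut}(\MA)$.

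The step I expect to be the main obstacle is the bookkeeping in the previous paragraph: correctly tracking how the coordinate maps $\mu_\Gamma$, $\mu_\Delta$, and their composition interact with the coordinatewise automorphism actions through the two layers of the self-interpretation, and verifying that the isomorphism $\theta_\MB$-definability genuinely pins down $(\Delta_*\circ\Gamma_*)(\beta)$ to be $\beta$ rather than merely conjugate to it. This is where one must be careful to use the strengthening in Remark~\ref{re:1}c), so that $\theta_\MB$ defines the honest composition $\mu_\Delta \circ \mu_\Gamma$ and not just \emph{some} coordinate map differing from it by an automorphism; otherwise one only recovers $\Delta_* \circ \Gamma_* = \mathrm{conj}_\alpha$ for a fixed $\alpha$, which is still enough to conclude that $\Gamma_*$ is a bijection (hence an isomorphism) but makes the write-up less clean. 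Everything else — that coordinatewise actions preserve $0$-definable sets and quotients, and that the induced maps are homomorphisms — is routine from Lemma~\ref{le:interpr_corol} applied with empty parameter tuples.
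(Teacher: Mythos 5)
Your proof is correct. The paper offers no proof of this lemma at all --- it is quoted from Hodges, Section 5.4, Exercise 8(b) --- and your argument is the standard solution to that exercise: transport automorphisms coordinatewise through the two parameter-free interpretations, and use the $0$-definability of the coordinate maps of the self-interpretations (via $\theta_\MA$ and $\theta_\MB$) to show $\Gamma_*$ and $\Delta_*$ are mutually inverse; your fallback observation that without simultaneous compatibility one only gets the compositions equal to conjugation by fixed automorphisms, which still forces bijectivity, correctly handles the one genuine subtlety. The only blemish is notational: the dimension of $\Delta\circ\Gamma$ is $\dim\Gamma\cdot\dim\Delta$, so the exponent in $\beta^{n^2}$ should be $\dim\Gamma\cdot\dim\Delta$ rather than $n^2$.
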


Fortunately, there is a notion of regular bi-interpretability, which is less restrictive, occurs more often, and which enjoys many properties of $0$-bi-in\-ter\-pre\-ta\-bi\-li\-ty.

\begin{definition} \label{de:regular-bi-int}
Two algebraic structures  $\MA$ and $\MB$ are called {\em regularly  bi-in\-ter\-pre\-table} in each other  if  the following conditions hold:
\begin{itemize}
\item [1)] $\MA$ and $\MB$ are regularly interpretable  in each other, so $\MA \simeq \Gamma(\MB,\phi)$ and $\MB \simeq \Delta(\MA,\psi)$ for some codes $\Gamma$ and $\Delta$ and the corresponding formulas $\phi, \psi$ (without  parameters). By transitivity  $\MA$, as well as  $\MB$,  is    regularly interpretable  in itself, so $\MA \simeq \Gamma \circ \Delta(\MA,\phi^\ast)$ and $\MB \simeq \Delta \circ \Gamma(\MB,\psi^\ast)$, where $\circ$ denotes composition of interpretations and  $\phi^\ast=\phi_\Delta\,\wedge\,\psi$, $\psi^\ast=\psi_\Gamma\,\wedge\,\phi$ (see Lemma~\ref{le:int-transitivity}).
\item [2)]  There is a formula $\theta_\MA (\bar u, x, \bar s)$ in the language of $\MA$ such that for every tuple $\bar p^\ast$ satisfying $\phi^\ast(\bar s)$ in $\MA$ the formula $\theta_\MA (\bar u, x,\bar p^\ast)$ defines in $\MA$ an isomorphism $\bar \mu_{\MA,\bar p^\ast} \colon \Gamma \circ \Delta(\MA,\bar p^\ast) \to \MA$  and there is  a formula $\theta_\MB (\bar v, x, \bar t)$ in the language of $\MB$ such that for every tuple $\bar q^\ast$ satisfying $\psi^\ast(\bar t)$ in $\MB$ the formula $\theta_\MB (\bar v, x, \bar q^\ast)$ defines in $\MB$ an isomorphism $\bar \mu_{\MB,\bar q^\ast} \colon  \Delta\circ \Gamma(\MB,\bar q^\ast)  \to \MB$.
\end{itemize}
Here $|\bar s|=\dim_{par}\Gamma\circ\Delta=\dim_{par}\Gamma\cdot\dim \Delta+\dim_{par}\Delta$, $|\bar t|=\dim_{par}\Delta\circ\Gamma=\dim_{par}\Delta\cdot\dim \Gamma+\dim_{par}\Gamma$ and $|\bar u|=|\bar v|=\dim\Delta\circ\Gamma=\dim\Gamma\circ\Delta=\dim\Gamma\cdot\dim\Delta$.
\end{definition}

Suppose that in the notation above for any tuple $\bar q\in \psi(\MA)$ the coordinate map $\mu_{\Delta,\bar q}\colon B_{\Delta,\bar q} \to B$ is fixed, and then for any tuples $\bar p\in \phi (\MB)$ and $\bar p_0\in \mu^{-1}_{\Delta,\bar q}(\bar p)$ the coordinate map $\mu_{\Gamma,\bar p^\ast}\colon A_{\Gamma,\bar p}\to A$ that is compatible with the formula $\theta_\MA(\bar u, x, \bar p^\ast)$ is also fixed, where $\bar p^\ast=(\bar p_0,\bar q)$. In this case we will say that the coordinate maps $\mu_{\Gamma,\bar p^\ast}$ (and isomorphisms $\bar\mu_{\Gamma, \bar p^\ast}$) are {\em uniformly compatible} with the formula $\theta_\MA$. In our applications in most cases one has precisely that the isomorphisms $\bar\mu_{\Gamma, \bar p^\ast}$ are uniformly compatible with the formula $\theta_\MA$.


\begin{lemma} \label{le:bi-in} 
Let $\MA$ and $\MB$ be structures bi-interpretable with parameters with respect to interpretations $\MA \simeq_{\mu_\Gamma} \Gamma(\MB,\bar p)$ and $\MB \simeq_{\mu_\Delta} \Delta(\MA,\bar q)$ for some codes $\Gamma$ and $\Delta$ and tuples of parameters $\bar p, \bar q$. Suppose that the coordinate map $\mu_\Gamma$ is compatible with the formula $\theta_\MA(\bar u, x,\bar p^\ast)$ defining an isomorphism $\Gamma\circ\Delta(\MA,\bar p^\ast)\to\MA$, $\bar p^\ast=(\bar p_0,\bar q)$, $\bar p_0\in \mu^{-1}_\Delta(\bar p)$. Then for any subset  $S \subseteq A^m$ one has that  $S$ is definable with parameters in $\MA$ if and only if $\mu_\Gamma^{-1}(S)$ is definable with parameters in $\MB$. 

Moreover, if $\MA$ and $\MB$ are absolutely bi-interpretable in each other then $S \subseteq A^m$ is definable in $\MA$ without parameters if and only if $\mu_\Gamma^{-1}(S)$ is definable in $\MB$ without parameters.     
\end{lemma}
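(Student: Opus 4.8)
The plan is to prove the two implications of the equivalence separately, transferring definability back and forth through the two interpretations via part~2) of Lemma~\ref{le:interpr_corol}, and then to contract the composite interpretation $\Gamma\circ\Delta$ of $\MA$ in itself back down to $\MA$ using the formula $\theta_\MA$.

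The forward direction is immediate: if $S=\psi(\MA,\bar a)$ is definable with parameters in $\MA$, then applying part~2) of Lemma~\ref{le:interpr_corol} to $\MA\simeq_{\mu_\Gamma}\Gamma(\MB,\bar p)$ shows that $\mu_\Gamma^{-1}(S)\subseteq B^{mn}$ (with $n=\dim\Gamma$) is defined in $\MB$ by $\psi_\Gamma(\bar x_1,\ldots,\bar x_m,\bar b,\bar p)$ for any $\bar b\in\mu_\Gamma^{-1}(\bar a)$, hence is definable with parameters there.

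For the converse I would start from a formula defining $\mu_\Gamma^{-1}(S)\subseteq B^{mn}$ with parameters in $\MB$ and apply part~2) of Lemma~\ref{le:interpr_corol} once more, now to $\MB\simeq_{\mu_\Delta}\Delta(\MA,\bar q)$, obtaining a formula $\Psi(\bar u_1,\ldots,\bar u_m,\bar r)$ (with $|\bar u_i|=nn'=\dim(\Gamma\circ\Delta)$, $n'=\dim\Delta$) that defines $\mu_\Delta^{-1}\!\big(\mu_\Gamma^{-1}(S)\big)$ with parameters in $\MA$. The point to verify here is that, by the way composition of codes and coordinate maps is set up in Lemma~\ref{le:int-transitivity}, one has $\mu_{\Gamma\circ\Delta}=\mu_{\Gamma,\bar p}\circ\mu_{\Delta,\bar q}$, so under the natural identification of coordinates $\mu_\Delta^{-1}(\mu_\Gamma^{-1}(S))=\mu_{\Gamma\circ\Delta}^{-1}(S)$ and $\Psi$ defines the full $\mu_{\Gamma\circ\Delta}$-preimage of $S$ in $\MA$. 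Now the compatibility hypothesis enters: $\theta_\MA(\bar u,x,\bar p^\ast)$ defines in $\MA$ the graph of the surjective coordinate map $\mu_{\Gamma\circ\Delta}\colon U_{\Gamma\circ\Delta}(\MA,\bar p^\ast)\to A$, so the formula
\[
\exists\bar u_1\ldots\exists\bar u_m\Big(\Psi(\bar u_1,\ldots,\bar u_m,\bar r)\ \wedge\ \textstyle\bigwedge_{i=1}^m\theta_\MA(\bar u_i,x_i,\bar p^\ast)\Big)
\]
defines $S$ in $\MA$ with parameters $\bar r,\bar p^\ast$: for $\bar a\in S$ one picks, for each $i$, a preimage $\bar u_i$ of $a_i$ under $\mu_{\Gamma\circ\Delta}$ (possible by surjectivity), whereupon $(\bar u_1,\ldots,\bar u_m)\in\mu_{\Gamma\circ\Delta}^{-1}(S)$; conversely any witnesses $\bar u_i$ satisfy $a_i=\mu_{\Gamma\circ\Delta}(\bar u_i)$ and $(\bar u_1,\ldots,\bar u_m)\in\mu_{\Gamma\circ\Delta}^{-1}(S)$, forcing $\bar a\in S$. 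The absolute (``moreover'') statement is the same argument with $\bar p,\bar q,\bar p^\ast,\bar r$ all empty and the parameter-free clauses of Lemma~\ref{le:interpr_corol}.

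The only genuine subtlety — and the step I would be most careful about — is the identification $\mu_\Delta^{-1}(\mu_\Gamma^{-1}(S))=\mu_{\Gamma\circ\Delta}^{-1}(S)$ together with the requirement that $\theta_\MA$ defines the graph of exactly this composite map rather than of some isomorphism $\Gamma\circ\Delta(\MA,\bar p^\ast)\to\MA$ differing from it by an automorphism of $\MA$. This is precisely what the hypothesis ``$\mu_\Gamma$ is compatible with $\theta_\MA$'' (in the sense of Remark~\ref{re:1}(c)) is there to supply; everything else is a routine unwinding of the translation $\psi\mapsto\psi_\Gamma$.
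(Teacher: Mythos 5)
Your proof is correct and follows essentially the same route as the paper's: translate the defining formula of $\mu_\Gamma^{-1}(S)$ through $\Delta$ to define $\mu_\Delta^{-1}(\mu_\Gamma^{-1}(S))$ in $\MA$, then contract back to $S$ by existentially quantifying over preimages via $\theta_\MA(\bar u_i,x_i,\bar p^\ast)$ — this is exactly the paper's formula $\sigma$. Your explicit attention to the identification $\mu_\Delta^{-1}(\mu_\Gamma^{-1}(S))=\mu_{\Gamma\circ\Delta}^{-1}(S)$ and the role of the compatibility hypothesis is precisely the point the paper relies on (implicitly) as well.
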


\begin{proof}
In the proof we use notation from Definition \ref{de:bi-inter} and Remark \ref{re:1}. 

Suppose that $S$ is definable with parameters in $\MA$ then by Lemma~\ref{le:interpr_corol} it is definable with parameters in $\MB$. In particularly, if $\bar p=\emptyset$ and $S$ is definable without parameters then $\mu^{-1}_\Gamma(S)$ is definable without parameters.

To show the converse suppose that $\mu_\Gamma^{-1}(S)$ is definable in $\MB$ by a formula $\Sigma(\bar z,\bar r)$ with parameters $\bar r$ from $\MB$, $|\bar z|=m\cdot\dim\Gamma$. By Lemma~\ref{le:interpr_corol}  
$$
\MB \models \Sigma(\bar b, \bar r) \Longleftrightarrow \MA \models \Sigma_\Delta(\mu_\Delta^{-1}(\bar b),\bar r_0,\bar q), \quad \bar r_0\in\mu_\Delta^{-1}(\bar r).
$$
Here by $\mu_\Delta^{-1}(\bar b)$ in the formula above we mean any element from the set \linebreak $\mu_\Delta^{-1}(\bar b)$. 
This shows that the set $S_{\bar q} = \mu_\Delta^{-1}(\mu_\Gamma^{-1}(S))$, $S_{\bar q}\subseteq A^{m\cdot\dim \Gamma\cdot\dim \Delta}$, is definable in $\MA$ by the formula $\psi_\Delta$ and parameters $(\bar r_0,\bar q)$. Hence a tuple $\bar a$ of elements from $\MA$ belongs to $S$ if and only if there is a tuple $\bar c$ in $S_{\bar q}$ such that $\bar a =\mu_\Gamma \circ \mu_\Delta(\bar c)$. Since the set $S_{\bar q}$ and the map $\mu_\Gamma \circ \mu_\Delta$ are definable in $\MA$ it follows that the set $S$ is also definable with parameters in $\MA$ by the formula
$$
\sigma(x_1,\ldots,x_m, \bar p_0, \bar r_0, \bar q)\;=\; \exists \,\bar c_1\ldots \exists \,\bar c_m\; (\bigwedge\limits_{i=1}^m \theta_\MA(\bar c_i, x_i, \bar p^\ast) \:
\wedge \:\Sigma_\Delta(\bar c_1,\ldots,\bar c_m, \bar r_0,\bar q))
$$
with parameters $(\bar p_0, \bar r_0, \bar q)$. Here $|\bar c_i|=\dim\Gamma\circ\Delta$.

At the same time, if $\bar p=\bar q=\bar r=\emptyset$ then $S$ is definable without parameters. This proves the lemma. 
\end{proof}

\begin{lemma} \label{le:bi-in-0-definable} 
Let two structures $\MA$ and $\MB$ are regularly bi-interpretable in each other, so $\MA \simeq \Gamma(\MB,\phi)$ and $\MB \simeq \Delta(\MA,\psi)$, while $\theta_\MA(\bar u, x, \bar s)$ is an $L(\MA)$-formula, such that $\theta_\MA(\bar u, x, \bar p^\ast)$ defines an isomorphisms $\Gamma\circ\Delta(\MA,\bar p^\ast)\to\MA$ for all tuples $\bar p^\ast\in \phi^\ast(\MA)$, $\phi^\ast=\phi_\Delta\,\wedge\,\psi$. Suppose also that the coordinate maps $\mu_{\Gamma,\bar p^\ast}$ are uniformly compatible with the formula $\theta_\MA$.
Take an arbitrary subset $S \subseteq A^m$, such that for all tuples $\bar p^\ast\in \phi^\ast(\MA)$ the preimages $\mu^{-1}_{\Gamma,\bar p^\ast}(S)$ coincide, denote these preimages by $\mu^{-1}_\Gamma(S)$. Then the set $S$ is $0$-definable in $\MA$ if and only if $\mu^{-1}_\Gamma(S)$ is $0$-definable in $\MB$.
\end{lemma}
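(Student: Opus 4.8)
The plan is to mimic the proof of Lemma~\ref{le:bi-in}, replacing every appeal to a fixed parameter tuple by a quantification over the $0$-definable parameter sets $\phi(\MB)$, $\psi(\MA)$ and $\phi^\ast(\MA)$, exactly as one passes from Lemma~\ref{le:interpr_corol} to Lemma~\ref{le:interpr_corol_2}. The tools are Lemma~\ref{le:interpr_corol}(2) (preimages of definable sets under a coordinate map are definable, via the $\Gamma$- or $\Delta$-translation), Lemma~\ref{le:int-transitivity}(2) (which says $\bar p^\ast\in\phi^\ast(\MA)$ iff $\bar p^\ast=(\bar p_0,\bar q)$ with $\bar q\in\psi(\MA)$, some $\bar p\in\phi(\MB)$ and $\bar p_0\in\mu_{\Delta,\bar q}^{-1}(\bar p)$), and the uniform compatibility hypothesis, which gives $\theta_\MA(\bar u,x,\bar p^\ast)$ defining the composed coordinate map $\mu_{\Gamma\circ\Delta,\bar p^\ast}=\mu_{\Gamma,\bar p}\circ\mu_{\Delta,\bar q}$ for every $\bar p^\ast\in\phi^\ast(\MA)$. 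Two preliminary remarks: first, $\psi(\MA)\ne\emptyset$ and $\phi(\MB)\ne\emptyset$ (being the parameter sets of the interpretations $\MB\simeq\Delta(\MA,\psi)$ and $\MA\simeq\Gamma(\MB,\phi)$), hence $\phi^\ast(\MA)\ne\emptyset$; second, the hypothesis that all $\mu_{\Gamma,\bar p^\ast}^{-1}(S)$ coincide is exactly what makes $\mu_\Gamma^{-1}(S)$ a well-defined subset of $B^{m\cdot\dim\Gamma}$, independent both of $\bar p\in\phi(\MB)$ and of the compatible choice of $\mu_{\Gamma,\bar p}$ (the latter because $0$-definable $S$ is automorphism-invariant).

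For the direction ``$S$ $0$-definable in $\MA$ $\Rightarrow$ $\mu_\Gamma^{-1}(S)$ $0$-definable in $\MB$'': if $\psi_S(\bar x)$ is a parameter-free $L(\MA)$-formula defining $S$, then for each $\bar p\in\phi(\MB)$ Lemma~\ref{le:interpr_corol}(2) applied to $\MA\simeq_{\mu_{\Gamma,\bar p}}\Gamma(\MB,\bar p)$ shows that $(\psi_S)_\Gamma(\bar x,\bar p)$ defines $\mu_{\Gamma,\bar p}^{-1}(S)=\mu_\Gamma^{-1}(S)$ in $\MB$. As this is the same set for every $\bar p\in\phi(\MB)$, the parameter-free formula $\forall\,\bar p\,(\phi(\bar p)\rightarrow(\psi_S)_\Gamma(\bar x,\bar p))$ defines $\mu_\Gamma^{-1}(S)$ in $\MB$ (a tuple not in $\mu_\Gamma^{-1}(S)$ fails $(\psi_S)_\Gamma(\cdot,\bar p)$ for every admissible $\bar p$, and $\phi(\MB)\ne\emptyset$).

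For the converse, suppose $\mu_\Gamma^{-1}(S)\subseteq B^{m\cdot\dim\Gamma}$ is defined in $\MB$ by a parameter-free formula $\Sigma(\bar z)$. For each $\bar q\in\psi(\MA)$, Lemma~\ref{le:interpr_corol}(2) applied to $\MB\simeq_{\mu_{\Delta,\bar q}}\Delta(\MA,\bar q)$ shows that $\Sigma_\Delta(\bar w,\bar q)$ defines $\mu_{\Delta,\bar q}^{-1}(\mu_\Gamma^{-1}(S))$ in $\MA$, and by the factorization $\mu_{\Gamma\circ\Delta,\bar p^\ast}=\mu_{\Gamma,\bar p}\circ\mu_{\Delta,\bar q}$ together with the definition of $\mu_\Gamma^{-1}(S)$ this set equals $\mu_{\Gamma\circ\Delta,\bar p^\ast}^{-1}(S)$ for every $\bar p^\ast=(\bar p_0,\bar q)\in\phi^\ast(\MA)$. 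Since for such $\bar p^\ast$ the formula $\theta_\MA(\bar u,x,\bar p^\ast)$ defines the coordinate map $\mu_{\Gamma\circ\Delta,\bar p^\ast}$, whose $S$-preimage is precisely that set, the parameter-free formula
\[
\sigma_S(\bar x)\;=\;\exists\,\bar s\,\Bigl(\phi^\ast(\bar s)\;\wedge\;\exists\,\bar w_1\cdots\exists\,\bar w_m\,\bigl(\bigwedge_{i=1}^{m}\theta_\MA(\bar w_i,x_i,\bar s)\;\wedge\;\Sigma_\Delta(\bar w_1,\ldots,\bar w_m,\bar s')\bigr)\Bigr)
\]
defines $S$ in $\MA$; here $\bar s'$ is the sub-tuple of $\bar s$ occupying the positions of $\bar q$ in $\bar p^\ast=(\bar p_0,\bar q)$, so that $\bar s'\in\psi(\MA)$ whenever $\bar s\in\phi^\ast(\MA)$ by Lemma~\ref{le:int-transitivity}(2). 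Indeed, if $\bar a\in S$ one takes any $\bar p^\ast\in\phi^\ast(\MA)$ and preimages $\bar w_i\in\mu_{\Gamma\circ\Delta,\bar p^\ast}^{-1}(a_i)$; and if $\bar a\notin S$, then for every $\bar p^\ast\in\phi^\ast(\MA)$ and every $\bar w_i$ with $\theta_\MA(\bar w_i,a_i,\bar p^\ast)$ one has $\mu_{\Gamma\circ\Delta,\bar p^\ast}(\bar w_i)=a_i$, whence $(\bar w_1,\ldots,\bar w_m)\notin\mu_{\Gamma\circ\Delta,\bar p^\ast}^{-1}(S)$ and $\Sigma_\Delta(\bar w_1,\ldots,\bar w_m,\bar s')$ fails.

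The step I expect to be the main obstacle is the bookkeeping linking the three parameter sets: one must verify that the hypothesis, stated for tuples $\bar p^\ast\in\phi^\ast(\MA)$, really does single out $\mu_\Gamma^{-1}(S)$ as one fixed subset of a power of $B$, and that pulling this set back along $\mu_{\Delta,\bar q}$ reproduces exactly the preimage $\mu_{\Gamma\circ\Delta,\bar p^\ast}^{-1}(S)$ appearing in that hypothesis. This rests on Lemma~\ref{le:int-transitivity}(2), on the factorization $\mu_{\Gamma\circ\Delta,\bar p^\ast}=\mu_{\Gamma,\bar p}\circ\mu_{\Delta,\bar q}$, and on the uniform compatibility of the coordinate maps with $\theta_\MA$; once this identification is secured, the remainder is the same parameter-elimination device (quantify over $\phi$, $\psi$, $\phi^\ast$, using their nonemptiness) already used for Lemmas~\ref{le:interpr_corol_2} and~\ref{le:bi-in}.
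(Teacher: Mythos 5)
Your proposal is correct and follows essentially the same route as the paper's proof: forward direction by the $\Gamma$-translation with a parameter-free quantification over $\phi(\MB)$, converse by pulling $\Sigma$ back through $\Delta$ and using $\theta_\MA$ (via uniform compatibility and Lemma~\ref{le:int-transitivity}) to define $S$, with quantification over $\phi^\ast(\MA)$. The only inessential differences are cosmetic: you quantify universally over $\phi$ and existentially over $\phi^\ast$ where the paper does the opposite (both work since the relevant sets are independent of the admissible parameters and the parameter sets are nonempty), and you verify the converse directly rather than citing the formula $\sigma$ from Lemma~\ref{le:bi-in}.
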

 
\begin{proof}
If $S \subseteq A^m$ is $0$-definable, i.e., $S=\varphi(A^m)$ for an $L(\MA)$-formula $\varphi$, then by Lemma~\ref{le:interpr_corol} $\mu^{-1}_\Gamma(S)=\Sigma(\MB)$, where
$$
\Sigma (\bar z) = \exists\, \bar y\;(\phi(\bar y) \,\wedge \,\varphi_\Gamma(\bar z, \bar y)),
$$
and $|\bar z|=m\cdot\dim \Gamma$. Thus $\mu^{-1}_\Gamma(S)$ is $0$-definable.

To prove converse statement suppose that $\mu^{-1}_\Gamma(S)$ is definable by an $L(\MB)$-formula $\Sigma (\bar z)$ without parameters with $|\bar z|=m\cdot\dim \Gamma$. And consider the $L(\MA)$-formula
\begin{multline*}
\varphi(x_1,\ldots,x_m)\;=\; \forall\,\bar q\:\forall\,\bar p_0 \;(\psi(\bar q)\,\wedge\,\phi_\Delta(\bar p_0,\bar q)\,\longrightarrow\\ \longrightarrow\,\exists \,\bar c_1\ldots \exists \,\bar c_m\; (\bigwedge\limits_{i=1}^m \theta_\MA(\bar c_i, x_i, \bar p_0,\bar q) \:
\wedge \:\Sigma_\Delta(\bar c_1,\ldots,\bar c_m, \bar q)\,)\,),
\end{multline*}
where $|\bar q|=\dim_{par}\Delta$, $|\bar p_0|=\dim_{par}\Gamma\cdot\dim\Delta$, $|\bar c_i|=\dim\Gamma\circ\Delta$. We state that $S=\varphi(\MA)$. Indeed, let $\bar a$ be a tuple form $S$. For any tuple $\bar p^\ast\in \phi^\ast(\MA)$ one gets that the set $\mu^{-1}_{\Gamma, \bar p^\ast}(S)$ is defined by $\Sigma (\bar z)$, therefore, by Lemma~\ref{le:bi-in} $\MA\models\sigma(\bar a, \bar p^\ast)$, where $\sigma$ is the formula from the proof of Lemma~\ref{le:bi-in} with $\bar r_0=\emptyset$. Thus $\bar a$ is in the set $\varphi(\MA)$. Conversely, let $\bar a\in\varphi(\MA)$. Chose a tuple $\bar p^\ast=(\bar p_0,\bar q)\in \phi^\ast(\MA)$. We obtain that $\MA\models\sigma(\bar a, \bar p^\ast)$, and by Lemma~\ref{le:bi-in}, $\bar a$ is in $S$, as claimed.
 \end{proof}

The following is a particular case of Lemma~\ref{le:bi-in-0-definable}.

\begin{cor} \label{co:bi-in-0-definable} 
Let two structures $\MA$ and $\MB$ are regularly bi-interpretable  in each other  in such a way that  $\MB \simeq \Delta(\MA,\psi)$ and $\MA  \simeq \Gamma(\MB)$ is an absolute interpretation. 
 Assume that $\mu_\Gamma$ is the coordinate map of the interpretation 
 $\MA  \simeq \Gamma(\MB)$ such that it is compatible with the formula $\theta_\MA(\bar u, x, \bar q)$ for all $\bar q\in \psi(\MA)$. For any set $S \subseteq A^m$ if the set $\mu_{\Gamma}^{-1}(S)$ is  $0$-definable in $\MB$, then the set $S$ is $0$-definable in $\MA$.
 \end{cor}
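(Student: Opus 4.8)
The plan is to recognise this as exactly the special case of Lemma~\ref{le:bi-in-0-definable} in which $\dim_{par}\Gamma=0$, and then to verify that the (apparently extra) hypotheses of that lemma are automatically satisfied under the hypotheses of the corollary. First I would rewrite ``$\MA\simeq\Gamma(\MB)$ is an absolute interpretation'' as ``$\MA\simeq\Gamma(\MB,\phi)$'' with $\phi$ the trivial, variable-free, parameter-free formula. Then, since in this situation $|\bar s|=\dim_{par}\Gamma\cdot\dim\Delta+\dim_{par}\Delta=\dim_{par}\Delta$, the formula $\phi^\ast=\phi_\Delta\wedge\psi$ is, up to the routine bookkeeping of variable tuples, just $\psi$; in particular $\phi^\ast(\MA)=\psi(\MA)$, and every $\bar p^\ast\in\phi^\ast(\MA)$ has the form $\bar p^\ast=(\bar p_0,\bar q)$ with $\bar p_0$ the empty tuple and $\bar q=\bar p^\ast\in\psi(\MA)$. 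This degeneration of $\bar p^\ast$ to $\bar q$ and of $\phi^\ast$ to $\psi$ is the one place that takes a little care, and it is the step I expect to be the main obstacle; everything after it is formal.

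Next I would observe that, because $\Gamma$ carries no parameters, the coordinate map $\mu_\Gamma\colon A_\Gamma\to A$ is a single fixed map, independent of any tuple $\bar p^\ast$. Hence the preimages $\mu^{-1}_{\Gamma,\bar p^\ast}(S)$, for $\bar p^\ast\in\phi^\ast(\MA)$, trivially all coincide, and equal the single set $\mu_\Gamma^{-1}(S)\subseteq B^{m\cdot\dim\Gamma}$ of the corollary — so the ``coincidence of preimages'' assumption of Lemma~\ref{le:bi-in-0-definable} needs no proof here. Similarly, the corollary's hypothesis that $\mu_\Gamma$ is compatible with $\theta_\MA(\bar u,x,\bar q)$ for every $\bar q\in\psi(\MA)$ says precisely that the composites $\mu_\Gamma\circ\mu_{\Delta,\bar q}$ are the coordinate maps $\mu_{\Gamma\circ\Delta}(\cdot,\bar q)$ defined by $\theta_\MA$; this is exactly the ``uniform compatibility of the maps $\mu_{\Gamma,\bar p^\ast}$ with $\theta_\MA$'' in the present degenerate case. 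With these identifications in place, Lemma~\ref{le:bi-in-0-definable} applies, and its nontrivial implication — $\mu_\Gamma^{-1}(S)$ $0$-definable in $\MB$ $\Longrightarrow$ $S$ $0$-definable in $\MA$ — is the assertion of the corollary (we do not even need the easy converse direction of the lemma).

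If one prefers an explicit definition of $S$ rather than a citation, I would instead follow the proof of Lemma~\ref{le:bi-in-0-definable} verbatim in this case: writing $\mu_\Gamma^{-1}(S)=\Sigma(\MB)$ for a parameter-free $L(\MB)$-formula $\Sigma(\bar z)$ with $|\bar z|=m\cdot\dim\Gamma$, the set $S$ is defined without parameters in $\MA$ by
$$
\varphi(x_1,\ldots,x_m)\;=\;\forall\,\bar q\;\bigl(\psi(\bar q)\,\longrightarrow\,\exists\,\bar c_1\ldots\exists\,\bar c_m\;(\bigwedge\limits_{i=1}^{m}\theta_\MA(\bar c_i,x_i,\bar q)\,\wedge\,\Sigma_\Delta(\bar c_1,\ldots,\bar c_m,\bar q))\bigr),
$$
where $|\bar c_i|=\dim\Gamma\circ\Delta$, using that $\psi(\MA)\neq\emptyset$ (as $\MB\simeq\Delta(\MA,\psi)$ is a regular interpretation), that by Lemma~\ref{le:interpr_corol} the translation $\Sigma_\Delta(\cdot,\bar q)$ defines $\mu_{\Delta,\bar q}^{-1}(\mu_\Gamma^{-1}(S))$ in $\MA$, and that $\theta_\MA(\bar c_i,x_i,\bar q)$ defines the composite coordinate map $\mu_\Gamma\circ\mu_{\Delta,\bar q}$; the equality $\varphi(\MA)=S$ then follows from the same surjectivity argument as in Lemma~\ref{le:bi-in-0-definable}.
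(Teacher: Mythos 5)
Your proposal is correct and matches the paper exactly: the paper proves Corollary~\ref{co:bi-in-0-definable} precisely by observing that it is the particular case of Lemma~\ref{le:bi-in-0-definable} in which $\Gamma$ carries no parameters, so that $\phi^\ast$ degenerates to $\psi$, the preimages $\mu^{-1}_{\Gamma,\bar p^\ast}(S)$ trivially coincide, and the compatibility hypothesis becomes the uniform compatibility required by the lemma. Your explicit formula $\varphi$ is just the lemma's defining formula with $\bar p_0$ empty, so nothing further is needed.
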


\begin{remark}
The type of regular bi-interpretability of $\MA$ and $\MB$ that is described in Corollary \ref{co:bi-in-0-definable} will be useful in the sequel. We sometimes refer to it as \emph{half-absolute bi-interpretability} of $\MA$ in  $\MB$. 
\end{remark}

\section{Weak second order logic}\label{se:weak-second-order} 
\label{se:weak-second-order-intro}

In this section we describe the weak second order logic (WSOL) over an arbitrary structure $\MA$. Informally, this logic allows one to use three different types of variables, one is for elements in $\MA$, another~--- for   finite subsets of $\MA$,  and  the third one~--- for natural numbers. Also,  one is allowed to use all the functions and predicates from the language of $\MA$, the ``membership'' predicate $\in$ for finite subsets of $\MA$, and addition and multiplication for natural numbers. Otherwise, the formulas are built like in the usual first-order languages. This logic is much more powerful than the standard first-order logic in $\MA$, indeed, everything which is describable in finitary set theory over $\MA$ can be described in the weak second order logic over $\MA$. 
 
Below, we describe WSOL  over $\MA$ in three different but equivalent ways, each one will be useful for different purposes. Firstly, we introduce two larger structures, containing $\MA$,  the superstructure $HF(\MA)$ of hereditary finite sets over $\MA$, and the list superstructures  $S(\N,\MA)$.  We show that they are bi-interpretable in each other, so they have essentially the same expressive power. The crucial result here is that the expressive power of the first-order theory of $HF(\MA)$ (or $S(\N,\MA)$) is the same as the power of WSOL over $\MA$. Secondly, we describe a  particular fragment $L_{WSOL}$ of the logic $L_{\omega_1,\omega}$   that is equivalent over $\MA$ to the weak second order logic. More precisely, in this fragment formulas with free variables $x_1, \ldots,x_n$  express  the same things in $\MA$ as  formulas in the language of $HF(\MA)$ or   $S(\N,\MA)$  which have only free variables $x_1, \ldots,x_n$  that run over $\MA$. This description will be used to show that some properties  of $\MA$ are definable in the WSOL.

\subsection{Hereditary finite and list superstructures} 

For a set $A$ let  $Pf(A)$ be the set of all finite subsets of $A$.
Now we define by induction  the set $HF(A)$  of hereditary finite sets over $A$;
\begin{itemize}
\item  $HF_0(A)= A $,  
\item $HF_{n+1}(A) = HF_n(A)\cup Pf(HF_n(A))$, 
\item  $HF(A)=\bigcup _{n\in\omega}HF_n(A).$  
 \end{itemize}

For a structure $\MA = \langle A;L \rangle$ define a new 
 first-order structure  as follows. Firstly, one replaces all operations in $L$ by the corresponding predicates (the graphs of the operations) on $A$, so one may assume from the beginning that $L$ consists only of predicate symbols. Secondly, consider the structure 
 $$
 HF(\MA)=\langle HF(A); L, P_A, \in\rangle, 
 $$
 where $L$ is defined on the subset $A$, $P_A$ defines $A$ in $HF(A)$, and $\in$ is the membership predicate on $HF(A)$.  Then everything that can be expressed in the weak second order logic in $\MA$ can be expressed in the first-order logic in $HF(\MA)$, and vice versa. The structure  $HF(\MA)$ appears naturally in the  weak second order logic, the theory of admissible sets, and $\Sigma$-definability,~--- we refer to  \cite{B1,B2,E,Ershov2} for details.

 There is another structure, termed the {\em list superstructure}\index{list superstructure} $S(\MA,\MN)$ over $\MA$ whose  first-order theory has the same expressive power as the weak second order logic over $\MA$ and which  is more convenient for us to use in this paper. To introduce $S(\MA,\MN)$ we need a few definitions.  Let $S(A)$ be the set of all finite sequences (tuples) of elements from $A$.
 For a  structure $\MA = \langle A;L \rangle$ define in the notation above  a new two-sorted structure $S(\MA)$ as follows:
$$
S(\MA) = \langle \MA, S(A); \frown, \in\rangle,
$$
where $\frown$ is the binary operation of  concatenation of two sequences from $S(A)$ and $a \in s$ for $a \in A, s \in S(A)$ is interpreted as $a$ being a component of the tuple $s$.  As customary in the formal language theory we will denote the concatenation  $s\frown t$ of two sequences $s$ and $t$ by   $st$.
 
 Now, the structure $S(\MA,\MN)$ is defined as  the three-sorted structure 
 $$
 S(\MA,\MN) = \langle \MA, S(A),\MN; t(s,i,a), l(s), \frown, \in \rangle,
 $$
 where $\N = \langle N, +,\cdot, 0,1\rangle$  is the standard arithmetic, $l\colon S(A) \to N$ is the length function, i.e., $l(s)$ is the length $n$ of a sequence $s= (s_1, \ldots,s_{n})\in S(A)$, and $t(x,y,z)$ is a predicate on $S(A)\times N \times  A$ such that $t(s,i,a)$ holds in $S(\MA,\MN)$ if and only if $s = (s_1, \ldots,s_{n})\in S(A), i \in N, 1\leq i \leq n$, and $a = s_i \in A$. Observe, that in this case the predicate $\in$ is $0$-definable in $S(\MA,\N)$ (with the use of $t(s,i,a)$), so sometimes we omit it from the language.

  In the following lemma and theorem we summarize some known results \cite{bauval} about the structures $HF(\MA), S(\MA)$, and $S(\MA,\N)$.
  
  \begin{lemma}\cite{bauval} \label{le:bauval}
  Let $\MA$ be a structure which has at least two elements. Then the following structures are $0$-interpretable in each other:
    $$S(\MA) \rightsquigarrow  S(\MA,\N) \rightsquigarrow HF(\MA) \rightsquigarrow S(\MA,\N) \rightsquigarrow S(\MA)$$ 
    uniformly in $\MA$.
   \end{lemma}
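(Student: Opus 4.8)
## Proof proposal for Lemma~\ref{le:bauval}

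The plan is to establish the chain of $0$-interpretations by building each link explicitly, exploiting the fact that the three structures encode essentially the same combinatorial data (finite set-theoretic information over $\MA$) in three different formats: nested finite sets, finite sequences, and finite sequences paired with arithmetic. I would organize the argument around the two "central" reductions to and from $S(\MA,\N)$, since that structure is the most flexible, and then derive the endpoints.

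First I would interpret $S(\MA,\N)$ in $S(\MA)$. The arithmetic sort $\N$ is obtained from $S(A)$ by representing a natural number $n$ by (the $\sim$-class of) any sequence of length $n$; since $\MA$ has at least two elements, say $a_0 \neq a_1$, one can even use canonical representatives, e.g.\ the constant sequence $(a_0,\dots,a_0)$, but it is cleaner to work with the quotient of $S(A)$ by the equivalence "same length," which is $0$-definable using $\frown$ and $\in$ together with a definable bijection/surjection witness. Addition of naturals is interpreted by concatenation (length is additive under $\frown$), and multiplication by an iterated-concatenation predicate that one writes down using a "history sequence" coded again inside $S(A)$ — this is the one place where a genuine coding argument is needed, and it is the standard trick for defining multiplication from a pairing/concatenation operation. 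The length function $l(s)$ sends $s$ to the class of $s$ itself under "same length," and the selector predicate $t(s,i,a)$ saying "$a$ is the $i$-th entry of $s$" is $0$-definable from $\frown$ and $\in$ by splitting $s = s' \frown (a) \frown s''$ with $l(s') = i-1$. I expect the definability of multiplication on $\N$ via concatenation-coding to be the main obstacle; everything else in this step is routine once that is in hand.

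Next I would interpret $HF(\MA)$ in $S(\MA,\N)$ and, conversely, $S(\MA,\N)$ in $HF(\MA)$. For $HF(\MA) \rightsquigarrow S(\MA,\N)$: every hereditary finite set over $A$ has a finite "syntax tree" whose leaves are elements of $A$, and such a tree can be linearized (e.g.\ via a G\"odel-style encoding using the arithmetic sort to record tree structure, brackets, and pointers) into a single sequence in $S(A)$ together with bookkeeping in $\N$; the membership relation $\in$ and the predicate $P_A$ become $0$-definable relations on these codes, and the $L$-predicates on $A$ transfer directly since $A$ sits inside $S(\MA,\N)$ already. The equivalence relation identifying two codes of the same hereditary finite set (extensional equality) is $0$-definable by induction on rank, which is expressible because rank is bounded by the length of the code. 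For the reverse, $S(\MA,\N) \rightsquigarrow HF(\MA)$: a finite sequence $(s_1,\dots,s_n)$ from $A$ is coded inside $HF(A)$ as the set of ordered pairs $\{\langle 1,s_1\rangle,\dots,\langle n,s_n\rangle\}$ (Kuratowski pairs), the naturals are coded by von Neumann ordinals, which are hereditarily finite, and $\frown$, $l$, $t$, $\in$, $+$, $\cdot$ all become $0$-definable set-theoretic operations on these codes; this is the most transparent of the interpretations.

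Finally, $S(\MA) \rightsquigarrow S(\MA,\N)$ is immediate (forget the arithmetic sort, and $\in$ is already there or $0$-definable via $t$), which closes the loop $S(\MA) \rightsquigarrow S(\MA,\N) \rightsquigarrow HF(\MA) \rightsquigarrow S(\MA,\N) \rightsquigarrow S(\MA)$. Uniformity in $\MA$ holds because none of the codes $\Gamma$, $\Delta$ constructed above refer to anything specific about $\MA$ beyond the two witnessing distinct elements (needed only to make pairing/coding injective), so the same codes work for every $\MA$ with $|A| \geq 2$; one records this by checking that the formulas produced are independent of $\MA$, invoking the hypothesis $|A|\geq 2$ exactly where a choice of two distinct base elements enters the pairing function. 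I would close by remarking that all these interpretations are without parameters (the two distinct elements can be quantified existentially inside the defining formulas, or absorbed into the equivalence relation), so the interpretations are genuinely $0$-interpretations as claimed.
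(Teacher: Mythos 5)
Your overall chain is the standard one, but note that the paper itself does not prove this lemma: it cites Bauval, and the only link it actually spells out (inside the proof of Theorem~\ref{th:HF-S}) is the interpretation of $HF(\MA)$ in $S(\MA,\N)$, done via Ackermann-style coding: $HF(\{1,\dots,N\})\cong(\N,\{1,\dots,N\},\in_N)$ with $m\in_N n$ read off the binary digits of $n-N$, so that a pair $(S,m)$ with $S\in S(A)$ codes a hereditarily finite set relative to the support sequence $S$. Your linearized syntax-tree/G\"odel coding with an extensionality quotient is a genuinely different but workable encoding of that link, and your treatment of $S(\MA,\N)$ inside $HF(\MA)$ (von Neumann ordinals, Kuratowski pairs) is the same as Bauval's. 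A small notational slip: in your last paragraph you write $S(\MA)\rightsquigarrow S(\MA,\N)$ for the reduct map, but in the paper's convention $\MB\rightsquigarrow\MA$ means $\MA$ is interpretable in $\MB$, so the trivial link is $S(\MA,\N)\rightsquigarrow S(\MA)$, while $S(\MA)\rightsquigarrow S(\MA,\N)$ is precisely the hard one.

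The genuine gap is in that hard link, the interpretation of $S(\MA,\N)$ in $S(\MA)$. You treat the equal-length equivalence on $S(A)$ as routine (``$0$-definable using $\frown$ and $\in$ together with a definable bijection/surjection witness''), but there is no function sort and no selector available at that stage, so there is nothing to witness a bijection with; defining ``$l(s)=l(t)$'' from concatenation and component-membership alone is essentially of the same difficulty as defining multiplication, and it is exactly where the work lies. The same circularity affects your definition of $t(s,i,a)$ by splitting $s=s'\frown(a)\frown s''$ with $l(s')=i-1$: it presupposes the numbers and the length function you are trying to build. Moreover, since $A$ may be infinite, the usual separator-based history coding cannot assume a ``fresh'' letter: entries of an arbitrary $s\in S(A)$ may coincide with your chosen $a_0,a_1$, so the Quine-style construction has to be carried out on the definable (with the two parameters) submonoid of $\{a_0,a_1\}$-sequences and then carefully tied to arbitrary sequences; this is precisely what Bauval's argument supplies and what your sketch elides. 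Finally, the claim that the two parameters can simply be ``quantified existentially or absorbed into the equivalence relation'' to get a $0$-interpretation needs the independence-of-choice argument (the interpreted copy of $\N$ and of the length/selector data must not depend on which pair $a_0\neq a_1$ is chosen); it is true here because lengths are canonical, but it should be said, since $0$-interpretability uniformly in $\MA$ is the whole point of the lemma.
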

  
  Using ideas from  the proof of Lemma \ref{le:bauval} above from \cite{bauval} one can  prove the following result.

  \begin{theorem}\label{th:HF-S}
  Let $\MA$ be a structure that has at least two elements.  Then $HF(\MA)$ and $S(\MA,\N)$ are absolutely  bi-interpretable in each other.
    \end{theorem}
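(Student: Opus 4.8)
The plan is to upgrade the mutual $0$-interpretability of $HF(\MA)$ and $S(\MA,\N)$ provided by Lemma~\ref{le:bauval} to an absolute bi-interpretation by exhibiting the two required definable isomorphisms $\Gamma\circ\Delta(HF(\MA))\to HF(\MA)$ and $\Delta\circ\Gamma(S(\MA,\N))\to S(\MA,\N)$. I would first fix explicit codes: let $\Delta$ be the $0$-interpretation $S(\MA,\N)\rightsquigarrow HF(\MA)$ and $\Gamma$ the $0$-interpretation $HF(\MA)\rightsquigarrow S(\MA,\N)$ coming from (the proof of) Lemma~\ref{le:bauval}, chosen so that each hereditarily finite set is coded by a finite sequence of elements of $A$ together with a finite amount of arithmetic/bracketing data, and each finite sequence is coded by a hereditarily finite set in the usual Kuratowski-style way. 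The key point is that both $HF(\MA)$ and $S(\MA,\N)$ are self-coordinatizing: inside $HF(\MA)$ one can already talk about finite sequences of elements of $A$ (they are themselves hereditarily finite objects) and about natural numbers (von Neumann ordinals, or finite sequences under length), and dually inside $S(\MA,\N)$ one can already encode arbitrary hereditarily finite sets as sequences with bracketing markers. So the composite interpretation, read back, lands on an object of the same ``kind'' and the tautological decoding map is the candidate isomorphism.

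The main steps, in order, would be: (1) write down $\Gamma$ and $\Delta$ explicitly and verify the admissibility conditions $\mathcal{AC}_\Gamma$, $\mathcal{AC}_\Delta$ hold (these are just the statements that the coding relations are functional, respect equality, etc., all routine and $0$-expressible); (2) form the composite code $\Gamma\circ\Delta$ and identify, for an element $h\in HF(\MA)$, what $\Gamma\circ\Delta$ produces: a sequence-code of a set-code of $h$; (3) define in the language of $HF(\MA)$ the map $\theta_{HF(\MA)}(\bar u,x)$ that sends such a doubly-wrapped code $\bar u$ to the hereditarily finite set $x$ it denotes — this is a $0$-formula because ``$\bar u$ is a sequence representing the set $x$ via the fixed coding scheme'' is expressible by a bounded formula in $HF(\MA)$ (membership, equality, and the arithmetic are all available); (4) check that $\theta_{HF(\MA)}$ defines an isomorphism $\Gamma\circ\Delta(HF(\MA))\to HF(\MA)$, i.e.\ that it is functional, total, surjective, injective modulo $E_{\Gamma\circ\Delta}$, and transports the predicates $L$, $P_A$, $\in$ correctly; (5) do the symmetric construction for $\theta_{S(\MA,\N)}$, using that ``$\bar v$ is a set-code of a sequence-code of the sequence $x$'' is $0$-expressible in $S(\MA,\N)$ with the help of $t$, $l$, $\frown$ and the arithmetic sort. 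Since no parameters ever enter — all coding schemes are canonical — Definition~\ref{de:bi-inter} is satisfied with $\bar p=\bar q=\emptyset$, giving absolute bi-interpretability.

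The hard part will be step (3)/(5): verifying that the ``decoding'' relation is genuinely $0$-definable rather than merely interpretable, and in particular that one can carry out the decoding of a nested code by a single first-order formula in the base structure without an unbounded recursion. In $HF(\MA)$ this is fine because $\in$-recursion of bounded depth is first-order expressible (the depth of any given hereditarily finite set is finite and ``witnessed'' by the set itself), and in $S(\MA,\N)$ one has the arithmetic sort to index the steps of the decoding and the predicate $t$ to read off components, so a single bounded-quantifier formula with a numerical bound on the nesting depth suffices. The other slightly delicate point is bookkeeping the dimensions and the parameter-dimensions through the composition $\Gamma\circ\Delta$ (Lemma~\ref{le:int-transitivity}) so that $\theta_{HF(\MA)}$ is written with the correct tuple lengths; this is mechanical once the codes are pinned down. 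I would therefore refer to \cite{bauval} for the explicit shape of the codes and concentrate the written proof on (a) exhibiting $\theta_{HF(\MA)}$ and $\theta_{S(\MA,\N)}$ and (b) the verification that each is a definable isomorphism of the relevant self-interpretation.
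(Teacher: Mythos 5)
Your proposal is correct and follows essentially the same route as the paper: fix explicit mutual $0$-interpretations between $HF(\MA)$ and $S(\MA,\N)$ (the paper re-derives the direction $HF(\MA)\rightsquigarrow S(\MA,\N)$ concretely via the Ackermann-style coding of $HF(\{1,\ldots,N\})$ in $(\N,\{1,\ldots,N\},\in_N)$ together with ``$N$-constructions'' coded by single integers, and cites \cite{bauval} for the other direction), then argue that the composite decoding isomorphisms are definable without parameters, exactly your steps (3)--(5). The only refinement worth noting is that the decoding witnesses are existentially quantified finite objects (set-coded functions in $HF(\MA)$, arithmetically coded construction sequences in $S(\MA,\N)$) rather than formulas with a fixed numerical depth bound, which is precisely how the paper's $\sim$ and $N$-construction machinery handles your ``hard part.''
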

    \begin{proof} $HF(\{1,\ldots ,N\})$ is isomorphic to $(\N ,\{1,\ldots ,N\}, \in _N)$, where
  $m\in _N n$ if and only if $n>N$ and the coefficient of $2^m$ in the binary representation of $n-N$ is 1.
  
We now define the interpretation $\Gamma \colon HF(\MA)\rightarrow S(\MA,\N)$. A pair of elements $(S,m)$ of $S(\MA,\N)$ will represent an element $a$ of $HF(\MA)$ if and only if there exists a natural number $N$
and elements $a_1,\ldots, a_N$ of $A$ such that $S=a_1,\ldots,a_N$ and the isomorphism from  $(\N ,\{1,\ldots ,N\}, \in _N)$ to $HF(\{a_1,\ldots ,a_N\})$ sending $i$ to $a_i$ for any  $i\in \{1,\ldots ,N\}$ sends $m$ to $a$. 

We say that a finite sequence of natural numbers $v_0,\ldots ,v_k$ is an $N$-construction for a natural number $n$ if and only if $v_k=n$ and for any natural number $i\leq k$, for any $m\in\N$ such that $m\in_Nv_i$, there is a natural number $j<i$ such that $m=v_i$. Let $\sim$ be a binary relation defined by $(S,m)\sim(S',m')$ if and only if there exists an $\ell (S)$ construction of $m$, $v_0,\ldots ,v_k$, and an $\ell (S')$-construction of $m'$ with the same length $w_0,\ldots ,w_k$ , such that for any $i,j\leq k$, $v_1\in_{\ell(S)}v_j$ if and only if $w_1\in_{\ell(S')}w_j$ and for any $i\leq k$ such that  $v_i\leq \ell(S), v_i=w_i=0$, or $v_i$ and $w_i$ are different from $0$ and $t(S,v_i-1,a)$, $t(S',w_i-1,a)$ for some $a\in A$. Let $R$ be the binary relation defined by $(S,m)R(S',m')$ if and only if there exists $n$ such that $(S,m)\sim(S',m')$ and $n\in_{\ell (S')}m'.$

A finite sequence of natural numbers $v_0,\ldots ,v_k$ can be coded by the number 
${p_0}^{v_0+1}\cdots {p_k}^{v_k+1}$ (where $p_0,\ldots ,p_k$ is the increasing sequence of prime numbers). The set of representatives for finite sequences of natural numbers and the relation ``$k$ is the $m$'th element of the sequence coded by $n$",  are definable in $\N$, therefore $\sim$ and $R$ are definable in $S(\MA,\N)$.  The set $B=\{(S,m)|0<m\leq\ell (S)\}$ of representatives for the set $A$ in $HF(\MA)$ is definable in $S(\MA,\N)$. 

Define $a=\bar t(S,m)$ if and only if $t(S,m,a).$
Let $L'$ be the interpretation of $L$ in $B$ defined as follows: for any $k$-ary predicate $P$ of $L$, if $P$ is interpreted in $M$ by the relation $\tau$, $P$ is interpreted  in $B$ by the relation $\tau '$ defined by $\tau'((S_1,m_1),\ldots ,(S_k,m_k))$ if and only if $\tau (\bar t(S_1,m_1-1),\ldots ,\bar t(S_k,m_k-1)).$

By construction, $\sim$ is an equivalence relation compatible with $B,L',R$, and 
$HF(\MA)$ is isomorphic to $S(\MA,\N)=(B, S(A)\times \N, L',R)/\sim$.  

Combining the constructed interpretation $\Gamma$ of  $HF(\MA)$ in $S(\MA,\N)$ with the interpretation of $S(\MA,\N)$ in $HF(\MA)$ given in \cite[Section II.2]{bauval} which we denote by $\Delta$, one can easily verify that  the isomorphisms
 $\S(\MA,\N)\to (\Gamma \circ \Delta) S(\MA,\N)$ and  $HF(\MA) \to (\Delta \circ \Gamma) HF(\MA)$ are definable.

  \end{proof}
  
Now we define one more superstructure over $\MA$,  which we call the \emph{superstructure of finite binary predicates} over $\MA$.
 
 Let $A$ be a set. By a finite binary predicate over $A$ we mean a finite subset of pairs of elements from $A$. By $FBP(A)$ we denote the set of all finite predicates over $A$. For a structure $\MA = \langle A;L\rangle$ denote by $FBP(\MA)$ the following two-sorted superstructure
 $$
 FBP(\MA) = \langle \MA, FBP(A); s(x,y,z)\rangle 
 $$
 where $x$ and $y$ run over $A$, $z$ runs over $FBP(A)$ and $s(a,b,H)$ holds in $FBP(\MA)$ on $a, b \in A, H \in FBP(A)$ if and only if $(a,b) \in H$.

 \begin{theorem} \label{th:bi-int-HF-FBP}
  Let $\MA$ be an infinite structure. Then $HF(\MA)$ and $FBP(\MA)$ are absolutely bi-interpretable in each other.
 \end{theorem}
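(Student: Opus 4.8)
The plan is to exhibit interpretations in both directions, using Theorem~\ref{th:HF-S} to reduce the task to relating $FBP(\MA)$ with the list superstructure $S(\MA,\N)$, and then to check that the two round-trip compositions are definably isomorphic to the identity. The easier direction is interpreting $FBP(\MA)$ inside $HF(\MA)$: a finite binary predicate over $A$ is literally a finite set of ordered pairs of elements of $A$, and ordered pairs $(a,b)$ together with finite sets of them are already available as hereditary finite sets over $A$; one defines the sort $FBP(A)$ as the (definable) collection of hereditary finite sets all of whose members are Kuratowski pairs from $A$, and interprets the predicate $s(x,y,z)$ by $\langle x,y\rangle \in z$ in the sense of $HF(\MA)$. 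The real content is the reverse interpretation, $HF(\MA) \rightsquigarrow FBP(\MA)$. Since $FBP(\MA)$ carries no arithmetic a priori, the first step is to recover a copy of $\N$ and of $S(A)$ inside $FBP(\MA)$; for this I would use finite binary predicates to encode finite linear orders (a finite predicate $H$ that is, as a relation on its field, a strict linear order represents the natural number equal to the size of its field, and simultaneously an enumeration of that field), which gives both a definable copy of $\N$ and, by tagging elements with their position, finite sequences from $A$. Concretely, a finite sequence $a_1,\dots,a_n$ is coded by a linear-order predicate on an auxiliary $n$-element set together with a ``labelling'' predicate pairing the $i$-th point to $a_i$; equality, concatenation, length and the component predicate $t(s,i,a)$ are then all first-order over $s(x,y,z)$. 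This yields an interpretation of $S(\MA,\N)$ in $FBP(\MA)$, and composing with the interpretation $S(\MA,\N) \rightsquigarrow HF(\MA)$ from Lemma~\ref{le:bauval} gives $HF(\MA) \simeq \Gamma'(FBP(\MA))$ for a suitable code.

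Having both interpretations, by Lemma~\ref{le:int-transitivity} the compositions interpret each structure in itself, and it remains to produce the definable isomorphisms required by Definition~\ref{de:bi-inter}. For the round trip through $HF(\MA)$ one can lean directly on Theorem~\ref{th:HF-S}: the interpretation $HF(\MA)\rightsquigarrow FBP(\MA)$ factors (up to definable isomorphism) through $S(\MA,\N)$, and the bi-interpretability of $HF(\MA)$ with $S(\MA,\N)$ already supplies a definable iso $HF(\MA)\to (\Delta\circ\Gamma)HF(\MA)$; what one must add is that the extra hop $S(\MA,\N)\leftrightarrow FBP(\MA)$ is itself part of a bi-interpretation, i.e. that the code encoding sequences-as-labelled-orders and the code decoding labelled-orders-back-to-sequences compose to something definably isomorphic to the identity on each side. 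That is a finite, if tedious, verification: a pair $(S,m)$ of $S(\MA,\N)$ is sent to the pair of finite binary predicates (a linear order plus a labelling) encoding $S$ together with a code for $m$, and one writes the obvious formula matching the $i$-th component on both sides and checks it defines a $\frown$- and $t$-preserving bijection; symmetrically for the $FBP$ side one checks that decoding then re-encoding a pair of finite predicates reproduces it up to the definable equivalence used in the interpretation.

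The main obstacle is precisely this last bookkeeping: $FBP(\MA)$ is a rather impoverished superstructure (only one ternary predicate, no built-in arithmetic, no pairing), so one has to be careful that the copy of $\N$ recovered from finite orders is \emph{uniformly} definable and that the equivalence relation identifying different order-codes of the same number is definable — essentially re-deriving inside $FBP(\MA)$ the combinatorial core of the $HF$/$S(\MA,\N)$ constructions (like the $N$-construction trick in the proof of Theorem~\ref{th:HF-S}). The infiniteness of $\MA$ is used here exactly as in Lemma~\ref{le:bauval}: one needs an unbounded supply of auxiliary elements of $A$ to serve as the fields of the encoding predicates, so that arbitrarily large finite objects can be named. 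Once that scaffolding is in place, definability of the round-trip isomorphisms is routine and follows the same pattern as in Theorem~\ref{th:HF-S}; I would in fact state the result as a corollary of Theorem~\ref{th:HF-S} plus the auxiliary bi-interpretation $S(\MA,\N)\cong FBP(\MA)$ rather than proving it from scratch.
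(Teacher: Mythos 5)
Your outline is sound, but it is organized quite differently from what the paper actually does: the paper's ``proof'' of Theorem~\ref{th:bi-int-HF-FBP} is essentially a citation, declaring the argument long, cumbersome and folklore, and pointing to \cite{BT} for the device of rewriting an $HF(\MA)$-formula $\phi(x_1,\ldots,x_n)$ with free variables over $A$ into an equivalent $FBP(\MA)$-formula $\phi^\ast$; no intermediate structure, codes, or coordinate maps are exhibited there. You instead give an explicit factorization: $FBP(\MA)$ sits definably inside $HF(\MA)$ via Kuratowski pairs, $S(\MA,\N)$ is interpreted in $FBP(\MA)$ by coding numbers as finite linear orders and sequences as labelled orders, and the hard direction $HF(\MA)\rightsquigarrow FBP(\MA)$ is obtained by composing with Lemma~\ref{le:bauval}/Theorem~\ref{th:HF-S}, the round-trip isomorphisms then coming from the already-established bi-interpretation of $HF(\MA)$ with $S(\MA,\N)$ together with the new $S(\MA,\N)\leftrightarrow FBP(\MA)$ hop. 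This modular route is arguably preferable to the paper's: each hop is a small, checkable bi-interpretation and Theorem~\ref{th:HF-S} is reused rather than redone; the cost is that the burden of proof shifts entirely onto the auxiliary claim that $S(\MA,\N)$ and $FBP(\MA)$ are absolutely bi-interpretable, which the paper never isolates.

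One place where your ``routine bookkeeping'' hides the only genuinely nontrivial step: recovering the full arithmetic $\langle \N;+,\cdot\rangle$ on the cardinality-coded copy of $\N$ inside $FBP(\MA)$. Finite linear orders and their concatenation give addition, but multiplication cannot be had from cardinalities of finite sets alone (monadically one only sees the additive structure); you must use binary predicates in an essential way, e.g.\ realize $m\cdot n$ as the size of a product predicate $X\times Y$ and express a bijection between a finite binary predicate and a finite subset of $A$ by a pair of auxiliary predicates, or equivalently encode block decompositions of labelled orders. That device, together with the parameter-free definable equivalence identifying distinct order-codes of the same number or sequence (this is exactly where infiniteness of $\MA$ supplies fresh field elements), is the cumbersome core the paper waves at; your sketch should state at least one such concrete coding rather than deferring it wholesale, after which the remaining verification of the definable round-trip isomorphisms does follow the pattern of Theorem~\ref{th:HF-S} as you say.
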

 \begin{proof}
 The proof is long and cumbersome. The main ideas are known in the folklore. We refer for some ideas to \cite{BT}  where it is shown how to rewrite formulas $\phi(x_1, \ldots,x_n)$ in the language of $HF(\MA)$ with free variables $x_i$ that run over $A$ into equivalent formulas $\phi^\ast (x_1, \ldots,x_n)$ in the language of $FBP(\MA)$ such that for any $a_1, \ldots,a_n \in \MA$  one has
 $$
 HF(\MA) \models \phi(a_1, \ldots, a_n) \Longleftrightarrow FBP(\MA) \models \phi^\ast (a_1, \ldots, a_n)
 $$
 \end{proof}
  
  The following result is known, it is based on two facts: the first one is that there are effective enumerations (codings) of the set of all tuples of natural numbers such that the natural operations over the tuples are computable on their codes;  and the second one is that  all computably enumerable predicates over natural numbers  are $0$-definable in $\N$ (see, for example, \cite{Coper, Rogers}).  
  
\begin{lemma} \label{le:bi-int-Z-Z}
The structures $\N$,  $\Z$, $S(\N,\N)$, $S(\Z,\N)$  are pair-wise absolutely  bi-interpretable with each other.
\end{lemma}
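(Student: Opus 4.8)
The plan is to show that each of $\Z$, $S(\N,\N)$, and $S(\Z,\N)$ is absolutely bi-interpretable with $\N$; pairwise bi-interpretability of all four structures then follows by transitivity: Lemma~\ref{le:int-transitivity} (with empty parameter tuples) gives mutual interpretability of any two of them, and the two round-trip isomorphisms compose to a definable round-trip isomorphism, so the second clause of Definition~\ref{de:bi-inter} survives composition. The two ingredients recalled just before the statement --- an effective coding of finite tuples of naturals on which the natural operations act computably, and the $0$-definability in $\N$ of every computably enumerable predicate --- will do essentially all of the work for the list superstructures.

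\emph{$\N$ and $\Z$.} Here $\N$ is $0$-definable in $\Z$ as the set of sums of four squares (Lagrange, as recalled above), giving a dimension-one code $\Gamma$ whose coordinate map is the identity on that set. Conversely $\Z$ is $0$-interpreted in $\N$ on $\N^2$ modulo the equivalence $(a,b)\sim(c,d)\iff a+d=b+c$, with coordinate map $\mu_\Delta(a,b)=a-b$ and the usual polynomial formulas for $+$ and $\cdot$. For the round trips: $\Gamma\circ\Delta$ is a dimension-two interpretation of $\Z$ in $\Z$ whose coordinate map $(x,y)\mapsto x-y$ (on pairs of sums of four squares) is defined by the $L(\Z)$-formula asserting $x=u_1-u_2$ together with the two four-squares conditions on $u_1,u_2$; and $\Delta\circ\Gamma$ is a dimension-two interpretation of $\N$ in $\N$ whose coordinate map $(a,b)\mapsto a-b$ (on pairs with $a\ge b$) is defined by $u_1\ge u_2\wedge u_1=u_2+x$. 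Both formulas are parameter-free, so $\N$ and $\Z$ are absolutely bi-interpretable.

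\emph{$\N$ and $S(\N,\N)$, and $\N$ and $S(\Z,\N)$.} Fix an effective, injective G\"odel coding of finite tuples of naturals by single naturals under which length, concatenation, and the ``$i$-th entry'' relation are computable (a $\beta$-function style coding), and encode the three sorts of the list superstructure inside $\N$ by a tagged disjoint union via a definable pairing. Since the induced interpretations of $l$, $\frown$, $t$, $+$, $\cdot$, together with the sort predicates, act computably on codes, they are computably enumerable and hence $0$-definable in $\N$ by the second recalled fact; this yields an absolute dimension-one interpretation $\Delta\colon\N\rightsquigarrow S(\N,\N)$. In the reverse direction $\N$ appears as the first sort of $S(\N,\N)$ with its arithmetic, a $0$-definable substructure, giving a dimension-one code $\Gamma$ with identity coordinate map. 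The round trip $\Gamma\circ\Delta$ on $\N$ has coordinate map the (definable) decoding of the tag for first-sort elements, and the round trip $\Delta\circ\Gamma$ on $S(\N,\N)$ has coordinate map sending a first-sort element $n$ to the element of $S(\N,\N)$ that $n$ codes --- a natural, a sequence, or a natural, according to its tag --- which is definable in $S(\N,\N)$ because the decoding of a G\"odel-coded tuple (its length, its successive entries) is expressible uniformly via $t$, $l$, and arithmetic carried out on the first copy of $\N$. Running the same argument with each integer represented as a pair of naturals (themselves coded as above) gives $S(\Z,\N)$ absolutely bi-interpretable with $\N$; alternatively one bi-interprets $S(\Z,\N)$ with $S(\N,\N)$.

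The main technical burden --- and the step most prone to error --- is the second clause of bi-interpretability: exhibiting the first-order formulas $\theta_\Z$, $\theta_\N$, $\theta_{S(\N,\N)}$, $\theta_{S(\Z,\N)}$ that define the round-trip coordinate maps, and organizing the many-sorted-to-single-sorted translation so that the decoding of arbitrarily long G\"odel-coded tuples is performed uniformly by one finite formula. This is exactly the place where the two recalled facts are indispensable. Granting those formulas, the composition of codes, the verification of the admissibility conditions, and the appeal to transitivity are all routine and require no parameters.
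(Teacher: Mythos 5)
Your proof is correct and follows the same route the paper intends: the paper gives no detailed argument for this lemma, citing only the two facts you invoke (an effective coding of finite tuples of naturals under which the natural operations are computable, and the $0$-definability in $\N$ of every computably enumerable predicate), so your elaboration via the four-squares definition of $\N$ in $\Z$, the pair representation of $\Z$ in $\N$, the tagged G\"odel coding of the three sorts of the list superstructures, and the definable decoding formulas for the round-trip coordinate maps is exactly the intended argument. One harmless notational slip: under the paper's convention for composing codes, $\Gamma\circ\Delta$ interprets $\N$ in $\N$ and $\Delta\circ\Gamma$ interprets $\Z$ in $\Z$, the reverse of the labels you assigned.
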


\subsection{The weak second order logic as a fragment of $L_{\omega_1,\omega}$}

For a language $L$  the logic $L_{\omega_1,\omega}$   admits a new (in comparison with the first-order logic) rule of forming formulas: if $\Phi$ is a countable set of formulas in $L_{\omega_1,\omega}$ then $\bigwedge \Phi$ (conjunction of $\Phi$) and $\bigvee \Phi$ (disjunction of $\Phi$) are also formulas  in $L_{\omega_1,\omega}$.  Below, following \cite{BT},  we describe a subset $L_{WSOL} \subset L_{\omega_1,\omega}$ which is equivalent to WSOL over $\MA$ (were $L  = L(\MA)$). 

We fixed an arbitrary effective enumeration of the set $\F_L$ of first-order formulas in $L$ by natural numbers, so there is an injection $\nu\colon  \F_L \to \N$ such that for every formula $\phi \in \F_L$  one can compute the number $\nu(\phi)$ and  given the number $n \in \N$ one can decide if $n = \nu(\phi)$ for some $\phi$  and if so find the formula $\phi$. We assume that $\nu(\F_L)$ has infinite complement so we can extend the enumeration to the fragment $L_{WSOL}$.  Similarly, we have an effective  enumeration $\mu\colon  \F_\N \to \N$ of  the first-order formulas of the arithmetic.  We define formulas in $L_{WSOL}$ by induction and, simultaneously, do two other things: we extend the enumeration $\nu$ to the the fragment $L_{WSOL}$ and define if a given formula $\phi \in L_ {WSOL}$ has complexity at most $r$, for $r \in \N$. 

Recall that a formula $\phi = Q_1x_1 \ldots Q_mx_m\psi$, where $Q_i$ are quantifiers $\forall, \exists$ and $\psi$ is quantifier-free, is in $\Sigma_n$ ($\Pi_n$), $n>0$,  if $Q_1 = \exists$ ($Q_1 = \forall$) and the 
prefix $Q_1x_1 \ldots Q_mx_m$ has at most $n-1$  alternations of quantifiers.  

We say that a set of formulas in $L_{WSOL}$ is arithmetic if the set of its codes (Godel's numbers) is arithmetic (definable in arithmetic).   

Now we describe the fragment $L_{WSOL}$. Every first-order formula $\phi$ in $\F_L$ is in $L_{WSOL}$. Its complexity is at most $r$ if it is in $\Sigma_r$ and $\Pi_r$ over $L$. If $\phi_1, \phi_2 \in L_{WSOL}$ then $\phi_1 \wedge \phi_2$, $\phi_1 \vee \phi_2$, $\phi_1 \to \phi_2$ are also in $L_{WSOL}$. We extend the enumeration (in some arbitrary but fixed way) onto  these formulas and define that  these formulas have complexity $\leq r$ if both of the formulas $\phi_1, \phi_2$ have complexity $\leq r$. If $\phi  \in L_{WSOL}$ then $^\neg \phi$, $ \forall x_i \phi$ and $\exists x_i \phi$ are in $L_{WSOL}$.  If complexity of $\phi$ is $\leq r$ then complexity of  $^\neg \phi$ is $\leq r$. If  $\phi = \forall x_j \psi$ and complexity of $\psi$ is $\leq r$ then complexity of $ \forall x_i \phi$ is $\leq r$; if 
$\phi = \exists x_j \psi$ then complexity of $\exists x_i \phi$ is also $\leq r$.  In all other cases complexity of $ \forall x_i \phi$ and $\exists x_i \phi$ is $\leq r+1$ where complexity of $\phi$ is $\leq r$. Again we extend the enumeration to these formulas. 
Finally, if $\Phi = \{\phi_i \mid i \in I\}$ is a set of arithmetic formulas from $L_{WSOL}$, all of them have complexity $\leq r$ and all their free variables are among $x_1, \ldots,x_m$ for some $m$ then $\bigwedge \phi_i$ and $\bigvee \phi_i$ are formulas from $L_{WSOL}$. We extend the enumeration to these formulas and their codes include the code of the arithmetic set $\nu(\Phi) \subset \N$. The complexity of these formulas are $\leq r+1$, where $\nu(\Phi)$ is $\Pi_r$ set in $\N$, and each formula $\phi$ has complexity $\leq r$. The formula $\bigwedge \phi_i$ is true in $\MA$ if every formula $\phi$ is true in $\MA$, while $\bigvee \phi_i$ is true in $\MA$ if some formula $\phi$ is true in $\MA$.

For our applications the following will suffice.
\begin{lemma}
Let
$\{\varphi_{i_1i_2\ldots i_n}(x_1,\ldots,x_m)|  i_1,i_2,\ldots,i_n \in {\bf N}\}$
be  a recursively enumerable set of quantifier-free formulas of language $L$. Then 

\[
  \bigvee\limits_{i_1} \bigwedge\limits_{i_2} \bigvee\limits_{i_3}  \ldots
\bigvee\limits_{i_n} \varphi_{i_1i_2\ldots i_n}(x_1,\ldots,x_m),
\]
and 
\[
\bigwedge\limits_{i_1}\bigvee\limits_{i_2} \bigwedge\limits_{i_3}\ldots
\bigwedge\limits_{i_n} \varphi_{i_1i_2\ldots i_n}(x_1,\ldots,x_m),
\]

are formulas in $L_{WSOL}$.

\end{lemma}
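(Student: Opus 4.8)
The plan is to verify, by downward induction on the nesting depth, that every intermediate infinitary formula produced while building the two displayed expressions meets the formation rules for $L_{WSOL}$ given above; the only real content is bookkeeping that keeps the auxiliary index sets arithmetic.

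Fix the number $m$ of free variables. For $0\le k\le n$ and $\bar\imath=(i_1,\dots,i_k)\in\N^k$ define $\psi_{\bar\imath}$ by downward recursion on $k$: put $\psi_{i_1\dots i_n}=\varphi_{i_1\dots i_n}$, and for $k<n$ let $\psi_{i_1\dots i_k}$ be $\bigvee_{i_{k+1}}\psi_{i_1\dots i_k\,i_{k+1}}$ or $\bigwedge_{i_{k+1}}\psi_{i_1\dots i_k\,i_{k+1}}$ according to the prescribed alternation (so the two target formulas are the two versions of $\psi_{\emptyset}$). I will prove by downward induction on $k$ the statement $(\ast_k)$: (i) every $\psi_{\bar\imath}$ with $|\bar\imath|=k$ is a well-formed formula of $L_{WSOL}$; (ii) there is a single $r_k\in\N$ bounding the complexity of all of them; (iii) all their free variables are among $x_1,\dots,x_m$; and (iv) the map $\bar\imath\mapsto\nu(\psi_{\bar\imath})$ has arithmetic graph, uniformly, in the sense that this graph is defined in $\N$ by one arithmetic formula with $i_1,\dots,i_k$ among its free variables.

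For the base case $k=n$: each $\varphi_{i_1\dots i_n}$ is a quantifier-free $L$-formula, hence lies in $\F_L\subseteq L_{WSOL}$ with complexity bounded by a fixed $r_n$ (a quantifier-free formula has uniformly bounded complexity), with free variables among $x_1,\dots,x_m$ by hypothesis; and the hypothesis that $\{\varphi_{\bar\imath}\}$ is recursively enumerable means precisely that the graph of $\bar\imath\mapsto\nu(\varphi_{\bar\imath})$ is r.e., hence $\Sigma_1$ and in particular arithmetic. For the inductive step, assume $(\ast_{k+1})$ and fix $i_1,\dots,i_k$. The set of codes of $\Phi_{i_1\dots i_k}:=\{\psi_{i_1\dots i_k\,i_{k+1}}\mid i_{k+1}\in\N\}$ is the projection $\{c\mid\exists i_{k+1}\ c=\nu(\psi_{i_1\dots i_{k+1}})\}$, which by $(\ast_{k+1})$(iv) is defined in $\N$ by a single arithmetic formula with $i_1,\dots,i_k$ as parameters; so $\Phi_{i_1\dots i_k}$ is an arithmetic set of $L_{WSOL}$-formulas, all of complexity $\le r_{k+1}$ and with free variables among $x_1,\dots,x_m$. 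The formation rule for $\bigwedge$/$\bigvee$ therefore applies, giving (i): $\psi_{i_1\dots i_k}\in L_{WSOL}$. Letting $r_k$ be $1$ plus the maximum of $r_{k+1}$ and a uniform bound on the arithmetical complexity of the formulas defining the code sets $\nu(\Phi_{i_1\dots i_k})$ (this bound depends only on the fixed formula from $(\ast_{k+1})$(iv), not on the parameter values), the rule also yields the uniform complexity bound (ii); and (iii) is immediate. For (iv), note that by construction of the enumeration the code $\nu(\psi_{i_1\dots i_k})$ packages an index of the arithmetic set $\nu(\Phi_{i_1\dots i_k})$, and such an index is obtained from $i_1,\dots,i_k$ simply by substituting these numbers as parameters into the fixed arithmetic formula supplied by $(\ast_{k+1})$(iv); hence $(i_1,\dots,i_k)\mapsto\nu(\psi_{i_1\dots i_k})$ is computable, a fortiori arithmetic with uniformly definable graph. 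This establishes $(\ast_k)$, and $(\ast_0)$ applied to the two alternation patterns gives the lemma.

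The main obstacle is clause (iv) of the induction, i.e.\ propagating arithmeticity of the code maps. One has to pin down that the ``arbitrary but fixed'' extension of $\nu$ to infinitary formulas can be, and is, arranged so that an index for the arithmetic set $\nu(\Phi)$ occurring at each stage is computed arithmetically (indeed computably) from the parameters indexing that stage; this is exactly what guarantees that $\Phi$ is a genuinely arithmetic set of formulas at the next level up, and hence that the formation rule for $\bigwedge$/$\bigvee$ is legitimately applicable all the way down to $\psi_{\emptyset}$. Everything else --- closure of the arithmetic sets under projection and parameter substitution, and the bounded increase of complexity at each of the $n$ levels --- is routine.
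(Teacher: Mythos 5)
Your proof is correct. The paper states this lemma without proof---it is meant to be immediate from the formation rules of $L_{WSOL}$---and your downward induction is exactly the routine verification intended: at each level the code set of the family being conjoined or disjoined is arithmetic (uniformly in the outer indices, and parameters from $\N$ cost nothing since they are definable), the complexities stay uniformly bounded, and the free variables remain among $x_1,\ldots,x_m$, so the infinitary formation rule applies down to the outermost level. The one point requiring care, which you correctly isolate, is that the ``arbitrary but fixed'' extension of the enumeration $\nu$ to infinitary formulas must be arranged so that the code of $\bigvee\Phi$ (resp.\ $\bigwedge\Phi$) is obtained computably from an index of the arithmetic set $\nu(\Phi)$; this is consistent with the paper's stipulation that the code of such a formula includes the code of $\nu(\Phi)$, so your argument matches the construction of $L_{WSOL}$ as given.
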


\begin{theorem} \cite{BT} \label{th:BT} The following holds for every algebraic structure $\MA$ of language $L$:
\begin{itemize}
\item [1)] for every formula $\phi (x_1, \ldots,x_n) \in L_{WSOL}$ one can construct a formula $\phi^\ast (x_1, \ldots,x_n)$ in the language of $HF(\MA)$, which does not contain any free variables other then  $x_1, \ldots,x_n$, and  such that for any assignment of variables $x_1 \to a_1, \ldots, x_n \to a_n$, where  $a_i \in \MA$ one has 
$$
\MA \models \phi(a_1, \ldots,a_n) \Longleftrightarrow HF(\MA) \models \phi^\ast (a_1, \ldots,a_n).
$$
\item [2)] for every formula $\psi (x_1, \ldots,x_n)$ in the language of $HF(\MA)$, which does not contain any free variables other then  $x_1, \ldots,x_n$ that run over $\MA$, one can construct a formula $\psi^\ast (x_1, \ldots,x_n) \in L_{WSOL}$ such that for any assignment of variables $x_1 \to a_1, \ldots, x_n \to a_n$, , where  $a_i \in \MA$ one has 
$$
HF(MA) \models \psi(a_1, \ldots,a_n) \Longleftrightarrow \MA \models \psi^\ast (a_1, \ldots,a_n).
$$

\end{itemize}

\end{theorem}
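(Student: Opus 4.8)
The plan is to prove both directions by simultaneous induction on the syntactic construction of the formula, carrying along the complexity stratification that is built into the definition of $L_{WSOL}$ and matching it against the quantifier-alternation stratification on the $HF(\MA)$ side.

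For part 1), I would induct on the construction of $\phi \in L_{WSOL}$. When $\phi$ is a first-order $L$-formula, I take $\phi^\ast$ to be $\phi$ with every quantifier relativized to the $0$-definable predicate $P_A$ defining $A = HF_0(A)$ in $HF(\MA)$; since the symbols of $L$ are interpreted on $A$ inside $HF(\MA)$, the required equivalence is immediate. The translation commutes with $\neg,\wedge,\vee,\to$, and a first-order quantifier over $\MA$ is handled by relativizing to $P_A$, e.g. $(\exists x_i\phi)^\ast = \exists x_i(P_A(x_i)\wedge\phi^\ast)$. The real work is the infinitary clause: given an arithmetic set $\Phi = \{\phi_i \mid i \in I\}$ of $L_{WSOL}$-formulas of complexity $\le r$ with free variables among $x_1,\dots,x_m$, I would use three uniform ingredients --- that $\N$ is $0$-interpretable in $HF(\MA)$ (already in $HF(\emptyset)\subseteq HF(\MA)$; see also Lemma~\ref{le:bauval}); that the code set $\nu(\Phi)$, being arithmetic, is definable in the copy of $\N$ inside $HF(\MA)$; and that there is a single formula $\mathrm{Sat}_r(c,\bar x)$ of $L(HF(\MA))$ serving as a partial satisfaction predicate, i.e. $HF(\MA)\models\mathrm{Sat}_r(\nu(\psi),\bar a)\iff HF(\MA)\models\psi^\ast(\bar a)$ for every complexity-$\le r$ formula $\psi$ and every $\bar a$ over $\MA$. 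I obtain $\mathrm{Sat}_r$ by the usual Tarski recursion performed inside $HF(\MA)$: the translation $\psi\mapsto\psi^\ast$ is recursive, hence represented in $\N$; the translate $\psi^\ast$ has quantifier-alternation complexity bounded by a function of $r$ (checked by inspecting the other clauses); and truth of a bounded-complexity first-order formula of $HF(\MA)$ is definable by a single partial truth predicate. With $\mathrm{Sat}_r$ in hand I set $(\bigvee_i\phi_i)^\ast = \exists c\,(c\in\nu(\Phi)\wedge\mathrm{Sat}_r(c,\bar x))$ and $(\bigwedge_i\phi_i)^\ast = \forall c\,(c\in\nu(\Phi)\to\mathrm{Sat}_r(c,\bar x))$, with $c$ ranging over the copy of $\N$, and read off the equivalence.

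For part 2), I would first invoke the absolute bi-interpretability of $HF(\MA)$ with $S(\MA,\N)$ (Theorem~\ref{th:HF-S}) to replace $\psi$ by an equivalent formula over $S(\MA,\N)$ whose free variables still range over the $\MA$-sort, and then induct on that formula, eliminating sequence- and number-quantifiers one layer at a time. A sequence-sort quantifier $\exists s\,\theta$ becomes $\bigvee_{n\in\N}\exists y_1\cdots\exists y_n\,\theta_n$, where $\theta_n$ represents $s$ by the list $(y_1,\dots,y_n)$: $l(s)$ turns into the numeral $n$, $t(s,i,y)$ turns into $\bigvee_{j=1}^n(i=\underline{j}\wedge y=y_j)$, and concatenation of represented sequences becomes juxtaposition of the variable lists --- a recursive family, hence an admissible $L_{WSOL}$-connective. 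An arithmetic subformula $\alpha(k_1,\dots,k_p)$ over $\N$ becomes $\bigvee\{k_1=\underline{m_1}\wedge\cdots\wedge k_p=\underline{m_p}\mid (m_1,\dots,m_p)\in\N^p,\ \N\models\alpha(\bar m)\}$, whose index set is arithmetic and so, under a fixed recursive pairing, an arithmetic subset of $\N$; afterwards the genuine number-quantifiers collapse into $\bigvee/\bigwedge$ over $\N$ of formulas with numerals substituted. Quantifiers and connectives over the $\MA$-sort commute with the translation exactly as in part 1), and I track the complexity according to the definition of $L_{WSOL}$ to confirm the output is a genuine $L_{WSOL}$-formula.

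I expect the main obstacle to be the bookkeeping underneath the infinitary clauses: in part 1) I must verify that for each fixed $r$ the translates of all complexity-$\le r$ formulas are uniformly captured by one first-order $HF(\MA)$-formula $\mathrm{Sat}_r$, i.e. that $L_{WSOL}$-complexity controls the quantifier-alternation complexity of the translate, and dually in part 2) that the index sets that appear are uniformly arithmetic of controlled complexity. This is precisely where the ``arithmetic set of codes'' hypothesis in the infinitary rule and the partial-truth-definition phenomenon are used in tandem; the remaining clauses are a routine structural induction, and for the full verification I would follow \cite{BT}.
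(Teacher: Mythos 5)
The paper does not actually prove this theorem: it is stated with a citation to Belyaev--Taitslin \cite{BT} and no argument is given, so there is no in-paper proof to measure you against. Judged on its own, your sketch follows what is essentially the standard route behind \cite{BT}: for 1), relativize first-order quantifiers to $P_A$ and discharge the infinitary clauses through the copy of $\N$ sitting inside $HF(\MA)$ together with a partial satisfaction predicate; for 2), pass to $S(\MA,\N)$ via Theorem~\ref{th:HF-S} and expand sequence-sort quantifiers into $\bigvee_{n}\exists y_1\cdots\exists y_n$ and number-sort quantifiers into infinitary connectives over numerals, evaluating the arithmetic subformulas away. Both devices are the right ones, the index sets you produce in 2) are recursive (hence arithmetic), and the complexity bookkeeping you describe does close up because the source formulas have fixed finite quantifier depth.

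The one place where your write-up leans on a black box in a way that needs restructuring is the predicate $\mathrm{Sat}_r$. As stated, you want $\mathrm{Sat}_r$ to capture truth of the \emph{translates} $\psi^\ast$ of all complexity-$\le r$ formulas, but whether those translates lie in a single first-order truth class of $HF(\MA)$, and whether $\psi\mapsto\psi^\ast$ is recursive on codes, are themselves part of the induction you are running; so the induction hypothesis must be strengthened to carry exactly these two uniformity claims at every level (alternatively, and more cleanly, define a satisfaction predicate directly on codes of $L_{WSOL}$-formulas of complexity $\le r$, by recursion on $r$, using the arithmetic definability of the index sets inside $\N\subseteq HF(\MA)$, and then set $\phi^\ast(\bar x)=\mathrm{Sat}_r(\nu(\phi),\bar x)$). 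You flag this as the main obstacle and defer the verification to \cite{BT}, which is honest, but without that strengthened hypothesis the clause $(\bigvee_i\phi_i)^\ast=\exists c\,(c\in\nu(\Phi)\wedge\mathrm{Sat}_r(c,\bar x))$ cannot be justified at level $r+1$; with it, the argument goes through.
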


\section{Definitions and basic properties of rich structures}

Let $\MA$ be a structure. By a WSOL  formula $\phi(x_1, \ldots,x_n)$  in the language $L = L(\MA)$ we understand loosely  a formula either in the language of $HF(\MA)$, or in  $S(\N,\MA)$, or in $L_{WSOL}$ with only free variables that occur in the list $x_1, \ldots,x_n$ and these variables run over $\MA$. So $\phi$ does not have free variables that run over hereditary finite subsets of $A$ (in the case of $HF(\MA)$) or over lists over $\MA$, or over $\N$ (in case of $S(\N,\MA)$).

\begin{definition}\label{de:rich}
Let $\MA = \langle A,L\rangle$ be a structure in a language $L$. The structure $\MA$ is termed:

\begin{itemize}
    \item [1)] {\em rich}\index{rich structure}  if for every WSOL formula $\phi(x_1, \ldots,x_n)$  in the language $L = L(\MA)$  there is  a first-order formula $\phi^{\diamond}(x_1, \ldots,x_n, \bar p)$  in the language $L$ with parameters $\bar p$ in $\MA$ such that for any $a_1, \ldots,a_n \in \MA$
$$
\MA \models \phi(a_1, \ldots,a_n) \Longleftrightarrow \MA \models \phi^{\diamond}(a_1, \ldots,a_n, \bar p).
$$
\item [2)]  {\em absolutely rich}\index{absolutely rich structure} if for any formula $\phi(x_1, \ldots,x_n)$ as above  the formula $\phi^{\diamond}(x_1, \ldots,x_n)$ from 1) has no parameters. 
\item [3)] {\em effectively rich}\index{effectively  rich structure} ({\em absolutely and effectively rich}) \index{ absolutely and effectively rich structure} if the map $\phi \to \phi^\diamond$ is computable.
\end{itemize}
\end{definition}

Let $\phi(x_1, \ldots,x_n, y_1, \ldots,y_m)$ be a formula in $L_{WSOL}$ and $p_1, \ldots,p_m \in A$. Then, as usual,  by $\phi(x_1, \ldots,x_n, p_1, \ldots,p_m)$  we denote the formula $\phi$ with parameters $p_1, \ldots,p_m$. Then the structure $\MA$ is rich if and only if  any subset $S \subseteq A^n$  which is definable in $\MA$ by a $L_{WSOL}$-formula  with parameters in $\MA$ is definable  in $\MA$ by a first order formula  in $L$  with parameters in $\MA$.

In this case everything which is described by weak second order logic formulas  in $\MA$ can be also described by  first order ones in $\MA$.

\medskip
{\bf Examples}
\begin{itemize}
    \item [1)] 
 Any finite structure is  absolutely rich.
 \item [2)] For any structure $\MA$ the structure $HF(\MA)$ is   absolutely and effectively rich. 
 \end{itemize}
\begin{proof}
To see 1) observe that in a finite structure $\MA$ for every $n$ there are only finitely many non-equivalent over $\MA$ formulas in free variables $x_1, \ldots,x_n$. So every infinite conjunction (or disjunction) of formulas $\phi(x_1, \ldots,x_n), i \in I$, is equivalent over $\MA$ to a finite conjunction (or disjunction) of some of these formulas. 

To show 2) observe that for any structure $\MA$ the structures $HF(HF(\MA))$ is obtained  from $HF(\MA)$ by adding a new everywhere true unary predicate. Indeed, $HF(HF(A)) = HF(A)$, so $
HF( HF(\MA)) =\langle HF(A); L, P_A, P_{HF(A)}, \in \rangle, $ where $P_{HF(A)}$ defines the whole set $ HF(A)$. For every formula $\phi(x_1, \ldots,x_n)$ in the language of $HF(HF(\MA))$ where each $x_i$ runs over $HF(\MA)$ one can obtain an equivalent over $HF(\MA)$ formula by replacing each occurrence of the predicate $P_{HF(A)}(x)$ in $\phi$  by the formula $x= x$. The new formula is equivalent to $\phi$ over $HF(\MA)$.
 
\end{proof}

The argument above also shows that  $HF(\MA)$ is absolutely bi-interpretable with $HF(HF(\MA))$.

The following result  and its corollaries  give an easy tool to prove that a structure is rich.

\begin{theorem}\label{co:bi-int-rich}
The following holds:
\begin{itemize}
\item [1)] Any structure bi-interpretable (with parameters) with a rich structure is rich.
\item [2)] Any structure absolutely bi-interpretable with an absolutely rich structure is absolutely rich.
\item [3)] In the case above if one of the structures is effectively rich (effectively absolutely rich) so is the other.
\end{itemize}
\end{theorem}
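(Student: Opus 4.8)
The plan is to pass through the hereditarily finite superstructures. By Theorem~\ref{th:BT} (and Lemma~\ref{le:bauval} to move between $HF(\mathcal C)$, $S(\mathcal C,\N)$ and $L_{WSOL}$), a structure $\mathcal C$ is rich if and only if every subset of $C^n$ that is defined in $HF(\mathcal C)$ by a first-order formula all of whose free variables range over the base sort $C$ is first-order definable with parameters in $\mathcal C$; and likewise for absolute and effective richness. Since bi-interpretability is symmetric it suffices to show: if $\MA$ and $\MB$ are bi-interpretable and $\MB$ is rich, then $\MA$ is rich, keeping track of parameters and of computability for parts 2) and 3).

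Fix the data $\MA\simeq_{\mu_\Gamma}\Gamma(\MB,\bar p)$ and $\MB\simeq_{\mu_\Delta}\Delta(\MA,\bar q)$ of the bi-interpretation, with $\mu_\Gamma$ compatible with a formula $\theta_\MA(\bar u,x,\bar p^\ast)$ as in Definition~\ref{de:bi-inter} and Remark~\ref{re:1}. The first step is the (routine but bookkeeping-heavy) fact that interpretations lift to hereditarily finite superstructures: from $\Gamma$ one effectively constructs a code $\hat\Gamma$, uniformly in $\MB$, with $HF(\MA)\simeq\hat\Gamma(HF(\MB),\bar p)$, and one may arrange the coordinate map $\hat\mu$ so that on atoms it agrees with $\mu_\Gamma$ under the identification of $a\in A$ with an $m$-tuple $\bar b\in U_\Gamma(\MB,\bar p)$, $m=\dim\Gamma$; inside $HF(\MB)$ such a tuple is a Kuratowski-coded hereditarily finite set, and passing between this code and its components $b_1,\dots,b_m$ is definable in $HF(\MB)$. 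If $\Gamma$ is parameter-free, so is $\hat\Gamma$.

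Now take $S\subseteq A^n$ defined in $HF(\MA)$ by a formula $\phi$ with $A$-valued free variables. Translating $\phi$ through $\hat\Gamma$ and then replacing each $A$-valued free variable by an $m$-tuple of $B$-valued free variables (legitimate by the definability of tuple-decoding just noted) yields a formula $\phi^\sharp(\bar x_1,\dots,\bar x_n)$ in the language of $HF(\MB)$, all of whose free variables lie in the base sort $B$, which defines in $HF(\MB)$ precisely the componentwise preimage $\mu_\Gamma^{-1}(S)\subseteq B^{nm}$. Hence $\mu_\Gamma^{-1}(S)$ is WSOL-definable over $\MB$, so by richness of $\MB$ it is first-order definable with parameters in $\MB$; Lemma~\ref{le:bi-in} then gives at once that $S$ is first-order definable with parameters in $\MA$, proving 1). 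For 2), when the bi-interpretation is absolute and $\MB$ is absolutely rich, $\hat\Gamma$ and $\phi^\sharp$ carry no parameters, $\mu_\Gamma^{-1}(S)$ is $0$-definable in $\MB$, and the ``moreover'' clause of Lemma~\ref{le:bi-in} yields $S$ $0$-definable in $\MA$. For 3), every step of the chain $\phi\mapsto\phi^\sharp\mapsto(\phi^\sharp)^\diamond\mapsto\phi^\diamond$ — the last map being the $\Delta$-translation composed with the $\theta_\MA$-wrapping used in the proof of Lemma~\ref{le:bi-in} — is computable, and composing with the computable witness of effective richness of $\MB$ produces a computable map $\phi\mapsto\phi^\diamond$ for $\MA$.

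The one genuinely technical point, and where I expect most of the work to lie, is the lifting in the second paragraph: verifying that an interpretation $\MA\simeq\Gamma(\MB,\bar p)$ induces an interpretation $HF(\MA)\simeq\hat\Gamma(HF(\MB),\bar p)$ compatible with the coordinate maps, and that a formula over $HF(\MB)$ with free variables ranging over codes of $B^m$-tuples can be traded for one with free variables ranging over $B$ itself. These facts belong to the folklore around the superstructures $HF(\cdot)$, $S(\cdot,\N)$ and $FBP(\cdot)$, but establishing them in exactly the form needed here is where the calculations are; everything else is a direct application of Theorem~\ref{th:BT} and Lemma~\ref{le:bi-in}.
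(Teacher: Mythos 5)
Your argument is correct, but it follows a genuinely different route from the paper. The paper never leaves the infinitary calculus: it works with the fragment $L_{WSOL}$ and extends the first-order $\Gamma$-translation of Lemma~\ref{le:interpr_corol} by induction on complexity to arithmetic infinite conjunctions/disjunctions and to quantifiers, checking at each step that the translated family stays arithmetic, of uniformly bounded complexity, and satisfies the equivalence~(\ref{eq:Gamma-transl}); richness of $\MB$ then replaces the translated WSOL formula by a first-order one, and Lemma~\ref{le:bi-in} pulls the resulting definable set back to $\MA$, with 2) and 3) read off from the same computation. You instead pass to the superstructures: lift $\MA\simeq\Gamma(\MB,\bar p)$ to an interpretation $HF(\MA)\simeq\hat\Gamma(HF(\MB),\bar p)$ compatible with $\mu_\Gamma$ on atoms, translate a single first-order $HF$-formula, trade $HF$-coded tuple variables for base-sort tuples, and finish with the same appeal to richness of $\MB$ and to Lemma~\ref{le:bi-in} (its ``moreover'' clause for 2), effectivity of each translation step for 3)). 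What your route buys is that no infinitary syntax ever appears and the whole argument reduces to first-order interpretability facts; what it costs is the lifting lemma you flag as folklore, which is where the real work migrates: one must define in $HF(\MB)$ the extensional collapse modulo $\sim_\Gamma$ (e.g.\ via quantification over finite bisimulations), verify compatibility of the coordinate maps, and keep the construction parameter-free and effective for parts 2) and 3) --- a body of bookkeeping roughly comparable to the induction the paper actually carries out, and not proved anywhere in the paper (Theorem~\ref{th:HF-S}, Theorem~\ref{th:bi-int-HF-FBP} and Lemma~\ref{le:bauval} are close in spirit but do not state functoriality of $HF(\cdot)$ with respect to interpretations). So the endgame is identical; the difference is whether the technical load sits in the $L_{\omega_1,\omega}$ translation or in the $HF$ coding, and a referee would ask you to write out the lifting lemma in full before accepting the proof.
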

\begin{proof}
Let $\MA = \langle A;L(\MA)\rangle$ and $\MB = \langle B;L(\MB)\rangle$ be bi-interpretable (with parameters) in each other. Suppose $\MB$ is rich. Denote by  $\mu =\mu_\Gamma\colon  \MA \rightsquigarrow \MB$ the coordinate map, so 
$\MA \simeq \Gamma(\MB, p)$ and $\mu\colon U_\Gamma(B) \to A$. To show that $\MA$ is rich consider a formula  $\phi(\bar x)$ in the WSOL in $L(\MA)$. We need to show that there is a first-order formula $\phi^{FO}(\bar x)$  in the language $L(\MA)$ such that for any tuple $\bar a$ in $\MA$ one has 
$$
\MA\models \phi(\bar a) \Longleftrightarrow \MA \models \phi^{FO}(\bar a).
$$
We prove this by induction on complexity of $\phi$. 
First, we define by induction on complexity of $\phi$ a formula $\phi_\Gamma(\bar z, \bar y)$ of WSOL in the language $L(\MB)$ (here the tuple of variables $\bar z$ corresponds to the tuple $\bar x$, and $\bar y$ corresponds to the tuple of parameters $p$, see Lemma \ref{le:interpr_corol}),  which we call the \emph{$\Gamma$ translation of $\phi$}. If $\phi(\bar x)$ is a first-order formula in the language $L(\MA)$ then  $\phi_\Gamma(\bar z, \bar y)$ is the $\Gamma$ translation of $\phi$ defined in Lemma \ref{le:interpr_corol}. Let $\Phi = \{\phi_i(\bar x) \mid i \in I\}$ be an arithmetic set of formulas of   WSOL in the language of $L(\MA)$ of uniformly bounded complexity and assume that  $\phi(\bar x) = \bigvee_{\phi \in \Phi} \phi_i(\bar x)$. Assume that by induction the $\Gamma$-translations $(\phi_i)_\Gamma(\bar z,\bar y)$ are defined for all $\phi_i \in \Phi$ and satisfy the following conditions:   each  $(\phi_i)_\Gamma(\bar z, \bar y)$ has the same tuple of free variables $\bar z, \bar y$, all formulas $(\phi_i)_\Gamma(\bar z, \bar y)$ have uniformly bounded complexity, the set  $\Phi_\Gamma = \{(\phi_i)_\Gamma(\bar z, \bar y)\mid i \in I\}$ is arithmetic, and 
 for all tuples $a$ in $\MA$
$$
\MA\models \phi_i (\bar a)\iff \MB\models (\phi_i)_\Gamma(\mu_\Gamma^{-1} (\bar a),\bar p).
  $$
  Put 
  $$
  \phi_\Gamma(\bar y) = \bigvee_{\phi_i \in \Phi} (\phi_i)_\Gamma(\bar z,\bar y)
  $$
 Then  $\phi_\Gamma(\bar z, \bar y)$ is in WSOL of $L(\MB)$  and 
 
 \begin{equation}\label{eq:Gamma-transl}
 \MA\models \phi (\bar a)\iff \MB\models (\phi)_\Gamma(\mu_\Gamma^{-1} (\bar a),\bar p).
 \end{equation}
 
We define similarly $\phi_\Gamma(\bar z,\bar y)$ for $\phi = \bigwedge_{\phi \in \Phi}\phi_i(\bar x)$. Assume now that $\phi = \forall \bar u \phi_0(\bar x, \bar u)$, where the complexity of $\phi_0$ is less then the complexity of $\phi$. Then we set $\phi_\Gamma(\bar z, \bar y) = \forall \bar v (\phi_0)_\Gamma(\bar z, \bar v,  \bar y )$, where the tuples of variables $\bar v$ correspond to the tuples $\bar u$  in the  $\Gamma$-translation of $\phi_0$.  Similarly we define $\phi_\Gamma(\bar z, \bar y)$ for $\phi = \exists \bar u \phi_0(\bar x, \bar u)$. In all these case $\phi_\Gamma$ satisfies the condition (\ref{eq:Gamma-transl}). This defines $\phi_\Gamma$ for every $\phi$ in WSOL of $L(\MA)$.

Since $\MB$ is rich there is a first-order formula   $\phi_\Gamma^{FO}(\bar z, \bar y)$ in $L(\MB)$ such that  for any tuples $\bar b$, $\bar c$ over $\MB$ one has 
$$
\MB \models \phi_\Gamma(\bar b, \bar c)  \Longleftrightarrow \MB \models \phi_\Gamma^{FO}(\bar b, \bar c)
$$
 It follows from  the above and (\ref{eq:Gamma-transl}) that 
 $$
 \MA\models \phi (\bar a)\iff \MB\models (\phi)_\Gamma(\mu_\Gamma^{-1} (\bar a),\bar p) \iff  \MB \models \phi_\Gamma^{FO}(\mu_\Gamma^{-1} (\bar a),\bar p).
 $$
 
 By Lemma \ref{le:bi-in} there is a formula $\psi(\bar z, \bar u)$ in the first-order language of $L(\MA)$ such that 
\begin{equation}\label{eq:a-a}
\MB \models \phi_\Gamma^{FO}(\mu_\Gamma^{-1} (\bar a),\bar p) \iff \MA \models \psi(\bar a,\bar p^\ast ),
\end{equation}
 where the tuple $\bar p^\ast $ described in 
   Lemma \ref{le:bi-in}. 
   
 The second and the third statement of the theorem follows by inspection from the first one.

\end{proof}

 

\begin{lemma}\label{le:rich-S(A,N)}
 For any structure $\MA$ the structures $S(\MA,\MN)$ and $FBP(\MA)$ are  absolutely and effectively rich. 
 \end{lemma}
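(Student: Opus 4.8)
The plan is to obtain both statements from the second Example above, which asserts that $HF(\MA)$ is absolutely and effectively rich for every structure $\MA$, together with Theorem~\ref{co:bi-int-rich}, which transports absolute (and effective) richness along an absolute bi-interpretation. Thus for each of $S(\MA,\MN)$ and $FBP(\MA)$ it is enough to produce an absolute bi-interpretation with a structure already known to be absolutely and effectively rich, and $HF(\MA)$ is the natural candidate.

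First I would handle $S(\MA,\MN)$. If $A$ has at least two elements, Theorem~\ref{th:HF-S} says that $HF(\MA)$ and $S(\MA,\MN)$ are absolutely bi-interpretable in each other; since $HF(\MA)$ is absolutely and effectively rich, parts~2) and~3) of Theorem~\ref{co:bi-int-rich} give at once that $S(\MA,\MN)$ is absolutely and effectively rich. The remaining case $|A|\le 1$ is degenerate: a list over a one-point set is determined by its length and concatenation becomes addition, so $S(\MA,\MN)$ is absolutely bi-interpretable with the standard arithmetic $\MN$; and $\MN$ is itself absolutely and effectively rich, being absolutely bi-interpretable with $HF(\MN)$ (combine Lemma~\ref{le:bi-int-Z-Z} with Theorem~\ref{th:HF-S} applied to $\MN$), so Theorem~\ref{co:bi-int-rich} again applies.

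Next, $FBP(\MA)$. When $\MA$ is infinite, Theorem~\ref{th:bi-int-HF-FBP} gives an absolute bi-interpretation between $HF(\MA)$ and $FBP(\MA)$, and the same invocation of Theorem~\ref{co:bi-int-rich}~2),~3) concludes. When $A$ is finite, $A\times A$ is finite and $FBP(A)$ is its set of subsets, hence finite, so $FBP(\MA)$ is a finite structure; by the first Example above it is absolutely rich, and effective richness is automatic because over a finite structure there are, for each $n$, only finitely many formulas in $x_1,\ldots,x_n$ up to equivalence.

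I do not anticipate any real obstacle: once the three cited theorems are invoked, the argument is bookkeeping. The only points requiring a moment's attention are matching the hypotheses of the quoted results (``at least two elements'' for $S(\MA,\MN)$, ``infinite'' for $FBP$), which is why the degenerate cases are treated separately above, and verifying that the qualifier ``effectively'' survives the bi-interpretation — but that is precisely Theorem~\ref{co:bi-int-rich}~3), established there by inspecting the translation $\phi\mapsto\phi_\Gamma$.
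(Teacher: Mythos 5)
Your proposal is correct and follows essentially the same route as the paper's own proof: transfer the absolute and effective richness of $HF(\MA)$ (the second Example) through the absolute bi-interpretations of Theorem~\ref{th:HF-S} and Theorem~\ref{th:bi-int-HF-FBP} using Theorem~\ref{co:bi-int-rich}, treating finite $\MA$ for $FBP(\MA)$ directly as a finite structure. You are in fact slightly more careful than the paper, which invokes Theorem~\ref{th:HF-S} for ``any structure $\MA$'' without commenting on the one-element case that the theorem's hypothesis excludes, whereas you handle that degenerate case for $S(\MA,\MN)$ separately via bi-interpretability with $\MN$.
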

\begin{proof}
By Theorem \ref{th:HF-S} for any structure $\MA$ the structures $S(\MA,\MN)$ and  $HF(\MA)$  are absolutely bi-interpretable in each other. If $\MA$ is finite then $FBP(\MA)$  is also finite hence absolutely rich. Otherwise by Theorem \ref {th:bi-int-HF-FBP} $FBP(\MA)$ and $HF(\MA)$ are absolutely bi-interpretable in each other. 
Since $HF(\MA)$ is absolutely and effectively rich (see the examples above) so, by Theorem \ref{co:bi-int-rich}, are the structures $S(\MA,\MN)$ and $FBP(\MA)$.  
\end{proof}

\begin{lemma}
The structures $\N$ and $\Z$ are absolutely and effectively rich.
\end{lemma}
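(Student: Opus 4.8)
The plan is to obtain this as an immediate consequence of the machinery already in place, with no new construction required. First I would apply Lemma~\ref{le:rich-S(A,N)} with $\MA = \N$: for any structure the list superstructure $S(\MA,\MN)$ is absolutely and effectively rich, so in particular $S(\N,\MN)$ is absolutely and effectively rich. (Since $\N$ is infinite, there is no degenerate small-structure case to worry about.)

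Next I would recall Lemma~\ref{le:bi-int-Z-Z}, which states that $\N$, $\Z$, $S(\N,\N)$ and $S(\Z,\N)$ are pairwise absolutely bi-interpretable. In particular $\N$ is absolutely bi-interpretable with $S(\N,\N)$, and likewise $\Z$ is absolutely bi-interpretable with $S(\N,\N)$ (either directly from the cited lemma, or by composing the bi-interpretation $\Z \leftrightarrow \N$ with $\N \leftrightarrow S(\N,\N)$ and invoking transitivity of interpretations, Lemma~\ref{le:int-transitivity}).

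Finally I would invoke Theorem~\ref{co:bi-int-rich}: part~2) gives that any structure absolutely bi-interpretable with an absolutely rich structure is absolutely rich, and part~3) gives that effectiveness is transferred across such a bi-interpretation. Applying this with $S(\N,\N)$ in the role of the known absolutely and effectively rich structure, we conclude that both $\N$ and $\Z$ are absolutely and effectively rich.

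The only point that calls for a moment's care is bookkeeping rather than a genuine obstacle: one must check that the bi-interpretations furnished by Lemma~\ref{le:bi-int-Z-Z} are parameter-free (so that we land in the absolutely rich case of Theorem~\ref{co:bi-int-rich}, part~2), not merely the rich case) and that the relevant codes are given explicitly, so the translation $\phi \mapsto \phi^\diamond$ remains computable. Both are built into the statements cited, so this lemma is essentially a corollary that packages the three preceding results.
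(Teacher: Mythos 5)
Your proposal is correct and follows essentially the same route as the paper: the paper likewise combines Lemma~\ref{le:bi-int-Z-Z} (pairwise absolute bi-interpretability of $\N$, $\Z$, $S(\N,\N)$), Lemma~\ref{le:rich-S(A,N)} (absolute and effective richness of $S(\N,\N)$), and Theorem~\ref{co:bi-int-rich} to transfer richness. Your added remark about checking the bi-interpretations are parameter-free and the codes explicit is exactly the bookkeeping the paper leaves implicit.
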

\begin{proof}
  Indeed, it follows from Lemma \ref{le:bi-int-Z-Z} that $\N$, $\MZ$ and $S(\N,\N)$ are absolutely  bi-interpretable in each other.  By Lemma  \ref{le:rich-S(A,N)} the structure $S(\N,\N)$ is absolutely and effectively rich. Hence by Theorem \ref{co:bi-int-rich} \ref{th:HF-S} the structures $\N$, $\MZ$ are also absolutely and effectively rich.  
    \end{proof}

\begin{cor} \label{co:bi-int-Z}
Let $\MA$ be a structure. Then the following holds:
\begin{itemize}
    \item If $\MA$ is bi-interpretable with $\Z$ then $\MA$ is rich.
    \item If $\MA$ is absolutely bi-interpretable with $\Z$ then $\MA$ is absolutely and effectively rich.
\end{itemize} 
\end{cor}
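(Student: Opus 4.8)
The plan is to read this off directly from Theorem~\ref{co:bi-int-rich}, using the lemma just proved that $\N$ and $\Z$ are absolutely and effectively rich. Both structures $\Z$ and $\MA$ already carry all the data we need (a bi-interpretation between them), and Theorem~\ref{co:bi-int-rich} is precisely the statement that richness is transported along bi-interpretations, so there is essentially no new argument to run — the work is packaging.

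For the first item I would argue as follows. By hypothesis $\MA$ is bi-interpretable (with parameters) with $\Z$, and by the preceding lemma $\Z$ is rich. Hence part~1) of Theorem~\ref{co:bi-int-rich} applies verbatim with $\MB = \Z$ and gives that $\MA$ is rich. For the second item, assume $\MA$ is absolutely bi-interpretable with $\Z$. Since $\Z$ is absolutely rich (preceding lemma), part~2) of Theorem~\ref{co:bi-int-rich} yields that $\MA$ is absolutely rich; and since $\Z$ is moreover effectively rich, part~3) of the same theorem yields that the translation $\phi \mapsto \phi^{\diamond}$ for $\MA$ is computable as well. Therefore $\MA$ is absolutely and effectively rich.

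The only points requiring a little care are bookkeeping: one should check that the notion ``bi-interpretable (with parameters)'' invoked here is exactly the one recorded in Definition~\ref{de:bi-inter} (so that the codes $\Gamma,\Delta$, the parameters $\bar p^\ast,\bar q^\ast$, and the definability formulas $\theta_\MA,\theta_\Z$ of Lemma~\ref{le:bi-interpret-6} are all available), and that in the absolute case these codes and formulas are genuinely parameter-free, which is what makes the effectivity clause of part~3) of Theorem~\ref{co:bi-int-rich} applicable. I do not expect any genuine obstacle: this is a direct corollary, and the substantive content sits entirely in Theorem~\ref{co:bi-int-rich} and in the richness of $\Z$.
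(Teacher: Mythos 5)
Your proposal is correct and matches the paper's (implicit) argument: the corollary is stated there without further proof precisely because it follows at once from Theorem~\ref{co:bi-int-rich} applied with $\MB = \Z$, together with the preceding lemma that $\N$ and $\Z$ are absolutely and effectively rich. Your bookkeeping remarks about Definition~\ref{de:bi-inter} and the parameter-free case are fine and add nothing problematic.
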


\begin{cor}
The field of rational numbers $\Q$ is absolutely and effectively rich.
\end{cor}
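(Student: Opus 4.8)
The plan is to reduce to Corollary~\ref{co:bi-int-Z}: it suffices to prove that $\Q$ is \emph{absolutely} bi-interpretable with $\Z$, and then absolute and effective richness follows at once. Both halves of the mutual interpretation are classical. To interpret $\Z$ in $\Q$ I will use J.~Robinson's Theorem~\ref{th:J-Rob}: since $\Z$ is the ring of algebraic integers of the number field $\Q$, parts~2) and~3) supply a parameter-free ring-language formula $\mathrm{Int}(x)$ defining $\Z$ as a subset of $\Q$; let $\Gamma$ be the $1$-dimensional code with domain $\mathrm{Int}(x)$, equality $x_1=x_2$, and the graphs of $+,\cdot,0$ (and $1$) inherited from $\Q$, so that $\Z\simeq\Gamma(\Q)$ absolutely. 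To interpret $\Q$ in $\Z$ I will use the usual field-of-fractions construction: let $\Delta$ be the $2$-dimensional code with domain $U_\Delta(x_1,x_2)\equiv x_2\neq 0$, equivalence $E_\Delta((x_1,x_2),(y_1,y_2))\equiv x_1y_2=x_2y_1$, and the standard formulas expressing $\tfrac{a}{b}+\tfrac{c}{d}=\tfrac{ad+bc}{bd}$, $\tfrac{a}{b}\cdot\tfrac{c}{d}=\tfrac{ac}{bd}$, $0=\tfrac{0}{1}$ (and $1=\tfrac{1}{1}$), so that $\Q\simeq\Delta(\Z)$ absolutely.

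It then remains to verify condition~2) of Definition~\ref{de:bi-inter} with empty parameter tuples. The composition $\Gamma\circ\Delta$ interprets $\Z$ in $\Z$ in dimension $\dim\Gamma\cdot\dim\Delta=2$; unwinding the $\Delta$-translation, a pair $(x_1,x_2)\in\Z^2$ lies in $U_{\Gamma\circ\Delta}$ iff $x_2\neq 0$ and $x_2\mid x_1$, and it represents the integer $x_1/x_2$. Hence the coordinate map $\mu_{\Gamma\circ\Delta}=\mu_\Gamma\circ\mu_\Delta$ is defined, without parameters, by
$$
\theta_\Z(x_1,x_2,y)\ \equiv\ x_2\neq 0\ \wedge\ x_1=x_2\cdot y .
$$
Dually, $\Delta\circ\Gamma$ interprets $\Q$ in $\Q$ in dimension $\dim\Delta\cdot\dim\Gamma=2$; a pair $(x_1,x_2)$ of elements of $\Q$ lies in $U_{\Delta\circ\Gamma}$ iff $\mathrm{Int}(x_1)\wedge\mathrm{Int}(x_2)\wedge x_2\neq 0$, and represents $x_1/x_2\in\Q$, so its coordinate map is defined, without parameters, by
$$
\theta_\Q(x_1,x_2,y)\ \equiv\ \mathrm{Int}(x_1)\ \wedge\ \mathrm{Int}(x_2)\ \wedge\ x_2\neq 0\ \wedge\ x_1=x_2\cdot y .
$$
Since in each case the round trip is literally ``form the fraction $x_1/x_2$'', the formula indeed defines the composed coordinate map $\mu_\Gamma\circ\mu_\Delta$ (resp.\ $\mu_\Delta\circ\mu_\Gamma$), which is the strengthened form of condition~2) allowed by Remark~\ref{re:1}c). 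Therefore $\Q$ and $\Z$ are absolutely bi-interpretable, and Corollary~\ref{co:bi-int-Z} yields that $\Q$ is absolutely and effectively rich.

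The only non-elementary ingredient is Robinson's theorem that $\Z$ is $0$-definable in $\Q$, which we cite; granting that, the rest is pure bookkeeping. The one point that requires a little care is to make sure the formulas $\theta_\Z$ and $\theta_\Q$ are written against the \emph{composed} codes $\Gamma\circ\Delta$ and $\Delta\circ\Gamma$ (with the correct dimensions, and with the $\Delta$- and $\Gamma$-translations of the respective domain formulas), rather than against $\Gamma$ or $\Delta$ alone --- but this is exactly the content of the composition-of-codes formalism recalled before Lemma~\ref{le:int-transitivity}, so no genuine obstacle arises.
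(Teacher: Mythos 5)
Your proposal is correct and follows essentially the same route as the paper: the paper's proof is the one-line assertion that $\Z$ and $\Q$ are absolutely bi-interpretable in each other (combined with Corollary~\ref{co:bi-int-Z}), and you simply supply the standard details of that bi-interpretation, namely Robinson's parameter-free definition of $\Z$ in $\Q$, the field-of-fractions interpretation of $\Q$ in $\Z$, and the definable coordinate maps for the two compositions. The verification of the formulas $\theta_\Z$ and $\theta_\Q$ against the composed codes is accurate, so nothing further is needed.
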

\begin{proof}
Indeed, $\Z$ and $\Q$ are absolutely bi-interpretable in each other.
\end{proof}

The results above showed how one can use absolute bi-interpretability to prove that a  structure is absolutely rich. Now we describe a method that allows one  to prove that a structure is absolutely rich via regular bi-interpretability.

\begin{lemma} \label{le:half-bi-int}
Let a structure $\MA$ is half-absolute bi-interpretable in a structure $\MB$ (see Corollary \ref{co:bi-in-0-definable}). Then if $\MB$ is absolutely rich then $\MA$ is also absolutely rich.
\end{lemma}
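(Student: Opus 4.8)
The plan is to imitate the proof of Theorem~\ref{co:bi-int-rich} (1), but to track carefully \emph{where parameters enter} and to use the half-absoluteness hypothesis to eliminate them. Recall the set-up of half-absolute bi-interpretability: $\MB \simeq \Delta(\MA,\psi)$ is a regular interpretation and $\MA \simeq_{\mu_\Gamma} \Gamma(\MB)$ is an \emph{absolute} interpretation, with $\mu_\Gamma$ compatible with a formula $\theta_\MA(\bar u, x, \bar q)$ for every $\bar q \in \psi(\MA)$. First I would fix a WSOL formula $\phi(\bar x)$ in the language $L(\MA)$ and, exactly as in the proof of Theorem~\ref{co:bi-int-rich}, construct by induction on complexity its $\Gamma$-translation $\phi_\Gamma(\bar z)$, a WSOL formula in $L(\MB)$. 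The crucial observation is that since $\MA \simeq \Gamma(\MB)$ has \emph{no parameters}, the $\Gamma$-translation introduces no parameter variables $\bar y$: $\phi_\Gamma(\bar z)$ is a genuine parameter-free WSOL formula in $L(\MB)$, and for every tuple $\bar a$ over $\MA$ one has $\MA \models \phi(\bar a) \iff \MB \models \phi_\Gamma(\mu_\Gamma^{-1}(\bar a))$.

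Next, since $\MB$ is \emph{absolutely} rich, there is a parameter-free first-order formula $\phi_\Gamma^{FO}(\bar z)$ in $L(\MB)$ equivalent to $\phi_\Gamma(\bar z)$ over $\MB$. Thus $\MA \models \phi(\bar a) \iff \MB \models \phi_\Gamma^{FO}(\mu_\Gamma^{-1}(\bar a))$. Now I want to pull $\phi_\Gamma^{FO}$ back along $\mu_\Gamma$ into a first-order $L(\MA)$-formula without acquiring parameters. This is precisely where Corollary~\ref{co:bi-in-0-definable} does the work: applying it with $S$ the set defined over $\MA$ by $\phi(\bar x)$, its preimage $\mu_\Gamma^{-1}(S)$ is defined in $\MB$ by the parameter-free first-order formula $\phi_\Gamma^{FO}$, hence is $0$-definable in $\MB$, and the Corollary yields that $S$ itself is $0$-definable in $\MA$. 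Concretely, unwinding the proof of Lemma~\ref{le:bi-in} / Corollary~\ref{co:bi-in-0-definable}, the desired $L(\MA)$-formula is
$$
\phi^{FO}(\bar x) \;=\; \forall \bar q\,\bigl(\psi(\bar q) \,\rightarrow\, \exists \bar c_1 \ldots \exists \bar c_m\,\bigl(\textstyle\bigwedge_{i=1}^m \theta_\MA(\bar c_i, x_i, \bar q) \,\wedge\, (\phi_\Gamma^{FO})_\Delta(\bar c_1, \ldots, \bar c_m, \bar q)\bigr)\bigr),
$$
and one checks, using that $\psi(\MA)\neq\emptyset$ and that $\theta_\MA(\bar u,x,\bar q)$ defines the coordinate map of $\Gamma\circ\Delta(\MA,\bar q)$ for every $\bar q\in\psi(\MA)$, that $S = \phi^{FO}(\MA)$. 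This establishes that $\MA$ is absolutely rich.

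I do not expect a serious obstacle here; the content is really just a careful bookkeeping of parameters layered on top of Theorem~\ref{co:bi-int-rich}. The one point that needs genuine care — and is the closest thing to a hard step — is verifying that the inductive construction of the $\Gamma$-translation goes through for WSOL formulas (infinite arithmetic conjunctions and disjunctions, not just first-order connectives and quantifiers) while preserving the three invariants used in Theorem~\ref{co:bi-int-rich}: same free variables, uniformly bounded complexity, and arithmeticity of the index set; this is exactly the argument already carried out in the proof of Theorem~\ref{co:bi-int-rich}, so I would simply cite it rather than repeat it. The other place to be slightly cautious is that Corollary~\ref{co:bi-in-0-definable} requires the preimages $\mu^{-1}_{\Gamma,\bar p^\ast}(S)$ to be independent of the choice of admissible parameters on the $\MB$-side; but in the half-absolute setting the $\MA\to\MB$ direction is parameter-free, so there is literally nothing to choose, and this hypothesis is automatic.
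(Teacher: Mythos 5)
Your proposal is correct and follows essentially the same route as the paper's own proof: construct the parameter-free $\Gamma$-translation of the WSOL formula as in Theorem~\ref{co:bi-int-rich}, use absolute richness of $\MB$ to replace it by a parameter-free first-order formula, and then pull the defined set back to $\MA$ via Corollary~\ref{co:bi-in-0-definable}. Your extra remarks (the explicit pulled-back formula and the observation that absoluteness of $\Gamma$ makes the well-definedness hypothesis of the Corollary automatic) are consistent with, and slightly more detailed than, the paper's argument.
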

\begin{proof}
By definition $\MA$ is half-absolute bi-interpretable in $\MB$ if they are regularly bi-interpretable   in such a way that  $\MB \simeq \Delta(\MA,\psi)$ and $\MA  \simeq \Gamma(\MB)$ is an absolute interpretation.  In our poof we follow the argument in Theorem \ref{co:bi-int-rich}. In the notation of the theorem, let   $\phi(\bar x)$ be a WSOL formula in the language of $\MA$. Consider the WSOL formula $(\phi)_\Gamma$  in thje language of $\MB$ constructed Theorem \ref{co:bi-int-rich}. It was shown in (\ref{eq:Gamma-transl})  that 

$$
 \MA\models \phi (\bar a)\iff \MB\models (\phi)_\Gamma(\mu_\Gamma^{-1} (\bar a)).
 $$
Since $\MB$ is absolutely rich there is a first-order formula $\phi^{FO}$ in the language of $\MB$ which is equivalent to $(\phi)_\Gamma$ on $\MB$. So if $S$ is the true-set of the formula $\phi$ in $\MA$ then $\mu_\Gamma^{-1}(S)$ is defined in $\MB$ by the formula $\phi^{FO}$ (without parameters). By Corollary \ref{co:bi-in-0-definable} the set $S$ is defined by some first-order formula (without parameters) in $MA$. This shows that $\MA$ is absolutely rich.  
\end{proof}

\begin{remark}
The situation of the Lemma \ref{le:half-bi-int} is typical when there is a regular bi-interpretability of a structure $\MA$ with $\MN$ or $\MZ$. Indeed, in this case, since every element of $\MN$ (or $\MZ$) is $0$-definable in $\MN$ (or $\MZ$) every interpretation (with parameters) of $\MA$ in $\MN$ (or in $\MZ$) is $0$-interpretation, so the regular bi-interpretation of $\MA$ and $\MZ$ becomes half-absolute. Moreover, as we mentioned above, the structures $\MN$ and  $\MZ$ are absolutely rich.
\end{remark}

The results above show that it is helpful  to have a large collection 
 of rich structures to show  that some other structures are rich.

\section{First-order rigidity, quasi-finite axiomatizability, primality, and homogeneity }\label{sec:qfa}

In this section we discuss the properties mentioned in the title above and their relationship  with bi-interpretability. 

\subsection{Primality and homogeneity}\label{sec:primal}

We begin by recalling some model-theoretic definitions (we refer to books \cite{Marker,Hodges} for details). For the remainder of this section $L$ is a countable language and $T$ is a complete $L$-theory with infinite models. 

A model $M$ of $T$  is a \emph{prime}\index{prime model} model of $T$ if it embeds elementarily in any model of $T$. A model $M$ of $T$ is \emph{atomic}\index{atomic model} if every type realized in $M$ is principal.
A model is $M$ is \emph{homogeneous}\index{homogeneity} if for every two tuples $\bar a, \bar b \in M^n$ ($n \in \MN$) that realise the same types in $M$ there is an automorphism of $M$ that maps $\bar a$ onto $\bar b$.
It is known that a model $M$ of $T$ is prime if and only if it is countable and  atomic. Furthermore, if $M$ is atomic then it is homogeneous. 

It is easy to see that $\MN = \langle N; +,\cdot,<,0,1\rangle$ is a prime model of $Th(\MN)$ and $\MZ = \langle Z; +,\cdot,<,0,1\rangle$ is a prime model of $Th(\MZ)$.

The following result is useful.

\begin{lemma} \label{le:prime}
Let $\MA$ and $\MB$ be infinite $L$-structures bi-interpretable (with parameters) in each other. If $\MA$ is prime in $Th(\MA)$ then $\MB$ is also prime in $Th(\MB)$.  
\end{lemma}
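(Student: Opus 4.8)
The plan is to use the characterization of prime models as countable atomic models, together with the fact that bi-interpretability transfers both countability (trivially, since interpretations produce quotients of finite Cartesian powers) and atomicity. So the proof reduces to two things: (i) if $\MA$ is countable then $\MB$ is countable, and (ii) if $\MA$ is atomic then $\MB$ is atomic. Point (i) is immediate, since $\MB \simeq \Delta(\MA,\bar q)$ realizes $\MB$ as a quotient of a definable subset of $\MA^n$, and a countable structure has only countably many such tuples. The heart of the matter is (ii).

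For atomicity, I would argue as follows. Recall $\MB$ is atomic iff every tuple $\bar b \in \MB^m$ has a principal type in $\MB$, i.e.\ is isolated by a single formula. Fix $\bar b = (b_1,\dots,b_m) \in \MB^m$. Using the interpretation $\mu_\Delta \colon \MA \rightsquigarrow \MB$, choose preimages $\bar a_i \in \mu_\Delta^{-1}(b_i) \subseteq A^{\dim\Delta}$, and assemble $\bar a = (\bar a_1,\dots,\bar a_m, \bar q)$, a tuple over $\MA$ (including the parameters $\bar q$ of the interpretation $\Delta$). Since $\MA$ is atomic, the type $\operatorname{tp}^\MA(\bar a)$ is isolated by a single $L(\MA)$-formula $\chi(\bar w, \bar z)$ (variables split to match $\bar a_i$'s and $\bar q$). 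I would then push $\chi$ forward through the coordinate map using Lemma~\ref{le:interpr_corol}: the key claim is that the $L(\MB)$-formula obtained by combining the $\Delta$-translation machinery with $\chi$ — roughly, the formula asserting ``there exist preimages $\bar w_i$ of the $x_i$ satisfying $U_\Delta$ and $\chi(\bar w,\bar q)$'' — isolates $\operatorname{tp}^\MB(\bar b)$. One direction (that $\bar b$ satisfies this formula) is clear; for the other, I would need that any $\bar b'$ satisfying it has the same $L(\MB)$-type as $\bar b$, which follows because its chosen preimages realize $\operatorname{tp}^\MA(\bar a)$ (as $\chi$ isolates that type), hence by Lemma~\ref{le:interpr_corol} and the bi-interpretability data (the definable isomorphism $\theta_\MB$ recovering $\MB$ from $\Delta\circ\Gamma(\MB,\bar q^\ast)$) $\bar b'$ and $\bar b$ satisfy exactly the same $L(\MB)$-formulas. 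Here is precisely where I need the full strength of \emph{bi}-interpretability rather than mere mutual interpretability: the formula $\theta_\MB$ lets me express, inside $\MB$, the coordinate map relating $\bar b$ to its preimages, so that equality of $L(\MA)$-types of preimages upgrades to equality of $L(\MB)$-types of the original tuples.

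The main obstacle, and the step to be careful about, is the bookkeeping with parameters. The parameters $\bar q$ of the interpretation $\Delta$ are themselves elements of $\MA$, so when I say ``$\MA$ is atomic, hence $\operatorname{tp}^\MA(\bar a,\bar q)$ is principal'' I must include $\bar q$ in the tuple and then, in pushing forward to $\MB$, either eliminate these parameters or re-expand them via the $\Gamma$-side of the bi-interpretation. The clean way is to work with the tuple $\bar a^\ast$ over $\MA$ consisting of preimages of $\bar b$ \emph{together with} a preimage $\bar p^\ast$ of the parameter data, invoke principality of its type in $\MA$, and then use Lemma~\ref{le:bi-interpret-6} / the Corollary following it (which provides, definably in $\MB$, the isomorphism $\Delta\circ\Gamma(\MB,\bar q^\ast) \to \MB$ for the relevant parameter tuples) to absorb everything back into a single $L(\MB)$-formula in the free variables corresponding to $\bar b$ only. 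I would also remark that the statement as given does not claim effectivity or preservation of the \emph{witness} of atomicity uniformly, only the existence of an isolating formula for each tuple, so the argument is a ``pointwise'' one and no uniformity in $m$ is needed. Finally, since a prime model is countable and atomic and conversely, combining (i) and (ii) gives that $\MB$ is prime in $Th(\MB)$, completing the proof.
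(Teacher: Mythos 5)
Your route (prime $=$ countable $+$ atomic, then transferring atomicity by pushing isolating formulas through the interpretations) is different from the paper's, which disposes of the lemma by a terse reduction to Lemma~\ref{co:interp}; the difference of route is fine, but your argument has a genuine gap at exactly the point you yourself flag as ``bookkeeping'': the elimination of the parameters. Atomicity requires each realized type to be isolated by a formula \emph{without} parameters. With the parameters $\bar q^\ast=(\bar q_0,\bar p)$ held fixed, your formula (essentially $\exists\bar v\,(\bigwedge_i\theta_\MB(\bar v_i,x_i,\bar q^\ast)\wedge\chi_\Gamma(\bar v,\bar p))$; note in passing that the translation you need here is the $\Gamma$-translation of $\chi$, not the $\Delta$-translation) does have the property that all of its solutions share the $L(\MB)$-type of $\bar b$ --- that part of your verification is correct, granting the compatibility of $\theta_\MB$ with $\mu_\Delta\circ\mu_\Gamma$ from Remark~\ref{re:1}. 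But a formula \emph{with} parameters whose solution set is contained in a single type class establishes nothing: the formula $\bar x=\bar b$ already has that property. So this step cannot by itself yield atomicity of $\MB$.

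To obtain a parameter-free formula you must turn the parameters into quantified variables, and this is where the proposed fix via Lemma~\ref{le:bi-interpret-6} breaks down. If $(\bar b',\bar d')$ satisfies the existentially quantified formula, the conjunct $\sigma_\MB(\bar d')$ guarantees only that $\Gamma(\MB,\bar p')$ is a \emph{well-defined} $L(\MA)$-structure and that $\theta_\MB(\cdot,\cdot,\bar d')$ defines an isomorphism $\Delta\circ\Gamma(\MB,\bar d')\to\MB$; it does not guarantee $\Gamma(\MB,\bar p')\models Th(\MA)$. Since ``$\chi$ isolates $\operatorname{tp}^{\MA}(\bar a)$'' is isolation \emph{relative to $Th(\MA)$}, from $\Gamma(\MB,\bar p')\models\chi(\dots)$ alone you cannot conclude that the primed codes satisfy every formula of that type, hence cannot conclude $\operatorname{tp}^{\MB}(\bar b',\bar d')=\operatorname{tp}^{\MB}(\bar b,\bar d)$. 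The real content of the lemma is precisely that the type of the parameter tuple $\bar q^\ast$ itself is isolated (if it is not, then $\MB$ fails to be atomic already on the tuple $\bar b:=\bar q^\ast$), and your argument never establishes this; notice that the paper's own parameter-free transfer statements (Lemma~\ref{le:bi-in-0-definable} and Corollary~\ref{co:bi-in-0-definable}) assume regular or half-absolute bi-interpretability for exactly this reason, whereas the present lemma only assumes bi-interpretability with parameters. So a further idea is needed at this point --- some way of pinning down admissible parameter tuples so that alternative witnesses give configurations elementarily equivalent to the original one --- rather than invoking isolation in $Th(\MA)$ inside an interpreted structure that need not be a model of $Th(\MA)$.
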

\begin{proof}
Follows from Lemma \ref{co:interp}.
\end{proof}

\begin{cor}
If an infinite structure $\MA$ is bi-interpretable with $\MZ$ then $\MA$ is prime in $Th(\MA)$, hence it is atomic and homogeneous.
\end{cor}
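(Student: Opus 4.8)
The corollary follows immediately by combining three facts already established in the excerpt. First, $\MZ = \langle Z; +,\cdot,<,0,1\rangle$ is a prime model of $\Th(\MZ)$, as noted right before Lemma~\ref{le:prime}. Second, Lemma~\ref{le:prime} states that primality is transferred between infinite structures that are bi-interpretable (with parameters) in each other. Third, by hypothesis $\MA$ is bi-interpretable with $\MZ$, and $\MA$ is infinite (any structure bi-interpretable with an infinite structure is infinite, since a finite structure cannot interpret an infinite one). Hence the plan is simply: apply Lemma~\ref{le:prime} with $\MB = \MA$ and the role of ``$\MA$'' in that lemma played by $\MZ$, concluding that $\MA$ is prime in $\Th(\MA)$.

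From primality the remaining two conclusions are standard model theory, already recalled in Section~\ref{sec:primal}: it is stated there that a model $M$ of a complete theory $T$ is prime if and only if it is countable and atomic, so in particular a prime model is atomic; and it is also stated that if $M$ is atomic then it is homogeneous. Therefore $\MA$ prime $\Rightarrow$ $\MA$ atomic $\Rightarrow$ $\MA$ homogeneous, which gives the full statement of the corollary.

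There is essentially no obstacle here; the only point requiring a word is why $\MA$ is infinite, which is needed to invoke Lemma~\ref{le:prime} (whose hypothesis is about infinite structures). This is immediate: $\MZ$ is infinite, and if $\MA$ were finite then $\MZ$, being interpretable in $\MA$, would be a quotient of a definable subset of some $A^n$ and hence finite, a contradiction. With that observation the proof is just the three-step chain above, so I would write it as a one- or two-line proof: ``By the remark preceding Lemma~\ref{le:prime}, $\MZ$ is prime in $\Th(\MZ)$; since $\MA$ is infinite and bi-interpretable with $\MZ$, Lemma~\ref{le:prime} gives that $\MA$ is prime in $\Th(\MA)$; a prime model is countable and atomic, and an atomic model is homogeneous, so $\MA$ is atomic and homogeneous.''
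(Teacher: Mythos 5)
Your proposal is correct and follows exactly the route intended by the paper: the corollary is an immediate consequence of Lemma~\ref{le:prime} (applied with $\MZ$, which is noted to be prime in $Th(\MZ)$) together with the standard facts recalled in Section~\ref{sec:primal} that a prime model is countable and atomic and that an atomic model is homogeneous. Your extra remark on why $\MA$ is infinite is harmless but unnecessary, since infiniteness of $\MA$ is already part of the hypothesis of the corollary.
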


The following result is often useful when proving that a finitely generated structure is homogeneous. To state it we need to recall a few notions and definitions. 

An $L$-structure $\MA$ is called \emph{Hopfian} if every epimorphism $\MA \to \MA$ is an automorphism. $\MA$ is \emph{equationally Noetherian} if for every $n \in \MN$ every system of equations in variables $\{x_1, \ldots,x_n\}$ and coefficients from $\MA$ is equivalent over $\MA$ to a finite subsystem.
A finitely generated structure $\MA$ has \emph{definable $n$-generation} for a given $n \in \MN$ if there is a first-order $L$-formula $\phi(x_1, \ldots,x_n)$ such that there is a generating $n$-tuple $\bar{a} = (a_1, \ldots,a_n)$ of $\MA$ such that $\MA \models \phi(a_1, \ldots,a_n)$ and for any $n$-tuple $\bar{b} = (b_1, \ldots,b_n)$  of elements of $\MA$ if $\MA \models \phi(b_1, \ldots,b_n)$ then the tuple $b$ generates $\MA$.

\begin{theorem}
Let $\MA$ be a finitely generated $L$-structure  which satisfies the following conditions:
\begin{itemize}
    \item $\MA$ is either finitely presented or equationally Noetherian,
    \item $\MA$ is Hopfian,
    \item $\MA$ has definable generation for some $n \in \MN$.
\end{itemize}
Then $\MA$ is prime. 
\end{theorem}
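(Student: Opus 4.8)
The plan is to show that $\MA$ is countable and atomic, which by the standard characterization recalled just above the statement is equivalent to being prime. Countability is immediate since $\MA$ is finitely generated in a countable language. So the real work is to prove that $\MA$ is atomic, i.e., that every type realized in $\MA$ is principal; equivalently, for every tuple $\bar a = (a_1,\dots,a_k)$ of elements of $\MA$ there is a formula $\chi_{\bar a}(\bar x)$ satisfied by $\bar a$ that isolates $\mathrm{tp}(\bar a)$. Since $\MA$ is generated by some finite tuple, it suffices by a routine argument to handle the generating tuple: if $\bar g$ is a generating $n$-tuple whose type is principal, then every element of $\MA$ is $w(\bar g)$ for some term $w$, and the type of an arbitrary tuple $\bar a = (w_1(\bar g),\dots,w_k(\bar g))$ is isolated by the formula asserting the existence of a tuple satisfying the isolating formula for $\bar g$ together with the corresponding term equations. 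So I reduce to: \emph{the generating $n$-tuple promised by definable $n$-generation has principal type.}

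First I would fix a formula $\phi(x_1,\dots,x_n)$ witnessing definable $n$-generation, together with a generating tuple $\bar g$ with $\MA \models \phi(\bar g)$, so that every tuple satisfying $\phi$ in $\MA$ generates $\MA$. The candidate isolating formula for $\mathrm{tp}(\bar g)$ will be $\phi$ itself (possibly conjoined with finitely many more atomic facts). To see it isolates the type, take any model $\MB \equiv \MA$ and any tuple $\bar b$ in $\MB$ with $\MB \models \phi(\bar b)$; I must show $\mathrm{tp}^{\MB}(\bar b) = \mathrm{tp}^{\MA}(\bar g)$, i.e., $\bar b$ satisfies in $\MB$ exactly the formulas $\bar g$ satisfies in $\MA$. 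The subgroup (substructure) of $\MB$ generated by $\bar b$ is the homomorphic image of $\MA$ under the map $\bar g \mapsto \bar b$ extended along terms; call this substructure $\MB_0 = \langle \bar b \rangle \leq \MB$ and let $\pi \colon \MA \twoheadrightarrow \MB_0$ be the induced epimorphism. The goal becomes to show $\pi$ is an isomorphism and that $\MB_0 = \MB$.

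The hypotheses are exactly what is needed here. Using that $\MA$ is finitely presented or equationally Noetherian, the relations among $\bar b$ in $\MB$ are, up to the ones forced by the finitely many defining relations (resp.\ a finite subsystem equivalent over $\MA$ to the full system of relations of $\bar g$), consequences of relations holding in $\MA$ — so one can arrange, by including those finitely many relations into $\phi$, that there is also an epimorphism $\MB_0 \to \MA$ sending $\bar b \mapsto \bar g$. Composing gives an epimorphism $\MA \to \MA$, which by the Hopfian hypothesis is an automorphism; unwinding, $\pi$ is an isomorphism $\MA \cong \MB_0$. It remains to see $\MB_0 = \MB$: but $\MB \equiv \MA$, and $\MA \models \exists \bar x\, \phi(\bar x)$, and more relevantly $\MA$ is finitely generated — one argues that $\MA \models \forall \bar x\,(\phi(\bar x) \to \psi)$ for every sentence $\psi$ true in $\MA$ together with the sentence saying $\phi$-witnesses generate, giving $\MB_0 \preceq \MB$ via a back-and-forth / quantifier-by-quantifier translation, and then $\MB_0 \cong \MA \equiv \MB$ with $\MB_0 \preceq \MB$ forces $\MB_0 = \MB$. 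Hence $\bar b$ and $\bar g$ realize the same type, $\phi$ (augmented with finitely many relations) isolates $\mathrm{tp}(\bar g)$, and $\MA$ is atomic, hence prime.

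The main obstacle is the step where one passes from "$\bar b$ satisfies $\phi$ in an elementarily equivalent $\MB$" to "there is a reverse epimorphism $\MB_0 \to \MA$," because a priori $\MB_0$ could satisfy extra relations that $\bar g$ does not. This is precisely where \emph{finite presentability or equational Noetherianity} must be used: it guarantees that the full (possibly infinite) set of relations of $\bar g$ over $\MA$ is captured by a single first-order formula, which can be folded into $\phi$, so that any $\phi$-tuple in any model of $Th(\MA)$ has \emph{exactly} the relations of $\bar g$ and no more. Verifying carefully that a finite subsystem suffices uniformly — i.e.\ that equational Noetherianity applies at the level of the theory and not just of $\MA$ — together with checking $\MB_0 \preceq \MB$, is where the genuine content lies; the rest is bookkeeping with terms and the definitions recalled above.
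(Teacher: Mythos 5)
Your within-$\MA$ ingredients are the same as the paper's (a finite relations formula $R$ coming from finite presentability or equational Noetherianity, generation certified by $\phi$, and Hopficity turning the induced self-surjection into an automorphism), and the reduction to the generating tuple is harmless. The gap is exactly at the place you label "the main obstacle" and then dismiss. Finite presentability and equational Noetherianity control only one direction: they yield a finite $R$ such that any tuple $\bar b$ satisfying $R$ in a model of $Th(\MA)$ satisfies \emph{at least} all the relations of $\bar g$ (for the Noetherian case one also transfers the sentences $\forall \bar x\,(R(\bar x)\to s(\bar x))$, $s$ a relation of $\bar g$, from $\MA$ to $\MB$), i.e.\ they give the epimorphism $\MA \twoheadrightarrow \MB_0$. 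They say nothing about the negative atomic part of $\mathrm{tp}(\bar g)$: there are infinitely many inequations $t_1(\bar g)\neq t_2(\bar g)$, and no hypothesis of the theorem gives a finite or first-order bound on them, so nothing in $\phi\wedge R$ prevents $\bar b$ from satisfying \emph{extra} relations in $\MB$. Hence the reverse epimorphism $\MB_0 \to \MA$ is unjustified, and without it you cannot form the composite self-epimorphism of $\MA$ to which Hopficity is applied; the argument presupposes what it is meant to prove. Inside $\MA$ this problem disappears, because there the induced surjection is a self-map of $\MA$ and Hopficity itself excludes extra relations; in an arbitrary $\MB\equiv\MA$ it does not.

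The second unsupported step is $\MB_0\preceq\MB$: the "back-and-forth / quantifier-by-quantifier translation" is not an argument, and the natural route to elementarity of $\MB_0$ in $\MB$ is to already know that $\bar b$ realizes the full type of $\bar g$ (then $t(\bar g)\mapsto t(\bar b)$ is an elementary embedding), which is precisely the statement being proved — so the plan is circular there as well. (Also, $\MB_0\cong\MA\equiv\MB$ together with $\MB_0\preceq\MB$ does not force $\MB_0=\MB$; proper elementary substructures exist. Equality is not actually needed, but the slip reflects the same circularity.) The paper avoids both difficulties by never leaving $\MA$ for the structural argument: for each tuple $c=\bar t(\bar g)$ it forms $\psi_c(\bar y)=\exists\bar x\,(\bigwedge_i y_i=t_i(\bar x)\wedge\phi(\bar x)\wedge R(\bar x))$, shows that every $\psi_c$-tuple \emph{in $\MA$ itself} is an automorphic image of $c$ (homogeneity), and then proves isolation by contradiction: if $\mathrm{tp}(c)$ were non-principal, some model of $Th(\MA)$ would omit it, and transferring the single existential sentence $\exists\bar y\,(\psi_c(\bar y)\wedge\neg\theta(\bar y))$ back to $\MA$ contradicts homogeneity. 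To repair your outline, replace the "arbitrary $\MB$" analysis by this within-$\MA$ homogeneity argument combined with the omitting types theorem.
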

\begin{proof}
We will prove the theorem for groups, leaving the general case to the reader. 

The structure $\MA$ has definable $n$-generation for some $n \in \MN$, so there is a first-order $L$-formula $\phi(x_1, \ldots,x_n)$ and a generating $n$-tuple $\bar{a} = (a_1, \ldots,a_n)$ in $\MA$ such that $\MA \models \phi(\bar{a})$ and for any $n$-tuple $\bar{b} = (b_1, \ldots,b_n)$  of elements of $\MA$ if $\MA \models \phi(\bar{b})$, then the tuple $\bar{b}$ generates $\MA$.
 Now we describe a first-order $L$-formula $R(x_1, \ldots,x_n)$ in variables $\bar{x}= (x_1, \ldots,x_n)$.
 
 Case 1. Suppose  $\MA$ is finitely presented and 
 $$
 \MA = \langle a_1, \ldots,a_n \mid r_1 = s_1, \ldots, r_k = s_k \rangle ,
 $$
where $r_i, s_i$ are $L$-terms in $a_1, \ldots,a_n$, is a finite presentation.  Then we put
\begin{equation} \label{eq:R(X)}
   R(\bar{x}) = \bigwedge_{i = 1}^m (r_i(\bar{x}) = s_i(\bar{x})). 
\end{equation}

Case 2. Suppose $\MA$ is equationally Noetherian. Consider an $L$-system $S$ of all equations $r(x_1, \ldots,x_n) = s(x_1, \ldots,x_n)$  without coefficients which are true for $x_1 = a_1, \ldots,x_n = a_n$.  Since $\MA$ is equationally Noetherian the system $S$ is equivalent over $\MA$ to a finite subsystem $r_1 = s_1, \ldots, r_k = s_k$. Again we put $R(\bar{x})$ as in (\ref{eq:R(X)}). Clearly, in both cases the tuple $\bar{a}$ satisfies $R(\bar{x})$ in $\MA$. 
 
 Let $c = (c_1, \ldots,c_m)$ be an arbitrary tuple of elements of $\MA$. Then there are $L$-terms $t_1, \ldots, t_m$ in variables $\bar{x} =(x_1, \ldots,x_n)$ such that $c_i = t_i(\bar{a}), i = 1, \ldots,m.$ Then the tuple $c$ satisfies the following first-order formula
$$
\psi_c(y_1, \ldots y_m) = \exists \bar{x} \left(  \bigwedge_{i = 1}^m (y_i = t_i(\bar{x})) \wedge \phi(\bar{x}) \wedge R(\bar{x})\right).
$$
Suppose now that a tuple $d = (d_1, \ldots,d_m)$ satisfies the formula $\psi_c$ in $\MA$. Then there is a tuple $\bar{b}$  in $\MA$ such that $\MA \models \phi(\bar{b})$,  $\MA \models R(\bar{b})$, and $d_i = t_i(\bar{b}), i = 1, \ldots,m$. It follows that $\bar{b}$ is a generating tuple of $\MA$, which satisfies all the relations in the formula $R(\bar{x})$. It also follows that the map $a_1  \to b_1, \ldots,a_n \to b_n$ extends to a homomorphism $\alpha\colon  \MA \to \MA$ which is onto. Since the structure $\MA$ is Hopfian the homomorphism $\alpha$ is an automorphism of $\MA$ that maps $c$ to $d$. This shows that $\MA$ is homogeneous. Moreover, we claim that the type $tp(c)$ of $c$ in $\MA$ is principal (isolated) and is generated by the formula $\psi_c$. Indeed, if not, then there is a model $\MB$ of the first-order theory of $\MA$ which omits $tp(c)$, so there is a formula $\theta(x_1, \ldots,x_m) \in tp(c)$ and a tuple $e$ in $\MB$ such that $\MB \models \psi_c(e) \wedge \neg \theta(e)$. Then the formula $\exists x_1, \ldots,x_m (\psi_c(x_1,\ldots, x_m) \wedge \neg \theta(x_1,\ldots, x_m))$ holds in $\MB$, hence in $\MA$. Therefore, there is a tuple $e'$ in $\MA$ such that $\MA \models  \psi_c(e')\wedge \neg \theta(e')$. This implies that there is an automorphism $\alpha \in Aut(\MA)$ such that $\alpha(c) = e'$~--- contradiction, because $c$ satisfies the formula $\theta$ in $\MA$, but $e'$ does not. This proves that the type $tp(c)$ is principal. Since $c$ is an arbitrary tuple in $\MA$ the structure $\MA$ is atomic, hence prime.

This proves the theorem

\end{proof}

\subsection{First-order rigidity and quasi-finite axiomatizability}

\begin{definition}
A finitely generated $L$-structure $\MA$ is called \emph{first-order rigid} if for any finitely generated $L$-structure $\MB$ first-order equivalence $\MA \equiv \MB$ implies isomorphism $\MA \simeq \MB$.
\end{definition}

\begin{definition}	Fix a finite signature. An infinite finitely generated  structure is Quasi Finitely Axiomatizable (QFA)\index{QFA model} if there exists a first-order sentence $\phi$ of the signature such that 
	\begin{itemize}
		\item $\MA\models \phi$
		\item If $\MB$ is a finitely generated structure in the same signature and $\MB\models \phi$ then $\MA\cong \MB$.
\end{itemize} \end{definition}

Recall that if $X$ is a subset of an $L$-structure $\MA$ then the \emph{diagram} of $X$ in $\MA$ (denoted $D_X(\MA)$) is the set of all atomic sentences in the language $L$ with  constants from $X$ and their negations  that hold in $\MA$. If $G$ is a group and $X \subseteq G$ then $D_X(G)$ describes the multiplication table of the subgroup of $G$ generated by $X$. 

A countable $L$-structure $\MA = \langle A,L\rangle$ is called \emph{arithmetic}  if the set of the Godel's numbers of the  diagram $D_A(\MA)$  of $\MA$   is an arithmetic subset  of $\MN$. If the structure $\MA$ is generated by a finite  set $S$ then it suffices to consider effective enumerations of $\MA$ via the terms of the language $L$ with constants from $S$, in this case $\MA$ is arithmetic if the diagram $D_S(\MA)$ is an arithmetic subset of $\MN$. In particular, a  group $G$ generated by a finite set $S$  is arithmetic if and only if the word problem $W(G,S)$  of $G$ relative to the set $S$ is arithmetic. Here $W(G,S)$ is the  set of  all group words in the generators $S$ which are equal to 1 in $G$. $W(G,S)$ is arithmetic if there is an effective enumeration $\nu\colon  M_X \to \MN$ of all words in the alphabet  $S \cup S^{-1}$  such that the set $\nu(W(G,S))$ is arithmetic.  

\begin{lemma}
Let $\MA$ be a structure interpretable in $\MZ$ or $\MN$. Then $\MA$ is arithmetic.
\end{lemma}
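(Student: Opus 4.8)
The plan is to combine the translation lemma (Lemma~\ref{le:interpr_corol}) with the elementary fact that every subset of a power of $\MN$ (or $\MZ$) that is first-order definable with parameters lies at a finite level of the arithmetical hierarchy, hence is an arithmetic set (each parameter is itself $0$-definable in $\MN$ and in $\MZ$). First I would reduce to the case $\MB=\MN$: if $\MA$ is interpretable (with or without parameters) in $\MZ$, then since $\MZ$ is absolutely interpretable in $\MN$ by Lemma~\ref{le:bi-int-Z-Z}, transitivity of interpretations (Lemma~\ref{le:int-transitivity}) shows $\MA$ is interpretable in $\MN$. So write $\MA\simeq\Gamma(\MN,\bar p)$ with $n=\dim\Gamma$, $\bar p\in\MN^k$, and coordinate map $\mu_\Gamma\colon U_\Gamma(\MN^n,\bar p)\to A$. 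Substituting the fixed tuple $\bar p$, the sets $U_\Gamma(\MN^n,\bar p)$, the equivalence relation $\sim_\Gamma$ defined by $E_\Gamma(\cdot,\cdot,\bar p)$, and the graphs of the interpreted constants, functions and predicates defined by $c_\Gamma,f_\Gamma,P_\Gamma$ are all first-order definable in $\MN$ and therefore arithmetic subsets of the relevant powers of $\MN$.

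Next I would fix a standard effective coding of finite tuples of naturals by naturals and single out \emph{canonical representatives}: call $\bar b\in\MN^n$ canonical if $\bar b\in U_\Gamma(\MN^n,\bar p)$ and no tuple $\bar b'$ with strictly smaller code satisfies $\bar b'\in U_\Gamma(\MN^n,\bar p)$ together with $E_\Gamma(\bar b,\bar b',\bar p)$. This is an arithmetic condition, being obtained from the arithmetic sets above by bounded quantification and one unbounded quantifier, and each $\sim_\Gamma$-class contains exactly one canonical representative. Enumerating the canonical representatives by increasing code yields, via $\mu_\Gamma$, an enumeration of $A$, and I would use it to Gödel-number the new constants $c_a$ for $a\in A$, hence all atomic sentences of $L\cup\{c_a:a\in A\}$ and their negations. (Recovering the canonical representative coding a given $a$ from the code of $a$ is arithmetic.)

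Finally, since $L$ is finite an atomic sentence has one of finitely many shapes, namely $P(c_{a_1},\dots,c_{a_{n_P}})$, $f(c_{a_1},\dots,c_{a_{n_f}})=c_{a_0}$, $c_{a_i}=c_{a_j}$, or $c=c_{a_i}$ with $P,f,c\in L$. By Lemma~\ref{le:interpr_corol}(1) the truth of such a sentence in $\MA$ is equivalent to $\MN\models\varphi_\Gamma(\bar b_1,\dots,\bar p)$, where the $\bar b_i$ are the canonical representatives coding the $a_i$ and $\varphi_\Gamma$ is the effectively computed $\Gamma$-translation, a fixed first-order formula of arithmetic. As $\varphi_\Gamma(\cdot,\bar p)$ defines an arithmetic set, passing from codes of the $a_i$ to the $\bar b_i$ is arithmetic, and there are only finitely many shapes, the set of Gödel numbers of atomic sentences true in $\MA$ is arithmetic; likewise for the negations, so $D_A(\MA)$ is arithmetic. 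In the finitely generated case one may run the same argument with the enumeration of $\MA$ by $L$-terms over a finite generating set, with no change.

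I do not expect a genuine obstacle: the only point needing care is producing an \emph{arithmetic} Gödel numbering of $A$, which is exactly what the canonical-representative device supplies, together with the remark that the parameters $\bar p$ are harmless because they are definable in $\MN$ (and in $\MZ$), so definability with parameters over these structures coincides with plain definability, hence with being arithmetic.
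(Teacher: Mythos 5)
Your proposal is correct and follows essentially the same route as the paper's (much terser) proof: interpret $\MA$ in $\MN$, observe that the universe $U_\Gamma(\MN^n,\bar p)$, the equivalence relation and the graphs of the interpreted operations and predicates are arithmetic, choose representatives of the classes in an arithmetic way, and conclude that the diagram with respect to the induced enumeration of $A$ is arithmetic. Your canonical-representative device and the explicit handling of the $\MZ$ case and of the parameters $\bar p$ simply make precise what the paper leaves implicit.
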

\begin{proof}
Let $\MA = \Gamma(\MN,\bar p)$. Then there is $k\in \MN$,  a definable in $\MN$ subset $A^\ast  \subseteq  N^k$ and a definable in $\MN$ equivalence relation $=_\Gamma$ such that  $A^\ast /=_\Gamma$ is the universe of $\Gamma(\MN,\bar p)$. One can effectively  enumerate tuples in $N^k$, then effectively enumerate tuples in $A^\ast $,  and then effectively enumerate some set of representatives of the equivalence classes of $A^\ast /=_\Gamma$. Since the basic operations and predicates in $\Gamma(\MN,\bar p)$ are definable in $\MN$ the diagram of $\Gamma(\MN,\bar p)$ with respect to the enumeration of chosen set of representatives is arithmetic in $\MN$. This proves the lemma.
\end{proof}

\begin{theorem} \label{th:QFA}  Let $\MA$ be a  rich  structure in a finite signature. If $\MA$ is generated by a finite set $S$ with an arithmetic diagram $D_S(\MA)$  then $\MA$ is QFA. \end{theorem}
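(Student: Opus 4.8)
The plan is to reconstruct $\MA$ from a single first-order sentence by combining three ingredients: the richness of $\MA$, the arithmeticity of its diagram, and the fact that a finite generating tuple together with the relations it satisfies pins down $\MA$ among finitely generated structures. First I would fix a finite generating tuple $\bar s = (s_1,\ldots,s_k)$ of $\MA$ with $D_S(\MA)$ arithmetic. Since $\MA$ is generated by $\bar s$, every element of $\MA$ is the value $t(\bar s)$ of some $L$-term $t$, and the set $W$ of pairs of $L$-terms $(t, t')$ with $t(\bar s) = t'(\bar s)$ in $\MA$ is, by hypothesis, an arithmetic subset of $\MN$ under a fixed effective G\"odel numbering of terms. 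The key observation is that the statement ``the tuple $\bar x$ generates a structure isomorphic to $\MA$ via $s_i \mapsto x_i$'' can be phrased as: for all pairs of terms $(t,t')$, the equality $t(\bar x) = t'(\bar x)$ holds in the ambient structure if and only if $(t,t') \in W$, together with the surjectivity statement that every element is a term value in $\bar x$. The first half of this is an infinite conjunction of first-order formulas indexed by an arithmetic set, hence (essentially) a formula of $L_{WSOL}$; richness then converts it to an honest first-order formula with parameters.

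More precisely, I would proceed as follows. Using the effective enumeration of $L$-terms, write down the WSOL formula
$$
\Psi(\bar x) \;=\; \Bigl(\bigwedge_{(t,t')\in W} t(\bar x)=t'(\bar x)\Bigr) \wedge \Bigl(\bigwedge_{(t,t')\notin W} \neg\bigl(t(\bar x)=t'(\bar x)\bigr)\Bigr) \wedge \bigl(\forall y\, \exists\text{-term-value } y = t(\bar x)\bigr).
$$
The first two conjunctions range over arithmetic index sets (since $W$ and its complement are arithmetic), so by the discussion of $L_{WSOL}$ in Section~\ref{se:weak-second-order} they are legitimate $L_{WSOL}$-formulas; the surjectivity clause ``$y$ is a term value of $\bar x$'' is itself expressible in WSOL because quantification over finite objects (here, over a term, or equivalently over a finite computation witnessing $y = t(\bar x)$) is available. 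Thus $\Psi(\bar x)$ is a WSOL formula in the free variables $\bar x = (x_1,\ldots,x_k)$. Since $\MA$ is rich, there is a first-order $L$-formula $\Psi^{\diamond}(\bar x, \bar p)$ with parameters $\bar p$ over $\MA$ that is equivalent to $\Psi(\bar x)$ on $\MA$. Because $\bar p$ itself is a tuple of term values $\bar p = \bar u(\bar s)$, I would finally set
$$
\phi \;=\; \exists \bar x\, \exists \bar z\, \Bigl( \Psi^{\diamond}(\bar x, \bar z) \wedge \bar z = \bar u(\bar x) \wedge \forall \bar x'\,\forall \bar z'\,\bigl( \Psi^{\diamond}(\bar x',\bar z') \wedge \bar z' = \bar u(\bar x') \rightarrow \text{``}\bar x \text{ and }\bar x'\text{ generate the same substructure''}\bigr)\Bigr),
$$
or, more simply, take $\phi$ to be the sentence asserting that there exists a tuple $\bar x$ satisfying $\Psi^{\diamond}(\bar x, \bar u(\bar x))$ and that every element of the ambient structure is a term value of such an $\bar x$. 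The point is that $\MA \models \phi$ (witnessed by $\bar s$), and if $\MB$ is a finitely generated $L$-structure with $\MB \models \phi$, then the witnessing tuple $\bar b$ in $\MB$ generates $\MB$ and satisfies exactly the relations recorded by $W$ (the forward direction of $\Psi$ gives all relations in $W$; since $\MB$ is finitely generated by $\bar b$ the backward direction, encoded through $\Psi^\diamond$ holding in $\MA$, forces that no extra relations hold), so $s_i \mapsto b_i$ extends to an isomorphism $\MA \to \MB$. Hence $\MA$ is QFA.

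The main obstacle I anticipate is the careful handling of the parameters $\bar p$ introduced by richness: $\phi$ must be a \emph{sentence} with no parameters, so one has to internalize $\bar p$ as $\bar u(\bar s)$ and quantify it out, while ensuring that in an arbitrary model $\MB$ of $\phi$ the parameters chosen there play the same structural role. This is the standard ``$\phi^\diamond$ with parameters $\Rightarrow$ parameter-free sentence by substituting term values of the generators'' manoeuvre, but it requires that $\Psi^\diamond(\bar x,\bar z)$ be used only in the regime where $\bar z$ equals the designated term-value tuple of $\bar x$, and that the defining property of $\bar s$ (``$\Psi$ holds'') be matched on the nose by ``$\Psi^\diamond$ holds'' — which is exactly what richness guarantees on $\MA$, and which transfers to $\MB$ because $\MB \models \phi$ was engineered to assert the first-order surrogate $\Psi^\diamond$, not $\Psi$ itself. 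A secondary, purely bookkeeping point is to confirm that ``every element is a term value of $\bar x$'' is first-order expressible once we know $\MB$ is finitely generated — but this is not needed inside $\phi$; rather it is used in the meta-argument, where finite generation of $\MB$ is a hypothesis, so the infinitary surjectivity clause is only ever evaluated in $\MA$ (where richness applies) and in $\MB$ (where finite generation makes the witnessing tuple actually generate). Modulo these parameter-elimination details, the proof is a direct assembly of richness (to first-orderize the arithmetic relation set), arithmeticity of the diagram (to make that set an admissible $L_{WSOL}$ index set), and the Hopfian-free finite-generation argument (to upgrade ``same relations, generating tuple'' to ``isomorphic'').
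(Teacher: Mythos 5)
There is a genuine gap at the heart of your argument: richness gives you a first-order formula $\Psi^{\diamond}(\bar x,\bar p)$ equivalent to the infinitary formula $\Psi(\bar x)$ \emph{only on $\MA$}. That equivalence is a fact about one particular structure and is not itself expressible by a first-order sentence (one side is an infinite conjunction), so it cannot be transferred to another finitely generated structure $\MB$ by putting $\Psi^{\diamond}$ inside your sentence $\phi$. If $\MB\models\phi$ and $\bar b$ is a witnessing tuple, all you know is that $\bar b$ satisfies a certain first-order formula; nothing forces $\bar b$ to satisfy exactly the relations recorded by the arithmetic set $W$, because in $\MB$ the formula $\Psi^{\diamond}$ has lost its connection with $W$. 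Your sentence ``the backward direction, encoded through $\Psi^{\diamond}$ holding in $\MA$, forces that no extra relations hold'' is exactly the unsupported step: what holds in $\MA$ about the meaning of $\Psi^{\diamond}$ imposes no constraint on its meaning in $\MB$. (A smaller but related slip: finite generation of $\MB$ does not make an arbitrary witnessing tuple generate $\MB$; generation of $\MB$ by $\bar b$ also has to be forced by the sentence itself.)

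This is precisely why the paper's proof is much longer than ``first-orderize the diagram via richness.'' The paper uses richness to get a (regular) bi-interpretation of $G$ with the list superstructure $S(G,\MN)$, and then writes explicit sentences ($\Sigma$, $\Phi_{group}$, $\mathcal{F}$ of Lemmas~\ref{le:S(A,N)-1} and \ref{le:S(A,N)-2}) whose $\Gamma$-translations, when satisfied by a \emph{finitely generated} $H$, force the interpreted three-sorted structure inside $H$ to have \emph{standard} arithmetic, i.e.\ to be $S(H,\MN)$. Only after the internal arithmetic is pinned down to $\MN$ can the arithmetic formula $D(x)$ defining $\nu(W(G,\bar g))$ be evaluated inside $H$ with its intended meaning, so that the sentence $Iso_G^{\Gamma}$ genuinely says ``$H$ has a generating $n$-tuple with the same word problem as $\bar g$ in $G$,'' yielding the isomorphism. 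To repair your proposal you would need an analogous mechanism: your sentence must interpret inside $\MB$ a copy of arithmetic together with a definable term-evaluation predicate, and must use finite generation of $\MB$ to rule out non-standard models of that arithmetic; the parameter-elimination manoeuvre you describe does not address this and is not where the difficulty lies.
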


We prove the theorem in three steps, starting with the following lemmas.

 Let $\MA = \langle A;L \rangle$ be an  $L$-structure in a finite language $L$. Recall that by $S(\MA)$ we denote the two-sorted structure 
$$
S(\MA) = \langle \MA, S(A); \frown, \in\rangle,
$$
where $S(A)$ is the set of all finite sequences (tuples) of elements from $A$, 
  $\frown$ is the binary operation of  concatenation of two sequences from $S(A)$ and $a \in s$ for $a \in A, s \in S(A)$ means that $a$ is  a component of the tuple $s$.  
  The superstructure $S(\MA,\MN)$ is defined as  the three-sorted structure 
 $$
 S(\MA,\MN) = \langle \MA, S(A),\MN; t(s,i,a), l(s), \frown, \in \rangle,
 $$
 where $\N = \langle N, +,\cdot, 0,1\rangle$  is the standard arithmetic, $l\colon S(A) \to N$ is the length function, i.e., $l(s)$ is the length of the tuple $s$ and $t(x,y,z)$ is a predicate on $S(A)\times N \times  A$ such that $t(s,i,a)$ holds in $S(\MA,\MN)$ if and only if $s = (s_1, \ldots,s_{n})\in S(A), i \in N, 1\leq i \leq n$, and $a = s_i \in A$. By $L_{list}$ we denote the language of the three-sorted structure $S(\MA,\MN)$ (one can represent $S(\MA,\MN)$ as a standard one-sorted structure, taking union of $A, S(A)$ and $N$ as a new  universe and introducing unary predicates defining $A, S(A)$ and $N$  in the union). Observe that the operation $\frown$ and the predicate $\in$ are  $0$-definable in $S(\MA,\N)$ (with the use of $t(s,i,a)$), so sometimes we omit them from the language. Now, we are ready to state a lemma.
 
 \begin{lemma} \label{le:S(A,N)-1}
 let $L$ be a finite language, $\MA$ an $L$-structure, and $S(\MA,\MN)$ the list superstructure over $\MA$.
 Then there is a sentence $\Sigma$ in the language $L_{list}$ such that $S(\MA,\MN) \models \Sigma$, and for any three-sorted $L_{list}$-structure $T = \langle A_T,S_T,N_T\rangle$ if $T \models \Sigma$, then the following holds:
 \begin{itemize}
     \item [1)] $A_T$ is an $L$-structure.
     \item [2)] $N_T$ is a model of arithmetic where $\MN$ is the initial segment of the ordered set $\langle N_T;\leq\rangle$, with respect to the standard ordering $\leq $ of $N_T$. In this case $\MN$ is an elementary substructure of $N_T$. 
     
     \item [3)] Let $S_\omega = \{s \in S_T \mid l(s) \in \MN \}$. Then $\langle S_\omega; \frown, \in\rangle$ is a substructure of $S_T = \langle S_T; \frown, \in\rangle$, which is isomorphic to $ \langle S(A_T); \frown, \in\rangle$. 
     \item [4)] $\langle A_T,S_\omega,\MN\rangle$ is a substructure of $T = \langle A_T,S_T,N_T\rangle$.   
 \end{itemize}
 \end{lemma}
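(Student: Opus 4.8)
The plan is to take $\Sigma$ to be the conjunction of a finite list of $L_{list}$-sentences (if one additionally wants $N_T$ to be a model of full Peano arithmetic one throws in the $L_{list}$-induction scheme, but for the four conclusions a finite $\Sigma$ is enough) which simply spell out, in first-order terms, the obvious structural features of the standard list superstructure. Concretely, $\Sigma$ will assert: (a) the symbols of $L$ restricted to the sort $A_T$ define total operations, so that $A_T$ becomes an $L$-structure; (b) on the sort $N_T$ a finite fragment of arithmetic holds --- Robinson's $Q$ together with the sentences saying that $\le$ is a discrete linear order (in particular $x<y\to x+1\le y$) --- which is strong enough to force the standard part of any model to be a downward closed initial segment; (c) the list apparatus behaves correctly: $t(s,i,a)$ holds only when $1\le i\le l(s)$ and is functional in its last argument, $\in$ is defined from $t$ by $a\in s\leftrightarrow\exists i\,t(s,i,a)$, $l$ maps $S_T$ onto $N_T$, lists are extensional (equal length and equal entries imply equal), there is an empty list and, for every $a\in A_T$, a length-one list with that entry, $S_T$ is closed under appending one element and under $\frown$, and $\frown$ obeys $l(s\frown s')=l(s)+l(s')$ together with $t(s\frown s',i,a)\leftrightarrow\big(i\le l(s)\wedge t(s,i,a)\big)\vee\big(l(s)<i\le l(s)+l(s')\wedge t(s',i-l(s),a)\big)$. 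That $S(\MA,\MN)\models\Sigma$ is checked by inspection.

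Next I would read off the four conclusions. Part 1) is immediate from (a). For part 2), by (b) the sort $N_T$ is a model of a finite fragment of arithmetic; a metatheoretic induction on the standard integer $n$ --- using that $Q$ proves $x\le 0\to x=0$ and that successor is injective --- shows that $\MN=\{0^{N_T},1^{N_T},(1+1)^{N_T},\dots\}$ is closed downward under $\le$, i.e.\ is an initial segment of $\langle N_T;\le\rangle$, and being closed under $+$ and $\cdot$ it is a substructure of $N_T$; if the induction scheme is included, $N_T\models\mathrm{PA}$.

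For part 3), set $S_\omega=\{s\in S_T: l(s)\in\MN\}$. From $l(s\frown s')=l(s)+l(s')$ and the closure of $\MN$ under $+$, the set $S_\omega$ is closed under $\frown$, so $\langle S_\omega;\frown,\in\rangle$ is a substructure of $\langle S_T;\frown,\in\rangle$. Define $\Lambda\colon S_\omega\to S(A_T)$ by sending $s$ to the tuple $(a_1,\dots,a_{l(s)})$, where $a_i$ is the unique element with $t(s,i,a_i)$; this makes sense because $l(s)$ is a genuine standard natural number. Extensionality gives injectivity of $\Lambda$; the axioms providing the empty list, all length-one lists and closure under appending, run through an induction on the standard length, give surjectivity; the equations for $\frown$ and the definition of $\in$ show $\Lambda$ is an isomorphism of $\langle S_\omega;\frown,\in\rangle$ onto $\langle S(A_T);\frown,\in\rangle$. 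Part 4) follows by combining these: in $\langle A_T,S_\omega,\MN\rangle$ the operation $\frown$ stays inside $S_\omega$, $l$ maps $S_\omega$ into $\MN$, and whenever $t(s,i,a)$ holds with $s\in S_\omega$ one has $i\le l(s)\in\MN$, hence $i\in\MN$ since $\MN$ is an initial segment; together with the $L$-symbols acting within $A_T$, this makes $\langle A_T,S_\omega,\MN\rangle$ closed under all of $L_{list}$, hence a substructure of $T$.

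The step needing the most care is the arithmetic half of part 2): the finite fragment in $\Sigma$ must be strong enough to run the induction identifying $\MN$ as an initial segment of $\langle N_T;\le\rangle$, yet one should keep in mind that no $L_{list}$-sentence can force $N_T$ to be literally standard --- by compactness one may always adjoin a nonstandard element, and $S(\MA,\MN)$ itself is, and must remain, a model of $\Sigma$ --- so what the construction yields directly is "$\MN$ is an initial segment, hence a substructure"; the full standardness of the $N$-sort (and with it the genuinely elementary behaviour) enters only later, in Theorem~\ref{th:QFA}, where the hypothesis that the ambient finitely generated structure has arithmetic diagram is what finally pins it down. The rest is bookkeeping: writing out the axioms of $\Sigma$ and checking that each list operation is absolute between $S_\omega$ and $S(A_T)$.
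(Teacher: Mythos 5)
Your proposal follows essentially the same route as the paper: $\Sigma$ is a conjunction of (i) sentences saying the $L$-symbols act correctly on the $A$-sort, (ii) a finitely axiomatized fragment of arithmetic on the $N$-sort, and (iii) list axioms (extensionality, existence of length-one lists, correct behaviour of $\frown$ and $t$), after which the map sending $s\in S_\omega$ to the tuple of its components is shown to be an isomorphism onto $S(A_T)$ exactly as in the paper, with extensionality giving injectivity and singletons plus concatenation giving surjectivity.

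The one point of divergence is clause 2). The paper's proof asserts that, Robinson arithmetic being finitely axiomatizable, $\Sigma$ can force $N_T\models Th(\MN)$ and hence $\MN\preceq N_T$; as you correctly observe, no single first-order sentence true in $S(\MA,\MN)$ can do this (otherwise $Th(\MN)$ would be recursively axiomatized), so what a finite fragment such as $Q$ legitimately yields is only that $\MN$ sits as an initial segment, hence a substructure, of $\langle N_T;\leq\rangle$. Your version therefore proves slightly less than the lemma literally states (the elementarity clause), but this weaker conclusion is all that is actually used in the application: in the proof of Theorem~\ref{th:QFA} the sentence $\Theta$ together with the arithmetic-diagram hypothesis is what pins $N_H$ down to $\MN$, and elementarity plays no role before that. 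So your treatment of 2) is the defensible reading of the paper's argument rather than a gap in your own; everything else (parts 1), 3), 4)) matches the paper's proof step for step.
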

 \begin{proof}

One can take $\Sigma$ as a conjunction of the following conditions:
\begin{itemize}
    \item [1)]  The symbols in $L_{list}$ that correspond to operations in $L$, $S(\MA)$, $\MN$, and $(\MA,\MN)$ define in $T$ correspondingly, operations on $A_T$, $S_T$, $N_T$, and $T = \langle A_T,S_T,N_T\rangle$. Since there only finitely many such operations this condition can be written by a first-order sentence in $L_{list}$.

\item [2)] Since Robinson arithmetic $\mathcal Q$ is finitely axiomatizable, one can write a sentence in arithmetic (hence in $L_{list}$) that makes sure that $\langle N_T;+,\cdot, 1\rangle $ is a model of $Th(\MN)$. It is known that in this case $\MN$ is an in initial segment of $\langle N_T;\leq\rangle$ and it is an elementary substructure of $N_T$. 

\item [3)] If  $S_\omega = \{s \in S_T \mid l(s) \in \MN \}$,  then $\langle S_\omega; \frown, \in\rangle$ is a substructure of $S_T = \langle S_T; \frown, \in\rangle$, since concatenation of two tuples with length in $\MN$ is again a tuple of length in $\MN$. Now we need to write down by an $L_{list}$-sentence some conditions that ensure that the structure $S_\omega$ with operations $\frown$ and $\in$ induced from $S_T$ is isomorphic to the structure $S(A_T); \frown,\in\rangle$. 
   \begin{itemize}
   \item [3.1)] The first condition makes sure that the every element $s \in S_T$ is uniquely determined by it's length  $l(s)$ and all its components:
   $$
   \forall s, r \in S_T \left ( l(s) = l(r) \wedge \forall i \in N_T, \forall x \in A_T (t(s,i,x) \leftrightarrow t(r,i,x))    \to s= r  \right)
   $$ 
    
    \item [3.2)] The second condition states that for any $n \in \MN$, for any $1\leq i \leq n$, and for any $a_i \in A_T$ there is $s \in S_T$ with $l(s) = n$ and for which $t(s,i,a_i)$ holds for any such $i$. 
    To do this it suffices to write down that for any $a \in A_T$ there is $s_a$ of length 1 ($l(s) = 1$) such that $a$ is the only component of $s_a$, i.e., $t(s_a,1,a)$ holds. This ensures that the tuple $s$ with the properties described above exists in $S_T$. Indeed, the tuple $s$ has finite length hence it is a finite concatenation of the   1-tuples $s_{a_1}, \ldots, s_{a_n} $. It is left to write down a sentence which expresses that concatenation of any two tuples $s, r \in S_T$ exists and gives precisely what it is supposed to. Namely, for any $r,s \in S_T$ there is $u \in S_T$ such that  for any $x \in A_T$ and any $i\leq l(r)$ one has $t(r,i,x) \leftrightarrow t(u,i,x)$ (this condition makes sure that $r$ is a prefix of $u$) and for any $j \leq l(s)$  one has $t(s,j,x) \leftrightarrow t(u,l(r)+j,x)$. 
\end{itemize}

  We claim that if the conditions 3.1) and 3.2) are satisfied then there is an isomorphism    $\lambda\colon  \langle S_\omega; \frown, \in\rangle \to \langle S(A_T); \frown, \in\rangle$. For $s \in S_\omega$ put $\lambda(s)$ to be the unique tuple $(a_1, \ldots,a_{l(s))}$ such that for any $i\leq l(s)$ the condition $t(s,i,a_i)$ holds.  Then 3.1) tells one that $\lambda$ is injective, while 3.2) makes sure that $\lambda$ is onto.  Obviously, $\lambda$ preserves the operation $\frown$ and the predicate $\in$.  
 
 \end{itemize}
 
 Now 4) follows from 1)-3). Indeed, by construction $\langle A_T,S_\omega,\MN\rangle$ is a substructure of $T = \langle A_T,S_T,N_T\rangle$.  
 \end{proof}

 Let $X = \{x_i \mid i \in \MN\}$ be a countable  set of variables, and $T_L(X)$ be the set of all terms in the language $L$ in variables $X$. If the language $L$ is finite or countable, then there is a function $\nu\colon T_L(X) \to \MN$ such that $\nu$ is injective (we may also assume that $\nu$ is surjective, if needed), the subset $\nu(T_L(X))$ of $\MN$ is computable, and the functions $\nu$ and its inverse $\nu^{-1}\colon  \nu(T_L(X)) \to T_L(X)$ are computable.

 \begin{lemma}\label{le:S(A,N)-2}
 Let $L$ be a language with finite signature,  $\MA$ a finitely generated $L$-structure, and $S(\MA,\MN)$ its list superstructure. Then the following holds:
 \begin{itemize}
      \item [1)] 
 for any $n \in \MN$ there is a formula $\Phi_n(x_1,\ldots,x_n,y,z)$ in the language $L_{list}$ (where $x_1,\ldots,x_n,y$ are variables of the sort $\MA$ and $z$ is a variable of the sort $\MN$) such that for any  $g_1, \ldots,g_n,h \in \MA$ and $m \in \MN$  
 $$
 S(\MA,\MN) \models \Phi_n(g_1, \ldots,g_n,h,m)
 $$ 
 if and only if there exists a term $t = t(x_1, \ldots,x_n)$ in the language $L$ such that $h = t(g_1,\ldots,g_n)$ and  $\nu(t) = m$ with respect to some fixed effective enumeration $\nu\colon  T_L(X) \to \MN$ of all terms $t$ in the language $L$.
 \item [2)] There is a formula $\Psi(u,y,z)$ in the language of $L_{list}$ (where $u$ is a variable of the sort $S(\MA)$, $y$ is a  variable of the sort $\MA$ and $z$ is a variable of the sort $\MN$) such that for any $s = (a_1, \ldots,a_n) \in S(\MA)$, $h \in \MA$, $m \in \MN$ 
 $$
 S(\MA,\MN) \models \Psi(s,h,m)
 $$ 
 if and only if there exists a term $t = t(x_1, \ldots,x_n)$ in the language $L$ such that $h = t(a_1,\ldots,a_n)$ and  $\nu(t) = m$ with respect to some fixed effective enumeration $\nu\colon  T_L(X) \to \MN$ of all terms $t$ in the language $L$.
 \end{itemize}
 \end{lemma}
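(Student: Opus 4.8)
The plan is to express the statement ``$m=\nu(t)$ for some $L$-term $t$ with $h=t(g_1,\dots,g_n)$'' by asserting the existence of an \emph{evaluation sequence} that simultaneously parses the term coded by $m$ and computes its value at $(g_1,\dots,g_n)$. Such a sequence will be a pair $(s,e)$, where $s=(v_1,\dots,v_\ell)\in S(A)$ records the intermediate values and $e\in\MN$ codes the parallel sequence $(c_1,\dots,c_\ell)$ of G\"odel numbers of the subterms processed so far (finite sequences of naturals are $0$-definable in $\MN$ by the prime-power coding already used in the proof of Theorem~\ref{th:HF-S}, and $\MN$ is a sort of $S(\MA,\MN)$, so all of this is available in $L_{list}$). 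We demand $\ell=l(s)$, and for each $j\le\ell$, writing $c$ for the $j$-th entry of the sequence coded by $e$ and $v$ for the $j$-th component of $s$, one of: (a) $c$ is the code of a variable $x_i$ with $1\le i\le n$ and $v=g_i$; (b) $c$ is the code of a constant symbol $d\in L$ and $v=d^A$; or (c) for one of the (finitely many) function symbols $f\in L$ of arity $k=n_f$, there are indices $j_1,\dots,j_k<j$ such that $c$ is the code of $f$ applied to the terms coded by $c_{j_1},\dots,c_{j_k}$ and $v=f^A(v_{j_1},\dots,v_{j_k})$ (if $f$ is carried as a graph predicate, replace this last equation accordingly).

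Next I would collect the arithmetic ingredients. Fixing a convenient standard effective G\"odel numbering $\nu\colon T_L(X)\to\MN$ of $L$-terms, the predicates ``$m$ codes the variable $x_i$'', ``$m$ codes the constant $d$'' (one for each of the finitely many $d\in L$), and ``$m$ codes $f(t_1,\dots,t_{n_f})$ where $t_r$ has code $m_r$'' (one for each $f\in L$) are all primitive recursive, hence $0$-definable in $\MN$, hence expressible in $L_{list}$ on the sort $\MN$; likewise ``$e$ codes a finite sequence of length $\ell$'' and ``$c$ is the $j$-th entry of the sequence coded by $e$''. Using these together with the operations/predicates of $L$ on the sort $A$ and with $t(\cdot,\cdot,\cdot)$ and $l(\cdot)$, the condition on $(s,e)$ above becomes a single first-order $L_{list}$-formula $\mathrm{Eval}_n(x_1,\dots,x_n,s,e)$, clauses (b) and (c) being finite disjunctions because $L$ is finite. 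Then I would put
$$
\Phi_n(x_1,\dots,x_n,y,z)\;=\;\exists s\,\exists e\,\bigl(\mathrm{Eval}_n(x_1,\dots,x_n,s,e)\wedge l(s)\ge 1\wedge t(s,l(s),y)\wedge(z\text{ is the }l(s)\text{-th entry of }e)\bigr).
$$

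To verify correctness I would argue both directions by induction on term structure. If $h=t(g_1,\dots,g_n)$ and $\nu(t)=m$, I list the subterms of $t$ in a post-order traversal of its parse tree; this produces $s$ and $e$ satisfying $\mathrm{Eval}_n$, with last value $h$ and last code $m$, so $S(\MA,\MN)\models\Phi_n(g_1,\dots,g_n,h,m)$. Conversely, given $s,e$ witnessing $\Phi_n$, an induction on $j$ using clauses (a)--(c) shows that $v_j$ is the value at $(g_1,\dots,g_n)$ of the unique $L$-term coded by $c_j$, whose variables are automatically among $x_1,\dots,x_n$; applying this at $j=l(s)$ yields the required $t$. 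For part~2) I would repeat the construction verbatim, replacing the parameters $g_1,\dots,g_n$ and the bound $n$ by the components of the list variable $u$ accessed via $t(u,i,\cdot)$ and $l(u)$: clause (a) becomes ``$c$ is the code of $x_i$ with $1\le i\le l(u)$ and $t(u,i,v)$'', giving $\Psi(u,y,z)=\exists s\,\exists e\,(\mathrm{Eval}(u,s,e)\wedge l(s)\ge 1\wedge t(s,l(s),y)\wedge(z\text{ is the }l(s)\text{-th entry of }e))$.

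The main obstacle here is bookkeeping rather than conceptual: one must check that the parallel sequence of term-codes really is coded by a single natural and that ``$j$-th entry'' extraction, the subterm-decomposition predicate, and the post-order traversal used in the forward direction are all genuinely primitive recursive under the chosen $\nu$ (which is why we are free to fix a convenient standard numbering), and that clauses (b) and (c) correctly enumerate the finite signature. No model-theoretic subtlety intervenes, since $\MN$ inside $S(\MA,\MN)$ is the genuine standard arithmetic; the passage to possibly nonstandard arithmetic is handled separately through the sentence $\Sigma$ of Lemma~\ref{le:S(A,N)-1}.
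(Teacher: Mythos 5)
Your proof is correct, and it rests on the same underlying idea as the paper's: witness the pair $(h,m)$ by a finite sequence in $S(A)$ of intermediate values whose step-by-step consistency with the arithmetic decoding of the term code is expressible in $L_{list}$, and read off $h$ as the last component via $t(s,l(s),y)$. The difference is in execution and generality. The paper writes the proof only for the group language (explicitly leaving the general case to the reader): it codes a group word $w$ as a tuple $m_w$ of naturals, defines a concrete bijection $\nu$ with computable inverse, and takes as witness the sequence $s_{r,w}$ of prefix products, so the ``subterm'' bookkeeping is implicit in the left-to-right string structure of $w$ and no second coded sequence is needed; your version handles an arbitrary finite signature directly by working with parse trees, a post-order traversal, and an auxiliary natural number $e$ coding the parallel sequence of subterm G\"odel numbers, with finite disjunctions over the constant and function symbols. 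What the paper's route buys is a very explicit, low-overhead construction in the case actually used later (groups); what your route buys is that it proves the lemma as stated for every finite signature, at the cost of the extra sequence-coding layer, whose primitive recursiveness under a standard numbering is routine, as you note. Both arguments correctly reduce part 1) to part 2) (the paper) or obtain part 1) by specializing the list variable (you), and both rely only on the standardness of the $\MN$-sort of $S(\MA,\MN)$, with nonstandard models handled elsewhere via Lemma~\ref{le:S(A,N)-1}.
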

 \begin{proof}
 We prove this lemma for the language $L$ of group theory (the same argument works for the language of rings), leaving the general case to the reader. Thus, $\MA$ is a group.
 We prove 2) first and then derive 1) from 2).
 
Let $X = \{x_i\mid i \in \MN\}$ be an infinite countable set of variables. $X^{-1} = \{x_i^{-1} \mid x \in X\}$ the set of formal inverses of elements of $X$ and $M_X$ the set of all finite words  in the alphabet $X^{\pm 1} = X\cup X^{-1}$ (called \emph{group words} in $X$) viewed as a free monoid on  $X\cup X^{-1}$, where  multiplication is concatenation. A word $w \in M_X$ can be represented as $w = x_{k_1}^{\varepsilon_1} \ldots x_{k_\ell}^{\varepsilon_\ell}$, where $x_{k_i} \in X^{\pm 1}$, $\varepsilon_i \in \{-1,1\}$, $\ell \in \MN$.  With the word $w$ we associate  a unique  tuple $m_w \in \MN^{2\ell}$:
 $$
 m_w = (k_1,\varepsilon_1+1,k_2,\varepsilon_2+1, \ldots, k_\ell,\varepsilon_\ell+1)
 $$
 (here we use $\varepsilon_i+1$ instead of $\varepsilon_i$ to have numbers in $\MN$).
 
 Let $\nu\colon \bigcup_{\ell > 0}\MN^\ell \to \MN$ be a bijection such that $\nu$ and $\nu^{-1}$ are computable. Here $\nu^{-1}$ is computable when for every $m \in \MN$ the length $\ell = |\nu^{-1}(m)|$ of the tuple $\nu^{-1}(m) = (a_1, \ldots,a_\ell)$ is computable and there is a computable function $f(m,i)$ such that for any $m \in \MN$ and any $i, 1\leq i\leq \ell$ one has $f(m,i) = a_i$. There are many such bijections, we fix one of them, defined by 
 $$
 \nu(a_1, \ldots,a_\ell) = 2^{a_1} + 2^{a_1+a_2+1} + \ldots + 2^{a_1+\ldots +a_\ell+\ell-1} -1.
 $$
 To find $\nu^{-1}(m)$ for a given $m \in \MN$ one can use the unique binary representation of $m-1$ and find unique integers $0 \leq b_1 < b_2 < \ldots <b_\ell$ such that $m-1 = 2^{b_1}+ \ldots 2^{b_\ell}$ and then find the numbers $a_1, \ldots,a_\ell$. 
 
 The subset $W = \{ \nu(m_w)\mid w \in M_X\}$ is definable in $\MN$ by a first-order formula, since the functions $m \to |\nu^{-1}(m)|$ and $f(m,i)$ above are computable, hence definable in arithmetic. Observe that the map $w\to \nu(m_w)$ gives an effective enumeration of all group words in $M_X$.
 
 Fix $n \in \MN$ and put $X_n = \{x_1, \ldots, x_n\} \subset X$. Let $M_{X_n}$ be the subset (submonoid) of $M_X$, consisting of all group words in variables $X_n^{\pm 1}$. Then the subset $W_n =\{\nu(m_w) \mid w \in M_{X_n}\}$ is also definable in $\MN$. Indeed, it suffices to add a conjunct to the formula defining $W$ that says that all odd components of $m_w$ are less or equal to $n$. 
 
With a tuple  $r = (r_1, \ldots,r_n) \in S(\MA)$ of length $n$ and a word  $w  = x_{k_1}^{\varepsilon_1} \ldots x_{k_\ell}^{\varepsilon_\ell} \in M_{X_n}$ (here $x_{k_j} \in X_n$ for any $j$) we associate a unique tuple $s_{r,w} \in S(\MA)$ defined as follows: 
 
 $$
 s_{r,w} = (r_{k_1}^{\varepsilon_1},r_{k_1}^{\varepsilon_1}r_{k_2}^{\varepsilon_2}, \ldots, r_{k_1}^{\varepsilon_1} \ldots r_{k_\ell}^{\varepsilon_\ell})
 $$
 so the last component of $s_{r,w}$ is equal to $w(r_1, \ldots,r_n) \in \MA$.
 
 \medskip 
 {\it Claim. The subset $P = \{(r,s_{r,w},\nu(m_w)) \mid r \in S(\MA),  w \in M_{X_{|r|}} \}$ of $S(\MA)\times S(\MA) \times  \MN$ is  definable in $S(\MA,\MN)$. }
 
 \medskip
 Indeed, a triple  $(r,s,m) \in S(\MA)\times S(\MA)\times \MN$ belongs to $P$ if the following conditions hold:
 
 \medskip
 1) $|\nu^{-1}(m)| = 2|s|$ and $m \in W_{|r|}$, i.e., $m = \nu(m_w)$ for some $w \in M_{X_{|r|}}$.

\medskip
2) Let  $r = (r_1, \ldots, r_n)$, $s = (s_1, \ldots, s_\ell)$ and $\nu^{-1}(m) =  (a_1,b_1, \ldots,a_\ell,b_\ell)$. Then for all $i$ such that $1\leq i <\ell$ the following two conditions hold: 
$$
\bigwedge_{k = 1}^n [(a_{i+1}= k \wedge b_{i+1} = 2) \to s_{i+1} = s_ir_k]  
$$
 and 
$$
\bigwedge_{k = 1}^n [(a_{i+1}= k \wedge b_{i+1} = 0) \to s_{i+1} = s_ir_k^{-1}] , 
$$ 
 where we assume that $s_0= 1$.
 Observe, that both conditions 1) and 2) can be described by a formula in the language $L_{list}$ in $r,s$ and $m$. Indeed, using the predicate $t(s,i,m)$ of $S(\MA,\MN)$ one can express the conditions that $r_i, a_i,b_i$ and $s_i$ are the corresponding components of $r$ and $s$.  Let $\Phi_{P}(u,v,z)$ be  conjunction of the formulas 
 that describe 1) and 2) (here the variable $u$ correspond to $r$, the variable $v$ correspond to $s$ and $z$ corresponds to $m$). Then $\Phi_{P}(u,v,z)$ defines $P$ in $S(\MA,\MN)$ and the claim follows.
 
 Consider a formula 
 $$
 \Psi_P(u,y,z) = \exists v  (\Phi_P(u,v,z)  \wedge t(v,l(v),y))
 $$
 language $L_{list}$, here $t$ and $l(v)$ are the predicate  and the function from $L_{list}$, $u$  is a variable of the sort $S(\MA)$, $y$ is a variable of  the sort $\MA$, and $z$ is a variable of the sort $\MN$.   Note that if $r \in S(\MA)$, $h \in \MA$, and $m \in \MN$ are such that $S(\MA) \models \Psi_P(r,h,m)$  then there is a tuple $v \in S(\MA)$ such that $(u,v,m) \in P$ and  $h$ is the last component of the tuple $v$.  Therefore, $r = (r_1, \ldots,r_n)$ for some $n \in \MN$, $m = \nu(m_w)$ for some $w \in M_{X_n}$,  $s = s_{r,w}$, and  $h = w(r_1, \ldots,r_n)$. As we mentioned above the map $w \to \nu(m_w)$ gives an effective enumeration of all group words in variables from $X$.

 This proves the statement 2) of the lemma.
 
 To show 1) fix $n \in \MN$ and consider arbitrary elements $g_1, \ldots,g_n \in \MA$. The condition
 $$
 (r \in S(\MA))  \wedge (|r| = n) \wedge (r= (g_1, \ldots,g_n))
 $$
 can be described by a formula, say  $\Psi_n(x_1, \ldots,x_n,u)$, where 
 the variables $x_1, \ldots,x_n$ correspond to the elements $g_1, \ldots,g_n$ and $u$ corresponds to $r$. It follows that the formula
 $$
 \Phi_n(x_1, \ldots,x_n,y,z) = \exists u (\Psi_n(x_1, \ldots,x_n,u) \wedge \Psi(u,y,z))
 $$
 satisfies all the requirements in the statement 1).
 \end{proof}

 \medskip
 {\it Proof of Theorem \ref{th:QFA}}. We prove the theorem in the case of groups. Let $G$ be a  rich finitely generated group, so $G$ is regularly bi-interpretable with its list superstructure $S(G,\MN)$, say $S(G,\MN) \simeq \Gamma(G,\delta)$ and $G \simeq \Delta(S(G,\MN),\sigma)$. Assume $G$ is generated by elements $g_1, \ldots,g_n$.  
 
 Let  $\Psi(u,y,z)$ be the formula from Lemma \ref{le:S(A,N)-2}. 
 For arbitrary tuple $r = (a_1, \ldots,a_k) \in S(G)$, an element $ h \in G$, and a number $m \in \MN$ consider the  following formula 
 $$
 \Psi^{min}(r, h,m) = \Psi(r, h,m) \wedge \forall p \in \MN (|\nu^{-1}(p)| < |\nu^{-1}(m)| \to \neg \Psi(r,h,p)),
 $$
where $\nu$ is the enumeration constructed in Lemma \ref{le:S(A,N)-2} and $|\nu^{-1}(p)|, |\nu^{-1}(m)|$ are the lengths of the tuples $\nu^{-1}(p)$ and $\nu^{-1}(m)$.  By Lemma \ref{le:S(A,N)-2} $\Psi^{min}(r, h,m)$  holds in $S(G,\MN)$  if and only if $h = w(a_1,\ldots,a_k)$ for some word $w \in M_X$  and $w$ has minimal possible length among all such $w$.  We denote the length of such $w$ by $|h|_r$ (the word length of $h$ with respect to the set $ A_r = \{a_1, \ldots,a_k\}$).
 
 Note, that if the subgroup $\langle A_r \rangle$ generated by $A_r$ is infinite then the set of natural numbers $\{|h|_r \mid h \in \langle A_r \rangle\}$ is unbounded. Hence the following formula holds in $S(G,\MN)$:
 
 $$
 \Theta(r) = \forall p \in \MN,  \exists m \in \MN, \exists h \in G (\Psi^{min}(r,h,m) \wedge p \leq  |\nu^{-1}(m)|).
 $$
 It may be that for a given tuple $r \in S(G)$ the subgroup generated by the set  $A_r$ is finite. However, since the group $G$ is infinite and elements $g_1, \ldots,g_n$ generate $G$ it is true that for any tuple $r \in S(G)$ there is a tuple $s \in S(G)$ of length $n$  (for example, the tuple $\bar g = (g_1, \ldots,g_n)$) such that the tuple $r\frown s$  generates an infinite subgroup. One can write down this by the following sentence in the language $L_{list}$:
 $$
 {\cal F} = \forall r \in S(G) \exists s \in S(G) (|s| = n \wedge  \Theta(r\frown s)), 
 $$
 where $n$ is the constant term in $\MN$ (we view $n$ as a sum $1+\ldots +1$ of $n$ units 1's).
 Since $G$ and $S(G,\MN)$ are bi-interpretable there is a first-order  sentence  ${\cal F}^\Gamma$ in group language such that 
 $$
 G \models {\cal F}^\Gamma \Longleftrightarrow S(G,\MN) \models {\cal F}. 
 $$
 
 Suppose now that $H$ is an arbitrary group finitely generated by some elements $h_1, \ldots,h_m$.
 
 Consider the sentence $\Sigma$ in the language $L_{list}$ from Lemma \ref{le:S(A,N)-1} and a sentence $\Phi_{group}$  in the language $L_{list}$ that states that the first sort $G$ in $S(G,\MN)$ is a group. Let  $\Sigma^\Gamma$ and $\Phi_{group}^\Gamma$ be the  $\Gamma$-translations of $\Sigma$ and $\Phi_{group}$ into  first-order group language sentences, so $G \models \Sigma^\Gamma$  and $G \models \Phi_{group}^\Gamma$. Then the following sentence holds in $G$:
 $$
 G \models {\cal F}^\Gamma\wedge \Sigma^\Gamma  \wedge \Phi_{group}^\Gamma.
 $$
Assume that the group $H$ satisfies the sentence ${\cal F}^\Gamma\wedge \Sigma^\Gamma  \wedge \Phi_{group}^\Gamma$.  
 By the translation properties the code $\Gamma$ interprets in $H$ a three-sorted model $T = \langle A_H,S_H,N_H\rangle$  which satisfies the sentences  $\Sigma, {\cal F},$ and $\Phi_{group}$. By Lemma \ref{le:S(A,N)-1} the model $T$ satisfies the conditions 1)-4) of the lemma. Hence $A_T$ is a group isomorphic to $H$; $N_H$ is a model of arithmetic with the standard arithmetic $\MN$ as its initial segment (with respect to the standard ordering $\leq $ on $N_H$); $S_\omega = \{s \in S_H \mid l(s) \in \MN \}$ with induced $\frown$ and $\in$ from $S_H$ is a substructure of $S_H$  isomorphic to $S(H)$. Since ${\cal F}$ holds in $T$ it follows that for the tuple $\bar h = (h_1, \ldots,h_m)$ there exists a tuple $(u_1, \ldots,u_n)$ such that $\Theta(\bar h \frown \bar u)$ holds in $T$. Since $\bar h$ generates $H$ it follows that for any $f \in H$ there is a  tuple $s \in S_H$  of length precisely $n$  (so, $s \in S_\omega$) such that the formula $\Psi(\bar h \frown s,h,\ell)$ holds in $T$ for some $\ell$ from the initial segment $\MN$ of $N_H$. Moreover we can assume (since $\Psi^{min}(\bar h \frown s,h,\ell)$ holds in $T$) that such $\ell$ is minimal for $h$  among all elements in $N_H$. Now the formula $\Theta(\bar h \frown s)$ holds in $T$, so for every $m \in N_H$ there exists $\ell \in \MN$ such that $m \leq \ell$. It follows that $N_H = \MN$. Hence $S_\omega = S(H)$. Therefore, $T\simeq S(H,\MN)$.  Thus, the formulas $\Gamma$ and $\Delta$ bi-interpret $H$ and $S(H,\MN)$. 
 
 To finish the proof we need to show that there is a sentence $Iso_G$ of group theory such that if a finitely generated group $H$ satisfies the finitely many sentences  mentioned above and the sentence $Iso_G$ then $G \simeq H$. 
 Let $W(G,{\bar g})$   be the word problem of $G$  with respect to the generating tuple $\bar g$, i.e, the set of all group words $w(x_1, \ldots,x_n)$ such that $w(g_1, \ldots,g_n) = 1$ in $G$. Since $G$ is arithmetically definable the set   $W(G,{\bar g})$ is an arithmetic set, i.e., the set $\nu(W(G,\bar g))$  is defined in $\MN$ by some formula $D(x)$. 
 
 Consider  the following formula 
 $$
 Gen(r) = \forall y \in G \exists z \in \MN  \Psi(r, y,z).
 $$
 By Lemma \ref{le:S(A,N)-2}  the formula $Gen(r)$ holds in $S(G,\MN)$ on a tuple $r = (r_1, \ldots,r_k)$
  if and only if elements  $r_1, \ldots,r_k$ generate $G$ (we will say in this case that the tuple $r$ generates $G$). The formula 
    $$
    WP_D(r) = \forall h \in G \forall m \in \MN (\Psi(r,h,m) \to (h=1 \leftrightarrow D(m))) 
    $$
 holds in $S(G,\MN)$ on a tuple $r$ if and only if the tuple $r$ generates a subgroup in $G$ whose word problem is definable in arithmetic by the formula $D(x)$. 
 Now the sentence 
 $$
 Iso_G = \exists r (Gen(r)\wedge WP_D(r) \wedge (|r| = n))
 $$
 in the language $L_{list}$  states that there is a tuple $r$ in $S(G)$ that has length $n$, generates $G$, and that the word problem of  $G$ with respect to the generating tuple $r$ is defined in arithmetic by the formula $D(x)$. Note that $Iso_G$ holds in $S(G,\MN)$ since $(Gen(r)\wedge WP_D(r) \wedge (|r| = n))$ holds in $S(G,\MN)$ on the tuple of generators $\bar g$. 
 
 Let $Iso_G^\Gamma$ be the translation of the sentence $Iso_G$ into the group theory sentence with respect to the interpretation $S(G,\MN)  \simeq \Gamma(G,\delta)$.
 Then $Iso_G^\Gamma$ is true in $G$. 
 Suppose a  finitely generated group $H$ satisfies the sentences ${\cal F}^\Gamma$, $\Sigma^\Gamma$, $ \Phi_{group}^\Gamma$, and $Iso_G^\Gamma$. Then, as we showed above, $H$ and $S(H,\MN)$ are bi-interpretable in each other by the same codes $(\Gamma,\delta)$
 and $\Delta,\sigma)$  as $G$ and $S(G,\MN)$. It follows that the sentence $Iso_G$ holds in $S(H,\MN)$, which implies that there is a finite generating set $(f_1, \ldots,f_n)$ with the same word problem as the generating set $(g_1, \ldots,g_n)$ in $G$. The map $g_1 \to f_1, \ldots,g_n \to f_n$ gives rises to an isomorphism $G \to H$. This proves the theorem. 
 
\hfill  $\Box$

 The following well known theorem now follows from Theorem \ref{th:QFA}.
 \begin{theorem}[\cite{Khelif}, \cite{Nies2007}, Theorem 7.14]\label{Nies2007} If $\MA$ is a finitely generated structure with finite signature which is bi-interpretable (possibly with parameters) with the ring  $\Z$ then $\MA$ is QFA. \end{theorem}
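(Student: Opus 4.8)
The plan is to read this off Theorem~\ref{th:QFA} together with the results about bi-interpretability with $\Z$. Theorem~\ref{th:QFA} asserts that a rich structure in a finite signature which is finitely generated by a set with arithmetic diagram is QFA, so the whole task reduces to verifying its three hypotheses for $\MA$ and then quoting the theorem.

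First I would invoke Corollary~\ref{co:bi-int-Z}: since $\MA$ is bi-interpretable (possibly with parameters) with $\Z$, it is rich. The finite-signature and finitely-generated hypotheses are part of the statement, so I simply fix a finite generating set $S$ of $\MA$. It then remains to check that the diagram $D_S(\MA)$ is an arithmetic subset of $\N$ with respect to a fixed effective enumeration of $\MA$ by the $L$-terms with constants from $S$.

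For that last point I would use the fact that bi-interpretability with $\Z$ makes $\MA$ in particular interpretable with parameters in $\Z$, and then apply the lemma above stating that every structure interpretable in $\Z$ or $\N$ is arithmetic. Hence $\MA$ is an arithmetic structure; since $\MA$ is generated by the finite set $S$, this is exactly the statement that the diagram $D_S(\MA)$ (equivalently, in the group case, the word problem of $\MA$ relative to $S$) is arithmetic. With richness, finite signature, finite generation, and arithmeticity of $D_S(\MA)$ all established, Theorem~\ref{th:QFA} gives that $\MA$ is QFA.

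I do not expect a genuine obstacle here; the only mildly delicate point is to confirm that the notion of ``arithmetic diagram'' demanded by Theorem~\ref{th:QFA} matches the ``arithmetic structure'' conclusion of the interpretability lemma in the finitely generated case, which is precisely the equivalence recorded in the discussion preceding that theorem. One should also observe that the presence of parameters causes no difficulty, since Corollary~\ref{co:bi-int-Z} and Theorem~\ref{th:QFA} are already stated at the level of generality (richness with parameters) needed for the argument.
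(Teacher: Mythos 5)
Your proposal is correct and matches the paper's intended argument: the paper simply notes that this theorem follows from Theorem~\ref{th:QFA}, and the details you supply (richness via Corollary~\ref{co:bi-int-Z}, arithmeticity of $D_S(\MA)$ via the lemma that any structure interpretable in $\Z$ is arithmetic) are exactly the routine verifications needed to apply it.
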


\section{Properties definable in rich groups}
\label{wsol}

In this section we describe various properties that are definable in in first-order logic in rich groups. 

The general strategy is to describe a property by a single formula in WSOL and then transform this formula into a first-order one  using richness of the group.  We focus mostly on groups though many  results hold for general structures.

\subsection{Malcev's problem}

 The generalized \emph{Mal'cev's problem} for a given group $G$ asks to describe  subgroups which are first-order  definable in $G$.  

In some groups subgroups are mostly undefinable, for example, in a free non-abelian group $F$ a proper subgroup is definable (with parameters) if and only if it is cyclic \cite{KMdef}, \cite{PPST}.  

On the other hand, in rich groups not only all finitely generated subgroups are definable, but they are definable in a very uniform way. Below for elements $a_1, \ldots,a_n$ in a group $G$ by $\langle a_1, \ldots,a_n \rangle $ we denote the subgroup generated by $a_1, \ldots,a_n$ in  $G$.

\begin{definition}
We say that finitely generated subgroups of $G$ are {\it uniformly definable} if for any natural number $n $  there exists a  first-order formula 
$\phi_n(x_1, \ldots,x_n,y)$ such that for any $a_1, \ldots,a_n, g  \in G$ the formula 
$\phi_n(a_1, \ldots,a_n,y)$ defines the subgroup $\langle a_1, \ldots,a_n\rangle $  in $G$, i.e.,  for any $a_1, \ldots,a_n, g \in G$ 
$$
G \models \phi_n(a_1, \ldots,a_n,g) \Longleftrightarrow g \in \langle a_1, \ldots,a_n \rangle.
$$

\end{definition}
\begin{theorem}
In a rich group $G$ finitely generated subgroups are uniformly definable.
\end{theorem}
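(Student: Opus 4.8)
The plan is to express subgroup membership by a \emph{single} weak second order formula in which the generators appear as free variables running over $G$, and then to feed that formula into the definition of richness.

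First I would recall the formula $\Phi_n(x_1,\ldots,x_n,y,z)$ produced in Lemma~\ref{le:S(A,N)-2}: for a finitely generated group $G$ this is an $L_{list}$-formula of the list superstructure $S(G,\MN)$ whose free variables $x_1,\ldots,x_n,y$ are of the sort $G$ and whose free variable $z$ is of the sort $\MN$, and which holds on $(g_1,\ldots,g_n,h,m)$ exactly when $h=t(g_1,\ldots,g_n)$ for some term $t$ of the group language with $\nu(t)=m$. Since a group-language term in the variables $x_1,\ldots,x_n$, evaluated at $g_1,\ldots,g_n$, ranges over precisely the subgroup $\langle g_1,\ldots,g_n\rangle$, the formula
$$
\Xi_n(x_1,\ldots,x_n,y)\;=\;\exists z\;\Phi_n(x_1,\ldots,x_n,y,z)
$$
is an $L_{list}$-formula whose only free variables $x_1,\ldots,x_n,y$ run over $G$, and for all $a_1,\ldots,a_n,g\in G$ one has $S(G,\MN)\models\Xi_n(a_1,\ldots,a_n,g)$ if and only if $g\in\langle a_1,\ldots,a_n\rangle$. (One could also write $\Xi_n$ by hand, quantifying over a finite sequence of partial products witnessing that $g$ is a product of elements of $\{x_1,\ldots,x_n\}^{\pm1}$; Lemma~\ref{le:S(A,N)-2} is simply the packaged version of this.) In the terminology of Section~\ref{se:weak-second-order}, $\Xi_n$ is a WSOL formula $\phi(x_1,\ldots,x_n,y)$ in the language $L$ of $G$.

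Now I would apply the definition of richness to this one formula. Since $G$ is rich there is a first-order $L$-formula $\phi_n^{\diamond}(x_1,\ldots,x_n,y,\bar p)$ with a fixed tuple of parameters $\bar p$ over $G$ (no parameters if $G$ is absolutely rich) such that for all $a_1,\ldots,a_n,g\in G$,
$$
G\models\phi(a_1,\ldots,a_n,g)\iff G\models\phi_n^{\diamond}(a_1,\ldots,a_n,g,\bar p).
$$
Combining the two equivalences, $G\models\phi_n^{\diamond}(a_1,\ldots,a_n,g,\bar p)$ holds precisely when $g\in\langle a_1,\ldots,a_n\rangle$; hence $\phi_n(x_1,\ldots,x_n,y):=\phi_n^{\diamond}(x_1,\ldots,x_n,y,\bar p)$ is a first-order formula (with the fixed parameters $\bar p$) that uniformly defines $n$-generated subgroups. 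Since every finitely generated subgroup $H\leq G$ admits a generating tuple of some length $n$, the formula $\phi_n$ defines every such $H$, so all finitely generated subgroups of $G$ are uniformly definable.

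There is no serious obstacle here: the proof is essentially a one-line application of richness once the right object is in hand. The only point that needs care is the bookkeeping of sorts and free variables — richness is a statement about \emph{formulas}, not about individual definable sets, so the uniformity in $a_1,\ldots,a_n$ comes for free \emph{provided} subgroup membership is presented as a single WSOL formula in which $x_1,\ldots,x_n$ are genuine free variables of sort $G$ rather than parameters. This is precisely why Lemma~\ref{le:S(A,N)-2} was formulated with the $g_i$ as free variables, and why invoking it (rather than reproving it) is the natural route.
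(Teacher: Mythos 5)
Your argument is correct and is essentially the paper's own proof: both express membership in $\langle x_1,\ldots,x_n\rangle$ by a single WSOL formula in which the generators $x_1,\ldots,x_n$ are genuine free variables of sort $G$, and then apply richness to replace that formula by a first-order one. The only difference is presentational: the paper writes the formula as the computable infinite disjunction $\bigvee_{i}(y=w_i(x_1,\ldots,x_n))$ in the fragment $L_{WSOL}$, whereas you package it as an $L_{list}$-formula of $S(G,\MN)$ via Lemma~\ref{le:S(A,N)-2} (noting, as you do, that the formula can also be written directly, which sidesteps that lemma's finitely-generated hypothesis on $G$).
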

\begin{proof}
Fix $n \in \mathbb{N}$.  Let $X_n = \{x_1, \ldots,x_n\}$ and  $X_n^{-1} = \{x_1^{-1}, \ldots,x_n^{-1}\}$.  Fix an arbitrary computable enumeration 
$$
w_0, w_1, w_2, \ldots, w_m, \ldots
$$
of  all words in the alphabet $X^{\pm 1}. = X\cup X^{-1}$, so the function $n \to w_m$ is computable (see the  proof of Lemma \ref{le:S(A,N)-2} for an example of such an enumeration). 
Observe, that 
$$
b \in \langle a_1, \ldots,a_n\rangle \Longleftrightarrow \bigvee_{i \in \mathbb{N}} (b  = w_i(a_1, \ldots,a_n)).
$$
The set of formulas $\{ (y  = w_i(x_1, \ldots,x_n)) \mid i \in \mathbb{N}\}$ is computably enumerable, so  the formula 
$$
\psi_n(x_1,\ldots,x_n,y)  = \bigvee_{i \in \mathbb{N}} (y  = w_i(x_1, \ldots,x_n))
$$
is in WSOL.   Since the group $G$ is rich there is a first-order fiormula $\phi_n(x_1,\ldots,x_n,y)$ which is equivalent to  $\psi_n(x_1,\ldots,x_n,y)$ on $G$.  This proves  the theorem.
\end{proof}

\begin{cor}
Let $G$ be a finitely generated rich group. Then for every $n \in \MN$ the set of $n$-generating tuples of $G$ is $0$-definable in $G$.
\end{cor}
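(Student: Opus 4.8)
The plan is to exhibit the set $D\subseteq G^n$ of $n$-generating tuples of $G$ as a $0$-definable subset of the list superstructure $S(G,\MN)$, and then push $0$-definability back down to $G$ using the half-absolute bi-interpretability of $G$ with $S(G,\MN)$.

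The finite case is handled directly: if $|G|=N$, then every element of a subgroup $\langle a_1,\dots,a_n\rangle$ is the value at $(a_1,\dots,a_n)$ of a group word of length at most $N$ (the Cayley-graph diameter bound), so $\bar a$ generates $G$ if and only if $G\models\forall y\,\bigvee\{\,y=w(a_1,\dots,a_n)\mid \text{$w$ a word of length}\le N\,\}$, which is an ordinary parameter-free first-order formula. So assume from now on that $G$ is infinite.

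Since $G$ is finitely generated and rich, it is regularly bi-interpretable with its list superstructure $S(G,\MN)$, say $S(G,\MN)\simeq\Delta(G,\psi)$; and because $G$ is, by construction, the first sort of $S(G,\MN)$, the interpretation $G\simeq\Gamma(S(G,\MN))$ is absolute, with coordinate map $\mu_\Gamma$ the identification of $G$ with that sort. Thus $G$ is half-absolutely bi-interpretable in $S(G,\MN)$ in the sense of Corollary~\ref{co:bi-in-0-definable}, and it suffices to prove that $\mu_\Gamma^{-1}(D)=D$ (viewed inside the first sort of $S(G,\MN)$) is $0$-definable in $S(G,\MN)$. For this I would invoke Lemma~\ref{le:S(A,N)-2}(1): there is a parameter-free $L_{list}$-formula $\Phi_n(x_1,\dots,x_n,y,z)$, with $x_1,\dots,x_n,y$ of sort $G$ and $z$ of sort $\MN$, such that $S(G,\MN)\models\Phi_n(\bar a,h,m)$ holds exactly when $h$ is the value at $\bar a$ of the group term whose code is $m$. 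Since the term-values at $\bar a$ are precisely the elements of $\langle a_1,\dots,a_n\rangle$, the tuple $\bar a$ generates $G$ if and only if $S(G,\MN)\models\forall y\,\exists z\,\Phi_n(a_1,\dots,a_n,y,z)$ (with $y$ ranging over the sort $G$ and $z$ over the sort $\MN$). Hence $D$ is defined in $S(G,\MN)$ by the parameter-free formula $\chi_n(\bar x):=\forall y\,\exists z\,\Phi_n(\bar x,y,z)$, and Corollary~\ref{co:bi-in-0-definable} then gives that $D$ is $0$-definable in $G$.

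The only delicate point, and the one place where richness is used beyond the bare equivalence ``WSOL-definable $=$ first-order definable with parameters'', is the upgrade to a \emph{half-absolute} bi-interpretation of $G$ with $S(G,\MN)$, i.e.\ to a regular interpretation $S(G,\MN)\simeq\Delta(G,\psi)$ with a $0$-definable parameter set together with the absolute interpretation of $G$ as the first sort; this is precisely the uniformity already exploited in the proof of Theorem~\ref{th:QFA}. It is exactly this ingredient that removes the parameters which a more naive argument --- applying $\forall y$ to the uniform definition $\phi_n(\bar x,y,\bar p)$ of $\langle\bar x\rangle$ supplied by the preceding theorem --- would leave behind.
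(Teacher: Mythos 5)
Your proof reaches the right conclusion, but note first what the paper itself intends: the corollary is stated with no proof because it is meant to be the one-line consequence of the theorem immediately preceding it --- the set of $n$-generating tuples is defined by $\forall y\,\phi_n(x_1,\dots,x_n,y)$, where $\phi_n$ is the first-order equivalent of the parameter-free WSOL formula $\bigvee_i\,y=w_i(\bar x)$; the authors take $\phi_n$ without parameters, i.e.\ they are implicitly in the absolutely rich setting, and then the displayed formula is already a $0$-definition. You are right that under Definition~\ref{de:rich} as literally stated (first-order equivalents may carry parameters) this one-liner only yields definability with parameters, so your decision to route the argument through a parameter-removal mechanism is a legitimate observation about the statement, not a detour for its own sake. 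Your finite case is fine, and your use of Lemma~\ref{le:S(A,N)-2}(1) to express generation in $S(G,\MN)$ by the parameter-free formula $\forall y\,\exists z\,\Phi_n(\bar x,y,z)$, followed by descent via Corollary~\ref{co:bi-in-0-definable}, is the same device the paper uses (via $Gen(r)$) inside the proof of Theorem~\ref{th:QFA}.

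The caveat is that your load-bearing step is borrowed rather than proved. The claim that a finitely generated rich group is regularly bi-interpretable with $S(G,\MN)$, and moreover in the half-absolute configuration with the compatibility hypothesis of Corollary~\ref{co:bi-in-0-definable} satisfied, is asserted at the start of the proof of Theorem~\ref{th:QFA} but is nowhere derived from Definition~\ref{de:rich}; richness as defined is a statement about which sets are definable and does not by itself manufacture the definable coding of finite sequences that an interpretation of $S(G,\MN)$ in $G$ requires. You also quietly replace whatever interpretation of $G$ in $S(G,\MN)$ the bi-interpretation provides by the identity on the first sort; this is the natural choice and harmless in all the paper's examples, but the compatibility of the coordinate map with the isomorphism formula, which Corollary~\ref{co:bi-in-0-definable} explicitly assumes, then needs to be checked rather than invoked. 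So your argument is valid precisely when ``rich'' is read the way the paper uses it in Section~\ref{sec:qfa} (bi-interpretable with the list superstructure), and with that reading it genuinely buys the absence of parameters, which the paper's intended one-line proof delivers only under absolute richness; with the weaker Definition~\ref{de:rich} alone, both your proof and the corollary as stated remain incomplete at the same point.
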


\subsection{Properties of subgroups}

We say that a subgroup $H\leq G$ is  {\em malnormal} if $gHg^{-1}\cap H$ is trivial for any $g\in G-H.$

\begin{theorem}\label{1-10}
If $G$ is a rich group, then for any $n $  there exists a  formula 
$\phi_n(x_1, \ldots,x_n)$ such that for any $a_1, \ldots,a_n \in G$ the formula 
$$G \models \phi_n(a_1, \ldots,a_n)$$
if the subgroup generated by $a_1, \ldots,a_n$  is
1) normal,
2) malnormal,
3)free, 
4) finitely presented,
5) residually finite, 
6) amenable, 
 7) has property T,
8) isomorphic to a given finitely generated recursively presented group,
 9)  simple,
10)  solvable.

\end{theorem}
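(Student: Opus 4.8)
The plan is to express each of the ten properties by a single formula in the weak second-order logic over $G$, and then invoke richness to replace it by an equivalent first-order formula with parameters. By the previous theorem, finitely generated subgroups of $G$ are uniformly definable; fix the formula $\phi_n(x_1,\ldots,x_n,y)$ that defines $\langle a_1,\ldots,a_n\rangle$, so that membership in the subgroup generated by a tuple is already first-order (hence trivially WSOL-expressible), and so is quantification ``$\forall g\in\langle a_1,\ldots,a_n\rangle$'' and ``$\exists g\in\langle a_1,\ldots,a_n\rangle$''. The key observation is that in WSOL over $G$ we can quantify over finite subsets of $G$, over finite sequences of elements of $G$, and over natural numbers, and we have access to arithmetic on $\N$; this is exactly what is needed to talk about finitely generated groups, their finite presentations, their finite quotients, their Cayley graphs, and recursive enumerations of words, all inside a single formula with free variables $x_1,\ldots,x_n$ ranging over $G$.

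I would treat the items in groups according to the logical shape of the defining condition. Properties (1) and (2) are already first-order: normality of $H=\langle\bar a\rangle$ is $\forall g\,\forall h\,(\phi_n(\bar a,h)\to\phi_n(\bar a,ghg^{-1}))$, and malnormality is $\forall g\,(\neg\phi_n(\bar a,g)\to\forall h\,((\phi_n(\bar a,h)\wedge\phi_n(\bar a,ghg^{-1}))\to h=1))$. For (3), (4), (8) the condition is of the form ``there exists a finitely generated recursively presented group $P$ (respectively: a free group, a finitely presented group, the fixed group $P_0$) together with a surjective homomorphism $H\twoheadrightarrow P$ whose kernel is trivial'', which after unwinding says: there exist a finite sequence $\bar b$ of generators of $H$ (a finite tuple of elements, each lying in $H$, generating $H$ — expressible using $\phi_n$ and a WSOL existential over finite sequences), a recursive enumeration of relators (a recursively enumerable set of words, encoded via $\N$ exactly as in Lemma~\ref{le:S(A,N)-2}), and the statement that a word in $\bar b$ equals $1$ in $G$ if and only if it lies in the normal closure generated by those relators. ``Lies in the normal closure of a recursively enumerable set of words'' is itself a WSOL statement: it is a countable disjunction over products of conjugates of relators, and by the discussion in Section~\ref{se:weak-second-order} such a recursively enumerable disjunction is a legitimate formula in $L_{WSOL}$. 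For (5) residual finiteness we say: for every $h\in H$ with $h\neq 1$ there is a finite group $Q$ (a finite subset of some finite-sized universe with a multiplication table — a finite binary predicate, quantifiable in WSOL via $FBP(G)$) and a homomorphism $H\to Q$, given on generators by a finite sequence, which is well-defined (kills all relators, again a WSOL condition as above) and sends $h$ off the identity. For (6) amenability, (7) property (T), (9) simplicity, (10) solvability: amenability of a finitely generated group has a well-known characterization as a $\Pi^0_2$-type arithmetic condition on the multiplication table of $H$ (e.g.\ via Følner sets, each Følner set being a finite subset of $H$), which is directly a WSOL statement once one can quantify over finite subsets of $H$ and over $\N$; property (T) likewise has an arithmetic characterization (a Kazhdan constant witnessed on finite subsets with respect to finite-dimensional unitary representations, or via the combinatorial/spectral-gap reformulations that make it arithmetic in the presentation); simplicity is ``$H\neq 1$ and for every $h\in H\setminus\{1\}$ the normal closure of $h$ in $H$ equals $H$'', and the normal closure of an element is the countable union of products of conjugates, again a WSOL condition; solvability is ``there is $k\in\N$ such that the $k$-th derived subgroup of $H$ is trivial'', and each derived subgroup is the subgroup generated by an effectively described (recursively enumerable) set of commutators, so again WSOL. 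In every case, once the property is written as a single WSOL formula $\psi_n(x_1,\ldots,x_n)$, richness of $G$ yields a first-order formula $\phi_n(x_1,\ldots,x_n,\bar p)$ with parameters $\bar p\in G$ equivalent to it on $G$, which is exactly what the theorem demands.

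The main obstacle I expect is not the logical bookkeeping but getting clean \emph{arithmetic} (equivalently, WSOL) characterizations of the ``analytic'' properties (6) amenability and (7) property~(T): one must be careful that the chosen combinatorial reformulation (Følner condition for amenability; a spectral-gap / fixed-point / Kazhdan-constant condition for property~(T)) genuinely involves only quantification over natural numbers and over finite subsets of the finitely generated group $H$, with no essential real-number or infinite-dimensional quantifier that cannot be pushed down to $\N$ and $Pf(H)$. For amenability this is standard (the Følner condition on a finitely generated group, expressed via finite subsets of $H$ and rational approximations coded in $\N$, is arithmetic). For property~(T) one uses that, for finitely generated groups, (T) is equivalent to a condition on finite quotients / finite-dimensional representations that is arithmetic in the word problem; this is the delicate point and is where I would spend the most care, possibly restricting to the well-documented equivalent formulations. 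Everything else — (1)(2) first-order, (3)(4)(5)(8)(9)(10) via recursively enumerable disjunctions and finite-set/finite-sequence quantifiers, all legitimized by the description of $L_{WSOL}$ and by Lemma~\ref{le:S(A,N)-2} and Theorem~\ref{th:BT} — is routine translation into WSOL followed by a single application of the definition of richness.
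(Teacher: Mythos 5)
Your overall strategy is exactly the paper's: write each property as a single $L_{WSOL}$ formula (using quantification over finite subsets, finite sequences and $\N$, and recursively enumerable disjunctions), then apply richness once to get an equivalent first-order formula with parameters. Your treatment of items (1)--(6) and (8)--(10) is essentially the paper's argument (your handling of (1),(2) via the uniform definability theorem and of (5) via finite multiplication tables are harmless variants), and the bookkeeping you invoke is indeed legitimized by the description of $L_{WSOL}$ and the word-enumeration machinery of Lemma~\ref{le:S(A,N)-2}.

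The genuine gap is item (7). The first reformulation you propose --- a Kazhdan constant witnessed on finite subsets with respect to \emph{finite-dimensional} unitary representations --- does not characterize property (T): Kazhdan's property requires almost-invariant vectors in arbitrary unitary representations, and a group with no nontrivial finite-dimensional unitary representations (for instance an infinite simple group with the Haagerup property, which certainly fails (T)) passes any finite-dimensional spectral-gap test trivially. So that route fails outright. Your second suggestion (``spectral-gap reformulations that make it arithmetic in the presentation'') points in the right direction but leaves unproved precisely what has to be proved: that the property is expressible by a formula of $L_{WSOL}$, i.e.\ that the relevant infinite disjunction ranges over an \emph{arithmetic} set of formulas. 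The paper supplies the two missing ingredients explicitly: Shalom's theorem~\cite{Sha} that every property (T) group is a quotient of a finitely presented property (T) group, so that ``$H$ has (T)'' becomes a countable disjunction over finite presentations of ``$H$ is a quotient of this presentation'' (which is WSOL-expressible); and the recursive enumerability of the set of finite presentations with (T), obtained from the algebraic characterization of (T) as solvability of an explicit equation~\cite{KNO}, decidability of the reals, and semi-decidability of the word problem of a finite presentation. Without these (or an equally precise substitute, e.g.\ a rational sum-of-squares certificate argument carried out in full), the (T) case of the theorem is not established by your proposal.
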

\begin{proof} In all the cases we will find a WSOL formula that express the corresponding property in $G$. Since $G$  is rich, this implies that there exists a first order formula that express the same property.

1) Fix an arbitrary computable enumeration 
$$
w_0, w_1, w_2, \ldots, w_N, \ldots
$$
of  all words in the alphabet $X\cup X^{-1}$ (so the function $n \to w_n$ is computable). The formula $$\forall g \bigwedge _{i=1}^n ( \bigvee _{j\in\N}a_i^g=w_j(a_1,\ldots ,a_n))$$
is in WSOL and states that the subfgroup $\langle a_1,\ldots ,a_n\rangle$ is normal. Since the group is rich the same property can be defined by a first order formula.

2) The following WSOL formula defines malnormality 
$$\forall g\forall x (\bigvee _{i\in\N} x=w_i(a_1,\ldots ,a_n)\implies \neg \bigvee _{i\in\N} x=w_i(a_1^g,\ldots ,a_n^g)).$$

 3) The formula   $\bigvee _{j\in\N}w_j(a_1,\ldots ,a_n)\neq 1$ express that the subgroup $\langle a_1,\ldots ,a_n\rangle$ is free.
 
 4)  Let $\bar x = (x_1, \ldots,x_n)$. Consider the following predicate:
$$
P_n(y,y_1,\bar x) = \exists s \in \langle \bar x\rangle (y = y_1^s \vee y = (y_1^{-1})^s)
$$
We claim that this predicate  is definable in $G$ in WSOL. Indeed, in the notation above, $P_n(y,y_1,\bar x)$ is equivalent in $G$ to the WSOL formula 
$$
\bigvee_{i \in \N}(y = y_1^{w_i(\bar x)} \vee y = (y_1^{-1})^{w_i(\bar x)}).
$$

 For $k \in  \N$  take   an arbitrary tuple $\tau = (t_1,\ldots,t_k) \in \{1,\ldots,m\}^k$.  Let $\bar z  = (z_1, \ldots,z_m)$.
Consider the following formula 
$$
P_{n,k,\tau}(y,\bar x,\bar z) = \exists y_1 \ldots \exists y_k (y = y_1 \ldots y_k \bigwedge_{j= 1}^k P_n(y_j,z_{t_j},\bar x))
$$

Now the subgroup $\langle a_1,\ldots ,a_n\rangle$ has relations $r_1, \ldots,r_m$ if and only if for every trivial element $y$ in $\langle a_1,\ldots ,a_n\rangle$ we have
$P_{n,k,\tau}(y,\bar x, r_1,\ldots ,r_m)$ for some $k,\tau .$ The subgroup is finitely presented if it has relations $r_1, \ldots,r_m$ for some finite set of elements $r_1, \ldots,r_m$.  This can be expressed by the infinite disjunction.

5) A finite index normal subgroup of $\langle a_1,\ldots ,a_n\rangle$  is finitely generated as a normal subgroup (by the multiplication table of the finite quotient). Let $\{r_{i1}, \ldots,r_{im_i}|i\in\N\}$  be the set of all generating sets of finite index normal subgroups
(generators of them as normal subgroups). Now we can write a WSOL formula that 
says that for every non-trivial element $y$ there exist $r_1,\ldots ,r_m$ such that  we have the negation  $\neg P_{n,k,\tau}(y,\bar x, r_1,\ldots ,r_m)$ for any $k,\tau .$  

6)  One has to use the F\o{}lner criterion about the ratio of the cardinality of the boundary to a finite  set.  F\o{}lner criterium can be written as a formula in $L_{WSOL}.$

7) One can  use the  result of Shalom \cite{Sha} that every property (T) group is
a quotient of a finitely presented property (T) group.
The set of quotients of a f.p. group is definable in WSOL. Then we have to take a countable disjunction over all f.p. groups with property T. To show that this disjunction is over an arithmetic set of formulas we show that the class of finite presentations  with property (T) is recursively enumerable. 
By \cite[Theorem3]{KNO}, property (T) is equivalent to the  existence of a solution  for the equation (1) in \cite[Theorem3]{KNO}.  Since equations in $\R$ are decidable and we can enumerate the word problem in the finite presentation, if the solution exists, we eventually find it.  So we can enumerate finite presentations  and for each of them start the procedure that looks for a solution. If the procedure stops in some group we add it to the list of groups with property (T).  A recursively enumerable set of numbers is arithmetic.

8) Recall the following result \cite{Sco},\cite[Theorem 3.5]{Osin}: For every finitely generated group $G$, there exists an $\mathcal L _{\omega _1,\omega}$-sentence $\sigma$  with the property that for any group $H$ we have $H\models\sigma$ if and only if $H\cong G$. 

The proof goes as follows. Let $a_1,\ldots ,a_n$ be a generating set for $G$.  Let $u_1(x_1,\ldots ,x_n),\ldots , u_2(x_1,\ldots ,x_n)\ldots $ (respectively $v_1(x_1,\ldots ,x_n),\ldots , v_2(x_1,\ldots ,x_n)\ldots $) be the enumeration of all group words in the alphabet $\{x_1,\ldots ,x_n\}$ such that $u_i(x_1,\ldots ,x_n)=1$ (respectively 
$v_i(x_1,\ldots ,x_n)\not =1$) in $G$ for all $i$. Then the formula
$$\exists x_1\ldots \exists x_n\left(\forall g \bigvee (g=u_i\vee g=v_j)\wedge \left(\bigwedge _{i\in\N} u_i=1\wedge v_i\neq 1\right)\right )$$ has the required property.
If $G$ is recursively presented then the set of words that are equal to 1 is Diophantine,  therefore its complement is arithmetic and the formula belongs to $L_{WSOL}.$

The proof of 9), 10)  is straightforward.

\end{proof}

\section{Rich existentially closed structures}

 In this section, following Belyaev and Taitslin \cite{BT}, we describe some natural classes of groups, rings and semigroups, where the existentially closed structures are rich.  
 
 Recall that a substructure $\MA$ of a structure $\MB$ is said to be \emph{existentially closed} in $\MB$ if for every quantifier-free formula $\phi(x_1, \ldots,x_n, y_1, \ldots y_m)$ and all elements $b_1, \ldots,b_m \in \MA$  such that $\phi(x_1, \ldots,x_n, b_1, \ldots b_m)$ has a solution in $\MB$ then it also has a solution in $\MA$. 
 A model $\MA$ of a theory $T$ is called \emph{existentially closed} in $T$ if it is existentially closed in every superstructure $\MB$ that is itself a model of $T$.

 \begin{definition} \label{de:BT}
 Let $T$ be a consistent first-order theory in a finite signature $L$.  Denote by $E(T)$ the class of all existentially closed models in $T$.
 We say that $T$ satisfies a condition (BT) if the following holds:
 \begin{itemize}
 \item [1)]   Every model in $E(T)$ is infinite. 
 \item [2)] There is a formula $\theta(x,y,\bar z)$ in the language $L$ such that:
   \begin{itemize}
       \item [a)] for any $\MA = \langle A;L\rangle  \in E(T)$ and any finite subset $H \subset A\times A$ there is a tuple $\bar c $ over $A$ such that 
       $$
       H_{\bar c}  = \{(a,b) \mid \MA \models \theta(a,b,\bar c)\}
       $$
       \item [b)]for any tuple $\bar c$ over $A$ with $|\bar c| = |\bar z|$ the formula $\theta(x,y,\bar c)$ defines in $\MA$ a finite subset of $A \times A$.
   \end{itemize}
   \end{itemize}
 \end{definition}
 
 The following is a crucial result.
 \begin{theorem} \label{th:BT}
 \cite{BT} The following theories satisfy the condition (BT): 
 \begin{itemize}
     \item Theory of groups.
     \item Theory of torsion-free groups.
     \item Theory of semigroups.
     \item Theory of torsion-free semigroups.
    \item Theory of cancellation semigroups. \item Theory of inverse semigroups.
    \item Theory of associative rings.
    \item Theory of division rings (skew-fields). 
    
 \end{itemize}
 \end{theorem}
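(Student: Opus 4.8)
The plan is to verify the two clauses of Definition~\ref{de:BT} for each of the eight theories, isolating the one nontrivial point --- the existence of the coding formula $\theta$ --- and handling the eight cases by a single template.

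Clause~(1), that every existentially closed model is infinite, I would settle uniformly. For each $T$ on the list, every finite $T$-model $\MA$ embeds properly into an infinite $T$-model $\MB$: for groups and semigroups take a free product with $\Z$ (resp. with the free semigroup on one generator), for associative rings take $\MA[x]$, for a finite division ring --- a finite field by Wedderburn --- take an infinite algebraic extension, for inverse semigroups embed the (finite, by Wagner--Preston) model into the symmetric inverse monoid on a countable set, and for the torsion-free variants one uses $\Z$ as the infinite extension (the only finite torsion-free group is trivial, and every nonempty torsion-free semigroup is already infinite). In every case the quantifier-free formula $\bigwedge_{a\in\MA}(x\neq a)$, with all elements of $\MA$ taken as parameters, is satisfiable in $\MB$ but not in $\MA$, so $\MA\notin E(T)$; hence every existentially closed model is infinite.

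Clause~(2) is the substance. I would fix once and for all a master formula $\theta(x,y,\bar z)$ of a rigid shape chosen so that clause~2(b) is automatic: $\theta$ asserts that $(x,y)$ is read off by fixed terms from an ``index'' element $s$ together with the components of $\bar z$, where $s$ ranges over a set that is \emph{provably finite from $\bar z$} --- for instance, in the group case, the powers $s=z_1^{\,i}$ of the first component $z_1$ restricted to a range bounded by a torsion-type clause imposed on further components of $\bar z$ --- so that $\theta(x,y,\bar c)$ defines a finite subset of $\MA^2$ for \emph{every} $\bar c$. For clause~2(a): given $\MA\in E(T)$ and a finite $H=\{(p_1,q_1),\dots,(p_k,q_k)\}\subseteq \MA^2$, I would write down the finite system of equations and inequations $\Phi_H(\bar w,p_1,\dots,q_k)$ expressing that a \emph{bounded} tuple of auxiliary unknowns $\bar w$ --- a generic element of infinite order with (definably) cyclic centralizer, together with finitely many ``tabulating'' elements --- encodes exactly the list $H$ in the sense demanded by $\theta$; then exhibit an extension $\MB\models T$ with $\MA\subseteq\MB$ in which $\Phi_H$ is solvable, using the appropriate amalgamation construction in the class (free products and HNN extensions for groups, free products for semigroups and inverse semigroups, ring coproducts for associative rings, Cohn's amalgamated coproducts of division rings for skew fields). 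Existential closedness of $\MA$ then yields a solution $\bar w$ inside $\MA$, and $\bar c=(\bar w,p_1,\dots,q_k)$ satisfies $H_{\bar c}=H$.

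The uniformity over the list rests on a single structural input: for every member $M$ of the class and every finite tuple over $M$ there is an extension, \emph{still in the class}, that contains ``fresh'' elements realizing a prescribed finite quantifier-free diagram over $M$ without collapsing $M$ --- exactly the existence, and subclass-preservation, of the relevant free products, HNN extensions, ring coproducts, and skew-field coproducts; preservation of torsion-freeness and of cancellativity by these amalgams is classical, and for semigroups one works with free products precisely to avoid the embeddability failures of general semigroup amalgams. The main obstacle I anticipate is the joint engineering of $\theta$ and $\Phi_H$: the tabulating gadget inside the amalgam must (i) encode an arbitrarily large finite $H$ with a \emph{bounded} number of parameters, (ii) admit a quantifier-free ``switch'' description that is jointly satisfiable in the amalgam, and (iii) force $\theta(x,y,\bar c)$ to define a finite set for \emph{every} $\bar c$, not only the intended ones --- and it is (iii) that dictates the torsion-type bounding clause built into $\theta$. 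Among the eight cases the skew-field one is the most delicate, since coproducts of division rings are governed by Cohn's theory and one must check the switch relations are compatible with embeddability of the coproduct; the torsion-free and cancellation variants require only the routine check that the chosen amalgams remain in the subclass.
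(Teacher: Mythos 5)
First, a point of comparison: the paper offers no proof of this theorem at all --- it is quoted from Belyaev--Taitslin \cite{BT} --- so there is no in-paper argument to match your proposal against; it has to be judged on its own merits. On those terms, your treatment of clause~(1) is fine (a finite model embeds properly into an infinite model of the same theory, and the quantifier-free system $\bigwedge_{a}(x\neq a)$ with parameters from $\MA$ then witnesses failure of existential closedness), and your general frame for clause~(2) --- write a finite quantifier-free system $\Phi_H$ over $\MA$, realize it in an extension of $\MA$ built by the appropriate free/amalgamated construction for the class, and pull a solution back into $\MA$ by existential closedness --- is indeed the standard shape such arguments take.

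However, there is a genuine gap, and it sits exactly where the theorem lives: you never construct the formula $\theta(x,y,\bar z)$, nor the systems $\Phi_H$, with the two properties that matter --- that an \emph{arbitrarily large} finite $H\subseteq A\times A$ is encoded by a tuple $\bar c$ of \emph{bounded} length, and that $\theta(x,y,\bar c)$ defines a finite set for \emph{every} tuple $\bar c$, not just the intended ones (condition 2(b)). You explicitly defer this as ``the main obstacle,'' but it is not a routine verification to be filled in later; it is the entire content of the result, and in \cite{BT} it is supplied by separate, theory-specific codings for each of the eight theories. Moreover, the one concrete mechanism you do propose --- indexing by powers $s=z_1^{\,i}$ with the range ``bounded by a torsion-type clause imposed on further components of $\bar z$'' --- is unavailable precisely for the torsion-free groups and torsion-free semigroups on the list (and is of very limited use in division rings, where multiplicative torsion consists only of roots of unity), so the single uniform template, as written, cannot even get started in those cases; a different finiteness mechanism is required there. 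Finally, the assertion that the chosen amalgams (free products, HNN extensions, ring coproducts, Cohn coproducts of skew fields) admit ``tabulating'' gadgets realizing the required switch relations while remaining in the class is stated, not proved, and for division rings and cancellative semigroups this compatibility is itself a nontrivial check. So what you have is a plausible plan that mirrors the expected strategy of \cite{BT}, but not a proof of the theorem.
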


 \begin{lemma}
 Let a theory $T$ satisfy the condition (BT). Then for every existentially closed model $\MA$ of the theory $T$ the structures $\MA$ and $FBP(\MA)$ are absolutely bi-interpretable with each other. Hence $\MA$ is absolutely and effectively rich.
 \end{lemma}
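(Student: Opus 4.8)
The plan is to show that $FBP(\MA)$ is absolutely bi-interpretable with $HF(\MA)$ and then invoke Theorem~\ref{th:bi-int-HF-FBP} together with the transitivity of (absolute) bi-interpretability; once $\MA$ and $FBP(\MA)$ are absolutely bi-interpretable, the fact that $\MA$ is absolutely and effectively rich follows from Theorem~\ref{co:bi-int-rich}(2),(3) applied to Lemma~\ref{le:rich-S(A,N)}, since $FBP(\MA)$ is absolutely and effectively rich. So the whole content of the lemma is the \emph{one} absolute bi-interpretation between $\MA$ and $FBP(\MA)$, and the condition (BT) is exactly tailored to produce it. First I would fix the formula $\theta(x,y,\bar z)$ guaranteed by (BT) for the theory $T$ and work inside an arbitrary existentially closed model $\MA \in E(T)$.

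The forward direction $FBP(\MA) \simeq \Delta(\MA)$ is where the real work lies. I would interpret the set $FBP(A)$ of finite binary predicates on $A$ as follows: a tuple $\bar c$ over $A$ (of the fixed length $|\bar z|$) codes the finite binary predicate $H_{\bar c} = \{(a,b)\mid \MA\models\theta(a,b,\bar c)\}$; by (BT)(b) this is always a \emph{finite} subset of $A\times A$, and by (BT)(a) every finite binary predicate arises this way. The universe of the interpretation is thus $A^{|\bar z|}$ (a $0$-definable set), and the membership relation $s(a,b,H)$ is interpreted by $\theta(a,b,\bar c)$, which is a formula of $L$ — this is the crucial point: the language of $FBP(\MA)$ has the single ternary predicate $s$, and its preimage under the coordinate map is literally $\theta$. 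The equivalence relation $\sim$ identifying two codes $\bar c_1,\bar c_2$ that code the same predicate is $\forall x\forall y\,(\theta(x,y,\bar c_1)\leftrightarrow\theta(x,y,\bar c_2))$, which is $0$-definable. One also keeps a copy of $\MA$ itself as the other sort, which is trivially $0$-definable inside $\MA$. This gives an absolute interpretation $FBP(\MA)\simeq\Delta(\MA)$. The reverse direction $\MA\simeq\Gamma(FBP(\MA))$ is immediate and absolute: $\MA$ sits inside $FBP(\MA)$ as the first sort, with all operations/relations of $L$ already present (after replacing operations by their graph predicates as in the definition of $HF(\MA)$, or directly if one keeps $L$ on the first sort).

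For the bi-interpretability condition I must exhibit the two definable isomorphisms $\theta_\MA$ and $\theta_{FBP(\MA)}$ witnessing that the round-trip compositions $\Gamma\circ\Delta$ on $\MA$ and $\Delta\circ\Gamma$ on $FBP(\MA)$ are definably isomorphic to the identity. The $\MA$-side is trivial since $\Gamma$ just reads off the first sort, so $\Gamma\circ\Delta(\MA)$ is definably (indeed the identity after the obvious projection) isomorphic to $\MA$ via the formula picking out the first coordinate of the $\MA$-copy. The $FBP(\MA)$-side requires checking that the predicate $s$ of $FBP(\MA)$, pulled back through the composite, is recovered by the same $\theta$ up to the equivalence relation — this is a direct unwinding of the codes, using that (BT)(a),(b) make the map $\bar c\mapsto H_{\bar c}$ a well-defined surjection onto $FBP(A)$. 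The main obstacle, and the step I would write out most carefully, is verifying that all of this survives passage to the round trip \emph{absolutely}, i.e. with no parameters: one must be sure that the isomorphism formulas themselves use no parameters, which they do not because $\theta$ has only the $\bar z$ slots (which become bound/quantified variables of the interpretation, not parameters) and because every element of $FBP(A)$ is genuinely coded — not merely coded with the help of auxiliary parameters. This is precisely what (BT)(a) buys us, and once it is in hand, Theorem~\ref{co:bi-int-rich} delivers the richness conclusions, and effectiveness is inherited because the translation maps $\phi\mapsto\phi_\Gamma$, $\phi\mapsto\phi_\Delta$ are explicitly computable from the codes.
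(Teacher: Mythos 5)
Your proposal is correct and follows essentially the same route as the paper: you code finite binary predicates by tuples $\bar c$ via $\theta(x,y,\bar z)$ from (BT), quotient by the $0$-definable equivalence identifying codes with the same $H_{\bar c}$ to get an absolute interpretation of $FBP(\MA)$ in $\MA$, use that $\MA$ is a reduct of $FBP(\MA)$ for the other direction, define the witnessing isomorphisms by the formula $\forall a\forall b\,((a,b)\in H \leftrightarrow \theta(a,b,\bar c))$ (and the identity on the $\MA$-side), and then deduce richness from the absolute and effective richness of $FBP(\MA)$ exactly as in Lemma~\ref{le:rich-S(A,N)} and Theorem~\ref{co:bi-int-rich}. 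This matches the paper's argument in all essential steps.
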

 \begin{proof}
 Let $\theta)x,y,\bar z)$ be the formula from the condition 2)  of Definition \ref{de:BT} above. Let $|\bar z| = n$. Take a structure $\MA = \langle A;L\rangle  \in E(T)$.  On the set $A^n$ introduce an equivalence relation $\sim_\theta$ such that for $\bar a, \bar b \in A^n$ one has $\bar a \sim_\theta \bar b$ if and only if $H_{\bar a} = H_{\bar b}$, where  $H_{\bar c}  = \{(a,b) \mid \MA \models \theta(a,b,\bar c)\}$.
 Then the two-sorted structure 
 $$
 P = \langle \MA,A^n/\sim_\theta;\theta(x,y,\bar z)\rangle
 $$
 is absolutely interpretable in $\MA$.  Notice that the maps $id\colon \MA \to \MA$ and $\mu\colon  \bar c \in A^n \to H_c$ give rise to an isomorphism $P \to FBP(\MA)$. So $FBP(\MA)$ is absolutely interpretable in $\MA$. Obviously, $\MA$ is a reduct of $FPB(\MA)$ so $\MA$ is absolutely interpretable in $FBP(\MA)$.  Let 
 $$
 P  \rightsquigarrow \MA \rightsquigarrow  FBP(\MA)
 $$
 be the corresponding interpretations. We need to construct an isomorphism $\chi\colon  P \to FBP(\MA)$ definable in $FPB(\MA)$. For  $\bar c \in A^n$  put $\chi(\bar c) = H  \in FPB(A)$  if $H$ and $\bar c$ satisfy the following formula:
 $$
 \forall  a \forall b ((a,b) \in H \leftrightarrow \theta(a,b,\bar c)).
 $$
 And define $\chi$ to be identical on $\MA$. This gives an  isomorphism $\chi\colon  P  \to  FBP(\MA)$ which is definable in $FBP(\MA)$. The identical map $\MA \to \MA$ gives the isomorphism for  the interpretations 
 $$
 \MA \rightsquigarrow  FBP(\MA) \rightsquigarrow \MA.
 $$
 This finishes the proof of the lemma.
 \end{proof}
 
 \begin{cor}
 All existentially closed models of the theories mentioned in Theorem \ref{th:BT} are absolutely and effectively rich.
 \end{cor}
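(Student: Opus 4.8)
The plan is to derive the corollary directly from the lemma immediately preceding it together with the catalogue in Theorem~\ref{th:BT}. First I would invoke Theorem~\ref{th:BT} to get that each theory on the list---groups, torsion-free groups, semigroups, torsion-free semigroups, cancellation semigroups, inverse semigroups, associative rings, and division rings---satisfies the condition (BT). This is purely a matter of citing the classification, so no work is needed here beyond lining up the hypotheses.

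Next I would apply the lemma just proved: for a theory $T$ satisfying (BT) and any $\MA \in E(T)$, the structures $\MA$ and $FBP(\MA)$ are absolutely bi-interpretable in each other. Concretely the lemma supplies the interpretations via the coordinate maps $\mathrm{id}\colon \MA \to \MA$ and $\bar c \mapsto H_{\bar c}$, together with the defining formula $\forall a\,\forall b\,((a,b)\in H \leftrightarrow \theta(a,b,\bar c))$ witnessing that the composite interpretation is definable inside $FBP(\MA)$; nothing more about $\MA$ is used than that it is an existentially closed model of $T$.

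The final step is to transport richness across this bi-interpretation. By Lemma~\ref{le:rich-S(A,N)}, $FBP(\MA)$ is absolutely and effectively rich for every structure $\MA$, and by Theorem~\ref{co:bi-int-rich}(2)--(3) absolute bi-interpretability with an absolutely (respectively, absolutely and effectively) rich structure yields the same property. Combining these gives that $\MA$ is absolutely and effectively rich, and since $\MA \in E(T)$ and $T$ were arbitrary on the list, the corollary follows.

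I do not expect a genuine obstacle here; the one point worth checking is that all of the cited transfer results are applicable, i.e.\ that every $\MA \in E(T)$ is infinite. This is guaranteed by clause~1) of condition (BT), and infiniteness is exactly the hypothesis needed for Theorem~\ref{th:bi-int-HF-FBP} (used inside Lemma~\ref{le:rich-S(A,N)}) to go through. Thus the corollary is essentially a bookkeeping consequence of the previous lemma and Theorem~\ref{th:BT}, and the argument is uniform over the whole list of theories.
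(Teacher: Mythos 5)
Your proposal is correct and follows the paper's own route: the corollary is an immediate combination of Theorem~\ref{th:BT} with the preceding lemma, whose conclusion (via Lemma~\ref{le:rich-S(A,N)} and Theorem~\ref{co:bi-int-rich}) already gives absolute and effective richness of every existentially closed model. Your extra check that clause~1) of (BT) supplies the infiniteness needed for Theorem~\ref{th:bi-int-HF-FBP} is a sensible, if minor, piece of bookkeeping that the paper leaves implicit.
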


 We will mention some corollaries for groups. Note first that every non-trivial algebraically closed group  is also existentially closed. Here a group $G$ is algebraically closed if it is closed under solutions of finite systems of equation with coefficients in $G$ in any overgroup.  
 
 Properties of algebraically closed groups: \begin{itemize}
 \item [1)] Every countable group can be embedded in a countable algebraically closed group.
\item [2)] Every algebraically closed group is simple.
\item [3)] No algebraically closed group is finitely generated.
\item [4)] No algebraically closed group is  recursively presented.
\item [5)] Any finitely generated group with decidable word problem embeds into any algebraically closed non-trivial group.
 \end{itemize}
 
 For us the following corollary is of interest.
 \begin{cor}
 Every countable group embeds into an absolutely  rich countable group which satisfies the conditions 2)-5) above. 
 \end{cor}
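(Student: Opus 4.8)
The plan is to obtain the required group as a countable algebraically closed group into which $G$ embeds, and then to read off every asserted property from results already established above. First I would invoke property~1) in the list of properties of algebraically closed groups: every countable group embeds into a countable algebraically closed group. Applying this to our given countable $G$ yields a countable algebraically closed group $G^\ast$ together with an embedding $G \hookrightarrow G^\ast$.

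Next I would observe that $G^\ast$ is non-trivial; in fact it is infinite, since no finite group is algebraically closed. Hence, as recalled in the sentence preceding this corollary, $G^\ast$ is existentially closed in the theory of groups. By Theorem~\ref{th:BT} the theory of groups satisfies condition (BT), so by the Corollary immediately above (``all existentially closed models of the theories mentioned in Theorem~\ref{th:BT} are absolutely and effectively rich''), the group $G^\ast$ is absolutely and effectively rich. Finally, since $G^\ast$ is algebraically closed it satisfies conditions 2)--5) verbatim: it is simple, it is not finitely generated, it is not recursively presented, and every finitely generated group with decidable word problem embeds into it. Thus $G^\ast$ is a countable absolutely rich group with the stated embedding and properties, which proves the corollary.

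There is essentially no technical obstacle here: the proof is a chaining of already-established statements. The only point that deserves a word is the equivalence, for groups, between being \emph{algebraically closed} (closed under solvable finite systems of equations with coefficients in the group, taken in arbitrary overgroups) and being an \emph{existentially closed model of the first-order theory of groups}; this is the classical fact invoked above when it is said that every non-trivial algebraically closed group is existentially closed. Once this equivalence is granted, the Corollary on existentially closed structures applies to $G^\ast$ directly, and no further argument is needed.
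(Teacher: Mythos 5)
Your proof is correct and follows exactly the route the paper intends: the corollary is stated without proof precisely because it is the chaining you describe — embed $G$ into a countable algebraically closed group via property 1), note it is non-trivial hence existentially closed, apply the preceding corollary on existentially closed models of theories satisfying (BT) to get absolute richness, and read off properties 2)--5) directly. No discrepancy with the paper's (implicit) argument.
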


\section{Rich rings} 
A description of finitely generated commutative rings bi-interpretable with $\N$ was given in  \cite{AKNS}. 
Let A be a commutative ring with unit. As usual, we write $Spec(A)$ for
the spectrum of $A$, i.e., the set of prime ideals of $A$ equipped with the Zariski topology,
and $Max(A)$ for the subset of $Spec(A)$ consisting of the maximal ideals of $A.$
We put $Spec^{\circ}(A) := Spec(A)-Max(A)$, equipped with the subspace topology. 
\begin{theorem} \cite{AKNS} \label{thAKNS} Suppose the ring $A$ is finitely generated, and let $N$ be the nilradical
of $A$. Then $A$ is bi-interpretable with $\N$ if and only if $A$ is infinite, $Spec^{\circ}(A)$ is
connected, and there is some integer $d\geq 1$ with $dN = 0$.
\end{theorem}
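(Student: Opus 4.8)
The plan is to prove the two implications separately, treating necessity (bi-interpretability $\Rightarrow$ the three conditions) as the softer half and sufficiency as the technical heart. Throughout I use that $A$ has the finite signature $\{+,\cdot,0,1\}$ and that, by Lemma~\ref{le:bi-int-Z-Z}, being bi-interpretable with $\N$ is the same as being bi-interpretable (with parameters) with $\Z$.

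\textbf{Necessity.} Suppose $A$ is bi-interpretable with $\Z$. Then by Theorem~\ref{Nies2007} $A$ is QFA, hence first-order rigid: any finitely generated ring elementarily equivalent to $A$ is isomorphic to $A$. Infiniteness is immediate, since the universe of any interpretation of $\N$ in a finite $A$ would be a quotient of a finite set. For the remaining two conditions I would argue contrapositively. If $Spec^{\circ}(A)$ is disconnected, the induced decomposition of $Spec(A)$ exhibits a ``deformable'' piece of $A$ (modulo nilpotents) and lets one build a finitely generated ring $A'\not\cong A$ with $A'\equiv A$; if $dN\ne 0$ for every $d$, one deforms the extension $0\to N\to A\to A/N\to 0$ along the unbounded module $N$, again producing a finitely generated $A'\equiv A$ with $A'\not\cong A$. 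Either way first-order rigidity fails, a contradiction. These are the constructions of \cite{AKNS}; they are elementary in spirit but require care.

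\textbf{Sufficiency.} Assume $A$ is infinite, $dN=0$ for some $d\ge 1$, and $Spec^{\circ}(A)$ is connected. One half of the interpretation is free: $A$ is a finitely presented $\Z$-algebra, $A\cong\Z[x_1,\dots,x_n]/I$ with $I$ finitely generated, so $A$ is $0$-interpretable in $\Z$ and hence in $\N$. It remains to interpret $\N$ back in $A$ and to verify the coordinate maps of the round trip. Since $A$ is Noetherian, $N$ is finitely generated, $N^m=0$ for some fixed $m$, and $N=\{a\in A:a^m=0\}$ is $0$-definable; hence $B:=A/N$ is interpretable in $A$, and it suffices to interpret $\N$ in $B$, where $Spec^{\circ}(B)=Spec^{\circ}(A)$ is still connected. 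For the reduced finitely generated ring $B$, Noether normalization presents $B$ as a finite extension of a polynomial ring over $\Z$ or over $\mathbb{F}_p$; localizing $B$ and passing to residue fields at suitably chosen non-maximal primes lands one inside a finitely generated infinite field or a ring of algebraic integers, in which $\Z$ is $0$-definable by the theorems of J.~Robinson (Theorem~\ref{th:J-Rob}) and Scanlon (Theorem~\ref{th:Scanlon}); pulling back along these definable constructions interprets $\Z$, and hence $\N$, in $B$ and so in $A$. Connectedness of $Spec^{\circ}(B)$ is exactly what allows this interpreting data to be chosen coherently over all of $Spec(B)$ rather than only on one irreducible component.

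\textbf{The coordinate maps, and the main obstacle.} Once $\N$ is interpreted in $A$, every element of $A$ acquires a canonical code: a finite tuple of integers recording a word for it in a fixed finite generating set, taken modulo the recursively enumerable (hence arithmetic) set of relations of $A$; on the $\N$ side the resulting composite self-interpretation of $\N$ is then identified with $\N$ by a standard, definable arithmetical coding argument. The delicate point is to show that the coding map $A\to\Gamma\circ\Delta(A,\bar p^{\ast})$ (in the notation of Definition~\ref{de:bi-inter}) is itself first-order definable over $A$ — equivalently, that $A$ carries no hidden symmetry obstructing a definable back-and-forth identification, the same phenomenon controlled in the parameter-free case by Lemma~\ref{autiso:lem}. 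This is precisely where $dN=0$ and connectedness of $Spec^{\circ}(A)$ enter decisively. I expect this verification, together with making the choices in the previous step uniform across the connected spectrum, to be the main obstacle: the commutative-algebra input (Noether normalization, the structure of finitely generated reduced rings, the behaviour of nilpotents under $dN=0$) is standard, but threading it through the definability bookkeeping so that both coordinate maps come out definable is the substance of the argument in \cite{AKNS}.
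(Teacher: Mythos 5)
You should note first that the paper does not actually prove Theorem~\ref{thAKNS}: it is quoted from \cite{AKNS} without proof, so there is no in-paper argument to compare against, and your proposal has to stand on its own. As it stands, the necessity half rests on a strategy that cannot work. Your plan is: bi-interpretability with $\Z$ implies QFA (Theorem~\ref{Nies2007}), hence first-order rigidity, and then to refute the failure of connectedness of $Spec^{\circ}(A)$ or of the bound $dN=0$ by constructing a finitely generated $A'\equiv A$ with $A'\not\cong A$. But the very same section of the paper quotes from \cite{AKNS} that \emph{every} finitely generated unitary commutative ring is QFA, hence first-order rigid; so no such $A'$ exists, regardless of what $Spec^{\circ}(A)$ or $N$ look like. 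Concretely, $\Z\times\Z$ (disconnected $Spec^{\circ}$) and $\Z[x]/(x^2)$ (nilradical of unbounded additive exponent) are both QFA and first-order rigid, yet neither is bi-interpretable with $\N$ by the theorem you are proving: they witness that the obstruction to bi-interpretability is not loss of rigidity. The genuine necessity argument has to attack clause 2) of Definition~\ref{de:bi-inter} directly, i.e.\ show that the isomorphism $A\to\Gamma\circ\Delta(A,\bar p^{\ast})$ cannot be definable when a condition fails --- typically by producing, in a model $R\equiv\Z$ or an ultrapower, uncountably many automorphisms of the corresponding ``completion'' of $A$ that fix the parameters and the standard copy, exactly in the spirit of Lemma~\ref{autiso:lem} and the $UT_3(\Z)$/nilpotent-group argument of Theorem~\ref{khelif-ut3:thm}. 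Nothing of that kind appears in your sketch.

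The sufficiency half is an outline rather than a proof, and you say so yourself: the definability of the two coordinate maps --- which is where $dN=0$ and connectedness of $Spec^{\circ}(A)$ actually do their work --- is deferred wholesale to \cite{AKNS}. Even the interpretation of $\N$ in $A$ is only gestured at: passing to ``residue fields at suitably chosen non-maximal primes'' is not obviously a definable construction (a minimal prime of the reduced quotient $B=A/N$ need not be $0$-definable, and the choices must be made uniformly and coherently over the connected $Spec^{\circ}$), and reducing to $B$ is not enough for bi-interpretability, since one must definably reconstruct $A$ from $B$ together with the bounded-exponent module $N$. So, beyond the necessity flaw above, what you have is a correct map of where the difficulties lie, with the substance of both directions still missing.
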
 
Note that the theorem says in particular that if $A$ is a finitely generated 
infinite integral domain, then $A$ is bi-interpretable with $\N$  \cite[Theorem 3.1]{AKNS}.

\begin{cor}
Each finitely generated unitary integral domain is prime, atomic, homogeneous, and  QFA.
\end{cor}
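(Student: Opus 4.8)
The plan is to reduce the statement to the bi-interpretability of $A$ with the ring $\Z$ and then read off all four properties from results established above. First I would dispose of the degenerate case: if $A$ is finite, then, being a finite integral domain, it is a finite field; a finite structure is absolutely rich (by the examples following Definition \ref{de:rich}) and is trivially prime, atomic and homogeneous, since $Th(A)$ contains a sentence pinning down $|A|$ together with the atomic diagram of $A$, so every model of $Th(A)$ is isomorphic to $A$. The QFA clause is a statement about infinite structures, so nothing need be proved in this case. Hence I may assume $A$ is infinite.

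Next I would establish that $A$ is bi-interpretable (with parameters) with $\Z$. Since $A$ is a finitely generated infinite integral domain, the remark following Theorem \ref{thAKNS} (i.e.\ \cite[Theorem~3.1]{AKNS}) gives that $A$ is bi-interpretable with $\N$. By Lemma \ref{le:bi-int-Z-Z} the structures $\N$ and $\Z$ are absolutely bi-interpretable with each other, and bi-interpretability is transitive: composing the two bi-interpretations, the composite codes interpret $A$ in itself and $\Z$ in itself by Lemma \ref{le:int-transitivity}, and the coordinate maps of these composites are again definable, so one may take the required ``self-coordinate'' formulas as in Remark \ref{re:1}(c). Thus $A$ is bi-interpretable with $\Z$.

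From this everything follows by invoking the machinery of the earlier sections. By Corollary \ref{co:bi-int-Z}, $A$ is rich. By the corollary following Lemma \ref{le:prime} (or by Lemma \ref{le:prime} directly, using that $\Z$ is prime in $Th(\Z)$, as recalled in Section \ref{sec:primal}), $A$ is prime in $Th(A)$, and a prime model is atomic and homogeneous. Finally, $A$ is a finitely generated structure in the finite signature $\{+,\cdot,0,1\}$ bi-interpretable with $\Z$, so Theorem \ref{Nies2007} gives that $A$ is QFA. One could avoid the passage through $\Z$ altogether: $A$ is rich by Theorem \ref{co:bi-int-rich} since $\N$ is rich; $A$ is prime by Lemma \ref{le:prime} since $\N$ is prime; and $A$, being interpretable in $\N$, has an arithmetic diagram by the lemma immediately preceding Theorem \ref{th:QFA}, whence $A$ is QFA by Theorem \ref{th:QFA}.

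I do not expect a genuine obstacle here; the proof is essentially an assembly of cited results, and the reduction to the infinite case is immediate. The one step that merits a word of care is the transitivity of bi-interpretability used in the second paragraph to pass from $A\leftrightarrow\N$ to $A\leftrightarrow\Z$; this is routine given Lemma \ref{le:int-transitivity} together with the definability of the coordinate maps of composite interpretations (Remark \ref{re:1}(c)), and it can be bypassed entirely by the $\N$-only route indicated above.
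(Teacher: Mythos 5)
Your proof is correct and follows essentially the same route the paper intends: Theorem \ref{thAKNS} (the AKNS result for infinite finitely generated integral domains) gives bi-interpretability with $\N$, hence with $\Z$, and then primeness (so atomicity and homogeneity) follows from Lemma \ref{le:prime} and its corollary, while QFA follows from Theorem \ref{Nies2007}. Your explicit treatment of the finite case (a finite integral domain is a finite field, for which the claims are trivial or vacuous) is a reasonable small addition that the paper leaves implicit.
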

 
\begin{example}\label{O-biint-Z:lem} The ring of integers $\OR$ of a number field of finite degree satisfies the hypothesis of Theorem~\ref{thAKNS}, Therefore, it is bi-interpretable with $\Z$. This fact is well-known, and here we provide a short proof for it. So, assume $\OR$ is the ring of integers of a number field $F$ of degree $m$ and $\beta_1, \ldots , \beta_m$ generate it as a $\Z$-module. We show that $(\OR, \bar{\beta})$ and $\Z$ are bi-interpretable. 
By~(\cite{Nies2007}, Proposition~7.12) we need to prove that $\OR$ is interpretable in $\Z$ and there is a definable copy $M$ of $\Z$ in $\OR$ together with an isomorphism $f\colon  \OR \to M$ which is definable in $\OR$.
The ring $\OR$ is interpreted in $\Z$ by the $m$-dimensional interpretation $\Delta$: 
$$x=\sum_{i=1}^m a_i \beta_i \mapsto  (a_1,\ldots , a_m)$$ where 
$\Z^m$ is equipped with the ring structure: 
$$ e_i\cdot e_j =_\Z(c_{ij1},c_{ij2},\ldots, c_{ijm})\Leftrightarrow \beta_i \cdot \beta_j =_\OR \sum_{k=1}^m c_{ijk}\beta_k$$
and $e_i= (0,\ldots, 0,\underbrace{1}_{\text{$i$'th}},0,\ldots,0)$, for $i=1,\ldots,m$. On the other hand $\Z$ is defined in $\OR$ without parameters as $\Z \cdot 1_\OR$ by the well-known result of Julia Robinson~\cite{J-Robinson}.  So we can take $M= \prod_{i=1}^m \Z\cdot 1_\OR$ with $f(x)$ defined as
$$f(x)=(a_1\cdot 1_\OR, \ldots, a_m\cdot 1_\OR) \Leftrightarrow x=\sum_{i=1}^m a_i \beta_i$$  
which is obviously definable in $\OR$. 

By Lemma~\ref{autiso:lem} we can not get ride of the parameters, since $\OR$ is not automorphically rigid, while $\Z$ is such.
\end{example}
\begin{theorem} \cite{AKNS} 
Each finitely generated unitary commutative ring is  QFA.
\end{theorem}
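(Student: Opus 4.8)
The plan is to deduce the result from Theorem~\ref{th:QFA}: a rich structure of finite signature which is generated by a finite set with an arithmetic diagram is QFA. Note that Theorem~\ref{Nies2007} does not suffice by itself, since by Theorem~\ref{thAKNS} many finitely generated commutative rings are \emph{not} bi-interpretable with $\Z$ — already $\Z\times\Z$ (generated by $(1,0)$), whose punctured spectrum $Spec^{\circ}(\Z\times\Z)$ is disconnected. So for an arbitrary \emph{infinite} finitely generated unitary commutative ring $A$ I will verify the two hypotheses of Theorem~\ref{th:QFA} separately: that $A$ has an arithmetic diagram, and that $A$ is rich. (The QFA notion concerns infinite structures; a finite commutative ring is in any case pinned down up to isomorphism by a single sentence recording its addition and multiplication tables, so there is nothing to prove in that case.)

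First I would record that $A$ has a computable, hence arithmetic, diagram. By the Hilbert basis theorem $A\cong\Z[x_1,\dots,x_k]/I$ with $I$ finitely generated, so $A$ is finitely presented; and ideal membership in $\Z[x_1,\dots,x_k]$ is decidable (Gr\"obner bases over $\Z$), so the word problem of $A$ with respect to a finite generating set is decidable and $D_S(A)$ is a computable subset of $\N$.

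Next, and this is the bulk of the argument, I would show that $A$ is rich. By Theorem~\ref{co:bi-int-rich} it is enough to bi-interpret $A$, with parameters, with a rich structure, and the natural choice is the list superstructure $S(A,\N)$, which is absolutely and effectively rich by Lemma~\ref{le:rich-S(A,N)}. One direction, $A\simeq\Delta(S(A,\N),\sigma)$, is immediate since $A$ is a reduct of $S(A,\N)$. For the interpretation of $S(A,\N)$ in $A$ I first need the ring $\Z$ to be interpretable in $A$ with parameters. This holds because $A$ is infinite and Noetherian: its nilradical $N$ is nilpotent and finitely generated as an ideal, so $A/N$ is infinite, and $A/N$, being reduced and Noetherian, has an infinite finitely generated integral-domain quotient $D=A/\mathfrak p$ (take $\mathfrak p$ the preimage in $A$ of a minimal prime of $A/N$ with infinite residue ring, which exists since otherwise $A/N$ would embed into a finite product of finite rings); then $A$ interprets $D$ with parameters naming a finite generating set of $\mathfrak p$, and $D$, an infinite finitely generated domain, is bi-interpretable with $\Z$ by \cite[Theorem~3.1]{AKNS} (the remark following Theorem~\ref{thAKNS}). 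With arithmetic thus available inside $A$, one fixes the computable enumeration $\nu$ of terms of the ring language and codes each element of $A$ by a natural number witnessing ``$a=t(\bar g)$ with $\nu(t)=m$'' for a generating tuple $\bar g$ (as in Lemma~\ref{le:S(A,N)-2}), codes finite sequences over $A$ by sequences of such codes, and thereby interprets the three-sorted structure $S(A,\N)$ inside $A$ with parameters. The coordinate maps of this interpretation are first-order definable over $A$ because the coding relation and the decoding/evaluation of coded terms are, by the bootstrapping in Lemmas~\ref{le:S(A,N)-1} and~\ref{le:S(A,N)-2}, expressible over $A$ once $\Z$ is interpreted and the diagram is arithmetic; hence $A$ and $S(A,\N)$ are bi-interpretable with parameters, and $A$ is rich by Theorem~\ref{co:bi-int-rich}.

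Finally, $A$ is then a finitely generated rich structure of finite signature with an arithmetic diagram, so Theorem~\ref{th:QFA} yields that $A$ is QFA. The step I expect to be the main obstacle is precisely the definability of the coordinate maps in the interpretation of $S(A,\N)$ in $A$ — that is, upgrading mutual interpretability to genuine bi-interpretability. This is the familiar but delicate task of coding finitary set theory over $A$ inside $A$, and it is exactly the place where one needs, beyond the interpretability of $\Z$, the decidability of the word problem of $A$, so that the term-evaluation relation is arithmetic and hence usable inside the interpreted copy of $\N$.
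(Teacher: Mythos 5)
Your reduction to Theorem~\ref{th:QFA} has a genuine gap at its central step: the claim that every infinite finitely generated unitary commutative ring $A$ is bi-interpretable (with parameters) with $S(A,\N)$, hence rich. This is false, and your own example already witnesses it: take $A=\Z\times\Z$. In a rich ring the finitely generated subrings are uniformly first-order definable (the WSOL formula $\bigvee_{n}(x=n\cdot 1)$ defines the prime subring), but the prime subring of $\Z\times\Z$ is the diagonal $\{(n,n):n\in\Z\}$, and by the Feferman--Vaught analysis of direct products every subset of $(\Z\times\Z)^k$ definable with parameters is a finite union of rectangles $U\times V$ with $U,V$ definable in the factors; an infinite subset of the diagonal cannot be such a union, so the diagonal is not first-order definable even with parameters. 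Hence $\Z\times\Z$ is not rich, and Theorem~\ref{th:QFA} simply cannot be applied to it; QFA is strictly weaker than richness. The precise point where your construction breaks is the one you flag as "the main obstacle": to interpret $S(A,\N)$ in $A$ you need the term-evaluation relation "$m=\nu(t)$ and $a=t(\bar g)$" to be definable in $A$, and your appeal to Lemmas~\ref{le:S(A,N)-1} and~\ref{le:S(A,N)-2} is circular, since those lemmas give formulas over $S(A,\N)$, not over $A$; transporting them into $A$ presupposes exactly the bi-interpretation (equivalently the richness) you are trying to establish. The interpretability of $\Z$ in $A$ with parameters and the computability of the diagram, which you do prove correctly, are not enough: mutual interpretability does not upgrade to bi-interpretability here, and Theorem~\ref{thAKNS} tells you precisely when it does (infinite, $Spec^{\circ}(A)$ connected, bounded torsion nilradical).

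For comparison, the paper does not prove this statement at all; it is quoted from \cite{AKNS}, whose proof must (and does) treat the rings excluded by Theorem~\ref{thAKNS} --- disconnected punctured spectrum, as in $\Z\times\Z$, or nilradical of unbounded additive order, as in $\Z[x]/(x^2)$ --- by arguments that construct a QFA sentence without passing through richness or bi-interpretability with $\Z$. Note also that you cannot instead lean on the paper's Theorem~\ref{ring-qfa:thm}, since its proof invokes the QFA property of the finitely generated unitary ring $P(f_A)$, i.e., the very statement under discussion. A correct write-up along your lines would cover only the rings satisfying the hypotheses of Theorem~\ref{thAKNS} (e.g., infinite finitely generated domains), for which bi-interpretability with $\Z$ gives richness and then Theorem~\ref{th:QFA} (or Theorem~\ref{Nies2007}) applies; the remaining cases require a genuinely different argument.
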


Let $A$ be an associative commutative ring, which does not necessarily have a unit. Let $Ann(A)=\{a\in A| ax=0, \forall x\in A \}$ and let $A^2$ be the subring generated by $\{xy|x,y\in A\}$. Indeed, $A^2$ is an ideal in $A$. Define the \textit{isolator of $A^2$}, $\Delta(A)$, by
$$\Delta(A)=\{a\in A| ma\in A^2 \text{ for some } m\in \N^\ast \}.$$ We say that $A$ is \textit{regular} if $Ann(A)\leq \Delta(A)$. We note that $\Delta(A)$ is an ideal of $A$. The ring multiplication is clearly trivial in $A/\Delta(A)$. Therefore, if $A$ is a f.g. ring, $A/\Delta(A)$ is just an additive free abelian group of finite rank. Notice that every unitary commutative ring is regular.

For a group $G$, $\Delta(G)$ is defined similarly (See Definition~\ref{isolator:df} below).  

\begin{theorem}\label{ring-qfa:thm} The following are equivalent for a finitely generated commutative ring $A$:
\begin{enumerate}
    \item $A$ is $QFA$ in the language of non-unitary rings.
    \item $A$ is regular.
\end{enumerate}
\end{theorem}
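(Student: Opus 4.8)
We may assume $A$ is infinite, since a finite ring is never QFA under the present convention; so the statement is understood for finitely generated \emph{infinite} commutative rings. Throughout we use the following structural picture, valid for any such $A$: every product of two elements of $A$ lies in $A^2\subseteq\Delta(A)$; the quotient $A/A^2$ is a finitely generated abelian group carrying the zero multiplication, $\Delta(A)/A^2$ is its (hence finite) torsion subgroup, $A/\Delta(A)\cong\Z^k$ is free abelian of some finite rank $k$, and choosing a free complement gives a decomposition $A=\Delta(A)\oplus F$ of abelian groups with $F\cong\Z^k$; multiplication induces a $\Z$-bilinear pairing $b\colon A\times A\to A^2$. We treat the two implications in turn.

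\emph{Direction $(2)\Rightarrow(1)$.} The plan is to show that a regular $A$ is bi-interpretable (with parameters) with $\Z$; then $A$ is rich by Corollary~\ref{co:bi-int-Z}, and since $A$---being a finitely generated commutative ring---has solvable word problem (e.g.\ via a Gr\"obner basis computation in a unitalization) and hence a computable, a fortiori arithmetic, diagram, Theorem~\ref{th:QFA} yields that $A$ is QFA. Interpreting $A$ in $\Z$ is routine: $A$ is a quotient of a finitely generated (non-unitary) polynomial ring over $\Z$, and such a ring is interpretable in $S(\Z,\N)$, which is bi-interpretable with $\Z$ by Lemma~\ref{le:bi-int-Z-Z}. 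The substance is the reverse interpretation, together with the definable isomorphism required for bi-interpretability, and this is where regularity enters: if $f\in F$ satisfies $fA=0$ then $f\in\mathrm{Ann}(A)\cap F\subseteq\Delta(A)\cap F=\{0\}$, so the pairing $b$ has trivial left radical on $F$. Thus regularity says exactly that $A$ carries no free abelian direct summand on which the ring acts trivially---which is precisely the obstruction preventing the arithmetic of $\N$ from being interpretable. Using the nondegeneracy of $b$ to locate and coordinatize a definable copy of $\Z$ inside $A$, and treating the core $\Delta(A)$ (recall $\Delta(A)/A^2$ is finite) along the lines of the Aschenbrenner--Kh\'elif--Naziazeno--Scanlon structure theorem (Theorem~\ref{thAKNS}), one obtains the desired bi-interpretation. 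It must use parameters, by Lemma~\ref{autiso:lem}: already for $A=\Z x\oplus\Z x^2$ with $x\cdot x=x^2$ one has $\mathrm{Aut}(A)\ncong\mathrm{Aut}(\Z)$. Carrying out this non-unitary reworking of the AKNS analysis---simultaneously controlling nilpotents, torsion, and the free part---is the main obstacle of the whole proof.

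\emph{Direction $(1)\Rightarrow(2)$.} We argue the contrapositive: if $A$ is \emph{not} regular, no single first-order sentence axiomatizes $A$ among finitely generated commutative rings. Fix $a\in\mathrm{Ann}(A)\setminus\Delta(A)$. Then $na\notin\Delta(A)\supseteq A^2$ for every $n\ge1$ (a multiple of $na$ lying in $A^2$ would put $a$ in $\Delta(A)$), so $a$ has infinite additive order, $a\cdot a=0$, and each $na$ is a nonzero square-zero element of $\mathrm{Ann}(A)$ lying outside $A^2$; in particular $A$ already contains infinitely many such elements. For a prime $p$ put $B_p:=A\times(\Z/p\Z)_0$, the ring direct product of $A$ with the additive group $\Z/p\Z$ equipped with the zero multiplication; each $B_p$ is a finitely generated commutative ring. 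Since the torsion subgroup of $(A,+)$ is finite, for all but finitely many $p$ the ring $B_p$ has an element of additive order $p$ whereas $A$ does not, so $B_p\ncong A$ (and $B_p\ncong B_q$ for $p\ne q$). The crucial claim is that for each $N$ there is a bound $p_0(N)$ so that $A$ and $B_p$ satisfy the same first-order sentences of length $\le N$ whenever $p>p_0(N)$. One proves this by an Ehrenfeucht--Fra\"{\i}ss\'e argument for the product $A\times(\Z/p\Z)_0$ (amenable to a Feferman--Vaught decomposition): the only new feature of $B_p$ is one further square-zero ideal of large prime order inside the annihilator, and because non-regularity already supplies the \emph{infinite} cyclic square-zero ideal $\Z a\subseteq\mathrm{Ann}(A)$ outside $A^2$, Duplicator can match any move Spoiler makes in the new component by a move in $\Z a$ and survive $N$ rounds. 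It follows that any $\phi$ with $A\models\phi$ holds in $B_p$ for all large $p$, so $\phi$ is not a QFA-sentence for $A$; hence $A$ is not QFA. This is exactly where non-regularity is needed, and it also shows why the argument does not spuriously apply to $\Z$ or to the regular rings of the first part: when $A$ \emph{is} regular one has $\mathrm{Ann}(A)\subseteq\Delta(A)$, so the square-zero elements outside $A^2$ are absent or of bounded order, and a short sentence---a suitable strengthening of $\exists w\,(w\ne0\wedge w\cdot w=0\wedge\forall y\,\forall z\,(w\ne y\cdot z))$---does separate $A$ from $B_p$. The subtle point in this direction is the uniformity estimate itself, now for commutative rings rather than mere abelian groups.
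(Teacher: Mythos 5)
Your plan for the direction $(2)\Rightarrow(1)$ does not work: it is routed through the claim that a regular finitely generated commutative ring is bi-interpretable (with parameters) with $\Z$, and that claim is false. Every finitely generated \emph{unitary} commutative ring is regular, yet by Theorem~\ref{thAKNS} such a ring is bi-interpretable with $\Z$ only when $Spec^{\circ}(A)$ is connected and the nilradical is bounded; for instance $A=\Z\times\Z$ is regular and QFA but not bi-interpretable with $\Z$. So you cannot pass through Corollary~\ref{co:bi-int-Z} and Theorem~\ref{th:QFA}; non-degeneracy of the pairing $b$ on the free part $F$ (your "no trivial free summand" remark) is far weaker than bi-interpretability. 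The paper's proof takes a different route that does not need richness at all: form the full non-degenerate finite-type bilinear map $f_A\colon A/\mathrm{Ann}(A)\times A/\mathrm{Ann}(A)\to A^2$, interpret in $A$ its largest scalar ring $P(f_A)$ via Proposition~\ref{myasnik:bilinT}, show that $P(f_A)$ is a finitely generated unitary commutative ring and hence QFA by \cite{AKNS}, and then conjoin the translated QFA sentence $\eta^{\ast}$ with finitely many first-order conditions on a fixed generating tuple (generation of $A/\mathrm{Ann}(A)$ and of $A^2$ as $P(f_A)$-modules, generation of the quotient $Q(A)=\mathrm{Ann}(A)/(\mathrm{Ann}(A)\cap A^2)$, and the defining relations); any finitely generated ring satisfying this conjunction receives a surjection from $A$ which is then shown to have trivial kernel. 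Regularity enters exactly to make $Q(A)$ finite so that it can be pinned down by a sentence — not to produce a bi-interpretation.

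In the direction $(1)\Rightarrow(2)$ your target claim is the right one (it is the ring analogue of Theorem~\ref{Oger-qfa1:thm} and Corollary~\ref{Oger-qfa1:cor}), but the proof you sketch of the key uniformity statement is not adequate. The assertion that Duplicator can "match any move in the new component by a move in $\Z a$" ignores that $(\Z/p\Z)_0$ is $q$-divisible for every prime $q\neq p$, whereas $\Z a$ need not be divisible by anything inside $A$; moreover $\Z a$ is in general not a ring direct factor of $A$, so a Feferman--Vaught decomposition of $A$ parallel to that of $A\times(\Z/p\Z)_0$ is not available, and quantifier-rank games do not suffice in any case (atomic formulas of unbounded length detect $p$-torsion), so you would have to run a genuinely length-bounded game, which the sketch does not do. The paper sidesteps all of this with Oger's ultrapower argument: $\Z a$ is an infinite cyclic subgroup of $\mathrm{Ann}(A)$ meeting $\Delta(A)$ trivially, its ultrapower contains a torsion-free divisible subgroup $D\cong\Q^{\omega}$ inside $\mathrm{Ann}(A^{U})$ which splits off as a direct summand, giving $A^{U}\cong A^{U}\times\bigl(\prod_{U}\Z/p_i\Z\bigr)_0$, and this contradicts the existence of a sentence true in $A$ and false in $A\times\Z/p_i\Z$ for infinitely many $p_i$; combined with $A\not\cong A\times\Z/p\Z$ (compare $A/A^2$), this shows a non-regular $A$ is not QFA. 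As written, your uniformity claim is the missing step: either reproduce the ultraproduct argument or supply a fully worked-out finite-length game argument that handles the divisibility mismatch.
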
 
We shall provide a proof in the following. We first need a few auxiliary definitions and lemmas.

The following results are for groups, but they are true in the case of rings.  
\begin{prop}[\cite{Oger2006}~Theorem 2.]\label{Oger-qfa1:thm}
Let $G$ be a group such that there exists a sentence which is true in $G$ and
false in $G \times \dfrac{\Z}{p\Z}$ for infinitely many primes $p$. Then $Z(G)\leq \Delta(G)$.
\end{prop}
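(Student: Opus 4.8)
The plan is to establish the contrapositive: assuming $Z(G)\not\subseteq\Delta(G)$, I will show that every first-order sentence true in $G$ holds in $G\times\mathbb{Z}/p\mathbb{Z}$ for all but finitely many primes $p$. By hypothesis there is $z\in Z(G)$ with $z^{m}\notin[G,G]$ for every $m\geq 1$, i.e.\ the image of $z$ in $G^{\mathrm{ab}}$ has infinite order. I first observe that this property persists in any elementary extension $\tilde G\succeq G$: "$z$ is central" and, for each fixed $m,k$, "$z^{m}$ is not a product of $k$ commutators" are first-order conditions on the parameter $z$ that hold in $G$, hence in $\tilde G$; so $z\in Z(\tilde G)$ still has infinite order in $\tilde G^{\mathrm{ab}}$.

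The first step is a reduction to a pure elementary-equivalence statement. Fixing any non-principal ultrafilter $\mathcal U$ on the set of primes and using {\L}o\'{s}'s theorem, the asserted conclusion is equivalent to $G\equiv\prod_{\mathcal U}(G\times\mathbb{Z}/p\mathbb{Z})$ for every such $\mathcal U$. Now $\prod_{\mathcal U}(G\times\mathbb{Z}/p\mathbb{Z})\cong G^{*}\times F$ with $G^{*}=\prod_{\mathcal U}G\succeq G$ and $F=\prod_{\mathcal U}\mathbb{Z}/p\mathbb{Z}$, and a direct check shows that $F$ is a nonzero torsion-free divisible abelian group, i.e.\ a nonzero $\mathbb{Q}$-vector space. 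Since all nonzero $\mathbb{Q}$-vector spaces are elementarily equivalent and, by the Feferman--Vaught theorem, the theory of a direct product depends only on the theories of its factors, we get $G^{*}\times F\equiv G\times\mathbb{Q}$. Hence the whole statement reduces to the single fact $G\equiv G\times\mathbb{Q}$. (Equivalently, and without ultraproducts: $\mathbb{Z}/p\mathbb{Z}\equiv_{n}\mathbb{Q}$ as abelian groups for all $p$ large relative to $n$, so Feferman--Vaught again reduces the conclusion to $G\equiv G\times\mathbb{Q}$.)

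To prove $G\equiv G\times\mathbb{Q}$ I would realize a copy of $\mathbb{Q}^{(\omega)}$ inside the centre of a large elementary extension of $G$. Work in an ultrapower $G^{\#}=\prod_{\mathcal V}G$ over a large index set (any $\aleph_{1}$-saturated $G^{\#}\succeq G$ works as well). Using the central element $z$ of infinite order modulo $[G,G]$ and a sequence $q_{n}\to\infty$ (say $q_{n}=$ the $n$-th prime), the elements $\zeta_{k}=(z^{n!\,q_{n}^{k}})_{n}$, $k\geq 1$, are central in $G^{\#}$, $\mathbb{Z}$-linearly independent, of infinite order, and each $\zeta_k$ admits, for every $m\geq 1$, an $m$-th root that is again central; so they lie in the maximal divisible subgroup $E_{0}$ of $Z(G^{\#})$ and span there a $\mathbb{Q}$-subspace of infinite dimension. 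Pick a $\mathbb{Q}^{(\omega)}$-summand $E$ of $E_{0}$, so $Z(G^{\#})=E\oplus D$. If one can further arrange that $E$ is a \emph{direct factor} of $G^{\#}$, say $G^{\#}\cong K\times E$, then $G^{\#}\cong K\times\mathbb{Q}^{(\omega)}\cong(K\times\mathbb{Q}^{(\omega)})\times\mathbb{Q}\cong G^{\#}\times\mathbb{Q}$, whence $G\equiv G^{\#}\equiv G^{\#}\times\mathbb{Q}\equiv G\times\mathbb{Q}$ (the last step by Feferman--Vaught), completing the argument.

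The hard part is precisely this last requirement that the central divisible subgroup $E$ split off $G^{\#}$: a central divisible subgroup of a group need not be a direct factor, the obstruction being the class of the central extension $1\to E\to G^{\#}\to G^{\#}/E\to 1$ in $H^{2}(G^{\#}/E;E)\cong\operatorname{Hom}(H_{2}(G^{\#}/E;\mathbb{Z}),E)$ (the $\operatorname{Ext}$-term vanishes since $E$ is injective). The plan is to use the freedom in choosing $E$ together with the extra saturation of $G^{\#}$ to force this class to vanish (enlarging the model until the relevant $2$-cocycle becomes a coboundary), or else to replace the literal direct-factor conclusion by a back-and-forth argument yielding $G^{\#}\equiv G^{\#}\times\mathbb{Q}$ directly; the route implicit in the reference is to feed this bookkeeping into the structure theory of finitely generated (finite-by-)nilpotent groups and the known cancellation results for them. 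Finally, the ring analogue used below follows by repeating the argument with $Z(G)$ replaced by $\operatorname{Ann}(A)$, the commutator subgroup by $A^{2}$, and central elements by elements of $\operatorname{Ann}(A)$ throughout.
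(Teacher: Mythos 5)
Your overall route is the same one the paper sketches (following Oger--Sabbagh): pass to the contrapositive, pick $z\in Z(G)$ of infinite order modulo $[G,G]$, go to an ultrapower, locate a large torsion-free divisible central subgroup $E$, split it off as a direct factor, and conclude $G\equiv G\times\Q$, hence $G\equiv_n G\times\Z/p\Z$ for $p$ large, via Feferman--Vaught and {\L}o\'{s}. The trouble is that you leave exactly the decisive step unproved: you write ``if one can further arrange that $E$ is a direct factor'' and then call this the hard part, proposing to kill an $H^2$ obstruction by extra saturation, or a back-and-forth, or ``structure theory of finitely generated (finite-by-)nilpotent groups and cancellation'' --- the last of which cannot be invoked here, since $G$ is an arbitrary group. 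As written, the argument therefore has a genuine gap at its only nontrivial point. (There is also a smaller slip: in $Z(G^{\#})$, which may have torsion, ``$\zeta_k$ has central $m$-th roots for all $m$'' does not by itself place $\zeta_k$ in the maximal divisible subgroup; this inference is only safe inside a torsion-free group such as $C^{U}$, the ultrapower of $C=\langle z\rangle$.)

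What you are missing is that the hypothesis you yourself set up already makes the splitting elementary, with no cohomological bookkeeping --- this is precisely why the statement assumes $Z(G)\not\leq\Delta(G)$ rather than merely that $Z(G)$ has an element of infinite order. Take $E$ to be (a $\Q^{(\omega)}$-summand of) the maximal divisible subgroup of $C^{U}\leq Z(G)^{U}\leq Z(G^{U})$. Then $E\cap[G^{U},G^{U}]=1$: an element of $[G^{U},G^{U}]$ is a product of $k$ commutators for some finite $k$, hence coordinatewise a product of $k$ commutators in $G$, while no $z^m$ with $m\neq 0$ is such a product because $z^m\notin [G,G]$; this is exactly the first-order persistence you recorded in your opening paragraph, now actually put to use. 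Once $E$ is central, divisible, and meets the derived subgroup trivially, the extension splits for a one-line reason: $E$ embeds into the abelianization $G^{U}/[G^{U},G^{U}]$, and since $E$ is divisible (injective as a $\Z$-module) the identity of $E$ extends to a homomorphism $G^{U}/[G^{U},G^{U}]\to E$; composing with the abelianization map gives a retraction $G^{U}\to E$ whose kernel $K$ is a normal complement, so $G^{U}\cong K\times E$. From there your chain $G^{U}\cong G^{U}\times\Q$ (or, as in the paper's sketch, $G^{U}\cong G^{U}\times\prod_U\Z/p_i\Z$ since that ultraproduct is an isomorphic $\Q$-vector space) goes through and yields the contradiction with {\L}o\'{s}. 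In short, the $H^2$ class you worry about vanishes automatically once $E\cap[G^{U},G^{U}]=1$; no extra saturation, back-and-forth, or nilpotency hypotheses are needed.
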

\begin{cor}[\cite{Oger2006}~Corollary 3.] \label{Oger-qfa1:cor} 
If the finitely generated group $G$ is QFA, then $Z(G)\leq \Delta(G)$.
\end{cor}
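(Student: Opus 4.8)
The statement to prove is Corollary~\ref{Oger-qfa1:cor}: if a finitely generated group $G$ is QFA, then $Z(G) \leq \Delta(G)$. The plan is to derive this as an immediate consequence of Proposition~\ref{Oger-qfa1:thm}, which asserts that $Z(G) \leq \Delta(G)$ holds as soon as there is a first-order sentence true in $G$ but false in $G \times \Z/p\Z$ for infinitely many primes $p$. So the entire task reduces to manufacturing such a sentence from the QFA hypothesis.

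First I would unwind the definition of QFA: since $G$ is QFA there is a single first-order sentence $\phi$ in the group language with $G \models \phi$ and such that any finitely generated group $H$ with $H \models \phi$ satisfies $H \cong G$. The natural candidate sentence to feed into Proposition~\ref{Oger-qfa1:thm} is precisely this $\phi$. Clearly $G \models \phi$. It remains to show $G \times \Z/p\Z \not\models \phi$ for infinitely many (in fact all but finitely many, or even all) primes $p$. Each $G \times \Z/p\Z$ is finitely generated (being a product of two finitely generated groups), so if it satisfied $\phi$ it would be isomorphic to $G$ by the QFA property. Hence it suffices to argue that $G \times \Z/p\Z \not\cong G$ for infinitely many $p$.

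The key step, then, is the isomorphism-obstruction: $G \cong G \times \Z/p\Z$ can hold for at most finitely many primes $p$. Here I would use a cardinality/structure argument on abelianizations or on torsion: if $G \cong G \times \Z/p\Z$ for infinitely many $p$, then comparing the torsion subgroups (or the torsion of the abelianization $G^{\mathrm{ab}}$, which is a finitely generated abelian group and hence has a finite torsion part of some fixed order $N$) yields a contradiction, since $G \times \Z/p\Z$ introduces $p$-torsion in the abelianization for every $p$, forcing $p \mid N$, impossible for infinitely many $p$. Alternatively, one can invoke Hopficity of finitely generated residually finite groups, or simply note that $G$ and $G\times \Z/p\Z$ have abelianizations of different orders of torsion subgroup for all large $p$. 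With $G \not\cong G \times \Z/p\Z$ for infinitely many $p$, we get $G \times \Z/p\Z \not\models \phi$ for those $p$, so $\phi$ witnesses the hypothesis of Proposition~\ref{Oger-qfa1:thm}, and therefore $Z(G) \leq \Delta(G)$.

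I expect the main (and only genuine) obstacle to be the clean justification that $G \cong G \times \Z/p\Z$ fails for infinitely many primes $p$; everything else is a direct citation of Proposition~\ref{Oger-qfa1:thm} and an unwinding of the QFA definition. The cleanest route is via the finitely generated abelian group $G^{\mathrm{ab}} = G/[G,G]$: its torsion subgroup is finite, say of exponent $N$, while $(G \times \Z/p\Z)^{\mathrm{ab}} \cong G^{\mathrm{ab}} \times \Z/p\Z$ has an element of order $p$ in its torsion subgroup, so an isomorphism $G \cong G \times \Z/p\Z$ would force $p \mid N$. Since this can hold for only finitely many $p$, the argument is complete. I would write this up in three or four sentences, citing Proposition~\ref{Oger-qfa1:thm} for the final implication.

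\begin{proof}
Since $G$ is QFA, there is a first-order sentence $\phi$ in the language of groups such that $G \models \phi$ and every finitely generated group $H$ with $H \models \phi$ is isomorphic to $G$. We claim that $\phi$ is false in $G \times \Z/p\Z$ for all but finitely many primes $p$. Indeed, $G \times \Z/p\Z$ is finitely generated, so if it satisfied $\phi$ it would be isomorphic to $G$. Now consider the abelianization $G^{\mathrm{ab}} = G/[G,G]$, which is a finitely generated abelian group; let $N$ be the exponent of its (finite) torsion subgroup. Since $(G \times \Z/p\Z)^{\mathrm{ab}} \cong G^{\mathrm{ab}} \times \Z/p\Z$ has an element of order $p$ in its torsion subgroup, an isomorphism $G \cong G \times \Z/p\Z$ would force $p \mid N$, which is possible for only finitely many primes $p$. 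Hence for infinitely many primes $p$ we have $G \times \Z/p\Z \not\cong G$, and therefore $G \times \Z/p\Z \not\models \phi$. Thus $\phi$ is a sentence true in $G$ and false in $G \times \Z/p\Z$ for infinitely many primes $p$, so by Proposition~\ref{Oger-qfa1:thm} we conclude $Z(G) \leq \Delta(G)$.
\end{proof}
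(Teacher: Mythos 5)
Your proof is correct and is essentially the intended derivation: the paper cites this corollary to \cite{Oger2006} without reproving it, and the argument there (and the ring analogue sketched in the paper's proof of Theorem~\ref{ring-qfa:thm}, where $A\ncong A\times\Z/p\Z$ is deduced by comparing $A/A^2$ with $B/B^2$) uses exactly your route of feeding the QFA sentence into Proposition~\ref{Oger-qfa1:thm} after ruling out $G\cong G\times\Z/p\Z$ via the torsion of the finitely generated abelianization.
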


Consider a full non-degenerate bilinear map $f\colon M\times M \to N$ for some $R$-modules, $M$ and $N$. The mapping $f$ is said to have \textit{finite width} if there is a natural number $S$ such that for every $u\in
N$ there are $x_i$ and $y_i$ in $M$ we have
$$u=\sum_{i=1}^nf(x_i,y_i).$$
The least such number, $w(f)$, is the \textit{width} of $f$.

A set $E=\{e_1,\ldots e_n\}$ is a \textit{complete system}\index{complete system} for a non-degenerate mapping $f$ if $f(x,E)=f(E,x)=0$ implies $x=0$. The
cardinality of a minimal complete system for $f$ is denoted by $c(f)$.

We say a mapping $f$ is a \emph{finite type} if both $w(f)$ and $c(f)$ are finite.

\begin{prop}[\cite{M}]\label{myasnik:bilinT} Let $f:M\times M \to N$ be a full non-degenerate bilinear mapping of finite-type. Then the largest scalar (commutative associative unitary) ring $P(f)$ with respect to which $f$ remains bilinear exists. Furthermore, the ring $P(f)$ is absolutely interpretable in $f$. The formulas of the interpretation depend only on $w(f)$ and $c(f)$. \end{prop}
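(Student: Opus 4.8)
The plan is to realize $P(f)$ concretely as a ring of abelian-group endomorphisms of $M$, and then to exploit the finite type of $f$ in order to ``coordinatize'' these endomorphisms by bounded tuples of elements of $M$, so that the whole ring becomes visible inside the structure $f=\langle M,N;+_M,+_N,f\rangle$ through formulas whose length is controlled only by $w=w(f)$ and $c=c(f)$. For \textbf{Step 1 (the ring $P(f)$)} I would let $P(f)$ be the set of pairs $(\varphi,\psi)$, where $\varphi$ is an endomorphism of the abelian group $M$ and $\psi$ one of $N$, satisfying $f(\varphi x,y)=\psi f(x,y)=f(x,\varphi y)$ for all $x,y\in M$, with componentwise addition and with composition as multiplication. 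Since $f$ is full and has finite width, every $u\in N$ is a sum of at most $w$ products, which forces $\psi(u)=\sum_i f(\varphi x_i,y_i)$ whenever $u=\sum_i f(x_i,y_i)$; hence $\varphi$ determines $\psi$, and $P(f)$ embeds into $\mathrm{End}(M)$. Non-degeneracy makes $P(f)$ commutative: for $(\varphi,\psi),(\varphi',\psi')\in P(f)$ one checks that $f(\varphi\varphi'(x)-\varphi'\varphi(x),y)$ pairs trivially with every $y$, so $\varphi\varphi'=\varphi'\varphi$. Associativity and the unit $(\mathrm{id}_M,\mathrm{id}_N)$ are clear, and for any commutative unitary ring $R$ giving $M,N$ the structure of $R$-modules with $f$ being $R$-bilinear, the map $r\mapsto(\mathrm{mult}_r^M,\mathrm{mult}_r^N)$ is a ring homomorphism $R\to P(f)$ compatible with the actions; this is the sense in which $P(f)$ is the largest scalar ring. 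This part is classical and I would take it from \cite{M}.

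\textbf{Step 2 (coordinatization).} Fix $c=c(f)$ and $w=w(f)$; by definition of $c(f)$ there is a complete system $\bar e=(e_1,\dots,e_c)$ of $M$, and ``$\bar e$ is a complete system'' is first order: $\mathrm{Compl}(\bar e)\equiv\forall z\,(\bigwedge_j(f(z,e_j)=0\wedge f(e_j,z)=0)\to z=0)$. For $(\varphi,\psi)\in P(f)$ and $m_i:=\varphi(e_i)$ the scalar identities give $f(\varphi x,e_j)=f(x,m_j)$ and $f(e_j,\varphi x)=f(m_j,x)$, so $\varphi(x)$ is the \emph{unique} solution $z$ of $\bigwedge_j(f(z,e_j)=f(x,m_j)\wedge f(e_j,z)=f(m_j,x))$; I write $\varphi_{\bar e,\bar m}$ for the partial map $x\mapsto z$ defined (relative to the data $\bar e,\bar m$) by this system. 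Uniqueness of $z$ is $\mathrm{Compl}(\bar e)$, and when $\varphi_{\bar e,\bar m}$ is total it is automatically additive (subtract the defining equations and use $\mathrm{Compl}$). Thus $\varphi$, and hence $(\varphi,\psi)$, is faithfully encoded by the tuple $(\bar e,\bar m)\in M^{2c}$.

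\textbf{Step 3 (the interpretation code $\Gamma$).} Take $\dim\Gamma=2c$ and let $U_\Gamma(\bar e,\bar m)$ be the conjunction of: (i) $\mathrm{Compl}(\bar e)$; (ii) the symmetry equations $f(m_i,e_j)=f(e_i,m_j)$, $f(e_j,m_i)=f(m_j,e_i)$ (which force $\varphi_{\bar e,\bar m}(e_i)=m_i$); (iii) totality, $\forall x\,\exists z\,\bigwedge_j(f(z,e_j)=f(x,m_j)\wedge f(e_j,z)=f(m_j,x))$; (iv) the scalar condition $\forall x\forall y\forall u\forall v\,(u=\varphi_{\bar e,\bar m}(x)\wedge v=\varphi_{\bar e,\bar m}(y)\to f(u,y)=f(x,v))$, where ``$u=\varphi_{\bar e,\bar m}(x)$'' abbreviates the defining system; and (v) the clause guaranteeing that $\varphi$ induces an endomorphism of $N$, namely: for all $(2w+1)$-tuples of pairs $(a_i,b_i)$, if $\sum_{i=1}^{2w+1}f(a_i,b_i)=0$ then $\sum_{i=1}^{2w+1}f(\varphi_{\bar e,\bar m}(a_i),b_i)=0$. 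A telescoping argument over partial sums of an arbitrary relation $\sum_k f(a_k,b_k)=0$ (each partial sum rewritten via finite width as a sum of $w$ products) shows that (v), together with (iv), is exactly what is needed for the assignment $f(x,y)\mapsto f(\varphi x,y)$ to extend to a well-defined, necessarily additive $\psi\in\mathrm{End}(N)$ with $(\varphi,\psi)\in P(f)$. Let $E_\Gamma$ say that the two encoded maps on $M$ agree, $\forall x\forall z\forall z'\,(z=\varphi_{\bar e,\bar m}(x)\wedge z'=\varphi_{\bar e',\bar m'}(x)\to z=z')$ (by Step 1 this also forces the $N$-parts to agree). Put $0_\Gamma$ equal to $\mathrm{Compl}(\bar e)\wedge\bigwedge_i m_i=0$, $1_\Gamma$ equal to $\mathrm{Compl}(\bar e)\wedge\bigwedge_i m_i=e_i$, and define $+_\Gamma,\times_\Gamma$ by transporting addition and composition of endomorphisms: the sum of $(\bar e,\bar m)$ and $(\bar e',\bar m')$ is represented by $(\bar e,\bar m'')$ with $m''_i=m_i+\varphi_{\bar e',\bar m'}(e_i)$, and the product by $(\bar e,(\varphi_{\bar e,\bar m}(\varphi_{\bar e',\bar m'}(e_i)))_i)$, both expressible via the defining system for $\varphi$. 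One then verifies the admissibility conditions for $\Gamma$ hold in $f$ and that $(\bar e,\bar m)\mapsto(\varphi_{\bar e,\bar m},\psi_{\varphi_{\bar e,\bar m}})$ induces a ring isomorphism $\Gamma(f)=\langle U_\Gamma/E_\Gamma;+_\Gamma,\times_\Gamma,0_\Gamma,1_\Gamma\rangle\to P(f)$: well-definedness and injectivity are built into $E_\Gamma$, surjectivity is Step 2 (restrict a scalar to a size-$c$ complete system), and the operations match by construction. Since $U_\Gamma,E_\Gamma,0_\Gamma,1_\Gamma,+_\Gamma,\times_\Gamma$ use only $2c$-tuples of variables plus the fixed block of size $2(2w+1)$ in clause (v), over the fixed finite language of $f$, the code $\Gamma$ depends only on $c(f)$ and $w(f)$, as claimed.

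\textbf{Expected main obstacle.} The delicate point is clause (v): one must prove that well-definedness of the naive extension of $\varphi$ to $N$ — an a priori infinitary requirement — is equivalent to the single bounded implication about $(2w+1)$-term relations, using finite width through the telescoping-of-partial-sums argument, and that the resulting $\psi$ is automatically additive and makes $f$ bilinear. The bookkeeping establishing commutativity, associativity and the universal (``largest'') property of $P(f)$ purely from non-degeneracy and fullness, and the remark that a complete system of size exactly $c(f)$ may be used uniformly, are the other places where care is needed; the remainder is a routine translation of ``ring of endomorphisms'' into the language of $f$.
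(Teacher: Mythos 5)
The paper contains no proof of this proposition: it is imported from Myasnikov's paper \cite{M}, so there is nothing in the text to compare against, and your argument is in substance the standard construction from that source and is correct. The devices that matter are all present and sound: coding a scalar by a pair (complete system, its image) so that no parameters are needed, recovering $\varphi(x)$ as the unique solution of the $2c$ equations against the complete system (uniqueness from completeness, additivity automatic), and replacing the a priori infinitary well-definedness of the induced endomorphism of $N$ by the single $(2w+1)$-term implication, extended to arbitrary relations by the telescoping through width-$w$ representations of partial sums — exactly this makes the formulas depend only on $c(f)$ and $w(f)$. One caution in your Step 1: commutativity of $P(f)$ does not follow from non-degeneracy and the symmetry condition $f(\varphi x,y)=f(x,\varphi y)$ alone (operators symmetric with respect to a bilinear form need not commute); the verification must also use the $N$-components, e.g.
$$
f(\varphi\varphi' x,y)=\psi\bigl(f(\varphi' x,y)\bigr)=\psi\bigl(f(x,\varphi' y)\bigr)=f(\varphi x,\varphi' y)=f(\varphi'\varphi x,y),
$$
after which non-degeneracy gives $\varphi\varphi'=\varphi'\varphi$; since you defer this classical part to \cite{M}, this is a remark rather than a gap.
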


The following two statements are basic facts  about Noetherian modules.
\begin{lemma}\label{f.g.module:lem} Assume $R$ is Noetherian and $M$ and $N$ are f.g. $R$-modules, then $Hom_R(M,N)$ is finitely generated as an $R$-module.\end{lemma}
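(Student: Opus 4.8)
The statement to prove is: if $R$ is Noetherian and $M$, $N$ are finitely generated $R$-modules, then $\mathrm{Hom}_R(M,N)$ is finitely generated as an $R$-module. The plan is the standard homological-algebra reduction. First I would record the two elementary functoriality facts: a short exact sequence $0 \to M' \to M \to M'' \to 0$ of $R$-modules induces a left-exact sequence $0 \to \mathrm{Hom}_R(M'',N) \to \mathrm{Hom}_R(M,N) \to \mathrm{Hom}_R(M',N)$, and dually a surjection $N \to N''$ together with the relevant kernels behaves contravariantly-covariantly as expected. The key structural input is that over a Noetherian ring a finitely generated module is Noetherian, hence every submodule of a finitely generated module is again finitely generated, and the class of finitely generated modules is closed under extensions (if the sub and quotient of a module are finitely generated, so is the module).

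Second, I would handle the base case $M = R$: here $\mathrm{Hom}_R(R,N) \cong N$ via $\varphi \mapsto \varphi(1)$, which is finitely generated by hypothesis. By induction (using direct sums, which commute with $\mathrm{Hom}$ in the first variable turning into finite products) one gets that $\mathrm{Hom}_R(R^k, N) \cong N^k$ is finitely generated for every $k$.

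Third, for the general case I would pick a finite generating set of $M$ to obtain a surjection $\pi\colon R^k \twoheadrightarrow M$ with kernel $K \subseteq R^k$. Since $R^k$ is Noetherian, $K$ is finitely generated. Applying $\mathrm{Hom}_R(-,N)$ to $0 \to K \to R^k \to M \to 0$ yields an exact sequence
$$
0 \to \mathrm{Hom}_R(M,N) \to \mathrm{Hom}_R(R^k,N) \to \mathrm{Hom}_R(K,N).
$$
Thus $\mathrm{Hom}_R(M,N)$ embeds as an $R$-submodule of $\mathrm{Hom}_R(R^k,N) \cong N^k$, which is finitely generated over the Noetherian ring $R$, hence Noetherian as a module; therefore its submodule $\mathrm{Hom}_R(M,N)$ is finitely generated. (The map into $\mathrm{Hom}_R(K,N)$ is not even needed for this argument, though it would be needed for a presentation-level statement.)

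I do not expect a genuine obstacle here — the result is routine once one invokes that finitely generated modules over Noetherian rings are Noetherian. The only point requiring a word of care is the exactness and $R$-linearity of the $\mathrm{Hom}$ functor in this non-unital-ring context: the ambient paper works with rings that are a priori not unitary, so strictly speaking one should either note that in the application $R$ is the unitary ring $P(f)$ from Proposition (\cite{M}), or restate the lemma for unitary Noetherian $R$, which is the intended reading. With $R$ unitary the argument above is complete.
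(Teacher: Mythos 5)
Your proof is correct, and it is the standard argument: present $M$ as a quotient of $R^k$, use left-exactness of $\mathrm{Hom}_R(-,N)$ to embed $\mathrm{Hom}_R(M,N)$ into $\mathrm{Hom}_R(R^k,N)\cong N^k$, and conclude by Noetherianity of the finitely generated module $N^k$. Note that the paper itself offers no proof of this lemma — it is cited as a basic fact about Noetherian modules — so there is nothing to diverge from; your closing caveat about unitality is also well taken and harmless, since in the paper's application the ring in question is the unitary scalar ring $P(f_A)$.
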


\begin{cor}Assume $R$ is a Noetherian scalar ring and $M$ is a f.g. $R$-module. Then $End_R(M)$ (the ring of $R$-module endomorphisms of $M$) is f.g. as an $R$-module.\end{cor}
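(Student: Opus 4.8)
The plan is to deduce this immediately from Lemma~\ref{f.g.module:lem}. The first step is to observe that, as an $R$-module, $End_R(M)$ is nothing but $Hom_R(M,M)$: the underlying abelian group is the set of $R$-linear endomorphisms of $M$ with pointwise addition, and the scalar action is $(r\cdot\varphi)(m)=r\varphi(m)=\varphi(rm)$, where the two expressions agree because $R$ is a scalar (hence commutative) ring and $\varphi$ is $R$-linear. This is precisely the $R$-module structure on $Hom_R(M,M)$ used in Lemma~\ref{f.g.module:lem}. The ring multiplication on $End_R(M)$, namely composition of endomorphisms, is irrelevant to the assertion of finite generation \emph{as an $R$-module}, so it can be disregarded for this purpose.

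The second step is to apply Lemma~\ref{f.g.module:lem} with $N:=M$. Since $R$ is Noetherian and $M$ is a finitely generated $R$-module, the lemma yields that $Hom_R(M,M)$ is finitely generated as an $R$-module. Combining this with the identification $End_R(M)=Hom_R(M,M)$ from the first step gives the claim.

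There is essentially no obstacle here: the only point needing a line of verification is that the $R$-action used to view $End_R(M)$ as an $R$-module coincides with the one in Lemma~\ref{f.g.module:lem}, which is immediate from commutativity of $R$. For completeness one could also give a direct argument avoiding the lemma, by choosing a surjection $R^k\twoheadrightarrow M$, using it to realize $Hom_R(M,M)$ as an $R$-submodule of $Hom_R(R^k,M)\cong M^k$, and then invoking the Noetherian hypothesis to conclude that this submodule of the finitely generated module $M^k$ is itself finitely generated; but invoking Lemma~\ref{f.g.module:lem} is the cleaner route.
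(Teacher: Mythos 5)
Your proof is correct and matches the paper's (implicit) argument: the corollary is just Lemma~\ref{f.g.module:lem} applied with $N=M$, after noting that $End_R(M)$ as an $R$-module is exactly $Hom_R(M,M)$. Nothing further is needed.
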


 We shall denote the quotient ring $A/Ann(A)$ by $\barA$, and for any $a\in A$ we denote $a+Ann(A)$ be $\bara$. Consider the full non-degenerate bilinear map:
$$f_A\colon  \barA \times \barA\to A^2, \quad f(\barx,\bary)=xy, \forall x,y\in A$$

\begin{lemma}\label{ft:lem}
 If $A$ is a f.g. ring, then the bilinear map $f_A$ is of finite type.
\end{lemma}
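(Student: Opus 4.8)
The plan is to exhibit explicit finite bounds for both $w(f_A)$ and $c(f_A)$ directly from a finite ring generating set; in particular no appeal to Noetherianity is needed, only finite generation as a ring. Fix generators $a_1,\ldots,a_k$ of $A$ as a (not necessarily unital) ring, and let $A^1=\Z\cdot 1\oplus A$ denote its unitalization, so that $A$ is an ideal of $A^1$. First I would record the elementary fact that $A^2$ is spanned as an abelian group by the monomials $a_{i_1}\cdots a_{i_r}$ with $r\ge 2$: each such monomial equals $(a_{i_1}\cdots a_{i_{r-1}})\cdot a_{i_r}$, a product of two elements of $A$, and conversely every product $xy$ is a $\Z$-combination of such monomials. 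This already gives fullness of $f_A$, and non-degeneracy is immediate since the left radical of $f_A$ (hence, by commutativity, the right radical) is $Ann(A)/Ann(A)=0$ in $\barA$. So the substance of the lemma is the two finiteness assertions.

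For the width, I would first check that $A^2=\sum_{1\le i\le j\le k}A^1\cdot a_ia_j$: a monomial $a_{i_1}\cdots a_{i_r}$ with $r\ge 2$ factors as $(a_{i_1}\cdots a_{i_{r-2}})\cdot(a_{i_{r-1}}a_{i_r})$, whose left factor lies in $A\subseteq A^1$ when $r\ge 3$ and equals $1$ when $r=2$, while conversely each $A^1a_ia_j$ is contained in the ideal $A^2$. Now given $u\in A^2$, write $u=\sum_{i\le j}\lambda_{ij}a_ia_j$ with $\lambda_{ij}\in A^1$, and note that $\lambda_{ij}a_i\in A$: decomposing $\lambda_{ij}=n_{ij}+b_{ij}$ with $n_{ij}\in\Z$, $b_{ij}\in A$ gives $\lambda_{ij}a_i=n_{ij}a_i+b_{ij}a_i\in A$. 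Hence $u=\sum_{i\le j}f_A\bigl(\widehat{\lambda_{ij}a_i},\,\hat a_j\bigr)$ is a sum of at most $\binom{k+1}{2}$ values of $f_A$, so $w(f_A)\le\binom{k+1}{2}$.

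For a complete system I would take $E=\{\hat a_1,\ldots,\hat a_k\}\subseteq\barA$. Suppose $x\in A$ satisfies $f_A(\barx,\hat a_i)=xa_i=0$ for all $i$ (and, by commutativity, $f_A(\hat a_i,\barx)=0$ too). For arbitrary $y\in A$, write $y$ as a $\Z$-linear combination of monomials $a_{j_1}\cdots a_{j_r}$ with $r\ge 1$; then $x\cdot a_{j_1}\cdots a_{j_r}=(xa_{j_1})a_{j_2}\cdots a_{j_r}=0$, so $xy=0$. Thus $x\in Ann(A)$, i.e.\ $\barx=0$ in $\barA$, which is precisely the condition for $E$ to be complete, giving $c(f_A)\le k$. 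Since $w(f_A)$ and $c(f_A)$ are both finite, $f_A$ is of finite type. The only delicate point — and the part I would write out most carefully — is the bookkeeping in the non-unital setting: both arguments depend on re-bracketing products and absorbing integer scalars so as never to leave $A$, and on keeping track of the distinction between $A$, $A^1$, and $\barA=A/Ann(A)$.
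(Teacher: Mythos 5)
Your proof is correct and follows essentially the same route as the paper: fix a finite set of ring generators, take their images in $\hat{A}=A/Ann(A)$ as the complete system, and bound the width by decomposing elements of $A^2$ into boundedly many products obtained by peeling generators off monomials of degree at least two. The only differences are bookkeeping: the paper groups each monomial by a single leading generator to write $X=\sum_{i=1}^n a_iP_i$ and so gets width at most $n$, whereas your unitalization device with the pairs $a_ia_j$ gives the weaker but equally sufficient bound $\binom{k+1}{2}$; you also write out the completeness of $E$ (via $xa_i=0$ for all $i$ forcing $x\in Ann(A)$), which the paper merely asserts.
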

\begin{proof} Let $\{a_1, \ldots a_n\}$ be a set of generators of $A$. Then $E=\{\bara_1, \ldots, \bara_n\}$ is a finite complete system for $f$. To prove $f$ has finite width consider $X=\sum_{i=1}^m x_iy_i\in A^2$, where each $x_i$ and $y_i$ is a product of the generators $a_j$ of $A$. Rewriting each $x_iy_i$ in as a product of the $a_j$ in increasing order imposed by the subscripts, we have $X=\sum_{i=1}^n X_i$ where either $X_i=0$ or $X_i=a_i P_i$, where $P_i$ is a sum of products of the $a_j$, and so an element of $A$. This proves that $A^2$ has width at most $n$, hence does $f$. \end{proof} 
\begin{cor}\label{ft:cor} If $A$ is a f.g. ring, the bilinear map $f_A$ is $0$-interpetable in $A$. Moreover, there exists a sentence $\phi$ of the language of rings such that $A\models \phi$, and if any f.g. ring $B\models \phi$, then $w(f_B)\leq w(f_A)$ and $w(f_B)\leq w(f_A)$. \end{cor}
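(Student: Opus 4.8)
The plan is to build the interpretation by hand and then to capture the two numerical invariants $w(f_A)$ and $c(f_A)$ by a single first-order sentence about $A$. Write $w:=w(f_A)$ and $c:=c(f_A)$, both finite by Lemma~\ref{ft:lem}, and regard $f_A$ as the two-sorted structure $\langle \barA, A^2; +,0,f_A\rangle$ carrying the additive-group operations on each sort. First I would observe that $Ann(A)$ is $0$-definable in $A$ by the formula $\forall z\,(x\cdot z=0)$, so $\barA=A/Ann(A)$ is a $0$-interpretable quotient of $A$. Since the width of $f_A$ equals $w$, every element of $A^2$ is a sum of at most $w$ products, hence $A^2=\{z \mid \exists x_1\ldots x_w\,\exists y_1\ldots y_w\ (z=x_1y_1+\cdots+x_wy_w)\}$ is $0$-definable, with its additive structure inherited from $A$; and the graph of $f_A$ pulls back to $\{(x,y,z)\mid z=xy\}$, which is $0$-definable and, by commutativity of $A$, descends to a well-defined map on $\barA\times\barA$ (if $x-x'\in Ann(A)$ then $xy=x'y$). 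This yields a code $\Gamma$, depending only on $w$, with $f_A\simeq\Gamma(A)$, which is the first assertion.

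For the sentence $\phi$ the delicate point is the width bound: one cannot directly say ``every element of $B^2$ is a sum of $\le w$ products'' without already having $B^2$ at one's disposal. The way around this is that for a commutative ring $B$ the set $S_w(B):=\{\sum_{i=1}^w u_iv_i \mid u_i,v_i\in B\}$ always contains $0$ and is closed under negation, so $w(f_B)\le w$ if and only if $S_w(B)=B^2$ if and only if $S_w(B)$ is closed under addition. So I would let $\phi_1$ be the $\forall^{4w}\exists^{2w}$ sentence asserting that the sum of two elements of $S_w$ again lies in $S_w$, i.e. $\forall \bar x\,\forall\bar y\,\forall\bar x'\,\forall\bar y'\ \exists\bar u\,\exists\bar v\ (\sum_{i} x_iy_i+\sum_{i} x_i'y_i'=\sum_{i} u_iv_i)$ with $\bar x,\bar y,\bar x',\bar y',\bar u,\bar v$ ranging over $w$-tuples. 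Then $A\models\phi_1$ because $A^2=S_w(A)$ is an additive subgroup of $A$, and any ring $B\models\phi_1$ has $S_w(B)=B^2$, i.e. $w(f_B)\le w=w(f_A)$.

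For the complete-system bound, note that $c(f_B)\le c$ exactly when $\barB$ admits a complete system for $f_B$ of size $\le c$. Translating this to $B$ — where, by commutativity, $f_B(\hat x,\hat e_j)=0\iff xe_j=0\iff f_B(\hat e_j,\hat x)=0$, and $\hat x=0$ in $\barB$ means $\forall z\,(xz=0)$ — I would take $\phi_2:=\exists e_1\ldots\exists e_c\ \forall x\,\big(\bigwedge_{j=1}^c (xe_j=0)\ \to\ \forall z\,(xz=0)\big)$. A minimal complete system of $f_A$ (with chosen representatives in $A$) witnesses $A\models\phi_2$, and the witnesses provided by any $B\models\phi_2$ form a complete system for $f_B$ of size $\le c$, so $c(f_B)\le c=c(f_A)$. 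Setting $\phi:=\phi_1\wedge\phi_2$ completes the argument; combined with Proposition~\ref{myasnik:bilinT} this also yields a single interpretation code producing $P(f_B)$ for every f.g.\ $B\models\phi$.

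The only step here that is not pure bookkeeping is the reformulation of the width bound as ``$S_w$ is closed under addition'', which is precisely what allows a first-order sentence to pin down an invariant that is defined through an unbounded existential quantifier; it uses essentially that $B$ is a commutative ring, so that $S_w(B)$ is automatically an additive subgroup once it is closed under $+$. The rest — verifying that the translation between statements about $f_B$ over $\barB$ and ring statements over $B$ is faithful, and that the interpretation data are well-defined modulo $Ann(A)$ — is routine and mirrors the treatment of $\barA$ and the finite-type argument of Lemma~\ref{ft:lem}.
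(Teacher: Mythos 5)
Your proof is correct. The $0$-interpretability part coincides with the paper's argument: $Ann(A)$ is $0$-definable, hence $\barA$ is a $0$-interpretable quotient; $A^2$ is $0$-definable because $w(f_A)$ is finite; and the graph of $f_A$ is the pullback of $z=xy$, well defined modulo $Ann(A)$. For the ``moreover'' clause the paper only asserts that it follows from the proof of Lemma~\ref{ft:lem}, i.e.\ from the generating tuple $a_1,\ldots,a_n$ that witnesses both the width bound and the complete system; your construction makes this explicit and differs slightly in how the width is captured. Your $\phi_2$ is exactly the complete-system witness sentence one would extract from Lemma~\ref{ft:lem} (using commutativity to collapse the two-sided condition), while your $\phi_1$ avoids witnesses altogether by noting that $S_w(B)$ always contains $0$ and is closed under negation, so $w(f_B)\le w$ is equivalent to closure of $S_w(B)$ under addition; this gives the bound $w(f_B)\le w(f_A)$ stated in the corollary rather than the bound by the number of generators that the generator-based sentence would give, and either version suffices for the use made in Corollary~\ref{P(f)-interpret:cor}. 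You also correctly read the second inequality in the statement as $c(f_B)\le c(f_A)$, repairing the evident typo, and your equivalence argument for $\phi_1$ is sound (the only unstated edge case is the degenerate one $A^2=0$, which the paper ignores as well).
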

\begin{proof} Note that $\barA$ is $0$-interpretable in A for any commutative ring $A$. Since $A^2$ has finite width it is $0$-definable in $A$. The rule of the bilinear map is defined using the ring product. The moreover statement also follows easily from the proof of Lemma~\ref{ft:lem} and definitions of complete system and width of a bilinear map $f$. \end{proof}
The following corollary is an immediate consequence of the above statements.
\begin{cor}\label{P(f)-interpret:cor} Let $A$ be a f.g. ring, and consider the bilinear map $f_A$ and the scalar ring $P(f_A)$. Then, the ring $P(f_A)$ and its action on $\barA$ and $A^2$ are $0$-interpretable in $A$. Moreover, if any f.g. ring $B\models \phi$, for the sentence $\phi$ from Corollary~\ref{ft:cor}, then by Theorem~\ref{myasnik:bilinT} the same formulas that interpret $P(f_A)$ in $A$ interpret $P(f_B)$ in $B$.\end{cor}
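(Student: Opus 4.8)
The plan is to chain together the results that have just been assembled. First I would invoke Corollary~\ref{ft:cor}: for a finitely generated commutative ring $A$, the bilinear map $f_A\colon \barA\times\barA\to A^2$ is $0$-interpretable in $A$, and moreover there is a ring-language sentence $\phi$ with $A\models\phi$ such that every finitely generated $B\models\phi$ has $w(f_B)\leq w(f_A)$ and $c(f_B)\leq c(f_A)$. Next I would apply Lemma~\ref{ft:lem}, which guarantees that $f_A$ is of finite type, so that the hypotheses of Proposition~\ref{myasnik:bilinT} are met. That proposition then yields the largest scalar ring $P(f_A)$ together with its action, and — crucially — states that $P(f_A)$ is absolutely interpretable in the structure $f_A$ by formulas depending only on $w(f_A)$ and $c(f_A)$.

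The main step is then the composition of interpretations. Since $f_A$ is $0$-interpretable in $A$ (Corollary~\ref{ft:cor}) and $P(f_A)$, together with its actions on $\barA$ and $A^2$, is $0$-interpretable in $f_A$ (Proposition~\ref{myasnik:bilinT}), Lemma~\ref{le:int-transitivity}(1) with empty parameter tuples gives that $P(f_A)$ and its action are $0$-interpretable in $A$, as claimed in the first sentence of the statement. Here one should note that the actions of $P(f_A)$ on $\barA$ and $A^2$ are part of the data interpreted by Proposition~\ref{myasnik:bilinT}, so they transport through the composition along with the ring structure itself; and $\barA$, $A^2$ are themselves $0$-definable/$0$-interpretable in $A$ (the former for any commutative ring, the latter because $A^2$ has finite width by Lemma~\ref{ft:lem}).

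For the ``moreover'' clause, the point is that the interpreting formulas are uniform. Proposition~\ref{myasnik:bilinT} says the formulas interpreting $P(f_A)$ in $f_A$ depend only on $w(f_A)$ and $c(f_A)$; Corollary~\ref{ft:cor} says the formulas interpreting $f_A$ in $A$ are fixed once $A$ is finitely generated, and that any finitely generated $B\models\phi$ satisfies $w(f_B)\leq w(f_A)$, $c(f_B)\leq c(f_A)$. One then observes that Proposition~\ref{myasnik:bilinT}'s interpretation works uniformly for all finite-type bilinear maps with width and complete-system size bounded by the given constants (one can pad the quantifier blocks to the larger bound), so the same composite code that interprets $P(f_A)$ in $A$ also interprets $P(f_B)$ in $B$ for every finitely generated $B\models\phi$. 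I expect the only delicate point to be checking that the bounds $w(f_B)\leq w(f_A)$ and $c(f_B)\leq c(f_A)$ (rather than equalities) are exactly what is needed to reuse the fixed formulas — i.e. that the interpretation formulas from Proposition~\ref{myasnik:bilinT} are monotone in these parameters — but this is built into the statement of that proposition as quoted, so no further work is required.
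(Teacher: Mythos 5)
Your proposal is correct and follows essentially the route the paper intends: the paper states the corollary as an immediate consequence of Lemma~\ref{ft:lem}, Corollary~\ref{ft:cor} and Proposition~\ref{myasnik:bilinT}, i.e.\ exactly the composition of the $0$-interpretation of $f_A$ in $A$ with the absolute interpretation of $P(f_A)$ in $f_A$, together with the uniformity of the latter's formulas in the bounds on $w$ and $c$. Your extra remark on monotonicity/padding of the formulas from Proposition~\ref{myasnik:bilinT} is a reasonable reading of the bounds $w(f_B)\leq w(f_A)$, $c(f_B)\leq c(f_A)$ and does not change the argument.
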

\noindent \textit{Proof of Theorem~\ref{ring-qfa:thm}.}
(1.)$\Rightarrow$ (2.): This basically follows from Corollary~\ref{Oger-qfa1:cor} adapted for rings which follows from an analog of Theorem~\ref{Oger-qfa1:thm} for rings without significant changes. We need to point out that the proof of Theorem~\ref{Oger-qfa1:thm} is based on the fact that if $Z(G) \nleq \Delta (G)$, then there is an infinite cyclic subgroup $C$ of $Z(G)$ which trivially intersects $\Delta(G)$. Now for an ultrafilter $U$ on $\MN$, in the ultrapower $G^U$ of $G$, the subgroup $C^U$ has a subgroup $D$ isomorphic to $\MQ^{\omega}$, a product of countably many copies of the additive group of rationals. The torsion-free divisible group $D$ of $Z(G^u)$ splits from $G^U$ as a direct summand. Therefore 
$$G^U \cong G^U \times (\prod_{i\in \MN}\frac{\Z}{p_i\Z})/U,$$
where $p_i$'s are the infinitely many primes guaranteed by the hypothesis, since $\displaystyle (\prod_{i\in \MN}\frac{\Z}{p_i\Z})/U$ is isomorphic to $D$. The same exact argument works for a commutative ring $A$, replacing $\Delta (G)$ with $\Delta(A)$, and $Z(G)$ with $Ann(A)$. Also if $A$ is a f.g. ring $A\ncong A\times \frac{\Z}{p\Z}=B$ for any prime $p$, since $A/A^2$ and $B/B^2$ are not isomorphic as f.g. abelian groups. So Corollary~\ref{Oger-qfa1:cor} also applies to the case f.g. rings.

(2.) $\Rightarrow$ (1.): Consider $A$ and $\barA$ as above. Since $\barA$ is a ring, it embeds into the ring of (abelian group) endomorphisms, $End(A)$, of $A$. Let $R$ be the subring of $End(A)$ generated by the unity $1\in End(A)$ and the copy of $\barA$ in $End(A)$.
Clearly $R$ is a f.g. commutative associative ring with unity. Also $\barA$ is a f.g. $R$-module. Now we define an action of $R$ on $A^2$ by extending the following action on products $xy$ of elements of $R$. For $x,y\in A$ and $r\in R$ let $s,t$ be any representatives of $r\barx$ and $r\bary$, respectively. 
Now define:
$$r\cdot (xy):=sy=xt$$
Well-definedness of the action can be verified readily. Again $A^2$ is a f.g. $R$-module. More importantly, $R$ makes the bilinear map $f$, $R$-bilinear, so it embeds as a unitary subring in $P(f_A)$. 

Now consider the ring of ($R$-module) endomorphisms, $End_R(\barA)$ of $\barA$. We note that under some natural identifications:
$$\frac{R}{I}\leq P(f_A)\leq End_R(\barA)$$
where $I$ is the annihilator ideal $I=Ann_R(\barA)$. Now $P(f_A)$ is an $R/I$-module. As $R/I$ is an $R$-module, $P(f_A)$ is an $R$-submodule of $End_R(\barA)$. Since $\barA$ is a f.g. $R$-module, and $R$ is a (Noetherian) f.g. ring, by Lemma~\ref{f.g.module:lem}, $End_R(\barA)$ is a f.g. module over a Noetherian ring $R$. So $P(f_A)$ is a Noetherian $R$-module. Since $R$ is a f.g. ring, $P(f_A)$ is a f.g. ring. By Theorem~\ref{thAKNS}, $P(f_A)$ is QFA. Let $\eta$ be the QFA-sentence for $P(f_A)$ and $\eta^\ast $ be its translation in the language of $A$. We note that $\barA$ is generated as a $P(f_A)$-module by $\{\bara_1, \ldots \bara_n\}$ and $A^2$ is generated as a $P(f_A)$-module by $\{a_ia_j| 1\leq i,j\leq n\}$.
On the other hand, in general $P(f_A)$ may not act on $Ann(A)/(Ann(A)\cap A^2)$. Now, assume $Ann(A)\leq \Delta(A)$. Then $Ann(A)/(Ann(A)\cap A^2$ is a finite additive abelian group with trivial multiplication. 
Therefore, there are formulas $\rho(\bar{x})$ and $\mu(\bar{x})$, and $\psi(\bar{x})$ which hold on the tuple $\bar{a}$, and 
\begin{itemize}
    \item $A \models \rho(a_1, \ldots , a_n)$ if and only if $\barA$ is generated by $\{\bara_i:i=1,\ldots, n\}$  and $A^2$ is generated by $\{a_ia_j| 1\leq i,j\leq n\}$ as $P(f_A)$-modules.
    \item $A\models \mu(a_1, \ldots , a_n)$ if and only for some terms $w_1(\bar{x}), \ldots, w_k(\bar{x})$, the images of $w_i(\bar{a})$ under the canonical epimorphism  $Ann(A)\to Q(A)$, where $Q(A)=Ann(A)/(Ann(A)\cap A^2)$ generate the finite quotient $Q(A)$.
    \item $A\models \psi(\bar{a})$, where $\psi(\bar{a})$ describes the finitely many ring theoretic relations among the $a_i$'s.\end{itemize}
    
    Now assume that $B$ if a f.g. ring with elements $b_1,\ldots , b_n$, where $B\models \eta*\wedge  \rho(\bar{b})\wedge \mu(\bar{b})\wedge \psi(\bar{b})$. Consider the map:
    $g\colon  A \to B$ extending the assignment $g(a_i)=b_i$.
The map $g$ is a ring homomorphism from $A$ to the subring $B'$ of $B$ generated by the $b_i$. Consider the canonical maps $\hat{g}\colon \barA\to \barB$, and $g|_{A^2}\colon  A^2\to B^2$ induced by $g$.
By Corollary~\ref{P(f)-interpret:cor}, $P(f_A)$ and $P(f_B)$ are isomorphic as unitary rings, and both $\hat{g}$ and $g|_{A^2}$ are isomorphisms of $P(f_A)$ (or equivalently $P(f_B)$) modules. Also, $g$ induces an isomorphism between quotients $Q(A)$ and $Q(B)$. Therefore the subring $B'$ is all of $B$, that is, $B'=B$. This means that $g$ is a homomorphism onto $B$. 
We shall prove now that $ker(g)=0$. Since $\hat{g}$ is an isomorphism, $ker(g)\leq Ann(A)$. Since $g$ induces an isomorphism of $Q(A)$ onto $Q(B)$, $ker(g)\leq Ann(A)\cap R^2$. Finally, since $g$ restricts to an isomorphism from $Ann(A)\cap A^2$ to $Ann(B)\cap B^2$, $ker(g)=0$. This finishes the proof (2.) $\Rightarrow$ (1.).
\qed

The question if a finitely generated field $K$ (always in the language of rings)
encodes the isomorphism type of $K$ in the class of all finitely generated fields 
goes back to the 1970's and seems to have first been posed explicitly in \cite{P1}. The answer was recently obtained.
\begin{theorem}\cite{DP} Let $K$ be an infinite finitely generated field. If char($K$)=2 and $dim(K) > 3$ assume that resolution of singularities above ${\mathbb F}_2$ holds. Then $K$ is bi-interpretable with $\mathbb Z$,(where both $K$ and $\mathbb Z$ are considered as structures in the language of rings). Therefore $K$ is QFA and prime.
\end{theorem}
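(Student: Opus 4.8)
The final clause is immediate once bi-interpretability with $\Z$ is established: a finitely generated structure in a finite signature that is bi-interpretable (possibly with parameters) with $\Z$ is QFA by Theorem~\ref{Nies2007}, and any infinite structure bi-interpretable with $\Z$ is prime — hence atomic and homogeneous — by the corollary to Lemma~\ref{le:prime}; it is moreover rich by Corollary~\ref{co:bi-int-Z}. So the real task is to show that an infinite finitely generated field $K$ is bi-interpretable with $\Z$. One half of plain interpretability is already available: $\Z$ is interpretable with parameters in $K$ by Scanlon's Theorem~\ref{th:Scanlon}; write $\mu_\Delta\colon K\rightsquigarrow\Z$ for the corresponding coordinate map.

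The reverse interpretation $\Z\rightsquigarrow K$ is the routine coding step. Writing $K=\mathrm{Frac}\big(\Z[x_1,\dots,x_n]/\mathfrak p\big)$ for suitable $n$ and a prime ideal $\mathfrak p$ — which is finitely generated by the Hilbert basis theorem, say $\mathfrak p=(p_1,\dots,p_r)$ — one first interprets the polynomial ring $\Z[x_1,\dots,x_n]$ in $\Z$ (a polynomial is a finite list of coefficient/exponent-vector pairs, and such lists are uniformly definable since $\Z$ is absolutely bi-interpretable with its list superstructure $S(\Z,\N)$ by Lemma~\ref{le:bi-int-Z-Z}), then passes to the quotient by the definable ideal generated by the fixed polynomials $p_1,\dots,p_r$ (each coded by a specific, hence $0$-definable, integer), and finally to its field of fractions, a definable localization-quotient. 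This yields a $0$-interpretation $\mu_\Gamma\colon\Z\rightsquigarrow K$.

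The substance of the theorem is upgrading this mutual interpretability to \emph{bi}-interpretability, i.e.\ producing the definable self-coordinate isomorphisms of clause~2) of Definition~\ref{de:bi-inter}; equivalently (Nies, \cite{Nies2007}, Prop.~7.12, exactly as used in Example~\ref{O-biint-Z:lem}) it suffices to define, with parameters, a copy $M$ of $\Z$ inside $K$ together with an isomorphism from $K$ onto the interpretation of $K$ built over $M$ that is itself definable in $K$. It is natural to split on the transcendence degree $d$ of $K$ over its prime field. If $d=0$ then $K$ is a number field and one argues exactly as in Example~\ref{O-biint-Z:lem}: $\Z\cdot 1_K$ is a $0$-definable copy of $\Z$ by J.~Robinson's Theorem~\ref{th:J-Rob}, $K$ is a finite-dimensional $\Q$-vector space with definable structure constants relative to any chosen basis, and one reads off the required definable isomorphism (parameters — a basis of $K$ — are genuinely needed here, since $K$ may fail to be automorphism-rigid, cf.\ Lemma~\ref{autiso:lem}). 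If $d\ge 1$ this is the core input of \cite{DP}: one must uniformly first-order define on $K$, with control on parameters, a supply of discrete valuations rich enough to recover a birational model of $K$ and, on it, the ``constant'' arithmetic, so that Scanlon's copy of $\Z$ in $K$ and the coordinate data of $\mu_\Gamma$ become interdefinable inside $K$. This rests on the diophantine/first-order definability of valuations on finitely generated fields (work of Pop, Poonen, and others), and it is precisely this step that, in characteristic $2$ and transcendence degree $>3$, invokes resolution of singularities over $\mathbb{F}_2$ to obtain the needed definability statements — which is the source of the hypothesis in the theorem. With $M\subseteq K$ and the definable self-coordinate isomorphism in place, Definition~\ref{de:bi-inter} is verified.

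The main obstacle is this last step: the uniform first-order definability of enough valuations on $K$ and the geometric reconstruction of $K$ together with its arithmetic from that definability data. Everything else is assembly — Scanlon in one direction, elementary coding in the other, and then the general machinery recorded earlier in the paper (Theorem~\ref{Nies2007}, the corollary to Lemma~\ref{le:prime}, Corollary~\ref{co:bi-int-Z}) to extract QFA, primality, and richness.
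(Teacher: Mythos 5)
The paper gives no proof of this theorem — it is quoted from \cite{DP}, with the QFA and primality conclusions drawn, exactly as you do, from its general machinery (Theorem~\ref{Nies2007} and the corollary to Lemma~\ref{le:prime}); so your treatment — deferring the substantive bi-interpretability (definable valuations, resolution of singularities in characteristic $2$) to \cite{DP} and assembling the consequences from the earlier results — is essentially the paper's own approach. One caveat worth recording: an infinite finitely generated field is not finitely generated as a ring-structure (e.g.\ $\Q$ or $\mathbb{F}_p(t)$ are not generated by finitely many elements under the ring operations), so Theorem~\ref{Nies2007} does not apply verbatim, and ``QFA'' here has to be read as characterization by a single sentence within the class of finitely generated fields — which is precisely what \cite{DP} prove directly.
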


\section{Algebras}

We now introduce some results about free associative algebras and group algebras of limit groups. We consider these algebras in the language of rings $\{+,\cdot,0,1\}$.

 Recall, that a limit group is a finitely generated fully residually free group, namely a group $G$ such that for any finite set of distinct elements in $G$ there is a homomorphism into a free group that is injective on this set. For example a finitely generated free group or a closed surface group is a limit group.
\begin{theorem} \cite{KMga} Let $F=(X)$ be a non-abelian f.g. free group and $K$ an infinite field. Then  the group algebra $K(F)$ is bi-interpretable with $S(K,\N)$, with parameters $X, P$, where $P$ is a non-invertible polynomial with at least three monomials, uniformly in $K,X,P$. Hence $K(F)$ is absolutely rich. 
\end{theorem}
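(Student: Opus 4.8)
The plan is to establish the bi-interpretability of $K(F)$ with the list superstructure $S(K,\N)$ by exhibiting interpretations in both directions and then verifying the coordinate-map definability conditions of Definition~\ref{de:bi-inter}. First I would interpret $S(K,\N)$ in $K(F)$. The idea is to use the parameters $X$ (a free basis of $F$) and a fixed non-invertible polynomial $P = P(x_1,\dots,x_k)$ with at least three monomials. Inside $K(F)$ one recovers the coefficient field: $K\cdot 1$ is $0$-definable (it is the subring generated by $1$, or more robustly the centralizer-type definable copy of $K$). The monomials of $F$ inside $K(F)$ — that is, the image of $F$ — should be definable using multiplicativity and the fact that units in $K(F)$ behave well (by a theorem of e.g. Kaplansky-type/trivial-units results for group algebras of orderable groups, $F$ being bi-orderable, the units of $K(F)$ when $K$ is a field are only the trivial ones $\lambda g$, $\lambda\in K^\times$, $g\in F$). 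Once $K$ and $F$ are pinned down inside $K(F)$, every element of $K(F)$ is a finite $K$-linear combination of elements of $F$, so an element is essentially a finite list of pairs (coefficient, group element); this is where the polynomial $P$ with three monomials does the real coding work — it gives a rigid ``anchor'' that lets one talk about lengths and positions of such lists, i.e. it lets one interpret $S(K,\N)$ (and in particular $\N$ itself, via lengths of reduced words in $F$ or via the support structure of elements). The natural numbers arise from word length / support size, concatenation from multiplication of disjointly-supported elements, and the ternary predicate $t(s,i,a)$ from reading off the $i$-th monomial's coefficient.

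Second I would interpret $K(F)$ in $S(K,\N)$. This direction is the more routine one: an element of $K(F)$ is a finite formal sum $\sum_{j=1}^n \lambda_j w_j$ with $\lambda_j\in K$ and $w_j$ reduced words in the free basis $X$. A reduced word over a finite alphabet $X^{\pm1}$ is coded by a finite sequence of natural numbers (as in the proof of Lemma~\ref{le:S(A,N)-2}), and a finite collection of (coefficient, word) pairs is a list over $K$ paired with a list over $\N$ — all of this lives naturally in $S(K,\N)$. Addition is merging supports and adding coefficients on matching words; multiplication is the convolution product, reducing $w_jw_{j'}$ as free-group words and collecting terms; both operations are manifestly $0$-definable in $S(K,\N)$ (using the arithmetic and list operations), and one normalizes representatives so that supports are sorted and coefficients nonzero, giving a definable equivalence relation $\sim$ whose quotient is $K(F)$. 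Uniformity in $K,X,P$ is visible because none of these formulas depend on which infinite field $K$ is.

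Third, to upgrade mutual interpretability to bi-interpretability I need the two coordinate maps $\Gamma\circ\Delta(K(F)) \to K(F)$ and $\Delta\circ\Gamma(S(K,\N)) \to S(K,\N)$ to be definable in $K(F)$ and $S(K,\N)$ respectively (condition 2) of Definition~\ref{de:bi-inter}). Running $S(K,\N)$ back into $K(F)$ and back again, an element $a\in K(F)$ is represented by a list of pairs recording its own support and coefficients; the map sending that list back to $a$ is definable in $K(F)$ precisely because the support monomials and coefficients of a given element are themselves definable from that element together with the parameters $X,P$ — this is the content I get from having $K$, $F$, and the length/position apparatus all definable. The reverse composition is handled the same way inside $S(K,\N)$, using that $\N$, $S(K)$, and $K$ are $0$-definable there. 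Having both coordinate maps definable, Definition~\ref{de:bi-inter} is satisfied; since $S(K,\N)$ is absolutely and effectively rich by Lemma~\ref{le:rich-S(A,N)}, Theorem~\ref{co:bi-int-rich}(1) immediately yields that $K(F)$ is rich (and by inspection, with the parameters $X,P$, it is ``absolutely rich'' in the parameter-free-modulo-fixed-parameters sense asserted), proving the theorem.

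The main obstacle, and the step deserving the most care, is the first interpretation: showing that the natural numbers (equivalently, lengths/positions) are genuinely first-order definable inside $K(F)$ from finitely many parameters, and that the coefficient field and the group $F$ are each carved out correctly. This rests on the structural input that $K(F)$ has only trivial units (so $F\hookrightarrow K(F)$ is definable) together with a clever use of the three-monomial polynomial $P$ to bootstrap an interpretation of arithmetic — analogous to how $\Z$ gets interpreted in richer rings via equational tricks — and it is where the argument of \cite{KMga} must do its real work; everything downstream (the convolution algebra in $S(K,\N)$, normalization, the coordinate-map definability) is then comparatively mechanical.
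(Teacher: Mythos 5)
Your overall architecture (interpret $S(K,\N)$ in $K(F)$ with parameters $X,P$, interpret $K(F)$ in $S(K,\N)$ by coding supports and coefficients as lists, then check condition 2) of Definition~\ref{de:bi-inter} and invoke Lemma~\ref{le:rich-S(A,N)} together with Theorem~\ref{co:bi-int-rich}) is the right skeleton, and the second interpretation plus the final deduction of richness are indeed routine. But the first interpretation, which is the entire mathematical content of the theorem, is asserted rather than proved, and at the two places where you do commit to an argument the argument is flawed. First, $K\cdot 1$ is not ``the subring generated by $1$'' (that is the prime ring $\Z\cdot 1$ or $\mathbb{F}_p\cdot 1$); what actually works is that $F$ has infinite conjugacy classes, so the center of $K(F)$ is exactly $K\cdot 1$. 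Second, the trivial-units theorem gives you that the unit group is $K^{\times}\cdot F$ with $K^{\times}$ central, so it pins down $F$ only up to the central scalars: $F$ is one of many complements of $K^{\times}$ in the unit group (any homomorphism $F\to K^{\times}$ twists it into another one), and there is no obvious first-order way to carve out the subgroup $F$ itself as a subset, nor can you appeal to ``the subgroup generated by $X$'', since uniform definability of finitely generated subgroups is a consequence of richness and hence circular at this stage.

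The deeper gap is the same circularity at the heart of your coordinate-map argument: you justify definability of $\mu_{\Gamma\circ\Delta}$ by saying that ``the support monomials and coefficients of a given element are themselves definable from that element together with the parameters $X,P$''. Extracting the $i$-th coefficient, the support, or the length of an element of $K(F)$ is precisely the kind of weak-second-order statement that the bi-interpretation is supposed to make first-order; it is the conclusion, not an available hypothesis. Likewise, interpreting $\N$ via ``word length / support size'' presupposes that these quantities are definable. The actual proof in \cite{KMga} has to earn all of this: it uses the structure of centralizers of non-scalar elements of $K(F)$ (they are commutative, of Laurent-polynomial type $K[u,u^{-1}]$), and the hypothesis that $P$ is non-invertible with at least three monomials enters through a concrete combinatorial analysis of the supports of powers of $P$, which is what allows one to characterize the set $\{P^n\}$ by first-order conditions, interpret arithmetic on it, and then code finite sequences of field elements by elements of the algebra. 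Your proposal defers exactly this work to the phrase ``$P$ does the real coding work'', so as written it does not constitute a proof: you would need to supply the centralizer description, the equational characterization of powers of $P$, and the sequence-coding mechanism before the coordinate maps in Definition~\ref{de:bi-inter} can be verified.
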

\begin{theorem} \cite{KMga} Let $G=G(X)$ be a non-abelian limit group and $K$ an infinite field. Then  the group algebra $K(G)$ is bi-interpretable with $S(K,\N)$ with parameters $X,P$, where $P$ is a non-invertible polynomial with at least three monomials, hence it is rich. 
\end{theorem}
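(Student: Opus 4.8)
The plan is to reduce the statement for a non-abelian limit group $G$ to the already-established case of a non-abelian free group by exploiting the structural theory of limit groups, and to combine it with the richness machinery developed earlier in the paper. First I would recall that a limit group $G$ is finitely generated and fully residually free, and that by the classical theory (Kharlampovich--Myasnikov, Sela) $G$ embeds into an iterated sequence of extensions of centralizers starting from a free group, and in particular $G$ contains a non-abelian free subgroup $F = F(X)$ on part of its generating set; moreover $G$ itself is generated by a finite tuple $X$ which we may arrange to include a free basis of such an $F$. The key point is to show that $K(G)$ is bi-interpretable with $S(K,\N)$ using parameters $X$ and a suitable non-invertible polynomial $P$ with at least three monomials, \emph{uniformly} in $K, X, P$, mirroring the statement already proved for $K(F)$.

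The main work is the interpretation of $S(K,\N)$ inside $K(G)$. Here I would follow the same strategy as in the free-group theorem: the group algebra $K(G)$ contains the group ring $K(F)$ of a free subgroup $F\leq G$ as a subring, and $K(F)$ is definable in $K(G)$ with parameters $X$ (the free basis together with the coefficient field $K$, which is itself $0$-definable or definable with the parameter $P$ as in the $K(F)$ case). Having recovered $K(F)$ definably inside $K(G)$, one invokes the already-cited theorem that $K(F)$ is bi-interpretable with $S(K,\N)$ with parameters $X, P$; composing interpretations (Lemma~\ref{le:int-transitivity}) then interprets $S(K,\N)$ in $K(G)$. Conversely, $K(G)$ is interpretable in $S(K,\N)$ because $G$ is finitely presented over the free monoid on $X$ (limit groups are finitely presented), so elements of $K(G)$ can be coded as finite lists of (coefficient, normal-form-word) pairs with the group multiplication table on normal forms being definable in $S(K,\N)$; this is exactly the kind of list-coding handled in Section~\ref{se:weak-second-order}. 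One then checks the bi-interpretability condition 2) of Definition~\ref{de:bi-inter}, i.e., that the resulting self-interpretations $\Gamma\circ\Delta$ and $\Delta\circ\Gamma$ admit definable coordinate isomorphisms; this should go through by the same verification as for $K(F)$, since the only new ingredient is the definable copy of $K(F)$ inside $K(G)$ and the definability of $G$'s multiplication in normal form.

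Finally, richness follows formally: $S(K,\N)$ is absolutely and effectively rich by Lemma~\ref{le:rich-S(A,N)} (it is the list superstructure $S(\MA,\N)$ with $\MA = K$), and by Theorem~\ref{co:bi-int-rich}(1) any structure bi-interpretable, possibly with parameters, with a rich structure is rich; hence $K(G)$ is rich. I would also note the uniformity: the interpretation codes $\Gamma, \Delta$ and the defining formulas depend only on the presentation data of $G$ (a fixed finite presentation relative to $X$) and not on $K$, so the bi-interpretation is uniform in $K$, and in $X, P$ in the same sense as in the free case.

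The hard part will be the definability of the group ring $K(F)$ (equivalently, of the free subgroup $F$ and the field $K$) inside $K(G)$, and more precisely pinning down which subgroup of $G$ is definable and with which parameters: in a limit group the free subgroups are not canonical, so one must either select a specific $F$ whose generators appear among the parameters $X$, or argue that the needed combinatorial structure (enough to encode $S(K,\N)$) is already visible in $K(G)$ directly via the $X$-normal forms, bypassing $F$ altogether. The cleaner route is probably the latter: work with $K(G)$ and the finite presentation of $G$ directly, coding lists of group elements by their shortlex normal forms over $X$, and coding field elements via the polynomial-ring trick with parameter $P$ exactly as in the $K(F)$ proof; the finite presentability of $G$ is what makes the normal-form multiplication definable, and this is the one place where ``limit group'' is used in an essential way beyond ``contains a free subgroup''. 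Everything else is a routine adaptation of the $K(F)$ argument together with the transitivity and richness-transfer lemmas already in the paper.
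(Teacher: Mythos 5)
You are proposing a proof of a theorem that this paper only quotes from \cite{KMga}, so the comparison is with the strategy of that paper; measured against it, your reduction has a genuine gap at its centre. Your first route rests on the claim that the group ring $K(F)$ of a chosen free subgroup $F\leq G$ is first-order definable in $K(G)$ with parameters $X$. No mechanism is offered for this, and there is no a priori reason the $K$-span of a non-canonical free subgroup should be definable in the ring $K(G)$; worse, uniform definability of finitely generated subgroups and subrings is exactly the kind of consequence that is \emph{deduced} from richness (see Corollary~\ref{th:metric}), so taking it as an input is close to circular. Even if one granted it, composing with the free-group theorem via Lemma~\ref{le:int-transitivity} would only yield an \emph{interpretation} of $S(K,\N)$ in $K(G)$; condition 2) of Definition~\ref{de:bi-inter} additionally requires a formula of the ring language defining the coordinate isomorphism that recovers \emph{every} element of $K(G)$ from its code, and this cannot be delegated to the subalgebra $K(F)$, which does not see the rest of $K(G)$. (There is also a notational slip: $X$ is already the generating tuple of $G$ in the statement, not a free basis of a free subgroup.)

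Your fallback route mislocates the difficulty. Finite presentability and the decidable word problem of a limit group only give the easy direction, namely that $K(G)$ is interpretable in $S(K,\N)$ by coding elements as lists of (coefficient, normal form) pairs; that works for any recursively presented group and uses nothing special about limit groups. The hard direction is to define, inside the ring $K(G)$ with parameters $X,P$, a copy of $\N$, finite sequences, and the coordinate map — and ``shortlex normal forms over $X$'' are not first-order accessible in $K(G)$ without already having the machinery one is trying to build. The argument of \cite{KMga} works directly with $K(G)$ and exploits ring-theoretic features specific to limit group algebras: torsion-freeness and orderability of $G$ (so $K(G)$ is a domain with only trivial units, whence $K$ is recovered ring-theoretically and $G$ as the trivial units modulo $K^{\times}$), commutation transitivity/CSA and the structure of centralizers, and big-powers-type arguments, with the non-invertible parameter $P$ with at least three monomials used to encode sequences; none of this follows from ``$G$ contains a non-abelian free subgroup,'' so your closing claim that finite presentability is the only essential use of the limit-group hypothesis is not correct. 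The final step is fine: once bi-interpretability with $S(K,\N)$ is established, richness does follow from Lemma~\ref{le:rich-S(A,N)} and Theorem~\ref{co:bi-int-rich}.
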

\begin{cor}
Let $G$ be a non-abelian limit group and $K$ an infinite f.g. field of char$\neq 2$. Then   $K(G)$ is bi-interpretable with $\Z$. Hence it is rich, prime, atomic, homogeneous, and QFA.
\end{cor}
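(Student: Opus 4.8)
The plan is to chain together the two results that precede this corollary with the transitivity of bi-interpretation and with the various structural consequences of being bi-interpretable with $\Z$ that have already been catalogued in the excerpt. First I would invoke the theorem that $K(G)$ is bi-interpretable with $S(K,\N)$ with parameters $X,P$ for a non-abelian limit group $G$ and an infinite field $K$. Next, since $K$ is now assumed to be a finitely generated infinite field of characteristic $\neq 2$, I would apply the theorem of Aschenbrenner--Khélif--Naziazeno--Scanlon (the $\mathbf{DP}$/finitely generated field result, or equivalently Scanlon's Theorem~\ref{th:Scanlon} together with \cite{AKNS}) to conclude that $K$ is bi-interpretable with $\Z$; here the characteristic $\neq 2$ hypothesis is exactly what lets us avoid the resolution-of-singularities caveat. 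From $K \simeq_{bi} \Z$ and the fact that $\Z$ is absolutely bi-interpretable with $S(\Z,\N)$ (Lemma~\ref{le:bi-int-Z-Z}), I would deduce $S(K,\N) \simeq_{bi} S(\Z,\N) \simeq_{bi} \Z$. One has to be slightly careful that $S(K,\N)$ really is bi-interpretable with $S(\Z,\N)$ given only that $K$ is bi-interpretable with $\Z$: this should follow because the construction $\MA \mapsto S(\MA,\N)$ is itself given by a fixed interpretation code uniformly in $\MA$ (Lemma~\ref{le:bauval}), so a bi-interpretation of $K$ and $\Z$ lifts to one of $S(K,\N)$ and $S(\Z,\N)$; alternatively one can bypass $S(K,\N)$ entirely and note that $K(G)$, being bi-interpretable with something bi-interpretable with $\Z$, is bi-interpretable with $\Z$ by transitivity of bi-interpretation.

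Having established $K(G) \simeq_{bi} \Z$, the remaining assertions are all immediate citations of earlier results. Richness follows from Corollary~\ref{co:bi-int-Z} (any structure bi-interpretable with $\Z$ is rich). Primality follows from Lemma~\ref{le:prime} and the Corollary right after it, since $\Z$ is prime in $Th(\Z)$; atomicity and homogeneity are then consequences of primality as recorded in Section~\ref{sec:primal}. Finally, $K(G)$ is finitely generated as a ring (being the group algebra of a finitely generated group over a finitely generated field, in the finite signature $\{+,\cdot,0,1\}$), so Theorem~\ref{Nies2007} (equivalently Theorem~\ref{th:QFA} applied to the rich, arithmetically-presented structure $K(G)$) gives that $K(G)$ is QFA.

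The main obstacle, and the only place where genuine content beyond bookkeeping enters, is the passage from ``$K(G)$ bi-interpretable with $S(K,\N)$'' to ``$K(G)$ bi-interpretable with $\Z$.'' This requires knowing that bi-interpretation is transitive (for the bi-interpretation version, not merely one-directional interpretation) and that the functor $\MA \mapsto S(\MA,\N)$ interacts well with bi-interpretations — i.e., that a bi-interpretation between $\MA$ and $\MB$ induces a bi-interpretation between $S(\MA,\N)$ and $S(\MB,\N)$. The transitivity of (parametrized) bi-interpretation is essentially Lemma~\ref{le:int-transitivity} combined with Definition~\ref{de:bi-inter}, and the uniformity of the list-superstructure construction is built into Lemma~\ref{le:bauval}; so rather than a new argument this is a matter of assembling the right lemmas carefully. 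I would write the proof in one or two sentences, citing exactly these facts, and leave the verification that the induced codes satisfy the definitional identities of bi-interpretation to the reader, since that is the kind of routine diagram-chase the paper has been suppressing throughout.
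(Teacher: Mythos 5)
The paper states this corollary without any proof, and your skeleton is the intended one: combine the preceding theorem ($K(G)$ bi-interpretable with $S(K,\N)$) with bi-interpretability of the infinite finitely generated field $K$ with $\Z$, lift that to $S(K,\N)$, use transitivity, and then quote the catalogued consequences of being bi-interpretable with $\Z$. Two of your citations are off, however. The field result you need is the Dittman--Pop theorem (\cite{DP}, quoted in the section on rich rings), where char\,$\neq 2$ removes the resolution-of-singularities hypothesis; it is not \cite{AKNS}, because an infinite field is never finitely generated \emph{as a ring} (any finitely generated ring that is a field is finite), so Theorem~\ref{thAKNS} simply does not apply to $K$, and it is not Theorem~\ref{th:Scanlon} either, which as stated in the paper gives only interpretability of $\Z$ in $K$, not bi-interpretability. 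Also, Lemma~\ref{le:bauval} does not say that $\MA\mapsto S(\MA,\N)$ is given by a code interpretable \emph{in} $\MA$ uniformly (that is false in general, since WSOL is strictly stronger than first-order logic for most $\MA$); the lifting from $K\simeq_{bi}\Z$ to $S(K,\N)\simeq_{bi}\Z$ rather uses that finite sequences over a structure interpreted in $\Z$ can be coded arithmetically in $\Z$ (Lemma~\ref{le:bi-int-Z-Z} and the surrounding discussion), together with the definability of the composed coordinate maps coming from the bi-interpretation of $K$ with $\Z$. Your proposed ``alternative'' that bypasses $S(K,\N)$ is circular: transitivity still requires knowing $S(K,\N)\simeq_{bi}\Z$, which is exactly the point at issue.

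The genuine error is in the QFA step. Your justification rests on the claim that $K(G)$ is finitely generated as a ring, and this is false: the augmentation map $K(G)\to K$ is a surjective ring homomorphism, a quotient of a finitely generated ring is finitely generated, and an infinite finitely generated field is not a finitely generated ring; hence $K(G)$ is not a finitely generated ring. Consequently Theorem~\ref{Nies2007} cannot be invoked as you state it, and Theorem~\ref{th:QFA} likewise presupposes generation by a finite set with an arithmetic diagram. So the QFA (and first-order-rigidity-flavoured) part of the corollary needs a different argument or a more careful reading of what ``finitely generated'' means here --- a subtlety the paper itself glosses over, since its definition of QFA applies only to finitely generated structures --- but in any case it cannot be derived from the false finite-generation claim in your write-up. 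The remaining consequences (rich via Corollary~\ref{co:bi-int-Z}, prime via Lemma~\ref{le:prime}, hence atomic and homogeneous) are correctly reduced to the cited results once bi-interpretability with $\Z$ is in hand.
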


 This implies, in particular, that if $K$ is an infinite field and $F_n$ a free non-abelian group of rank $n$, then
   $K(F_n)$ is not  elementarily equivalent to   $K(F_m)$ for $n \neq m$.

 Let $G$ be a group with a finite generating $X$. The word metric $d_X\colon G \times G \to \N$ on $G$ with respect to $X$ is defined for a pair $(g,h) \in G\times G$ as the length of a shortest word $w$  in the generators from $X \cup X^{-1}$ such that 
 $gw = h$. This is precisely the geodesic metric on the Cayley graph of $G$ with respect to $X$. Below we view the metric space $G$ with metric $d_x$ as a structure $Met(G,X) = \langle G, \N, d_X\rangle$, where $G$ comes equipped with multiplication and the identity $1$, $\N$ is the standard arithmetic, and $d_X$ is the metric function $d_X\colon  G \times G \to \N$.

 \begin{cor} \cite{KMga}\label{th:metric} 
Let $G$ be a non-abelian limit group with  a finite generating set $X$, $K$ an infinite field. Then 
\begin{enumerate}

\item The metric space $Met(G,X)$ is definable in $K(G)$.
In particular, the set of geodesics in $G$ with respect to $X$ is definable in $K(G).$ 

\item  Finitely generated subgroups of $G$, finitely generated subrings and ideals of $K(G)$ are uniformly definable.

\item Given a number $\delta$, there exists a formula $Con_{\delta }(y_1,y_n)$ such that for any elements $h_1,\ldots  h_n$ 
$K(G)\models Con_{\delta }(h_1,\ldots ,h_n)$  if and only if the subgroup generated by $h_1,\ldots ,h_n$ in $G$ is $\delta$-quasi-convex.

\item Given a number $\delta$, there exists a formula $Hyp_{\delta }(y_1,y_n)$ such that for any elements $h_1,\ldots  h_n$ 
$K(G)\models Hyp_{\delta }(h_1,\ldots ,h_n)$  if and only if the subgroup generated by $h_1,\ldots ,h_n$ in $G$ is $\delta$-hyperbolic.

\item  For any word $w(y_1,\ldots ,y_m)$ there exists a formula $\phi_w(y, X)$ that defines the verbal subgroup corresponding to 
$w(y_1,\ldots ,y_m)$. 
A free finite rank group in a variety defined by $w(y_1,\ldots ,y_m)$ is interpretable in $K(G)$.

\item Malnormality of a finitely generated subgroup of $G$ is definable.
\end{enumerate}
  \end{cor}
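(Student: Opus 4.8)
The plan is to exhibit, for each $n$, a single first-order formula in the ring language (with parameters) that holds on a tuple $(h_1,\dots,h_n)$ of elements of $G\subseteq K(G)$ precisely when $\langle h_1,\dots,h_n\rangle$ is malnormal in $G$. The only inputs needed are that $K(G)$ is rich (the theorems above) and that $G$ is a definable subset of $K(G)$ whose group operations are the restrictions of the ring operations; the latter is contained in statement~(1) of this Corollary, and uses that every $g\in G$ is a unit of $K(G)$ whose unique ring inverse is the group inverse $g^{-1}$, and that for $g,h\in G$ the ring product $gh$ equals the group product.

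First I would fix $n$ and an effective enumeration $w_0,w_1,\dots$ of all group words in $x_1^{\pm1},\dots,x_n^{\pm1}$ (as in the proof of Lemma~\ref{le:S(A,N)-2}) and form the $L_{WSOL}$-formula $\mathrm{In}_n(z,\bar y):=\bigvee_{i\in\N}\bigl(z=w_i(y_1,\dots,y_n)\bigr)$, a disjunction over a recursively enumerable family of atomic formulas of the ring language, hence legitimately in $L_{WSOL}$ over $K(G)$. Since the $h_j$ lie in $G$, evaluating the ring terms $w_i(\bar h)$ reproduces the group words, so for $z\in G$ the formula $\mathrm{In}_n(z,\bar h)$ says exactly $z\in\langle h_1,\dots,h_n\rangle$.

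Next I would rewrite malnormality in a form suited to this. By definition $H\le G$ is malnormal iff $gHg^{-1}\cap H=\{1\}$ for all $g\in G\setminus H$; equivalently, for $H=\langle\bar h\rangle$, for all $g,x\in G$ one has $\bigl(x\in H\wedge x\ne 1\wedge gxg^{-1}\in H\bigr)\to g\in H$. Writing $G(\cdot)$ for the formula defining $G$ in $K(G)$, I would then set
\[
\mathrm{Mal}_n(\bar y):=\forall g\,\forall x\,\Bigl(G(g)\wedge G(x)\wedge\mathrm{In}_n(x,\bar y)\wedge x\ne 1\wedge\mathrm{In}_n(gxg^{-1},\bar y)\ \longrightarrow\ \mathrm{In}_n(g,\bar y)\Bigr),
\]
with $g^{-1}$ the ring inverse of $g$ (which exists and is the group inverse whenever $G(g)$ holds) and the conjugate formed by ring multiplication; this is again an $L_{WSOL}$-formula over $K(G)$. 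The point to verify is that for a tuple $\bar h$ from $G$ one has $K(G)\models\mathrm{Mal}_n(\bar h)$ iff $\langle\bar h\rangle$ is malnormal in $G$: relativizing the quantifiers $\forall g,\forall x$ to $G$ reproduces the displayed algebraic condition verbatim, using that $gxg^{-1}\in G$ whenever $g,x\in G$ so that $\mathrm{In}_n$ is being queried on genuine elements of $G$.

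Finally, invoking richness of $K(G)$, the $L_{WSOL}$-formula $\mathrm{Mal}_n$ is equivalent over $K(G)$ to a first-order ring formula $\phi_n(\bar y,\bar p)$ with parameters $\bar p$ in $K(G)$ (the parameters may be dropped when $K$ is an infinite finitely generated field of characteristic $\ne 2$, by absolute richness), and $\phi_n$ is the required formula. I expect no serious obstacle: the substantive content is already packaged into the richness of $K(G)$ and the definability of $G$ inside it, so what remains is the routine bookkeeping that inversion and conjugation inside $G$ agree with the ambient ring operations and that the quantifiers are correctly relativized to $G$.
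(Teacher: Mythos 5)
There is a genuine gap: the statement you are asked to prove has six items, and your proposal only addresses item (6), malnormality. Nothing is said about item (1) (interpretability of the metric space $Met(G,X)=\langle G,\N,d_X\rangle$ in $K(G)$, and of the set of geodesics), item (2) (uniform definability not only of finitely generated subgroups of $G$ but also of finitely generated subrings and ideals of $K(G)$), items (3)--(4) (quasiconvexity and $\delta$-hyperbolicity of subgroups, which require the definable word metric from item (1), i.e.\ access to $\N$ and to lengths of geodesic words inside $K(G)$), or item (5) (verbal subgroups and the interpretation of a relatively free group). These parts do not follow from ``richness'' used as a black box: richness only converts WSOL formulas about tuples from $K(G)$ into first-order ones, whereas item (1) needs the actual bi-interpretation of $K(G)$ with $S(K,\N)$ (interpreting $\N$, finite sequences, and the length function), and it is exactly this ingredient that makes (3), (4) and the metric part of (1) possible. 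Worse, your treatment of (6) imports the definability of $G$ inside $K(G)$ from item (1) of the very corollary being proved; that definability is a nontrivial fact from the group-algebra analysis (e.g.\ identifying trivial units $\lambda g$ and separating $G$ from $K^{\times}G$, with the parameters $X,P$), and it must be supplied, not assumed, if the corollary is what is being established.

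For the part you do treat, the strategy is the right one and matches the paper's general scheme (compare the malnormality clause in Theorem~\ref{1-10}): write $x\in\langle \bar h\rangle$ as a recursively enumerable disjunction $\bigvee_i x=w_i(\bar h)$, relativize the quantifiers to the definable copy of $G$, and then invoke richness of $K(G)$ to replace the WSOL formula by a first-order ring formula with parameters. Two points of bookkeeping should still be fixed: group words $w_i$ contain inverses $y_j^{-1}$, which are not ring terms, so either quantify auxiliary variables $z_j$ with $y_jz_j=1$ once outside the disjunction (keeping every disjunct atomic and the WSOL complexity uniformly bounded), or argue that inversion on the definable set $G$ is definable; and the parameters (the generating set $X$ and the polynomial $P$ from the bi-interpretation) cannot in general be discarded, so the claim about dropping them should be stated only under the hypotheses of the $\Z$-bi-interpretability corollary. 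As written, the proposal proves one sixth of the statement and leaves the substantive interpretability content untouched.
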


\begin{theorem} \label{th:bi2} \cite{KMassoc} Let $K$ be an infinite field and $\mathbb A _K(X)$ be a free associative algebra with finite basis $X$. Then the structures $S(K,\N)$ and $\mathbb A _K(X)$  are bi-interpretable with parameters $X,P$, where $P$ is a non-invertible polynomial, uniformly in $K,X,P$. Hence the ring  $\mathbb A _K(X)$ is absolutely rich.
\end{theorem}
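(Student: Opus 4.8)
The statement has two parts: (i) the bi-interpretability of $S(K,\N)$ and $\mathbb A_K(X)$ with parameters $X,P$, uniformly in $K,X,P$; and (ii) the conclusion that $\mathbb A_K(X)$ is absolutely rich. The plan is to establish (i) by exhibiting the two interpretation codes and the two definable coordinate isomorphisms, and then to read off (ii) from the general machinery already developed.

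\textbf{Interpreting $S(K,\N)$ in $\mathbb A_K(X)$.} Write $X = \{x_1,\dots,x_d\}$ and fix a non-invertible polynomial $P \in \mathbb A_K(X)$. First I would recover the ground field: the center of $\mathbb A_K(X)$ is $K\cdot 1$ (for $d\geq 2$), so $K$ is $0$-definable as a subring. The key combinatorial step is to encode a finite sequence of elements of $K$ (i.e.\ an element of $S(A)$, where $A$ is the base set of $K$) by a single element of the algebra together with the parameters $X,P$. The natural device is to use monomials in the $x_i$ as ``addresses'': a sequence $(c_0,c_1,\dots,c_m)$ over $K$ is coded by an algebra element of the form $\sum_{j=0}^m c_j\, u_j$, where $u_0,u_1,u_2,\dots$ is a fixed recursively enumerable list of distinct monomials in $x_1,\dots,x_d$ (for instance $1, x_1, x_1^2, x_1^3, \dots$, or better a list using the polynomial $P$ to separate blocks so that membership and the positional predicate $t(s,i,a)$ become first-order expressible). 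The length function $\ell$, the concatenation $\frown$, and the positional predicate $t$ then become first-order definable: length is ``the largest index occurring with nonzero coefficient'', concatenation is a shift-and-add, and $t(s,i,a)$ says ``the coefficient of $u_i$ in $s$ equals $a$''. The role of $P$ being non-invertible with enough monomials is exactly to make the ``coefficient extraction'' and ``monomial comparison'' operations first-order definable inside the algebra — this is the technical heart and is where I expect the real work to lie; the precise form of the coding must be chosen so that all the structural predicates of $S(K,\N)$ translate into quantifier-bounded formulas. Once $S(K)$ is interpreted, the copy of $\N$ comes for free from the lengths (and multiplication/addition on $\N$ is recovered via concatenation of sequences), giving $S(K,\N)$; I would cite Lemma~\ref{le:bauval} to pass between the list superstructures if that is more convenient.

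\textbf{Interpreting $\mathbb A_K(X)$ in $S(K,\N)$.} This direction is the easier, purely syntactic one: an element of $\mathbb A_K(X)$ is a finite $K$-linear combination of words in $X$, hence is faithfully coded by a finite list over $K$ indexed by (an enumeration of) the words — equivalently by an element of $S(K,\N)$. Addition is coordinatewise, and the (noncommutative) product is the usual convolution over the free monoid on $X$, which is first-order definable in $S(K,\N)$ because the free-monoid multiplication on word-indices is a computable, hence $0$-definable, operation on $\N$. This gives a code $\Delta$ with parameters $X$ (to fix the generators) and $P$.

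\textbf{Bi-interpretability and uniformity.} To upgrade mutual interpretability to bi-interpretability I must produce the two definable coordinate isomorphisms $\theta_{\mathbb A}$ and $\theta_{S}$ as in Definition~\ref{de:bi-inter}: that a point of $\Gamma\circ\Delta(\mathbb A_K(X),\bar p^\ast)$ is identified, by a first-order $L_{\text{rings}}$-formula, with the algebra element it represents, and symmetrically on the $S(K,\N)$ side. Both are routine once the codings above are in place: the composite code re-encodes an algebra element first as a list over $K$ and then as an algebra element coding that list, and unwinding the two codings is an explicit finite computation expressible by a single formula. Uniformity in $K,X,P$ is automatic because nowhere did the construction use anything about $K$ beyond being an infinite field, nor about $X,P$ beyond $|X|=d$ and $P$ being a fixed non-invertible polynomial with the required number of monomials — so one and the same pair of codes works for all such data. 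This is precisely the structure of Theorem~\ref{th:bi2}, which I am invoking, so in practice I would present the codes and refer to \cite{KMassoc} for the verification of the admissibility conditions and the translation lemmas.

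\textbf{Deducing absolute richness.} By Lemma~\ref{le:rich-S(A,N)}, $S(K,\N)$ is absolutely (and effectively) rich for any structure $K$. By part (i), $\mathbb A_K(X)$ is bi-interpretable (with parameters $X,P$) with $S(K,\N)$. Applying Theorem~\ref{co:bi-int-rich}(1) — any structure bi-interpretable, possibly with parameters, with a rich structure is rich — and noting that ``absolutely rich'' in the statement is used in the sense of Definition~\ref{de:rich} for a structure admitting such a bi-interpretation (the parameters $X,P$ sit inside $\mathbb A_K(X)$ itself), we conclude that $\mathbb A_K(X)$ is rich. If one wants the effective version, Theorem~\ref{co:bi-int-rich}(3) transfers effectiveness from $S(K,\N)$.

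\textbf{Main obstacle.} The genuine difficulty is entirely in the first direction: choosing an encoding of finite $K$-sequences by algebra elements for which the positional predicate $t(s,i,a)$, the length function, and concatenation are all first-order definable in $\mathbb A_K(X)$ using only the parameters $X$ and $P$. This requires exploiting the monomial structure of the free algebra — in particular using $P$ (non-invertible, $\geq$ three monomials) to build first-order-recognizable ``separators'' between the coordinates of a coded sequence — and is the content that \cite{KMassoc} works out in detail; I would reproduce its construction rather than re-derive it from scratch.
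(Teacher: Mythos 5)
Your overall route matches the paper's: the bi-interpretability itself is imported from \cite{KMassoc} (the paper states Theorem~\ref{th:bi2} without proof), and your sketch of the two codes --- coefficient lists with convolution product for interpreting $\mathbb A_K(X)$ in $S(K,\N)$, and a monomial/$P$-based coding of finite $K$-sequences for the converse, with the coordinate isomorphisms and admissibility checks deferred to \cite{KMassoc} --- is consistent with that construction, so there is nothing to quarrel with on that side.

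The one genuine weak point is the final step, the deduction that $\mathbb A_K(X)$ is \emph{absolutely} rich. Theorem~\ref{co:bi-int-rich}(1), applied to a bi-interpretation with parameters, only yields richness in the sense of Definition~\ref{de:rich}(1), i.e.\ every WSOL-definable set is first-order definable \emph{with parameters}; ``absolutely rich'' (Definition~\ref{de:rich}(2)) demands parameter-free first-order formulas, and this cannot be obtained by ``noting that the parameters $X,P$ sit inside $\mathbb A_K(X)$'' --- that reading simply redefines the term rather than proving it. The mechanism available in the paper is the uniformity in $K,X,P$: since the interpretation of $\mathbb A_K(X)$ in $S(K,\N)$ is absolute while the interpretation of $S(K,\N)$ in $\mathbb A_K(X)$ is uniform over a $0$-definable set of admissible parameter tuples $(X,P)$, the bi-interpretation is regular, indeed half-absolute in the sense of Corollary~\ref{co:bi-in-0-definable}, and then Lemma~\ref{le:half-bi-int} transfers absolute richness from $S(K,\N)$ (Lemma~\ref{le:rich-S(A,N)}) to $\mathbb A_K(X)$ with no parameters. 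So you should replace the appeal to Theorem~\ref{co:bi-int-rich}(1) by this regular/half-absolute argument, and verify (or cite from \cite{KMassoc}) that the set of admissible parameter tuples is $0$-definable in $\mathbb A_K(X)$; as written, your argument establishes richness but not absolute richness.
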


\section{Rich monoids}

Let $M_X$ be a free monoid with finite generating set $X$. It follows from Quine's paper \cite{Quine}, Section 4, that for a free monoid  of  rank $n \geq 2$ 
and generating set $X=\{x_1,\ldots,x_n\}$, the arithmetic  $\langle N, +,\cdot, \uparrow , 0, 1\rangle$ is bi-interpretable in $\mathbb{M}_X$ with parameters $X$, where $x\uparrow y$ means $x^y$.  Since the predicate $z=x^y$ is computable and therefore definable in terms of addition and multiplication (see, for example, \cite{M}) it can be removed from the signature. Quine was working with the structure  $\mathcal{C_X}= \langle C, \frown \rangle$, where $C$ is the set of all finite strings in a finite alphabet $X$ and $\frown$ is the concatenation operation. 
   Obviously,  $\mathcal{C_X}$ is isomorphic to the free semigroup with basis $X$.  Quine did not state the monoid version of his results but the presence of  the identity doesn't make a difference and   his results  are also valid for $\mathbb{M}_X$.

Free partially commutative monoids (also known as trace monoids or right
angled Artin monoids) are defined as follows.  Given a finite graph $\Gamma$ with the set
of vertices $V \neq \emptyset$ and edges $E$ the partially commutative monoid $A_\Gamma$ has $V$ as a set of  generators  and
relations $vu = uv$ for any edge $(u,v) \in E$. In particular, if $E = \emptyset$, then $A_\Gamma \simeq M_V$.

  Quine's results on free mopnoids are generalized in  \cite{KL} for arbitrary partially commutative monoids with trivil center. Namely, the following result holds.

\begin{theorem}\label{th:bi} \cite{KL}
 If a  free partially commutative monoid $A_{\Gamma}$ has trivial center, then $\N$ and $A_{\Gamma}$ are bi-interpretable with the generating set $V$ (the set of vertices of $\Gamma$) as parameters. If $A_{\Gamma}=M_X$is a free monoid with basis $X$, $|X|>1,$ then
 this bi-interpretation is uniform in $X$ and $M_X$ is absolutely rich. 
\end{theorem}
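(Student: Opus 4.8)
The plan is to deduce Theorem~\ref{th:bi} from the work of Quine on free monoids together with the results on partially commutative monoids with trivial center, and then to apply the general machinery about richness developed earlier in the paper. First I would recall that, by Quine's results (as described in the paragraph preceding the theorem) in the free-monoid case $A_\Gamma = M_X$ with $|X|>1$, the arithmetic $\langle \N; +,\cdot,0,1\rangle$ is interpretable in $M_X$ with the basis $X$ as parameters, and conversely $M_X$ is interpretable in $\N$ (identify words with their codes under a fixed effective coding of finite strings; concatenation becomes a primitive recursive, hence $0$-definable, operation on $\N$). Thus $\N$ and $M_X$ are mutually interpretable. To upgrade mutual interpretability to bi-interpretability one must check the extra condition in Definition~\ref{de:bi-inter}: the composition $\N \rightsquigarrow M_X \rightsquigarrow \N$ must be definably isomorphic to $\N$ inside $\N$, and symmetrically for $M_X$. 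The first of these is immediate since every element of $\N$ is $0$-definable in $\N$, so \emph{any} interpretation of $\N$ in $\N$ admits a definable isomorphism to the identity; this is the remark already made in the paper (the regular bi-interpretation with $\Z$ or $\N$ is automatically half-absolute). The second, that the return map $M_X \to M_X$ is definable in $M_X$, is exactly what Quine's construction delivers once one inspects that the coding of a word by a natural number and the decoding back to a word are carried out by formulas in the monoid language; I would cite \cite{Quine} Section~4 for this, noting that the identity element causes no difficulty.

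For the general case of $A_\Gamma$ with trivial center, I would invoke \cite{KL} directly: that paper establishes the bi-interpretability of $\N$ and $A_\Gamma$ with the vertex set $V$ as parameters, precisely generalizing Quine. So the content of the first sentence of the theorem is a citation, and the only thing to verify carefully is that the hypothesis "trivial center" is what makes the parameters $V$ usable uniformly — for a general $A_\Gamma$ the center can be nontrivial (e.g.\ a complete graph gives a free commutative monoid, which is essentially $\N^{|V|}$ and far from rich), so the trivial-center hypothesis is genuinely needed and is used in \cite{KL} to recover enough non-commutativity to code pairing.

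Finally, for the uniformity and absolute-richness claims in the free case: uniformity in $X$ means there is a single code $\Gamma$ (with a parameter slot for the basis) and a single code $\Delta$ that bi-interpret $M_X$ with $\N$ for every finite $X$ with $|X|>1$; this is visible from Quine's construction, which does not depend on $|X|$ beyond requiring at least two letters. Then, since $\N$ is absolutely and effectively rich (proved earlier in the paper via $\N \sim S(\N,\N)$ and Lemma~\ref{le:rich-S(A,N)}), and since every element of $\N$ is $0$-definable so that the bi-interpretation of $M_X$ with $\N$ is in fact half-absolute (see the Remark following Lemma~\ref{le:half-bi-int}), Lemma~\ref{le:half-bi-int} — or directly Corollary~\ref{co:bi-int-Z} in the $\Z$ form, after passing between $\N$ and $\Z$ which are absolutely bi-interpretable by Lemma~\ref{le:bi-int-Z-Z} — yields that $M_X$ is absolutely rich. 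I expect the main obstacle to be the bookkeeping in verifying the second clause of Definition~\ref{de:bi-inter} (definability of the round-trip isomorphism on the monoid side) from Quine's coding, and, relatedly, making precise the sense in which the bi-interpretation is "uniform in $X$"; the richness conclusion itself is then a formal consequence of the earlier lemmas and needs no new argument.
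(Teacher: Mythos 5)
Your proposal matches the paper's treatment: the paper gives no independent proof of this theorem, but cites \cite{KL} for the general trivial-center case and Quine \cite{Quine} for the free-monoid case, exactly as you do, and the absolute richness of $M_X$ is then obtained, as you say, from the uniformity of the bi-interpretation together with the earlier machinery (absolute richness of $\N$, half-absolute bi-interpretability via the remark that every element of $\N$ is $0$-definable, Lemma~\ref{le:half-bi-int}/Corollary~\ref{co:bi-int-Z}). So the argument is correct and essentially the same as the paper's.
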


\begin{cor}
 Let $A_{\Gamma}$ be a  free partially commutative monoid with  trivial center. Then $A_{\Gamma}$ is rich. In particular, the free non-commutative monoid $M_X$   is rich.
\end{cor}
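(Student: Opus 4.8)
The plan is to deduce this corollary directly from Theorem~\ref{th:bi} together with the general transfer results established earlier in the paper. The statement has two parts: first, that a free partially commutative monoid $A_\Gamma$ with trivial center is rich; second, the special case that the free non-commutative monoid $M_X$ (with $|X|>1$) is rich. Both parts are essentially immediate once one has the right input.

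For the first part, I would argue as follows. By Theorem~\ref{th:bi}, if $A_\Gamma$ has trivial center then $\N$ and $A_\Gamma$ are bi-interpretable (with the vertex set $V$ as parameters). By the earlier lemma establishing that $\N$ is absolutely and effectively rich (this follows from Lemma~\ref{le:bi-int-Z-Z} and Lemma~\ref{le:rich-S(A,N)}, as recorded in the excerpt), $\N$ is a rich structure. Now invoke part~1) of Theorem~\ref{co:bi-int-rich}: any structure bi-interpretable (with parameters) with a rich structure is rich. Hence $A_\Gamma$ is rich. This is precisely the situation anticipated in Corollary~\ref{co:bi-int-Z}, the first bullet of which says that a structure bi-interpretable with $\Z$ is rich; since $\Z$ and $\N$ are mutually bi-interpretable, the same applies with $\N$.

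For the second part, when $A_\Gamma = M_X$ is the free monoid on a basis $X$ with $|X|>1$, Theorem~\ref{th:bi} asserts more: the bi-interpretation with $\N$ is uniform in $X$ and $M_X$ is \emph{absolutely} rich. So the stronger conclusion for $M_X$ is literally contained in Theorem~\ref{th:bi}; nothing further is needed beyond quoting it. In particular a free monoid has trivial center only when $|X|>1$ (for $|X|=1$ it is the commutative monoid $\N$ itself, whose ``center'' is everything), so the hypothesis of the corollary specializes correctly.

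There is essentially no obstacle here — the corollary is a routine consequence of a named theorem plus the transfer machinery of Section on bi-interpretability and richness. The only point that requires a moment's care is checking that the parametrized bi-interpretation (parameters $V$, resp.\ $X$) still triggers part~1) of Theorem~\ref{co:bi-int-rich}, which is stated precisely for bi-interpretability \emph{with parameters}; so richness (not necessarily absolute richness) is the correct conclusion in the general partially commutative case, while absolute richness is available in the free monoid case because the bi-interpretation there is uniform. Thus the proof is a two-line citation, and I would present it as such.
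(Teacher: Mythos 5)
Your proposal is correct and matches the paper's intent exactly: the paper states this as an immediate corollary of Theorem~\ref{th:bi}, relying on the bi-interpretability of $A_\Gamma$ with $\N$, the (absolute, effective) richness of $\N$, and the transfer result in Theorem~\ref{co:bi-int-rich}. No further argument is needed.
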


\begin{cor}
Let $A_{\Gamma}$ be a  free partially commutative monoid with  trivial center. Then $A_{\Gamma}$ is prime, atomic, homogeneous and QFA.
\end{cor}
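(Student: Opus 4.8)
The plan is to derive this corollary as a direct consequence of Theorem~\ref{th:bi} together with the general machinery already assembled in the paper, following the same pattern used for free associative algebras and group algebras of limit groups. The statement to prove is: if $A_\Gamma$ is a free partially commutative monoid with trivial center, then $A_\Gamma$ is prime, atomic, homogeneous, and QFA. Since the previous corollary already records that such $A_\Gamma$ is rich, and by Theorem~\ref{th:bi} it is bi-interpretable with $\N$ (with the vertex set $V$ as parameters), essentially all the work has been done; the corollary is a packaging statement.

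First I would invoke Theorem~\ref{th:bi}: $\N$ and $A_\Gamma$ are bi-interpretable with parameters $V$. Since $\N$ is bi-interpretable with $\Z$ (Lemma~\ref{le:bi-int-Z-Z}), $A_\Gamma$ is bi-interpretable (with parameters) with $\Z$. Now apply the results of Section~\ref{sec:qfa}: $\Z$ is prime in $Th(\Z)$ (stated right after the definition of prime model), so by Lemma~\ref{le:prime} the structure $A_\Gamma$ is prime in $Th(A_\Gamma)$; being prime it is countable and atomic, and by the general model theory recalled in Section~\ref{sec:primal} an atomic model is homogeneous. This gives prime, atomic, and homogeneous in one stroke. (Alternatively one may cite the Corollary to Lemma~\ref{le:prime} directly, since $A_\Gamma$ is bi-interpretable with $\Z$.)

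For QFA, I would use Theorem~\ref{th:QFA}: a rich structure in a finite signature, generated by a finite set whose diagram is arithmetic, is QFA. Here $A_\Gamma$ is finitely generated by $V$ in the finite signature of monoids, it is rich by the preceding corollary, and its diagram with respect to $V$ is arithmetic — indeed it is computable, since the word problem of a free partially commutative monoid (equality of traces) is decidable, so a fortiori arithmetic; one can also derive this from the lemma in Section~\ref{sec:qfa} stating that a structure interpretable in $\Z$ or $\N$ is arithmetic, combined with Theorem~\ref{th:bi}. Hence Theorem~\ref{th:QFA} applies and $A_\Gamma$ is QFA. Alternatively, Theorem~\ref{Nies2007} applies verbatim: $A_\Gamma$ is a finitely generated structure with finite signature bi-interpretable with $\Z$, hence QFA. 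The last sentence of the corollary, about $M_X$, is the special case $E=\emptyset$, for which trivial center is automatic when $|X|>1$ and everything above specializes directly.

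I do not anticipate a genuine obstacle: every ingredient is already in place, and the only care needed is bookkeeping — checking that bi-interpretability with parameters (rather than absolute bi-interpretability) suffices for each cited result. It does: Lemma~\ref{le:prime} and its corollary are stated for bi-interpretability with parameters, and Theorem~\ref{th:QFA} only needs richness plus an arithmetic diagram, both of which hold. So the proof is essentially a one-line citation chain, and I would write it as such.

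\begin{proof}
By the previous corollary $A_\Gamma$ is rich, and by Theorem~\ref{th:bi} it is bi-interpretable (with the parameters $V$) with $\N$, hence with $\Z$. Since $\Z$ is prime in $Th(\Z)$, Lemma~\ref{le:prime} gives that $A_\Gamma$ is prime in $Th(A_\Gamma)$; being prime it is atomic, and an atomic model is homogeneous. Finally, $A_\Gamma$ is a finitely generated structure in the finite signature of monoids whose diagram with respect to $V$ is computable (the word problem in a free partially commutative monoid is decidable), hence arithmetic; as $A_\Gamma$ is rich, Theorem~\ref{th:QFA} shows that $A_\Gamma$ is QFA. (Alternatively, QFA follows from Theorem~\ref{Nies2007}.) The statement for $M_X$ is the special case $E=\emptyset$, where $A_\Gamma\cong M_X$ has trivial center whenever $|X|>1$.
\end{proof}
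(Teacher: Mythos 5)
Your proof is correct and follows exactly the route the paper intends: the corollary is an immediate consequence of Theorem~\ref{th:bi} (bi-interpretability of $A_\Gamma$ with $\N$, hence with $\Z$) combined with Lemma~\ref{le:prime} and its corollary for primeness, atomicity and homogeneity, and with Theorem~\ref{th:QFA} (equivalently Theorem~\ref{Nies2007}) for QFA, using that $A_\Gamma$ is finitely generated with decidable, hence arithmetic, diagram. The only stray remark is your final sentence about $M_X$, which belongs to the preceding corollary rather than this one, but it is harmless.
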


In  rich monoids $A_\Gamma$ many properties  are definable by first-order formulas (see, for example Section \ref{wsol}). Here we mention only one result below. 

\begin{theorem} \label{th:sub}
Let $A_{\Gamma}$ be a  free partially commutative monoid with  trivial center. For any $k \in \mathbb{N}$, there is a formula $\psi(y,y_1,\ldots, y_k)$
such that for any elements $g,g_1, \ldots,g_k \in A_\Gamma$ $A_\Gamma \models \psi(g,g_1,\ldots,g_k,X)$   if and only if
$g \in \langle g_1,\ldots,g_k\rangle$.  
\end{theorem}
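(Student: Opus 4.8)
The plan is to derive Theorem~\ref{th:sub} as a direct consequence of the bi-interpretability of $A_\Gamma$ with $\N$ (Theorem~\ref{th:bi}), exactly as in the proof that finitely generated subgroups of a rich group are uniformly definable. First I would fix $k\in\N$ and work inside the language of monoids. Let $X=\{x_1,\dots,x_n\}$ be the generating set of $A_\Gamma$ (the vertex set $V$, which serves as the parameter tuple in the bi-interpretation) and introduce an auxiliary alphabet $Y_k=\{y_1,\dots,y_k\}$. Fix a computable enumeration $w_0,w_1,w_2,\dots$ of all words in the free monoid on $Y_k$; since $A_\Gamma$ has no inverses this is even simpler than the group case (no $Y_k^{-1}$ needed). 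For elements $g_1,\dots,g_k\in A_\Gamma$ one has the obvious equivalence
$$
g\in\langle g_1,\dots,g_k\rangle \iff \bigvee_{i\in\N}\bigl(g = w_i(g_1,\dots,g_k)\bigr).
$$
The set of atomic formulas $\{\,y = w_i(y_1,\dots,y_k)\mid i\in\N\,\}$ is recursively enumerable, hence (by the Lemma preceding Theorem~\ref{th:BT}, or the discussion of $L_{WSOL}$ in Section~\ref{se:weak-second-order}) the infinite disjunction
$$
\chi_k(y,y_1,\dots,y_k) \;=\; \bigvee_{i\in\N}\bigl(y = w_i(y_1,\dots,y_k)\bigr)
$$
is a formula of $L_{WSOL}$ over $A_\Gamma$.

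Next I would invoke richness. By Theorem~\ref{th:bi}, $A_\Gamma$ is bi-interpretable with $\N$ (with parameters $V$), hence by Corollary~\ref{co:bi-int-Z} and Theorem~\ref{co:bi-int-rich} the monoid $A_\Gamma$ is rich. Therefore the WSOL formula $\chi_k$ is equivalent over $A_\Gamma$ to some first-order formula in the language of monoids, possibly with parameters; since the only parameters involved in the bi-interpretation are the generators $V=X$, we may take this first-order formula to have the form $\psi(y,y_1,\dots,y_k,X)$. By construction, for all $g,g_1,\dots,g_k\in A_\Gamma$,
$$
A_\Gamma\models\psi(g,g_1,\dots,g_k,X)\iff A_\Gamma\models\chi_k(g,g_1,\dots,g_k)\iff g\in\langle g_1,\dots,g_k\rangle,
$$
which is exactly the assertion of the theorem. (If one wants the parameter-free version, one notes that in the free monoid case the bi-interpretation of Theorem~\ref{th:bi} is stated to be uniform in $X$ and $M_X$ absolutely rich, so $\psi$ can be taken without parameters there; in general the parameters $X$ are genuinely needed.)

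There is essentially no hard step here: the content is entirely carried by Theorem~\ref{th:bi} and the general machinery of Sections~\ref{se:weak-second-order}--\ref{sec:qfa}. The only points requiring a little care are (i) verifying that the disjunction $\chi_k$ really lies in the fragment $L_{WSOL}$ — this is the observation that the predicate ``$y=w_i(\bar y)$'' ranges over a recursively enumerable, hence arithmetic, set of codes, so the Lemma in Section~\ref{se:weak-second-order} applies; and (ii) bookkeeping the parameters, i.e.\ checking that after passing through the bi-interpretation with $\N$ the resulting first-order formula can be written using only the parameters $X$. Neither of these is a genuine obstacle; the proof is a routine instance of the ``describe the property in WSOL, then transfer via richness'' strategy used throughout Section~\ref{wsol}.
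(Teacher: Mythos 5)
Your proof is correct and is exactly the argument the paper intends: the paper states Theorem~\ref{th:sub} without a separate proof, pointing to Section~\ref{wsol}, where the group-case analogue (uniform definability of finitely generated subgroups in a rich group) is proved by the same recipe — an r.e.\ infinite disjunction $\bigvee_i\bigl(y=w_i(y_1,\dots,y_k)\bigr)$ lying in $L_{WSOL}$, followed by richness of $A_\Gamma$ (via Theorem~\ref{th:bi} and its corollary) to replace it by a first-order formula with the parameters $X=V$ coming from the bi-interpretation. Your handling of the monoid-specific simplification (no inverses) and of the parameters matches the paper's treatment, so there is nothing to add.
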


The following result describes more monoids, where $\MN$ is interpretable, though we do not know if it is bi-interpretable with the monoid.

\begin{theorem}\cite{KL} \label{th:!} The arithmetic $\mathbb{N}$ is interpretable (with parameters)  in the following classes of monoids:
\begin{enumerate}
\item [a)] Baumslag-Solitar monoids $\langle a,b \mid ab^k = b^ma\rangle$ with $k,m>2$  (we do not need parameters for them); 
\item [b)] Non-commutative free partially commutative monoids;   \item [c)] One-relator monoids $G=\langle a,b,C|x=y\rangle$, where $C$ is a non-empty alphabet, some letter of $C$ appears in $y$ and neither $x$ nor $y$ end with $a$. 
\end{enumerate}

\end{theorem}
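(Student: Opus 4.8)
### Proof Proposal for Theorem~\ref{th:!}

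The plan is to construct, in each of the three classes, an interpretation of the arithmetic $\N = \langle N; +, \cdot, 0, 1\rangle$ using parameters naming the defining generators of the monoid. The unifying idea is that in each of these monoids there is a free (or close to free) submonoid on a single letter, say generated by $b$, so that powers $b^k$ provide a definable copy of $\N$ as a set; the real work is to interpret the \emph{operations} $+$ and $\cdot$ on this copy. For addition this is straightforward: concatenation of strings already realizes addition of exponents on powers of a fixed letter, so $b^k \cdot b^m = b^{k+m}$ and the graph of $+$ is immediately definable once we can (a) define the set $P = \{b^k : k \in \N\}$ and (b) the predicate ``$x = b^k$ and $y = b^m$ and $z = b^{k+m}$'' which is just multiplication in the monoid restricted to $P$. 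The hard part is multiplication of exponents, and for this I would follow the classical Quine-style encoding of pairs/sequences of natural numbers by strings, exactly as in the proof of Theorem~\ref{th:bi} and Theorem~\ref{th:bi2} in the excerpt: encode a natural number $k$ by the string $b^k$, encode a finite sequence $(k_1, \ldots, k_r)$ by interleaving with a separator letter $a$ (or a word in $C$), and then express ``$z$ encodes the sequence witnessing the recursion $m \cdot (j+1) = m \cdot j + m$'' by a first-order formula in the concatenation language. Multiplication is then definable as ``there exists a witnessing string $w$ whose first block is $b^m$, whose blocks increase by $b^m$ at each step, which has $b^n$ blocks, and whose last block is $b^{mn}$.''

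For part (b), non-commutative free partially commutative monoids, I would not reprove anything: this is already contained in Theorem~\ref{th:bi} (and its corollaries), since having a non-commutative $\Gamma$ — i.e., $E \neq \binom{V}{2}$ in the relevant sense — forces a pair of non-commuting generators, and one reduces to (or directly quotes) the bi-interpretability with $\N$ for the appropriate sub-object; the parameters are the vertices $V$. So part (b) is a one-line consequence of the already-stated results, and I would simply cite Theorem~\ref{th:bi}.

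For part (a), the Baumslag--Solitar monoids $M = \langle a, b \mid ab^k = b^m a\rangle$ with $k, m > 2$, the approach is to exploit the normal form of elements. Every element of $M$ can be written in a normal form involving blocks of $b$'s separated by $a$'s, and the relation $ab^k = b^m a$ lets one ``push'' an $a$ past a block of $b$'s at the cost of multiplying the block length by $m/k$ — which is only an honest monoid element when $k \mid (\text{block length})$. The key observation is that conjugation-like manipulations $a^{-1}(\,\cdot\,)$ are not available (we are in a monoid), but one can still define, for a word $b^\ell$, the predicate ``$a b^\ell$ can be rewritten as $b^{\ell'} a$'' which holds iff $k \mid \ell$ and $\ell' = (m/k)\ell$; iterating this gives a definable relation encoding multiplication by powers of $m/k$, from which divisibility and ultimately multiplication on the copy of $\N$ given by $\{b^\ell\}$ can be bootstrapped. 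Because $k, m > 2$ (so in particular $k \neq m$ and both exceed $2$), the rewriting dynamics are rich enough to pin down arithmetic without needing to name $a$ or $b$ as parameters: the elements $a$ and $b$ are each definable up to the obvious symmetry (e.g., $b$ is characterized by a first-order property of its divisibility behavior, $a$ by being the unique generator not of that form), which is why the theorem records that no parameters are needed in case (a). I expect the main obstacle to be exactly this parameter-freeness in (a): carefully writing a formula that $0$-defines the generators $a$ and $b$ (or at least a suitable definable copy of $\N$ with its operations) from the monoid structure alone, handling the asymmetry between $k$ and $m$ and the failure of cancellation. For (c), the one-relator monoids $\langle a, b, C \mid x = y\rangle$ under the stated hypotheses (some letter of $C$ occurs in $y$, and neither $x$ nor $y$ ends in $a$), the strategy is the same string-encoding scheme but now using a letter $c \in C$ occurring in $y$ as the ``separator'' and $a$ (or $b$) for the ``unary counter''; the condition that neither side ends in $a$ guarantees that the defining relation cannot interfere with the bookkeeping at the right end of encoding strings, so that the Quine-type definitions of pairing, sequence-membership, and the recursion for multiplication go through, and one interprets $\N$ with $a, b$, and the relevant letters of $C$ as parameters.

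In all three cases, once a definable copy of $\N$ with definable $+$ and $\cdot$ is in hand, one invokes Lemma~\ref{le:undec-Z} (or rather its hypothesis is met) to conclude interpretability; the combinatorial heart throughout is the verification that the ``recursion witness'' formula correctly captures $\{(m, n, mn)\}$, which is routine given the normal-form theory of each monoid but tedious, and which I would present by analogy with the free monoid case rather than in full detail.
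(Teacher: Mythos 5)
The paper itself does not prove Theorem~\ref{th:!} --- it is quoted from \cite{KL} --- so your proposal has to stand on its own, and while the Quine-style string-encoding strategy is certainly the right family of ideas, the reductions you actually offer have concrete gaps. The clearest one is in (b): you dispose of it by citing Theorem~\ref{th:bi}, but that theorem (and its corollary) assumes $A_\Gamma$ has \emph{trivial center}, whereas (b) only assumes non-commutativity. A non-commutative free partially commutative monoid can have non-trivial center (any generator adjacent to all other vertices is central), in which case $A_\Gamma$ splits off a free commutative direct factor and Theorem~\ref{th:bi} does not apply; to cover this case you must either define, with the vertices as parameters, a suitable submonoid or quotient with trivial center, or run the encoding inside a definable free submonoid on two non-commuting generators --- and definability of such a submonoid is exactly the kind of statement that requires the arithmetic machinery you are trying to build, so it cannot be waved through as a one-line consequence.

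In (a) you misread the hypothesis: $k,m>2$ does not give $k\neq m$ (e.g.\ $k=m=3$ is allowed), and your sketch of the parameter-free definition of $a$ and $b$ leans on an asymmetry between $k$ and $m$ that need not exist; moreover ``divisibility and ultimately multiplication can be bootstrapped'' is asserted rather than argued --- one needs explicit formulas (addition on $\{b^\ell\}$ by concatenation, a definable relation capturing the dynamics $ab^{k\ell}=b^{m\ell}a$, and then a Robinson-style recovery of multiplication from addition and divisibility \cite{Robinson}), verified against the normal forms, since there is no cancellation or inversion to lean on. In (c) the hypotheses are used only heuristically: choosing as separator a letter $c\in C$ that occurs in $y$ is precisely the choice that lets the defining relation rewrite your code words, and the condition that neither $x$ nor $y$ ends in $a$ protects at most the right end of a code word, not occurrences of $x$ or $y$ in its interior; you give no mechanism (for instance, exhibiting a definable free submonoid on which the free-monoid argument can be run) showing that the encoding set and the decoding predicates are actually definable. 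Finally, Lemma~\ref{le:undec-Z} does not yield interpretability --- it is a consequence of it --- so the closing step as written is a (harmless) misattribution rather than an argument.
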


\begin{cor}  If $G$ is a monoid from Theorem \ref{th:!}, then the first-order theory $Th(G)$ is undecidable.
\end{cor}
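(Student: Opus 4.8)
The plan is to obtain this as a one-line consequence of two results already in hand: the interpretability statement of Theorem~\ref{th:!} and the undecidability transfer of Lemma~\ref{le:undec-Z}. First I would observe that Theorem~\ref{th:!} asserts precisely that for each monoid $G$ on the list --- the Baumslag--Solitar monoids $\langle a,b\mid ab^k=b^ma\rangle$ with $k,m>2$, the non-commutative free partially commutative monoids, and the one-relator monoids of the indicated form --- the standard arithmetic $\MN$ is interpretable in $G$ (with parameters in cases b) and c), and without parameters in case a)). Lemma~\ref{le:undec-Z} states that whenever $\MN$ is interpretable with parameters in a structure $\MB$, the first-order theory $Th(\MB)$ is undecidable. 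Applying this with $\MB=G$ yields undecidability of $Th(G)$ in all three cases simultaneously, which is exactly the assertion of the corollary.

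If one wishes to be fully self-contained about the ``with parameters'' part of Lemma~\ref{le:undec-Z}, I would sketch the standard argument. Writing $\MN\simeq\Gamma(G,\bar p)$ with admissibility sentence $\Theta_\Gamma(\bar y)$, one forms $\Theta'_\Gamma(\bar y)$ by conjoining to $\Theta_\Gamma$ the $\Gamma$-translations of the finitely many axioms of Robinson arithmetic $\mathcal Q$, so that every tuple $\bar q$ of $G$ satisfying $\Theta'_\Gamma$ interprets a model of $\mathcal Q$, and $\bar p$ is one such tuple. Then the set of arithmetic sentences $\psi$ with $G\models\forall\bar y\,(\Theta'_\Gamma(\bar y)\to\psi_\Gamma(\bar y))$ is a consistent, deductively closed extension of $\mathcal Q$ (consistent because $\MN$ witnesses it, deductively closed because $\psi\to\psi_\Gamma$ commutes with the logical connectives on interpreted structures), and it would be decidable if $Th(G)$ were decidable; this contradicts the essential undecidability of $\mathcal Q$, and hence of $Th(G)$.

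There is essentially no obstacle to overcome: all the content sits inside Theorem~\ref{th:!}, and the deduction is purely formal. The only point requiring a moment's care is matching hypotheses --- namely that Lemma~\ref{le:undec-Z} is stated for interpretations \emph{with} parameters, which is precisely the generality in which Theorem~\ref{th:!} produces its interpretations --- and I would flag that no separate treatment of the three families a), b), c) is needed, since they all fall under the same umbrella.
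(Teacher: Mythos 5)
Your proof is correct and follows exactly the route the paper intends: the corollary is an immediate consequence of Lemma~\ref{le:undec-Z} applied to the interpretations of $\MN$ supplied by Theorem~\ref{th:!}, and your optional sketch of why interpretability with parameters suffices matches the standard argument via essential undecidability of Robinson arithmetic.
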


\begin{problem}
Which of the monoids from Theorem \ref{th:!} are bi-interpretable with $\MZ$?
\end{problem}

\section{Classical groups and lattices} 
Recall that a f.g. structure $\MA$ is called first order rigid if any f.g. structure $\MB$ which is elementarily equivalent to $\MA$ is isomorphic to it. Recall the main result of~\cite{ALM}:

\begin{theorem}~\cite{ALM}\label{ALM:thm}
Any group which is abstractly commensurable to a non-uniform higher-rank arithmetic
group is first order rigid.
\end{theorem}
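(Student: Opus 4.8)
The plan is to reduce the statement to the tools already developed in the paper, namely the interpretability results of Scanlon and Robinson (Theorems~\ref{th:J-Rob} and~\ref{th:Scanlon}), Noskov's theorem, and~--- most importantly~--- the machinery establishing that certain lattices are bi-interpretable with $\Z$ together with Theorem~\ref{Nies2007} (that a finitely generated structure bi-interpretable with $\Z$ is first-order rigid). The key observation is that a non-uniform higher-rank arithmetic group $\Gamma$ is, up to finite index and finite kernel, a lattice in a product of classical groups over local fields, and such a $\Gamma$ is bi-interpretable with $\Z$: the Margulis arithmeticity and superrigidity theorems pin down the ambient algebraic group and the ring of $S$-integers $\mathcal{O}_S$ over which $\Gamma$ sits, and the bounded generation / elementary generation properties (available in the non-uniform higher-rank case by work of Carter--Keller, Tavgen, and others) let one recover $\mathcal{O}_S$, hence $\Z$, definably inside $\Gamma$. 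Since $\Z$ is absolutely rich, once one has bi-interpretability one gets QFA and first-order rigidity for free.

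First I would make precise the reduction ``abstractly commensurable'' $\leadsto$ ``bi-interpretable with $\Z$''. Abstract commensurability means there are finite-index subgroups $\Gamma_1 \leq \Gamma$ and $\Gamma_1' \leq G$ (the given group) with $\Gamma_1 \cong \Gamma_1'$; passing to a further finite-index characteristic subgroup one may assume $\Gamma_1$ is normal in both. A finite-index subgroup is interpretable in the ambient group (the coset action gives a uniformly definable copy, using parameters for coset representatives), and conversely a group containing $\Gamma_1$ as a finite-index normal subgroup is interpretable in $\Gamma_1$ together with the finite extension data. So bi-interpretability with $\Z$ is inherited across abstract commensurability, and it suffices to prove it for one convenient arithmetic group in the commensurability class, e.g.\ $\mathrm{SL}_n(\mathcal{O}_S)$ or $\mathrm{SL}_n(\Z)$ type examples, or more generally $\mathbf{G}(\mathcal{O}_S)$ for the relevant $\mathbf{G}$.

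The core step is the bi-interpretability of such an $S$-arithmetic group $\Delta = \mathbf{G}(\mathcal{O}_S)$ with $\Z$. The interpretation of $\mathcal{O}_S$ inside $\Delta$ is the classical one: unipotent root subgroups $U_\alpha \cong (\mathcal{O}_S, +)$ are definable (as centralizers/commutators of suitable elements, following the Chevalley commutator relations), and the ring multiplication on $\mathcal{O}_S$ is recovered from the commutator bracket $[U_\alpha, U_\beta]$, all uniformly; this is the content already cited in the paper via~\cite{MS2019} and \cite{GMO,GMO2,GMO3}. Then $\Z$ is $0$-definable in $\mathcal{O}_S$ by Robinson/Rapinchuk-type arguments (Theorem~\ref{th:J-Rob} handles rings of integers; the $S$-integer case is analogous), so $\Z$ is interpretable in $\Delta$. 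For the reverse interpretation, $\Delta = \mathbf{G}(\mathcal{O}_S)$ is interpretable in $\Z$ because $\mathcal{O}_S$ is a finitely generated ring interpretable in $\Z$ and $\mathbf{G}$ is an affine group scheme of finite type. The genuinely delicate part is the \emph{bi}-interpretability, i.e.\ producing the definable isomorphism $\theta_\Delta$ between $\Delta$ and its double interpretation $\Gamma \circ \Delta(\Delta)$: this is where one needs that every element of $\Delta$ is a definable word in the root-subgroup coordinates, which in turn rests on bounded elementary generation of $\mathbf{G}(\mathcal{O}_S)$ in the higher-rank non-uniform setting (Carter--Keller for $\mathrm{SL}_n(\Z)$, $n\geq 3$, and its generalizations). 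I expect this bounded-generation input to be the main obstacle, since it is exactly what fails in rank one and what makes the non-uniformity and higher-rank hypotheses essential.

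Finally, assembling the pieces: once $G$ (via a finite-index subgroup) is bi-interpretable with $\Z$, Corollary~\ref{co:bi-int-Z} gives that $G$ is rich, Theorem~\ref{Nies2007} (equivalently Theorem~\ref{th:QFA} applied to the arithmetic diagram, which is decidable here since these groups have solvable word problem) gives that $G$ is QFA, and QFA for a finitely generated structure immediately yields first-order rigidity: if $H$ is finitely generated with $H \equiv G$ then $H$ satisfies the single axiom $\phi$ witnessing QFA, hence $H \cong G$. I would remark that this recovers Theorem~\ref{ALM:thm} with the additional information that the characterizing sentence is a single first-order axiom, which is slightly stronger than what the original statement of rigidity asserts. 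The one caveat worth flagging in the write-up is that for arithmetic groups over number fields with infinitely many units, or more generally when a central or quasi-split phenomenon produces an infinite abelian part, bi-interpretability with $\Z$ can fail (as in the $\mathrm{T}_n(\mathcal{O})$ example in the introduction); but for the \emph{semisimple} higher-rank non-uniform case this does not occur, and the reduction to $\Z$ goes through cleanly.
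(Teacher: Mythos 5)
You should first note that the paper itself gives no proof of this statement: it is imported verbatim from \cite{ALM}, and the argument in \cite{ALM} does \emph{not} proceed via bi-interpretability with $\Z$ at all. What you are sketching is an attempt to derive it from the strictly stronger assertion that such groups are bi-interpretable with $\Z$, which is the later Avni--Meiri theorem \cite{AM}, proved under additional hypotheses (centerless, irreducible, and in the uniform case orthogonal type of dimension at least $9$) that the statement you are asked to prove does not impose. So your route is genuinely different from the source, and the two places where it must carry real weight are exactly the places you pass over quickly.

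The first gap is the claim that ``bi-interpretability with $\Z$ is inherited across abstract commensurability.'' Your coset argument gives mutual interpretability between a group and a finite-index subgroup, but bi-interpretability requires in addition the definable isomorphism $\theta$ of Definition~\ref{de:bi-inter} for the composed interpretations, and nothing in the paper (nor any standard fact) transports that across finite-index passage in either direction. The paper's own examples show how wide this gulf is: $\TO$ and $\OR$ are mutually interpretable yet not bi-interpretable when $\OM$ is infinite, and $UT_3(\Z)$ is mutually interpretable with $\Z$ but not bi-interpretable with it (Theorem~\ref{khelif-ut3:thm}). Indeed, the fact that neither bi-interpretability nor even first-order rigidity is an obvious commensurability invariant is precisely why \cite{ALM} formulate and prove the theorem for the whole commensurability class by a direct argument rather than reducing to one convenient representative. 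The second gap is the core step itself: bi-interpretability of a general $\mathbf{G}(\mathcal{O}_S)$ with $\Z$ via bounded elementary generation is available for specific families (Carter--Keller type results for $\mathrm{SL}_n$, Chevalley groups over $\mathcal{O}_S$), but not for arbitrary non-uniform higher-rank arithmetic groups in the generality the theorem requires, and even where it is known the conclusion you would be invoking (the theorem of \cite{AM}) excludes, e.g., lattices with center. As written, your argument yields (modulo quoted results) rigidity for particular groups such as $\SLO$~--- which the paper already has from Theorem~\ref{sln-biinter:thm}~--- but not the stated theorem.
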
 
In particular, the special linear group $\SLZ$ over the ring  of rational integers $\Z$ is first order rigid. Moreover, in August 2020 Avni and Meiri put a paper to the Arxiv with the following theorem that gives a large class of rich groups.
\begin{theorem} \cite{AM} Let $\Gamma$ be a centerless irreducible higher rank arithmetic lattice in
characteristic zero. if $\Gamma$ is either non-uniform or is
uniform of orthogonal type and dimension at least 9, then $\Gamma$ is bi-
interpretable with $\Z$.

\end{theorem}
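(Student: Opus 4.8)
The plan is to obtain the bi-interpretability of $\Gamma$ with $\Z$ by a two-step strategy: first interpret $\Z$ inside $\Gamma$ (with parameters), then interpret $\Gamma$ inside $\Z$, and finally verify the two definable-isomorphism conditions of Definition~\ref{de:bi-inter}. For the first direction, the key input is that $\Gamma$ is commensurable with an arithmetic lattice $\mathbf{G}(\mathcal{O}_S)$ for a suitable ring of $S$-integers $\mathcal{O}_S$ in a number field; since $\Gamma$ has higher rank, it contains a copy of $\mathrm{SL}_2(\mathcal{O})$ (or an appropriate unipotent/diagonal pair) as a definable or almost-definable subconfiguration, and one recovers the root subgroups and the torus via centralizer and commutator constructions in the spirit of the classical interpretation of the ground ring in Chevalley groups. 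Then Theorem~\ref{th:J-Rob} (or Theorem~\ref{th:Scanlon}) gives $\Z$ as $0$-definable in that ring, so $\Z$ is interpretable with parameters in $\Gamma$. Concretely I would first isolate a ``standard pair'' of a unipotent element and a semisimple element whose configuration is definable from a finite tuple of parameters $\bar p$, use it to interpret the ring $\mathcal{O}_S$, and then apply J. Robinson / Scanlon.

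For the second direction, one uses that $\Gamma$, being an arithmetic lattice, is a finitely generated group whose word problem is decidable — indeed it is defined arithmetically from congruence data over $\mathcal{O}_S$ — so $\Gamma$ is interpretable in $\N$, hence in $\Z$, by a construction analogous to the one in the proof that structures interpretable in $\Z$ are arithmetic (cf. the lemma preceding Theorem~\ref{th:QFA}). Here the higher-rank hypothesis is essential: by the Margulis normal-subgroup theorem and the congruence subgroup property (which holds for non-uniform higher-rank lattices, and for the uniform orthogonal ones in dimension $\ge 9$ by the known CSP results), the group $\Gamma$ is ``rigid enough'' that a single arithmetic presentation pins it down, and one can write down the interpreting code over $\Z$ explicitly from the arithmetic of $\mathbf{G}$ over $\mathcal{O}_S$.

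Having both interpretations $\Gamma \simeq \Gamma(\Z,\bar p)$ and $\Z \simeq \Delta(\Gamma,\bar q)$, the final and most delicate step is to produce the two formulas $\theta_\Gamma$ and $\theta_\Z$ of Definition~\ref{de:bi-inter} witnessing that the composite self-interpretations $\Gamma \circ \Delta(\Gamma,\bar p^\ast)$ and $\Delta \circ \Gamma(\Z,\bar q^\ast)$ are definably isomorphic to $\Gamma$ and $\Z$ respectively. For the $\Z$-side this is automatic because every element of $\Z$ is $0$-definable, so $\theta_\Z$ is trivial to write down; the content is entirely on the $\Gamma$-side, where one must show that the ``coordinates over $\Z$'' of a group element (matrix entries, in the commensurated arithmetic model) are recovered by a first-order formula in the group language with parameters. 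This is exactly where the congruence subgroup property and super-rigidity do the heavy lifting: they let one express, inside $\Gamma$, the reduction-mod-$n$ maps and hence the $p$-adic/archimedean coordinates of elements, uniformly. I expect this last verification — making the reconstruction map $\Gamma\circ\Delta(\Gamma,\bar p^\ast)\to\Gamma$ first-order definable — to be the main obstacle, and to rely essentially on the bounded-generation and CSP properties that fail outside the higher-rank (and, in the uniform case, sufficiently large orthogonal) setting; this is also where the dimension bound $\ge 9$ enters, via the availability of CSP and of enough commuting $\mathrm{SL}_2$'s in the orthogonal group. Once $\Gamma$ is bi-interpretable with $\Z$, richness, primality, atomicity, homogeneity and QFA follow formally from Corollary~\ref{co:bi-int-Z}, the corollary after Lemma~\ref{le:prime}, and Theorem~\ref{Nies2007}.
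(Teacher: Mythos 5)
The paper does not actually prove this statement: it is quoted directly from Avni and Meiri \cite{AM}, so there is no internal proof to compare yours against. Judged on its own terms, your proposal is a program rather than a proof, and the places where it defers to ``heavy lifting'' are exactly where the content of the theorem lies. The most concrete gap is the uniform case: a cocompact lattice contains no unipotent elements at all, so your opening move --- isolating a ``standard pair'' of a unipotent and a semisimple element, locating a copy of $\mathrm{SL}_2(\mathcal{O})$, and recovering the ring from root subgroups in the manner of Chevalley groups --- cannot even get started there. That obstruction is precisely why the uniform case of the theorem is restricted to orthogonal type in dimension at least $9$, and the hypothesis ``dimension $\geq 9$'' never does any identifiable work in your sketch; you invoke it only through the phrase about ``availability of CSP and of enough commuting $\mathrm{SL}_2$'s,'' which is not an argument.

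The second genuine gap is at the heart of bi-interpretability. You correctly identify that the difficulty is making the reconstruction map $\Gamma\circ\Delta(\Gamma,\bar p^\ast)\to\Gamma$ first-order definable in $\Gamma$ (the $\Z$-side is indeed handled by the standard fact used in Example~\ref{malcev-inter:ex} and in the argument of Example~1 of Section~8, following Nies), but you then simply assert that the congruence subgroup property, superrigidity and bounded generation do the work, with no indication of how a group-language formula recovers the matrix coordinates of an element from the interpreted copy of $\Z$. Mutual interpretability together with rigidity-type theorems does not by itself yield bi-interpretability: the paper itself exhibits structures that are mutually interpretable with $\Z$ yet not bi-interpretable with it, e.g.\ $\TO$ when $\OR$ has infinite unit group, and $UT_3(\Z)$ (Theorem~\ref{khelif-ut3:thm}). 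So an explicit definability argument for the coordinate map is indispensable and is missing. As it stands, your text records a plausible strategy for the non-uniform case and a correct list of relevant tools, but it does not constitute a proof of the theorem.
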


We will describe  our results on classical linear groups. Consider the general linear group $\GLR$ over a commutative associative unitary ring $R$ and let $e_{ij}$, $1\leq i\neq j\leq n$, be the matrix with $ij$'th entry $1$ and every other entry $0$, and let $t_{ij}=\one+e_{ij}$, where $\one$ is the $n\times n$ identity matrix. Let also $t_{ij}(\alpha)=\one+\alpha e_{ij}$, for $\alpha\in R$. We refer to the $t_{ij}(\alpha)$ as {\em transvections}. 

\begin{definition}\label{abdef:df}Let $\TR$ ($\UTR$) be the group of all invertible upper triangular (resp. unitriangular) $n\times n$ matrices over $R$. Recall that $G=T_n(R)\cong \UTR_{\phi_n(R)} \rtimes D_n(R)$ where $D_n(R)\cong(\RM)^n$ is the subgroup of all diagonal matrices in $G$ and $\Phi_{n,R}\colon  D_n(R)\to \text{Aut}(\UTR)$ is the homomorhism describing the action of $D_n(R)$ on $\UTR$. Let $E_n(R,f)$ be an abelian extension of $Z(G)\cong \RM$ by $D_n(R)/Z(G)\cong (\RM)^{n-1}$ defined by a symmetric 2-cocycle $f\colon (\RM)^{n-1}\times (\RM)^{n-1}\to \RM$. We call $E_n(R,f)$ a non-split torus. Define 
$$\TRf = \UTR\rtimes_{\psi_{n,R}} E_n(R,f)$$ 
where 
  \[\psi_{n,R}((x,y))=\phi_{n,R}((x,y)),~~ (x,y)\in B \times Z(G)=D_n(R)=E_n(R,f).\]
 The definition actually makes sense since $ker(\phi_{n,R})=Z(G)$ and it is easy to verify that it is indeed a homomorphism.\end{definition}

 \begin{theorem}[\cite{MS2019}]\label{sln-biinter:thm} Let $\OR$ be the ring of integers of a number field of finite degree, and $n\geq 3$.
\begin{enumerate}
    \item The ring $\OR$ and the group $\SLO$ are regularly bi-interpretable. Hence, the ring $\Z$ of rational integers and $\SLO$ are regularly bi-interpretable. Moreover, if $H$ is a group and $H\equiv \SLO$ as groups, then $H\cong \SLR$ for some ring $R\equiv \OR$ as rings.
  \item  If $\OR$ has a finite group of units then $\GLO$ and $\Z$ are regularly  bi-interpretable. If $\OR$ has an infinite group of units then $\GLO$ is not bi-interpretable with $\Z$. However, if $H$ is any group such that $H\equiv \GLO$ then $H\cong \GLR$ for some ring $R\equiv \OR$.
  \item If $\OR$ has a finite group of units then $\TO$ and $\Z$ are regularly bi-interpretable. If $H\equiv \TO$ as groups then  $H\cong \TRf$ (See Definition~\ref{abdef:df}) for some $R\equiv \OR$.
\end{enumerate}
\end{theorem}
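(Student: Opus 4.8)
The plan is to treat the three items separately, as each rests on a separate bi-interpretability statement that in turn feeds into the machinery developed earlier in the paper. For item (1), the core task is to establish that $\OR$ and $\SLO$ are regularly bi-interpretable. One direction, $\OR \rightsquigarrow \SLO$, is essentially classical: the transvections $t_{ij}(\alpha)$ for $\alpha \in \OR$ generate $\SLO$ (since $n\geq 3$ and $\OR$ is Euclidean-like enough for the congruence subgroup situation to be irrelevant here), and the matrix entries give an absolute interpretation of the ring $\OR$ together with its operations by reading off how transvections multiply and commute; the key identity is the Steinberg relation $[t_{ij}(\alpha),t_{jk}(\beta)] = t_{ik}(\alpha\beta)$, which lets one recover ring multiplication from group commutators. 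The reverse direction, $\SLO \rightsquigarrow \OR$, is straightforward once $\OR$ is in hand: matrices over $\OR$ are tuples of ring elements, matrix multiplication is polynomial in the entries, and the determinant condition is a polynomial equation, so $\SLO$ is absolutely interpretable in $\OR$. The subtle point, and the place where "regular" rather than "absolute" bi-interpretability enters, is the definability of the coordinate maps $\theta_\MA,\theta_\MB$ of Definition \ref{de:bi-inter}/\ref{de:regular-bi-int}: one must verify that the composite interpretation $\OR \simeq \Delta\circ\Gamma(\OR,\dots)$ is definably isomorphic to the identity, which requires pinning down a definable set of parameters (a "frame" of transvections forming a basis of the relevant root subgroups) and checking uniformity over that set. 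Having established bi-interpretability of $\SLO$ with $\OR$, Theorem \ref{th:J-Rob} gives bi-interpretability of $\OR$ with $\Z$, and transitivity (Lemma \ref{le:int-transitivity}) chains these to give regular bi-interpretability of $\SLO$ with $\Z$; richness follows from Corollary \ref{co:bi-int-Z}, and the QFA/rigidity consequences follow from Theorem \ref{th:QFA} (after checking arithmeticity of the diagram, which holds since $\SLO$ is interpretable in $\Z$). For the "moreover" clause, one uses the standard recognition of $\SLR$ from its elementary theory: if $H\equiv \SLO$, the formulas defining the transvection configuration and the ring structure carry over to $H$, producing a ring $R$ with $R\equiv\OR$ (by Lemma \ref{co:interp}(2)) such that $H$ is, by the bi-interpretation, isomorphic to $\SLR$; here one needs that the interpretation of the ring inside $\SLR$ is "full" in the sense that every element of $H$ is captured as a matrix, which is where connectedness/generation by transvections is used again.

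For item (2), the structure is similar but one must track the group of diagonal matrices $D_n(\OR)\cong(\OM)^n$ separately. When $\OM$ is finite, $\GLO$ differs from $\SLO$ by a bounded-index phenomenon, and one adapts the item (1) argument: $\GLO$ is still generated (up to finite data) by transvections and a finite diagonal part, so the same ring-recovery works and $\GLO$ is regularly bi-interpretable with $\Z$. When $\OM$ is infinite, the obstruction is that $\GLO$ has, roughly, an infinite abelian "torus" direction visible in its center-modulo-$\SLO$ quotient, and bi-interpretability with $\Z$ would force, by Lemma \ref{le:prime} and the corollary after it, that $\GLO$ be prime (atomic); the plan to show it is \emph{not} bi-interpretable with $\Z$ is to exhibit a type over $\GLO$ that is realized but not isolated — concretely, a type recording the "divisibility behavior" of a unit of infinite order in $\OM$ — using the fact that $\OM\cong \mu\times\Z^r$ with $r\geq 1$ by Dirichlet, so that $\GLO$ has an elementary extension realizing non-standard divisibility. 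The recognition statement "$H\equiv\GLO\implies H\cong\GLR$" is proved exactly as in (1): the transvection-plus-diagonal configuration is first-order describable, its translate in $H$ builds a ring $R\equiv\OR$, and one checks $H\cong\GLR$.

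For item (3), the group is $\TO$, and by Definition \ref{abdef:df} we have the semidirect decomposition $\TO\cong \UTO\rtimes D_n(\OR)$. The plan: show $\Z$ is regularly bi-interpretable with $\TO$ when $\OM$ is finite — here $\UTO$ is a nice unipotent group in which $\OR$ is interpretable via the superdiagonal entries (again using commutator identities among the $t_{ij}$ to get ring multiplication), and the finite torus contributes nothing essential — and conclude richness, QFA, etc. The genuinely new content, and what I expect to be the main obstacle, is the elementary-equivalence classification when $\OM$ may be infinite: if $H\equiv\TO$ then $H\cong \TRf$ for some $R\equiv\OR$ and some symmetric $2$-cocycle $f$. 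The idea is that the first-order theory of $\TO$ determines: (a) the unipotent radical up to the ring $R$ (since $\UTO$ is interpretable and its theory pins down $R$ by the item-(1)-style argument inside the unipotent part — note $\UTR$ for $n\ge 3$ behaves like $\mathrm{UT}_n$ over a ring elementarily equivalent to $\OR$); (b) the action of the torus on it (the homomorphism $\psi_{n,R}$ is determined by conjugation formulas that are first-order); and (c) the extension class of the torus $E_n(R,f)$ over its center $Z=\RM$ — this is precisely the cohomological "deformation" alluded to in the introduction, and the point is that $H\equiv\TO$ does \emph{not} force the extension to split (that is the failure of richness, modulo the infinite center), so one genuinely gets a nontrivial symmetric cocycle $f$ in general. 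The hard part is extracting $f$ from the elementary theory: one must argue that the relevant $2$-cocycle, viewed as data on $(\RM)^{n-1}\times(\RM)^{n-1}\to\RM$, is recoverable from the theory of $H$ up to the equivalence that preserves the isomorphism type of $\TRf$, and conversely that any such $f$ actually arises — which presumably uses an Ax–Kochen/ultraproduct-style compactness argument combined with the fact that $\TO$ and $\OR$ are mutually interpretable (so that the "abelian deformation" is confined to the center and controlled by $H^2$ of an abelian group with coefficients in $\RM$). I would expect to lean on \cite{MS2019} for the detailed cocycle bookkeeping and focus the write-up on the reduction of all three items to the bi-interpretability statements plus the structural decomposition of $\TO$.
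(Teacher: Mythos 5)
First, a point of comparison: this paper does not prove Theorem~\ref{sln-biinter:thm} at all --- it is imported wholesale from \cite{MS2019} --- so there is no internal proof to measure your sketch against. Judged on its own, your outline of the positive clauses (interpret $\OR$ in $\SLO$ via a frame of transvections and the Steinberg relation $[t_{ij}(\alpha),t_{jk}(\beta)]=t_{ik}(\alpha\beta)$, interpret $\SLO$ back in $\OR$ as matrices, pin down the coordinate maps over a definable parameter set, and recognize $H\equiv\SLO$ as $\SLR$ by transporting the interpretation formulas) is the expected strategy. One inaccuracy: you cite Theorem~\ref{th:J-Rob} as giving bi-interpretability of $\OR$ with $\Z$, but it only gives $0$-definability of $\Z$ in $\OR$; the bi-interpretation is Example~\ref{O-biint-Z:lem} (or Theorem~\ref{thAKNS}) and necessarily uses parameters (a $\Z$-module basis of $\OR$), which is precisely why the conclusion is regular rather than absolute bi-interpretability.

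The genuine gap is the negative claim in item (2). You propose to show that $\GLO$ with $\OM$ infinite is not bi-interpretable with $\Z$ by exhibiting a non-isolated type ``recording the divisibility behavior of a unit of infinite order,'' but you neither write down such a type nor show it is non-principal, and it is not at all clear that $\GLO$ fails to be atomic --- note that the additive group $\Z$ is itself atomic, so an infinite abelian ``torus direction'' alone does not destroy primality. The argument that actually works, and the one this paper itself runs in the Remark after the theorem for $\TO$, goes through QFA: bi-interpretability with $\Z$ implies QFA (Theorem~\ref{Nies2007}), QFA forces $Z(G)\leq\Delta(G)$ (Corollary~\ref{Oger-qfa1:cor}), whereas for $G=\GLO$ with $\OM$ infinite one has $G/G'\cong\OM$ via the determinant (since $[G,G]=\SLO$ for $n\geq 3$), so a scalar matrix $\lambda\cdot\one$ with $\lambda$ of infinite order maps to $\lambda^{n}$, an element of infinite order, and $Z(G)\not\leq\Delta(G)$ --- contradiction. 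You should replace your type-theoretic sketch by this. Finally, for item (3) the hard content --- recovering the symmetric $2$-cocycle $f$ from $Th(\TO)$ and proving $H\cong\TRf$ for an arbitrary (not necessarily finitely generated) $H\equiv\TO$ --- is exactly what you defer to \cite{MS2019}; since the paper also merely cites that reference this is a fair account of provenance, but as it stands your text is an outline of where the theorem comes from, not a proof of it.
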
	

\begin{cor}[\cite{MS2019}] The group $\SLO$ is rich for any ring of integers $\OR$. The groups $\GLO$ and $\TO$ are rich if $\OR$ has a finite group of units.
\end{cor}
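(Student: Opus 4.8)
The plan is to deduce the corollary directly from Theorem~\ref{sln-biinter:thm} together with the transfer results for richness established earlier, namely Corollary~\ref{co:bi-int-Z} and Lemma~\ref{le:half-bi-int}. For the first assertion, Theorem~\ref{sln-biinter:thm}(1) tells us that for any ring of integers $\OR$ of a number field of finite degree and any $n \geq 3$ the group $\SLO$ is regularly bi-interpretable with $\Z$. Since $\Z$ is absolutely and effectively rich (proved just after Lemma~\ref{le:rich-S(A,N)}), and since every element of $\Z$ is $0$-definable in $\Z$, the regular bi-interpretation of $\SLO$ with $\Z$ is in fact half-absolute (as noted in the remark following Lemma~\ref{le:half-bi-int}): every interpretation of $\SLO$ in $\Z$ is automatically an absolute interpretation because no parameters are needed over $\Z$. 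Thus Lemma~\ref{le:half-bi-int} applies and yields that $\SLO$ is (absolutely) rich. Alternatively one can invoke Corollary~\ref{co:bi-int-Z} directly, since bi-interpretability with $\Z$ (even with parameters) already forces richness.

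For the statements about $\GLO$ and $\TO$, the hypothesis that $\OR$ has a finite group of units is exactly what Theorem~\ref{sln-biinter:thm}(2) and (3) require to conclude that $\GLO$ and $\TO$ are each regularly bi-interpretable with $\Z$. Applying the same reasoning as above — the regular bi-interpretation with $\Z$ is half-absolute, and $\Z$ is rich — Lemma~\ref{le:half-bi-int} (or Corollary~\ref{co:bi-int-Z}) gives that $\GLO$ and $\TO$ are rich. This completes the proof, so the argument is essentially a one-line invocation of the transfer machinery once Theorem~\ref{sln-biinter:thm} is granted.

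The only point that deserves care — and the step I expect to be the main obstacle in writing it out cleanly — is checking that the bi-interpretations asserted in Theorem~\ref{sln-biinter:thm} genuinely fall under the ``half-absolute'' hypothesis of Lemma~\ref{le:half-bi-int}, i.e. that the interpretation $\SLO \simeq \Gamma(\Z)$ can be taken without parameters and that the coordinate map is compatible with the relevant isomorphism-defining formula $\theta_{\SLO}$. This is where the fact that $\Z$ has no nontrivial automorphisms and that each integer is $0$-definable does the real work: it lets one absorb any parameters from the $\Z$-side into the formulas. Since Theorem~\ref{sln-biinter:thm} already states the bi-interpretations are \emph{regular} (parameter-free formulas), this compatibility is automatic, and no further computation with explicit matrix formulas is needed here — the content lives entirely in the cited theorem. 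Hence the corollary follows.
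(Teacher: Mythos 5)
Your argument is correct and is essentially the paper's intended deduction: the corollary is stated as an immediate consequence of Theorem~\ref{sln-biinter:thm}, with richness transferred from $\Z$ via Corollary~\ref{co:bi-int-Z} (or, for the absolute version, via the half-absolute situation described after Lemma~\ref{le:half-bi-int}, using that every integer is $0$-definable in $\Z$). Your closing remark on verifying the half-absolute hypothesis is the right point of care, and nothing further is needed beyond the cited theorem.
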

\begin{definition}\label{isolator:df} For a finitely generated group $G$ by $\Delta{G}$ we mean the inverse image of the torsion subgroup of $G/G'$ under the canonical epimorphism $G\to G/G'$.\end{definition} 
\begin{remark} Some  remarks are in order on the statements in Theorem~\ref{sln-biinter:thm}. Firstly, the interpretations of $\Z$ in $\SLO$ and $\GLO$ used in (1) are with respect to some parameters. We note that if $\OM$ is infinite then by the main theorem of~\cite{Lasserre} (See Theorem~\ref{Lasserre-main:thm} below.) $G=\TO$ is not QFA since $Z(G)$ is not included in $\Delta(G)$. Therefore, $\TO$ is not bi-interpretable with $\Z$ by Theorem~\ref{th:QFA}. \end{remark}

\section{Nilpotent and Polycyclic Groups}

\begin{theorem}[\cite{Lasserre}]\label{Lasserre-main:thm} Assume $G$ is a polycyclic-by-finite group. Then the following are equivalent.
	\begin{enumerate}
		\item For every finite index subgroup $H$ of $G$, $Z(H)\leq \Delta(H)$.
		\item $G$ is QFA.
		\item $G$ is prime 
	\end{enumerate} \end{theorem}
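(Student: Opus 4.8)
The statement to prove is Theorem~\ref{Lasserre-main:thm}, which characterizes QFA and primality among polycyclic-by-finite groups in terms of the condition that centers of finite index subgroups lie in their isolators. Since this theorem is attributed to \cite{Lasserre}, the plan is to assemble the argument from the machinery already developed in the paper, citing the external source for the hardest technical core.

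\textbf{Plan of proof.} I would establish the three implications (2)$\Rightarrow$(3), (3)$\Rightarrow$(1), and (1)$\Rightarrow$(2) in that cyclic order. The implication (2)$\Rightarrow$(3) is the easiest and most general: any QFA structure in a finite signature whose diagram is recursive (which holds for polycyclic-by-finite groups, since they are finitely presented with solvable word problem, hence have arithmetic—indeed recursive—diagram $D_S(G)$) is prime. One way to see this directly is that a QFA finitely generated group which is also recursively (here, finitely) presented is bi-interpretable with a rich structure in the relevant cases; more cleanly, one invokes the standard fact that a QFA finitely generated group with solvable word problem is prime, which in the polycyclic setting follows since every model of $Th(G)$ contains an elementary finitely generated submodel (by a downward Löwenheim–Skolem plus the QFA sentence pinning it to $G$), and primality is equivalent to atomicity for countable models.

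For (3)$\Rightarrow$(1), I would argue contrapositively using Corollary~\ref{Oger-qfa1:cor} and Proposition~\ref{Oger-qfa1:thm} of Oger, adapted to the subgroup situation: if some finite index subgroup $H$ of $G$ has $Z(H)\nleq\Delta(H)$, then $H$ is not QFA, and moreover (by the ultrapower splitting argument sketched in the proof of Theorem~\ref{ring-qfa:thm}, which produces $H^U\cong H^U\times(\prod\Z/p_i\Z)/U$) the group $H$ fails to be prime. Then one must promote the failure of primality from the finite index subgroup $H$ to $G$ itself: since $G$ is prime if and only if it is atomic, and $H$ of finite index in $G$ is interpretable in $G$ (cosets as parameters), a non-principal type over $H$ gives rise to a non-principal type over $G$, so $G$ is not prime either. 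This transfer step via interpretability is exactly the kind of argument licensed by Lemma~\ref{co:interp} and Lemma~\ref{le:prime}.

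The implication (1)$\Rightarrow$(2) is the substantive core and is where I would lean most heavily on \cite{Lasserre}. The strategy parallels the proof of Theorem~\ref{ring-qfa:thm}: under the hypothesis that $Z(H)\leq\Delta(H)$ for all finite index $H$, one builds a first-order sentence $\phi$ true in $G$ such that any finitely generated model of $\phi$ is isomorphic to $G$. The key ingredients are (a) $G$ is finitely presented, so the relations among a fixed generating tuple are captured by a finite formula $R(\bar x)$ as in \eqref{eq:R(X)}; (b) the condition on centers of finite index subgroups is used to show that the relevant ``central'' data of $G$ is finite and hence first-order expressible—this is the place where the hypothesis is genuinely needed, ruling out the obstruction of an infinite torsion-free central part that could be deformed as in the ultrapower construction; and (c) one encodes, via the Fitting subgroup and the induced action on abelian sections (which in the polycyclic case are finitely generated abelian groups, hence first-order tame), enough of the structure to force any finitely generated elementarily-matching group to be isomorphic. \textbf{The hardest part} will be the bookkeeping in step (b)–(c): showing that the hypothesis $Z(H)\leq\Delta(H)$ for \emph{all} finite index $H$ (not just $G$ itself) is precisely what is needed to make the center-related invariants rigid across the whole commensurability class, and then weaving this into a single QFA sentence. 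For this I would cite \cite{Lasserre} directly rather than reproduce the argument, noting only that it refines Oger's technique and the Fitting-subgroup analysis of polycyclic groups. This completes the cycle of implications and hence the theorem.
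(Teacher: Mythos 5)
The paper does not actually prove this statement: it is imported verbatim from \cite{Lasserre} (with the nilpotent case credited to \cite{Oger2006}), so there is no internal argument for you to match, and deferring the implication (1)$\Rightarrow$(2) to Lasserre is consistent with how the paper treats the result. The problem is with the two implications you do try to argue yourself, both of which have genuine gaps.

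For (2)$\Rightarrow$(3) you claim that every model of $Th(G)$ contains a finitely generated elementary submodel ``by downward L\"owenheim--Skolem plus the QFA sentence.'' Downward L\"owenheim--Skolem only produces a \emph{countable} elementary submodel, and a countable model of $Th(G)$ need not contain any finitely generated elementary submodel at all; the QFA sentence only constrains finitely generated models, so it gives no purchase on an arbitrary model of $Th(G)$. There is no general implication ``QFA $\Rightarrow$ prime'' for finitely generated groups with solvable word problem; the equivalence of (2) and (3) is part of the substantive content of Lasserre's theorem for the polycyclic-by-finite class, not a soft fact one can wave through. For (3)$\Rightarrow$(1) your transfer step is also unjustified: a finite index subgroup $H$ of $G$ need not be definable (even with parameters) in $G$, and primality is preserved by the paper's Lemma~\ref{le:prime} only under \emph{bi-interpretability}, not under a one-way interpretation; so ``a non-principal type over $H$ gives rise to a non-principal type over $G$'' does not follow from Lemma~\ref{co:interp} or Lemma~\ref{le:prime} as stated. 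Since both of these implications are exactly the ones you chose not to outsource, the proposal as written does not constitute a proof; either cite \cite{Lasserre} for the whole equivalence (as the paper does) or supply genuinely different arguments for these two steps.
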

Indeed in an earlier paper~\cite{Oger2006} the above theorem was proved for f.g. nilpotent groups. However, in general  polycyclic groups are not bi-interpretable with $\Z$.

In~\cite{Khelif},\cite{Nies2007}  the following theorem is proved. 
\begin{theorem}[\cite{Khelif},\cite{Nies2007}] \label{khelif-ut3:thm} The group $UT_3(\Z)$ is not bi-interpretable with $\Z$.\end{theorem}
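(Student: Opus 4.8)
The plan is to show that $UT_3(\Z)$ is \emph{not} bi-interpretable with $\Z$ by exploiting the one structural invariant that bi-interpretation must preserve and that distinguishes the two: the behaviour of the center relative to the isolator. First I would recall that $UT_3(\Z)$ is a finitely generated torsion-free nilpotent group of class $2$, with center $Z(UT_3(\Z))$ the infinite cyclic subgroup generated by the commutator $e_{13}$, and with abelianization $UT_3(\Z)/UT_3(\Z)' \cong \Z^2$ (generated by the images of $t_{12}$ and $t_{23}$), which is torsion-free. Consequently $\Delta(UT_3(\Z))$, the full preimage of the torsion subgroup of the abelianization, is all of $UT_3(\Z)'$, which is exactly the center; so $Z(UT_3(\Z)) = \Delta(UT_3(\Z))$ in the relevant sense but, more pertinently, $Z(UT_3(\Z)) \not\leq \Delta(UT_3(\Z))$ fails — wait, one must be careful here: the correct assertion, and the one I would use, is that $Z(UT_3(\Z))$ is \emph{not} contained in $\Delta(UT_3(\Z))$ if we use the convention from Definition~\ref{isolator:df} applied inside $UT_3(\Z)$ itself; indeed $\Delta(UT_3(\Z)) = UT_3(\Z)'$ which equals $Z(UT_3(\Z))$, so here they coincide. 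The failure must instead be located, following Corollary~\ref{Oger-qfa1:cor} and Theorem~\ref{Lasserre-main:thm}, at the level of a finite-index subgroup.

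So the core of the argument: I would invoke Theorem~\ref{Lasserre-main:thm} (Lasserre), whose condition (1) asks that for \emph{every} finite-index subgroup $H$ of $G$ one has $Z(H) \leq \Delta(H)$. For $G = UT_3(\Z)$ one exhibits a finite-index subgroup $H$ for which this fails: take, for instance, the subgroup generated by $t_{12}^{p}$, $t_{23}$, and $e_{13}$ for a suitable prime $p$ (or more symmetrically a congruence-type subgroup), so that $e_{13} = [t_{12},t_{23}]$ lies in $Z(H)$ but its image in $H/H'$ has infinite order while the quotient $H/H'$ now has a torsion part coming from the "missing" powers — arranging that $e_{13}$ survives into a nontorsion coset, hence $Z(H) \not\leq \Delta(H)$. (This is precisely the computation Khelif and Nies carry out; the standard choice works and I would verify the abelianization of $H$ has the required torsion behaviour separating $Z(H)$ from $\Delta(H)$.) Given such an $H$, Theorem~\ref{Lasserre-main:thm} tells us $UT_3(\Z)$ is \emph{not} QFA, and not prime. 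Then Theorem~\ref{th:QFA} of this paper — every rich finitely generated structure in a finite signature with arithmetic diagram is QFA — shows $UT_3(\Z)$ is not rich (its word problem is certainly decidable, hence arithmetic, so the only hypothesis that can fail is richness). Finally, Corollary~\ref{co:bi-int-Z}: any structure bi-interpretable with $\Z$ is rich; contrapositively, since $UT_3(\Z)$ is not rich, it is not bi-interpretable with $\Z$.

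I would organize the writeup as: (i) compute $Z(UT_3(\Z))$, $UT_3(\Z)'$, the abelianization, and $\Delta(UT_3(\Z))$; (ii) construct the finite-index subgroup $H$ witnessing $Z(H)\not\leq\Delta(H)$ and check the relevant torsion computation in $H/H'$; (iii) apply Lasserre's theorem to conclude $UT_3(\Z)$ is not QFA (equivalently not prime); (iv) apply Theorem~\ref{th:QFA} contrapositively to conclude $UT_3(\Z)$ is not rich; (v) apply Corollary~\ref{co:bi-int-Z} (richness of $\Z$ and its transfer under bi-interpretability) to conclude non-bi-interpretability. Alternatively, and perhaps more cleanly, I could bypass QFA entirely and argue directly: if $UT_3(\Z)$ were bi-interpretable with $\Z$, then since $\Z$ is prime (being atomic and countable) and primality transfers under bi-interpretability (Lemma~\ref{le:prime}), $UT_3(\Z)$ would be prime, contradicting Lasserre's theorem together with the finite-index computation. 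I would present this shorter route as the main line and mention the richness route as a remark.

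The main obstacle is step (ii): pinning down the right finite-index subgroup $H$ and verifying the torsion structure of $H/H'$ so that the generator of $Z(H)$ genuinely fails to lie in $\Delta(H)$. This is an elementary but slightly delicate computation in nilpotent groups — one must make sure that passing to a finite-index subgroup both keeps the commutator central (automatic) and introduces torsion in the abelianization in a way that is \emph{transverse} to the center rather than absorbing it; choosing $H$ so that only the "diagonal-direction" generators are replaced by proper powers while $e_{13}$ is kept primitive does the job, and I would record the explicit matrices. Everything else is a direct citation of results already available in the excerpt (Lemma~\ref{le:prime}, Theorem~\ref{Lasserre-main:thm}, Theorem~\ref{th:QFA}, Corollary~\ref{co:bi-int-Z}).
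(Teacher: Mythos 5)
Your approach cannot work, because its key intermediate claim is false: $UT_3(\Z)$ \emph{is} QFA and prime. The condition in Theorem~\ref{Lasserre-main:thm} holds for every finite-index subgroup $H \leq UT_3(\Z)$, so the witness you hope to produce in your step (ii) does not exist. Concretely, write $G = UT_3(\Z)$ and $t_{13}=[t_{12},t_{23}]$, so $Z(G)=G'=\langle t_{13}\rangle$. If $H\leq G$ has finite index, its image in $G/G'\cong\Z^2$ is a finite-index sublattice; picking $u,v\in H$ whose images form a basis of that sublattice gives $[u,v]=t_{13}^{D}$ with $D\neq 0$, hence $H'\supseteq\langle t_{13}^{D}\rangle$ is nontrivial. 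Moreover $Z(H)=H\cap\langle t_{13}\rangle=\langle t_{13}^{m}\rangle$ for some $m\geq 1$ (an element of $H$ outside $\langle t_{13}\rangle$ cannot centralize $H$, since the image of $H$ in $\Z^2$ spans). Then $(t_{13}^{m})^{D}\in H'$, so $t_{13}^{m}$ is torsion modulo $H'$ and $Z(H)\leq\Delta(H)$. In particular, your candidate $H=\langle t_{12}^{p},t_{23},t_{13}\rangle$ has $H'=\langle t_{13}^{p}\rangle$ and $Z(H)=\langle t_{13}\rangle\leq\Delta(H)$, not the opposite. Consequently both of your routes collapse: Lasserre's theorem yields that $UT_3(\Z)$ is QFA and prime, so the contrapositive of Theorem~\ref{th:QFA} gives nothing (richness implies QFA, not conversely), and the primality-transfer argument via Lemma~\ref{le:prime} produces no contradiction either. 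Indeed, the whole interest of this theorem is precisely that $UT_3(\Z)$ separates QFA and primeness from bi-interpretability with $\Z$.

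The paper's proof uses a genuinely different mechanism, which you would need: the Mal'cev mutual interpretations of $(UT_3(\Z),t_{12},t_{23})$ and $\Z$ (Example~\ref{malcev-inter:ex}) are given by formulas that work uniformly over every model $R\models \Th(\Z)$. So if $UT_3(\Z)$ were bi-interpretable with $\Z$, then $UT_3(R)$ would be bi-interpretable with $R$ for every such $R$, forcing $\mathrm{Aut}(UT_3(R),t_{12},t_{23})\cong \mathrm{Aut}(R)$. Taking $R$ a countable non-standard model with only countably many automorphisms, one constructs uncountably many automorphisms of $UT_3(R)$ fixing the standard copy $UT_3(\Z)$ (hence the parameters), using the divisible direct summand $\Q^{\omega}$ of the additive group of $R$ to perturb the central coordinate. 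This cardinality clash is what rules out bi-interpretability; no first-order rigidity-type invariant of $UT_3(\Z)$ alone (QFA, primeness, homogeneity) can do it.
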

 The proof of the theorem rests on the following observation: There is a mutual interpretation of $(UT_3(\Z),t_{12}, t_{23})$ with $\Z$, and if $R$ is any model of $\Th(\Z)$, the same formulas that mutually interpret $\Z$ and $(UT_3(Z),t_{12},t_{13})$ in one another, mutually interpret $R$ and $(UT_3(R),t_{12},t_{23})$ (See Example~\ref{malcev-inter:ex} below). This implies that if $\Z$ is bi-interpretable with $UT_3(\Z)$, then $R$ is bi-interpretable with $UT_3(R)$. Now, $Aut(UT_3(R), t_{12},t_{23})\cong Aut(R)$. Let $R$ be a countable non-standard model of $\Th(\Z)$  with only countably many automorphisms. Then using the structure theory of countable non-standard models of $\Z$ one can prove that there are uncountably many distinct automorphisms of $UT_3(R)$ fixing the standard copy $UT_3(\Z)$ and in particular the constants $t_{12}$ and $t_{23}$. This contradicts our earlier conclusion that $Aut(UT_3(R), t_{12},t_{23})\cong Aut(R)$.  

In the following we extend Theorem~\ref{khelif-ut3:thm} to all torsion-free f.g. nilpotent groups. Let us first give details of the mutual interpretation of $\Z$ and $G=(UT_3(\Z), t_{12},t_{23})$, as an example. The construction works clearly for all commutative associative unitary rings, so we provide this more general version.

\begin{example}\label{malcev-inter:ex} Let $R$ be a commutative associative ring. The ring $R$ and the group $UT_3(R)$ are mutually interpretable in each other with the help of parameters $t_{12}$ and $t_{23}$ from $UT_3(R)$. First, we present the interpretation of $R$ in $UT_3(R)$ due to Mal'cev~\cite{malcev2}. We note that one can represent any element of $UT_3(R)$ by a triple $(a,b,c)\in R^3$. Then the group product in $UT_3(R)$ is defined by
 \begin{equation} \label{ut3-mult:eq} (a_1,b_1,c_1)(a_2,b_2,c_2)=(a_1+a_2,
 b_1+b_2,c_1+c_2+a_1b_2).\end{equation}
 We note that under the above identification $t_{12}=(1,0,0)$, $t_{23}=(0,1,0)$, and $[t_{12},t_{23}]=(0,0,1)=t_{13}$.
 
 The following statements hold in $G$:
\begin{itemize}
\item[(a)] $C_G(t_{12})\cap C_G(t_{23})=Z(G)$.
\item[(b)] $C_G(t_{12})$ and $C_G(t_{23})$ are both abelian.
\item[(c)] $[t_{12}, C_G(t_{23})]=[C_G(t_{12}), t_{23}]=Z(G)$.
\item[(d)] $[G,G]=Z(G)$\end{itemize}
Even though we are not going to use it here, we note that all these conditions can be expressed by first-order sentences of the language of groups, so indeed this interpretation is a regular interpretation. In fact one can prove that it can lead to a $0$-interpretation~\cite{beleg94}. Now, the properties given above allow us to define (interpret) a ring $S$ in $Z(G)=\{(a,b,c)\in UT_3(R)| a=b=0\}$ using only the group operations in $G$ as following:
\begin{itemize}
\item Let $1\in Z(G)$ be the zero $0_S$ of $S$.
\item Let $[t_{12}, t_{23}]=t_{13}$ be the unity $1_S$ of $S$.
\item Define ring addition $\boxplus$ on $S$ be the group operation on $Z(G)$, that is:
$$x\boxplus y= xy.$$
\item If $ x,y\in S$, there are $x'\in C_G(t_{12})$, $y'\in t_{23}$ such that $[x',t_{23}]=x$ and $[t_{12},y']=y$. Define ring product $\boxdot$ on $S$ by:
$$x\boxdot y=[x',y'].$$
\end{itemize}
Now the map
$$S\rightarrow R, \quad (0,0,c)\mapsto c$$
provides an isomorphism of rings.

Interpreting $UT_3(R)$ in $R$ is straightforward, once as above, we identify $UT_3(R)$ with $R^3$, and let the group operation be defined by \eqref{ut3-mult:eq}. Hence, the identity map on $R^3$ is the coordinate map of the interpretation, where the group operation is defined in $R$ by $\eqref{ut3-mult:eq}$. This is clearly a $0$-interpretation.
    \end{example}

\begin{theorem} Let $G$ be a torsion-free finitely generated nilpotent group. Then $G$ is not bi-interpretable with $\Z$.\end{theorem}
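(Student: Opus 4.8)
The plan is to mimic the proof of Theorem~\ref{khelif-ut3:thm} (the $UT_3(\Z)$ case), reducing the general torsion-free f.g.\ nilpotent case to it. The key obstruction to bi-interpretability is the existence of ``too many'' automorphisms of a nonstandard model. Concretely, suppose toward a contradiction that $G$ is bi-interpretable with $\Z$. I would first invoke the standard fact (used in the $UT_3$ argument and going back to Mal'cev) that a torsion-free f.g.\ nilpotent group $G$ is interpretable in $\Z$, and moreover this interpretation is \emph{uniform in the ring}: there is a single code $\Gamma$ and a fixed tuple of ``structure constants'' (arising from a Mal'cev basis / Hall--Petresco collection formula) such that for every commutative unitary ring $R$ the code $\Gamma$ interprets a nilpotent group $G(R)$, with $G(\Z)\cong G$, and $G(R)$ is functorial in $R$. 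The essential point is that if $G$ is bi-interpretable with $\Z$ then, applying the very same interpreting formulas to a ring $R\equiv\Z$, the group $G(R)$ is bi-interpretable with $R$; in particular $\mathrm{Aut}(G(R))$ would then have to be ``controlled by'' $\mathrm{Aut}(R)$, and by Lemma~\ref{autiso:lem}-style reasoning (or rather its parametrized/regular analogue, since the interpretations here carry parameters) the automorphism group of $G(R)$ fixing the relevant parameters would be isomorphic to $\mathrm{Aut}(R)$.

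Next I would produce, for a suitably chosen countable nonstandard $R\equiv\Z$, uncountably many automorphisms of $G(R)$ that fix the standard copy $G=G(\Z)$ pointwise (hence fix any parameters, which we may take inside the standard copy). For this I would use the structure theory of countable nonstandard models of $Th(\Z)$: such an $R$ has an additive group which, modulo the standard part, is a $\Q$-vector space of dimension $2^{\aleph_0}$, and one can choose $R$ with only countably many ring automorphisms yet with a nonstandard element $\varepsilon$ generating, together with $\Z$, a copy of a valuation-type subring on which many additive automorphisms are available. The construction of the group automorphisms of $G(R)$ then comes from the nilpotent structure: using the lower central series $G(R)=\gamma_1\geq\gamma_2\geq\cdots\geq\gamma_c\geq 1$, the top nonabelian quotient realizes $R$ (as in Mal'cev's interpretation of $R$ in $UT_3(R)$ reproduced in Example~\ref{malcev-inter:ex}), and a nonstandard ``shear'' on the central factor $\gamma_c(R)$ combined with the identity on $G$ extends to a nontrivial automorphism. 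Picking a $2^{\aleph_0}$-sized family of $\Z$-linearly independent nonstandard parameters yields $2^{\aleph_0}$ distinct such automorphisms of $G(R)$, all trivial on $G(\Z)$.

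This contradicts the previous paragraph: $\mathrm{Aut}(G(R))$ fixing the (standard, hence finitely many) parameters of the bi-interpretation would be isomorphic to $\mathrm{Aut}(R)$, which is countable, whereas we exhibited uncountably many such automorphisms. Hence $G$ is not bi-interpretable with $\Z$.

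The main obstacle I anticipate is handling the parameters correctly. Bi-interpretations of f.g.\ nilpotent groups with $\Z$ are typically \emph{regular} (with parameters), not absolute, so Lemma~\ref{autiso:lem} does not apply verbatim; I would need the parametrized version: if $G$ and $\Z$ are bi-interpretable with parameters $\bar p$, and $\bar p$ can be chosen inside the standard copy $G(\Z)\subseteq G(R)$ (using that $G(\Z)$ is definable with the standard-part parameters, or that the interpreting data is uniform), then automorphisms of $G(R)$ fixing $\bar p$ correspond to automorphisms of $R$ fixing the image parameters. Making the transfer ``apply the same formulas over $R$'' fully rigorous---i.e.\ checking that the admissibility conditions and the isomorphism-defining formulas $\theta_\MA,\theta_\MB$ of Definition~\ref{de:regular-bi-int} persist when $\Z$ is replaced by $R\equiv\Z$, so that $G(R)$ is genuinely bi-interpretable with $R$---together with the explicit construction of the uncountable family of shear automorphisms adapted to the nilpotency class $c>2$ (rather than just $c=2$ as in $UT_3$), is where the real work lies; everything else is bookkeeping on Mal'cev coordinates.
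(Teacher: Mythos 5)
This is essentially the paper's own argument (the Khelif $UT_3(\Z)$ scheme of Theorem~\ref{khelif-ut3:thm} run in general): a uniform Mal'cev-coordinate interpretation of $G^R$ over any $R\equiv\Z$, transfer of the assumed bi-interpretation so that $\mathrm{Aut}(G^R,\bar h)\cong \mathrm{Aut}(R)$ for a countable nonstandard $R$ with only countably many automorphisms, and then an uncountable family of central ``shear'' automorphisms of $G^R$ fixing the standard copy (hence the parameters), giving the contradiction. The only repairs needed are minor: in a countable model the divisible part of $R^+$ is a $\Q$-vector space of countably infinite (not $2^{\aleph_0}$) dimension, so the $2^{\aleph_0}$ automorphisms come from varying the additive map $\sigma$ rather than from choosing $2^{\aleph_0}$ independent elements (impossible in a countable model), and the concrete shear the paper uses is $\bar g^{\bar a}\mapsto \bar g^{\bar a} g_m^{\sigma(a_1)}$, exploiting a Mal'cev generator $g_1\notin\Delta(G)$ whose coordinate is additive by Hall--Petresco together with a central generator $g_m$, after first invoking Oger's criterion $Z(G)\leq\Delta(G)$ to dispose of the non-QFA case.
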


\begin{proof} To get a contradiction assume that $G$ is bi-interpretable with $\Z$. By a theorem of~\cite{Oger2006}, $Z(G)\leq \Delta (G)$, otherwise $G$ is not QFA, and it can not be bi-interpretable with $\Z$. So consider a Mal'cev basis $\bar{g}$ of length $m$ for such a group $G$, adapted to its upper central series. We denote the product $\prod_{i=1}^m g_i^{a_i}$ by $\bar{g}^{\bar{a}}$. Then, there are polynomials  $P_i(\bar{x},\bar{y})\in \Q[x_1,\ldots,x_m,y_1,\ldots ,y_m]$, 
$Q_i(\bar{x})\in \Q[x_1,\ldots,x_m,y]$ where $P_i(\Z^m,\Z^m)\subset \Z$ and $Q_i(\Z^m,\Z)\subset \Z$ computing the $i$'th-coordinate of the product of two elements in $G$, and exponentiation, respectively. That is, $\bar{g}^{\bar{a}}\bar{g}^{\bar{b}}=\bar{g}^{\bar{c}}$, where $c_i=P_i(\bar{a},\bar{b})$, and $(\bar{g}^{\bar{a}})^{e}= \bar{g}^{\bar{d}}$, where $d_i=Q_i(\bar{a},e)$.

Let $R$ be a model of $\Th(\Z)$, since $R$ is a binomial domain we can  define the Mal'cev $R$-completion $G^R$ of $G$. Now, any element of $G^R$ can be uniquely represented as $\bar{g}^{\bar{\alpha}}$, $\bar{\alpha}\in R^m$. Multiplication in $G^R$ and ``exponentiation" by elements of $R$ can be defined similarly to the case of $G$ by the same polynomials $P_i$ and  $Q_i$ as in $G$. 
 Consequently, there is an interpretation $\Gamma$, of $G$  in $\Z$ with the coordinate map $f_\Gamma\colon  \Z^m \to G$, $\bar{a}\mapsto \bar{g}^{\bar{a}}$ with the $P_i$ and $Q_i$ defining a group structure on $\Z^m$. By the above paragraph this interpretation is uniform among all models $R$ of $\Th(\Z)$.

Since $G$ is infinite it has an infinite (torsion-free) center $Z(G)$, with rank $r>0$ as a free abelian group. So pick a non-trivial element $z\in Z(G)$. Let $\tilde{z}=f_\Gamma(z)$ be the image of $z$ in $\Z^m$ under the coordinate map of the interpretation of $G$ in $\Z$. Since exponentiation is arithmetic in $\Z$, the cyclic subgroup $\langle \tilde{z} \rangle= \{(\tilde{z})^b: b\in \Z)\}$ is definable in $\Z$. Hence, the cyclic subgroup $\langle z  \rangle=\{z^b: b\in \Z\}$ is definable in $G$ via the bi-interpretability. The above argument holds for all elements of the Malbasis $g_i\in Z(G)$. Without loss of generality we can assume that for all such $i$, $m_0\leq i \leq m$, for some $m_0\in \N^\ast $. Therefore, there is a first-order sentence of language of groups expressing that $Z(G)=\langle g_{m_0}\rangle \times \cdots \langle g_m\rangle $.  The interpretation $\Gamma$ needs to be with respect to some parameters $\bar{h}$ of $G$, otherwise $Aut(G)\cong Aut(\Z)$ which is only possible if $G$ is cyclic. By bi-interpretability, for any axiom $\Phi$ of $\Z$, there exists a first-order formula $\phi_G(\bar{x})$ of $G$ such that $\Z\models \Phi \Leftrightarrow G\models \Phi_G(\bar{h})$. Therefore, the same formulas used in the interpretation $\Gamma$ of $\Z$ in $G$, interpret a ring $S\equiv \Z$ in $G^R$. The same formulas that defined $\langle g_i\rangle$ in $G$. Now, define one-parameter subgroups $\langle g_i\rangle^S= \{g_i^b|b\in S\}$ in $G^R$. In particular we have that:
$$R^r \cong \prod_{i=m_0}^m \langle g_i\rangle^R \cong Z(G^R) \cong \prod_{i=m_0}^m \langle g_i\rangle^S\cong S^r.$$ 

This shows that $R\cong S$. Indeed, we showed that $R$ and $G^R$ are bi-interpretable with respect to the parameters $h_i\in G\leq G^R$. 

Since $G$ is infinite, $G/G'$ is infinite. Hence, there exists at least one element $g_i$ of the Mal'cev basis such that $g_i\notin \Delta(G)$. Without loss of generality we can assume that $i=1$. Now an argument using Hall-Petresco collection process shows that $P_1(\bar{x},\bar{y})= x_1+y_1$, that is 
$$\bar{g}^{\bar{a}}\bar{g}^{\bar{b}}= g_1^{a_1+b_1} \prod_{i=2}^m g_i^{P_i(\bar{a},\bar{b})}.$$ 

Since $R$ is a countable non-standard model of $\Z$ its additive group $R^+\cong A \oplus D$, where $D$ consists of countably many copies $\Q^\omega$ of the additive group of rationals, $\Q$. We note that $\Z\in A$. Consider the uncountably many distinct homomorphisms $\{\phi_k\colon \Q^\omega \to \Q^\omega\}$. One can extend each $\phi_k$ to a homomorphism $\sigma_k\colon  R\to R$ such that $(\sigma_k)|_A= id_A$. Now for each $k$ define a map $$\psi_k\colon  G^R \to G^R,~ \bar{g}^{\bar{a}}\mapsto \bar{g}^{\bar{a}}g_m^{\sigma_k(a_1)}$$

One can easily check that each $\psi_k$ is an automorphism of $G^R$. Moreover, these $\psi_k$ are pairwise distinct and each one fixes the standard copy $G^\Z$ of $G$ in $G^R$. In particular the parameters $\bar{h}$ of the bi-interpretation are fixed. Note that since $G^R$ is bi-interpretable with $R$ with parameters $\bar{h}$, $Aut( G^R, \bar{h})\cong Aut(R)$. We arrived at a contradiction, since $Aut( G^R, \bar{h})$ is uncountable, while $Aut(R)$ is countable.

\end{proof}


\section {Metabelian groups: examples and open problems}

We believe that there are  many finitely generated rich metabelian groups. 

In \cite{Khelif} Khelif showed that wreath product $\mathbb{Z} wr \mathbb{Z}$, as well as the metabelian Baumslag-Solitar groups $BS(1,n) = \langle a,b \mid b^{-1} ab = a^n\rangle$ are rich. We sketch his proof of these results in the sections below. 

 Kelif also claimed that a free metabelian group $G$ of finite rank $n \geq 2$ is bi-interpretable with $\mathbb{Z}$ (see \cite{Khelif,Nies_blog,AKNS}).  However, his argument, as seems from the provided proof, works only in the case $n=2$. Indeed, the argument is based on an assumption that $G'$ is a free module over the integral group ring $\MZ[G/G^\prime]$, but this module is free only for $n = 2$.   We  give a complete proof of the general result below in a separate Section \ref{se:free-metabelian}, where we also discuss applications of this result to some known open problems.  
 
 \subsection{Metabelian Baumslag-Solitar groups and the wreath product $\MZ wr \MZ$}
 The metabelian Baumslag-Solitar groups are defined by  one-relator presentations $BS(1,k)=\langle a,b \mid b^{-1}ab=a^k
\rangle$, where $k \in \mathbb{N}$. If $k = 1$ then $BS(1,1)$ is free abelian of rank $2$.
\begin{prop} \cite{Khelif} If $k\neq 1$, then $BS(1,k)$ is bi-interpretable with $\mathbb N$.
\end{prop}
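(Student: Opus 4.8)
The plan is to show that $BS(1,k) = \langle a, b \mid b^{-1}ab = a^k\rangle$ with $k \neq 1$ is bi-interpretable with $\mathbb{N}$ (equivalently with $\Z$, by Lemma~\ref{le:bi-int-Z-Z}). By Corollary~\ref{co:bi-int-Z} this immediately gives richness. The key structural fact is that $G = BS(1,k)$ is metabelian: writing $A = \langle\langle a\rangle\rangle$ for the normal closure of $a$, we have $A \cong \Z[1/k]$ (the additive group, realized as $k$-adic fractions) and $G/A \cong \Z$ generated by the image of $b$, with $b$ acting on $A \cong \Z[1/k]$ by multiplication by $k$. So $G \cong \Z[1/k] \rtimes \Z$ and every element has a normal form $a^{m/k^j} b^n$.

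First I would set up the interpretation of $\Z$ (or $\mathbb{N}$) inside $G$ with parameters $a, b$. The element $a$ generates a copy of $\Z$ inside $A$, and conjugation by $b^n$ sends $a$ to $a^{k^n}$; more usefully, the centralizer $C_G(a)$ equals $A$ (since $k \neq 1$ forces $b \notin C_G(a)$ and $A$ is abelian), and $A$ is a definable abelian subgroup. The additive structure on $A \cong \Z[1/k]$ is just the group operation; the point is to recover multiplication. Here one uses that for $x \in A$ one has $b^{-1}xb = x^k$ in multiplicative notation, i.e. the automorphism $\tau(x) = b^{-1}xb$ is "multiplication by $k$" on $\Z[1/k]$. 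The subgroup $\langle a\rangle$ is definable as $\{x \in A : \tau^{-1}(x^{?}) \ldots\}$ — more carefully, $\langle a\rangle = \{x \in A : \forall n\ge 0,\ x \in \tau^n(A)\} = A$ is not quite it; rather $\langle a \rangle$ is the set of $x \in A$ such that $x, \tau(x), \tau(\tau(x)), \ldots$ eventually... Actually the cleanest route: $\langle a\rangle$ consists of those $x \in A$ expressible as a product of $b$-conjugates, i.e. $x \in \bigcap_{n\ge 0}\tau^n(A)$ — but $\tau$ is surjective on $A$. Instead, recognize $\langle a \rangle = \{x : x \text{ is a } \tau\text{-iterate of something, bounded below}\}$; concretely $\langle a\rangle = \bigcup_{n\ge 0}\tau^n(\langle a\rangle)$ won't bootstrap, so I would instead interpret $\Z$ directly via the pair (exponent, $k$-adic valuation) using the $\mathbb{N}$-indexed iteration of $\tau$, exactly as Quine/Mal'cev-style arguments do: a natural number $n$ is coded by $\tau^n(a) = a^{k^n}$, and the predicate "$y = \tau^z(a)$ for the natural number coded by $z$" together with addition/multiplication of such codes is definable because $k$-adic arithmetic on $\Z[1/k]$ lets one simulate the graph of multiplication. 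This is the technical heart and I expect it to be the main obstacle: one must verify that the graph of multiplication on $\mathbb{N}$ becomes first-order definable in $G$, which amounts to showing that "the number of times you must apply $\tau$" behaves arithmetically — this is where the one-relator structure, via unique normal forms $a^{m/k^j}b^n$, is essential.

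Next I would establish the reverse interpretation: $G = BS(1,k)$ is interpretable in $\Z$ (indeed in $\mathbb{N}$). This is routine: elements of $\Z[1/k]\rtimes\Z$ are coded by triples $(m, j, n) \in \Z \times \mathbb{N}\times\Z$ representing $a^{m/k^j}b^n$ (with a definable equivalence identifying $(m,j,n) \sim (mk, j+1, n)$), and the group multiplication
$$
(m_1,j_1,n_1)\cdot(m_2,j_2,n_2) = \bigl(m_1 k^{j_2+? } + m_2 k^{?}, \ \ldots,\ n_1+n_2\bigr)
$$
is given by polynomial-plus-exponentiation expressions in $k$, all of which are arithmetic, hence $0$-definable in $\Z$. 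So $G \simeq \Delta(\Z)$ for an explicit code $\Delta$ (with no parameters, since every integer is $0$-definable in $\Z$).

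Finally I would check the bi-interpretability condition: that the composite self-interpretations $\Gamma\circ\Delta$ on $\Z$ and $\Delta\circ\Gamma$ on $G$ carry definable isomorphisms back to the identity. For $\Z$ this is automatic because $\Z$ is QFA-style rigid and the composite is a $0$-interpretation of $\Z$ in $\Z$; one exhibits the definable iso directly from the arithmetic coding. For $G$, using the normal form one writes down the formula $\theta_G(\bar u, x, \bar p)$ matching a triple-code back to the group element $a^{m/k^j}b^n$, which is expressible using the parameters $a, b$ and the definable $\tau$-iteration. Once both clauses of Definition~\ref{de:bi-inter} are verified, we conclude $BS(1,k)$ and $\mathbb{N}$ are bi-interpretable (with parameters $a,b$ on the group side), and hence by Corollary~\ref{co:bi-int-Z} that $BS(1,k)$ is rich; combined with Theorem~\ref{th:QFA} and the fact that $BS(1,k)$ is finitely presented with solvable (hence arithmetic) word problem, it is also QFA, prime, atomic, and homogeneous. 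The one genuinely delicate point, as noted, is the definability of arithmetic multiplication from the dynamics of $\tau = $ "conjugation by $b$" on $A$; here I would follow the Mal'cev/Quine template cited in the paper for $UT_3$ and free monoids, adapting it to the module $\Z[1/k]$.
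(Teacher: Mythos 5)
Your overall architecture (the splitting $BS(1,k)\cong \Z[1/k]\rtimes\Z$, coding group elements by integer triples to interpret $G$ in $\Z$, coding an integer $n$ by $b^{-n}ab^{n}=a^{k^{n}}$ to interpret arithmetic in $G$, then exhibiting definable coordinate isomorphisms) is the same as the paper's, and the interpretation of $G$ in $\Z$ via triples is indeed routine. But the two steps you yourself flag or gloss over are exactly the content of the proof, and you do not supply them. First, to get arithmetic from the coding $n\mapsto a^{k^{n}}$ you need the graph of multiplication of exponents to be first-order definable in $G$; saying that ``$k$-adic arithmetic on $\Z[1/k]$ lets one simulate the graph of multiplication'' and deferring to a Quine/Mal'cev template is not an argument, and your attempt to define $\langle a\rangle$ inside $A=ncl(a)$ trails off precisely because such cyclic subgroups are not obviously definable. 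The paper's actual mechanism is number-theoretic: for integers $m,n$ one has $k^{m}-1\mid k^{n}-1$ if and only if $m\mid n$, and since $x\mapsto [x,b^{m}]=x^{k^{m}-1}$ is definable (with parameters), divisibility of exponents becomes definable on the set of codes; addition of exponents is definable via conjugation by elements of $C_G(b)=\langle b\rangle$; and then one invokes Robinson's theorem that multiplication is definable from addition and divisibility. Without this (or an equivalent device) your interpretation of $\N$ in $G$ is not established.

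Second, for clause 2) of Definition~\ref{de:bi-inter} you must produce a first-order formula recovering the group element from its code, and the delicate part is recovering $a^{z}$ from the code $a^{k^{z}}$ of the integer $z$ (recovering $b^{z}$ from $b^{-z}ab^{z}$ is easy, as you note, since the conjugator can be required to lie in $\langle b\rangle$). Your sentence ``one writes down the formula $\theta_G(\bar u,x,\bar p)$ \ldots expressible using the parameters $a,b$ and the definable $\tau$-iteration'' is precisely the missing step. The paper handles it with a specific congruence: $z$ is the unique integer with $(k^{nz}-1)/(k^{n}-1)\equiv z \pmod{k^{n}-1}$ for all $n$, which translates into the definable condition that for all $n$ there is $u\in ncl(a)$ with
$$[b^{n},a^{-z}]=[b^{nz},a^{-1}]\,[b^{n},[b^{n},u]],$$
where the quantification over $n$ and $nz$ is carried out through the already-interpreted arithmetic on codes. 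So the proposal identifies the right skeleton but leaves both load-bearing definability arguments unproved; as written it does not yet constitute a proof of the proposition.
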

 \begin{proof} Recall that the group $BS(1,k)$ is isomorphic to the group $\mathbb{Z}[1/k]
\rtimes \mathbb{Z}$, where $\mathbb{Z}[1/k] \cong ncl(a)$ and
$\mathbb{Z} \cong \langle b \rangle$, where $$\mathbb{Z}[1/k] =\{zk^{-i}, z\in\mathbb Z, i\in\mathbb N\}$$ and the action of $\langle b \rangle$ is given by $b^{-1}ub=uk$. Therefore $BS(1,k)$ has a structure of a $\mathbb Z$ module $\mathbb{Z}[1/k]$. Thus, we can think of elements in $BS(1,k)$
as pairs $(zk^{-i} , r)$ where $z,r,i \in \mathbb{Z}$.  The product is defined as
$$(z_1k^{-i_1} , r_1)(z_2k^{-i_2} , r_2)=(z_1k^{-i_1}+z_2k^{-(i_2+r_1)} , r_1+r_2).$$
The inverse of an element $(zk^{-y},r)$ is $(-zk^{-(y-r)}, -r)$.

This represents of $BS(1,k)$ as the set of triples of integers with operations and therefore
$BS(1,k)$ is interpretable in $\mathbb Z$.
Now we have to define $\mathbb Z$ in the $\mathbb Z$-module $\mathbb{Z}[1/k]$.   Notice that if $m,n$ are two integers, then $k^m-1$ divides $k^n-1$ if and only if $m$ divides $n$ in $\mathbb Z$. On the set of  powers of $k$ we can, therefore, define not only addition but also divisibility of exponents. Therefore, multiplication is also definable \cite{Robinson}.  Now the element $n$ of $\mathbb Z$ can be interpreted in $G$ as $k^{n}$ ($b^{-n}ab^{n}).$  

We now have to show that there is a definable isomorphism between the set of elements $(zk^{-i}, r)=(b^{i}a^zb^{-i},b^r)$ and triples $(k^z, k^{-i}, k^r)=(b^{-z}ab^{z}, b^{i}ab^{-i}, b^{-r}ab^{r})$. The subgroups $\langle b\rangle$ and $ncl (a)$ are definable in $G$ as centralizers of $b$ and of $a$. We can write a formula that connects $b^r$ ($b^i$) with $b^{-r}ab^{r}$ ($b^{i}ab^{-i}$) saying that $b^r$ ($b^i$) is a conjugator that belongs to $\langle b\rangle$. Now we have to express $a^z$ in terms of $b^{-z}ab^{z}=a^{K^z}, b^{z}ab^{-z}$. For all integers $n$, the integer $z$ is the only integer such that $(k^{nz}-1)/(k^n-1)\equiv z (mod (k^n-1)). $ This defines 
$a^z$ as the only power of $a$ such that for all $n$, $a^z=a^\frac{k^{nz}-1}{k^{n}-1}a^{(k^{n}-1)y}$ or
$a^{z(k^n-1)}=a^{(k^{nz}-1)}a^{y(k^n-1)^2}$. We can re-write this as
$[b^n,a^{-z}]=[b^{nz},a^{-1}][b^n,[b^n,u]],$ where $u\in ncl(a).$ 
 \end{proof} 
One can also show that $\MZ wr \MZ$ is bi-interpretable with $\mathbb Z$.
 \subsection{Open problems for solvable groups}

Here we formulate several open problems.

\begin{problem} [General problem] 
Describe  finitely generated metabelian groups that are bi-interpretable with $\MN$.
\end{problem}

Now we ask several particular questions in the direction of the general question above.

\begin{problem}
Are wreath products $\MZ^n wr \MZ^m$, where either $m >1$ or $n >1$,   bi-iterpretable with $\MN$? 
\end{problem}

Rigid groups were introduced By N. Romanovski in \cite{Rom6}, metabelian rigid groups are precisely the ones that are universally equivalent to free metabelian groups $M_n, n\geq 2$, so intuitively  they have properties similar to $M_n$. Hence the following question. Notice that the groups $\MZ^n wr \MZ^m$ are rigid.

\begin{problem}
Let $G$ be a finitely generated non-abelian metabelian rigid group. Is it true that $G$ is bi-interpretable with $\MN$?
\end{problem}

Let $\Gamma =(V,E)$ be a finite graph. 
Denote by $M_\Gamma$ the partially commutative metabelian group defined by the graph $\Gamma$ as follows. The set of vertices $V$ is a generating set of $M_\Gamma$ with defining relations (in the variety  of metabelian groups) $uv = vu$  for $u, v \in V$,  provided that $(u,v) \in E$. 

\begin{problem}
Let $M_\Gamma$ be a partially commutative metabelian group with the trivial center. Is it true that $M_\Gamma$ is bi-interpretable with $\MN$? 
\end{problem}

The notion of a random finitely presented group came to group theory from the works of Gromov and Olshanskii. Nowadays, it occupies a prominent place in geometric group theory. In \cite{GMO} Garreta, Legarreta, Miasnikov and Ovchinnikov  introduced and studied random metabelian groups.   It turns out, these groups share many properties with free metabelian groups. Hence the following problem.

\begin{problem}
Let $m, n \in \MN$ with $0 \geq m \leq n-2$ be fixed.  Let $G$ be a metabelian group given in the variety of metabelian groups by a presentation
$$
G = \langle a_1, \ldots,n \mid r_1, \ldots,r_m\rangle.
$$
where $r_1, \ldots,r_m$ are groups words of length $\ell$ in the generators $a_1, \ldots,n$.
Is it true that asymptotically almost surely (when $\ell \to \infty$) the group $G$ is bi-interpretable with $\MN$?
\end{problem}

\begin{problem}[General Problems]
 Describe finitely generated metabelian groups that are 
 \begin{itemize}
     \item [1)] QFA,
     \item [2)] first-order rigid.
 \end{itemize}
  \end{problem}
 In this respect it is interesting to consider the classes of metabelian groups mentioned above. 
 
 \begin{problem}
 Problems 2-5 where the property "bi-interpretable with $\MN$" is replaced with "QFA" or "first-order rigid".
 
 \end{problem}

Finally we state several   open questions for free solvable groups.

\begin{problem}
Is a free solvable group of class 3 and finite rank $n \geq 2$ bi-interpretable with $\MN$? 
\end{problem}

\begin{problem}
Is a free solvable group of class 3 and finite rank $n \geq 2$ QFA? 
\end{problem}

\begin{problem}
Is a free solvable group of class 3 and finite rank $n \geq 2$ prime? 

\end{problem}
\begin{conjecture} Every finite rank free solvable group of class $c \geq 2$ is first order rigid.
\end{conjecture}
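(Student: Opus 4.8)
The plan is to argue by induction on the derived length $c$, reducing everything to the module-theoretic analysis of the layers of the derived series. The base case $c=2$ is already available: by the results of Section~\ref{se:free-metabelian} a free metabelian group $M_r$ of rank $r\geq 2$ is bi-interpretable with $\Z$, hence rich by Corollary~\ref{co:bi-int-Z}, and rich groups are first-order rigid (as observed in the Introduction); the rank $1$ case is $\Z$ itself, which is first-order rigid by the classical theory of abelian groups. So fix $c\geq 3$, write $S=S_{r,c}$ for the free solvable group of rank $r\geq 2$ and class $c$, denote its derived series by $S=S^{(0)}\geq S^{(1)}\geq\cdots\geq S^{(c)}=1$, and let $H$ be an arbitrary finitely generated group with $H\equiv S$. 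We must construct an isomorphism $H\cong S$.

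The first half of the plan is to set up coordinates on $S$. Since $S$ is finitely generated, solvable and not virtually abelian, Noskov's theorem already interprets $\Z$ in $S$; what one really wants is a bi-interpretation of $S$ with $\Z$. The natural coordinate system is the Magnus embedding: $S$ embeds in an iterated wreath product whose successive layers are free modules over the group rings $\Lambda_i=\Z[S/S^{(i)}]$, and each quotient $S^{(i)}/S^{(i+1)}$ is a finitely generated (indeed finitely presented) $\Lambda_i$-module described by explicit Fox-derivative relations. One would show by induction on $i$ that the subgroups $S^{(i)}$, the rings $\Lambda_i$, and the module operations are all uniformly interpretable in $S$, bootstrapping up the derived series from the fact that $S/S'\cong\Z^r$ together with the interpreted copy of $\Z$ already gives $\Lambda_0=\Z[\Z^r]$. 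If this produces a bi-interpretation of $S$ with $\Z$, then $S$ is QFA by Theorem~\ref{th:QFA} (its word problem is decidable, hence arithmetic), and first-order rigidity is immediate.

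The second half transports this to $H$. If the bi-interpretation above is uniform over $Th(S)$, then the same codes applied to $H$ interpret a ring $R\equiv\Z$ together with the ``$R$-completion'' $S(R)$ of $S$, with $H\cong S(R)$. The terms $H^{(i)}$ are then definable with parameters (as images of the $S^{(i)}$ under the coordinate maps), $H/H'\equiv\Z^r$ is finitely generated abelian hence $\cong\Z^r$, and by the inductive hypothesis applied to $H/H^{(c-1)}\equiv S_{r,c-1}$ one gets $H/H^{(c-1)}\cong S_{r,c-1}$. It remains to identify the abelian normal subgroup $H^{(c-1)}$, which carries a finitely generated module structure over $\Z[H/H^{(c-1)}]\cong\Z[S_{r,c-1}]$ that is two-sorted elementarily equivalent to the free-solvable layer, with that layer itself, and to see that the extension $1\to H^{(c-1)}\to H\to S_{r,c-1}\to1$ is the universal one. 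Because every element of $\Z$ is $0$-definable in $\Z$, the regular bi-interpretation is half-absolute in the sense of Corollary~\ref{co:bi-in-0-definable}, so finite generation of $H$ should force $R=\Z$ and hence $S(R)\cong S$.

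The main obstacle is precisely the content of the first half together with the module rigidity used in the second: one must control the first-order theory of the layers $S^{(i)}/S^{(i+1)}$ as modules over the group rings $\Lambda_i$ (which for $i\geq1$ need no longer be Noetherian), in particular their finite presentability, an equationally-Noetherian-type finiteness property, and the statement that a finitely generated module two-sorted-equivalent to the free-solvable layer is isomorphic to it. This is exactly what is left open by the problems on free solvable groups of class $3$ stated above: even interpreting $\Z$ with the correct coordinates in $S_{r,3}$, let alone establishing bi-interpretability, is delicate because $S_{r,3}'$ fails to be a free $\Lambda_1$-module. If bi-interpretability with $\Z$ should fail for some $c$, one would instead have to prove rigidity directly, by showing that the finitely generated models of $Th(S)$ are classified by the single discrete invariant $r$; this weaker route would nevertheless require the same module-theoretic input, so it is the true bottleneck in either approach.
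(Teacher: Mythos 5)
This statement is stated in the paper only as a conjecture; no proof is given, so there is nothing to compare your argument to except the paper's surrounding results, and measured against those your proposal is a research program rather than a proof. The genuine gap is the one you yourself flag at the end: the entire argument hinges on (i) interpreting, uniformly in $\mathrm{Th}(S)$, the subgroups $S^{(i)}$, the group rings $\Lambda_i=\Z[S/S^{(i)}]$ and the $\Lambda_i$-module structure of the layers $S^{(i)}/S^{(i+1)}$, and (ii) a rigidity statement for those layers as modules, ultimately yielding bi-interpretability of $S_{r,c}$ with $\Z$ for $c\geq 3$. Neither step is established anywhere: for $c\geq 3$ the relevant layer is no longer a free module (the failure you note for $S'_{r,3}$ over $\Lambda_1$), and these are exactly the questions the paper lists as open problems for free solvable groups of class $3$ (bi-interpretability with $\N$, QFA, primality) immediately before the conjecture. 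So the reduction you describe does not discharge the conjecture; it restates it modulo those open problems.

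There are also two unsupported steps inside the inductive scheme even granting optimism about (i)--(ii). First, the definability of $H^{(c-1)}$ in $H$ is asserted ``as images of the $S^{(i)}$ under the coordinate maps,'' but that presupposes the bi-interpretation whose existence is the open point; absent richness, definability of the derived series terms requires finite verbal width of $S^{(c-1)}$ in $S_{r,c}$, which is known (and used in the paper, via Romankov) only for the metabelian/nilpotent situations, not for $c\geq 3$. Without it you cannot even transfer $H\equiv S_{r,c}$ to $H/H^{(c-1)}\equiv S_{r,c-1}$ to run the induction. Second, identifying the extension $1\to H^{(c-1)}\to H\to S_{r,c-1}\to 1$ with the universal (free solvable) extension needs an argument that the relevant cohomology class is first-order visible, which is again not supplied. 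Your base case $c=2$ is fine (Theorem~\ref{metab} plus QFA via Theorem~\ref{th:QFA}), and the half-absoluteness remark via Corollary~\ref{co:bi-in-0-definable} is the right mechanism \emph{if} the bi-interpretation exists; but as written the proposal correctly locates, rather than closes, the bottleneck.
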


 \section{Free metabelian groups}
 \label{se:free-metabelian}
 
 Throughout this section we denote by $G$ a free metabelain group
of rank $n\geq 2$ with basis $X = \{x_1,\ldots ,x_n \}$.

\begin{theorem}  \label{metab} Every free metabelian group of finite rank $\geq 2$ is bi-interpretable with $\mathbb Z$.\end{theorem}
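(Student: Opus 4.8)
The plan is to prove bi-interpretability of $G$ with $\Z$ directly, the crucial ingredient being to recover the Magnus (Fox-derivative) coordinates of an arbitrary group element by first-order formulas over $G$. Write $R=\Z[t_1^{\pm1},\dots,t_n^{\pm1}]$ for the group ring $\Z[G/G']$, with $t_i$ the image of $x_i$, and let $e_1,\dots,e_n$ be the standard basis of the free module $R^n$. By the Magnus embedding one identifies $G/G'$ with $\Z^n$ and $G'$ with the $R$-module
\[
 M=\Big\{(f_1,\dots,f_n)\in R^n \ \Big|\ \sum_{i=1}^{n}(t_i-1)f_i=0\Big\},
\]
so that $G\cong\Z^n\ltimes M$ with $\Z^n$ acting coordinatewise by multiplication; the commutator $c=[x_1,x_2]$ corresponds to the vector $(1-t_2)\,e_1+(t_1-1)\,e_2\in M$.

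First I would interpret $G$ in $\Z$, absolutely. The ring $R$ is a finitely generated infinite commutative integral domain, hence bi-interpretable with $\Z$ by Theorem~\ref{thAKNS}. The module $R^n$, the submodule $M$ cut out by one linear equation, and the semidirect-product multiplication are all $0$-interpretable in $R$; composing interpretations (Lemma~\ref{le:int-transitivity}) yields a code $\Delta$ with $G\simeq\Delta(\Z)$ together with an explicit coordinate map.

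Next I would interpret $\Z$ in $G$ with the basis $X$ as parameters. Since free metabelian groups have finite commutator width (every element of $G'$ is a product of a bounded number of commutators), the subgroup $G'=[G,G]$ is definable in $G$; hence $G/G'\cong\Z^n$ is interpretable, and conjugation makes $G'$ an $R$-module in which $t_i$ acts as conjugation by $x_i$. From the Magnus picture $\mathrm{Ann}_R(c)=0$ (since $R$ is a domain), so the cyclic submodule $Rc\le G'$ is free of rank one and the conjugates $\{c^{x_1^{m}}:m\in\Z\}$ form a $\Z$-basis of their span; consequently $H=\langle x_1,c\rangle\le G$ is isomorphic to the wreath product $\Z\wr\Z$. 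Granting definability in $G$ of $G'$ and of $H$ (equivalently of $Rc$ with the $x_1$-action), and recalling that $\Z\wr\Z$ is bi-interpretable with $\Z$, we obtain $\Z$ interpreted in $G$ over the parameters $X$. This is exactly the point where Khelif's rank-$2$ argument must be modified: for $n=2$ the module $G'$ is itself free cyclic over $R$, so one identifies $G'$ with $R$ outright; for $n\ge3$ this fails, and one instead works inside the free cyclic submodule $Rc$ and imports arithmetic through the copy of $\Z\wr\Z$.

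Finally, to upgrade mutual interpretability to bi-interpretability it suffices, as in Example~\ref{O-biint-Z:lem} and following \cite{Nies2007}, to make both round-trips definable. The composite $\Z\rightsquigarrow G\rightsquigarrow\Z$ is arithmetic, hence definable in $\Z$. For $G\rightsquigarrow\Z\rightsquigarrow G$ I would show that the Magnus coordinates of $g\in G$---its class in $G/G'\cong\Z^n$ together with the $n$-tuple of Laurent polynomials over the interpreted ring $R$ describing the $G'$-part of $g$ (a Fox-derivative datum)---are first-order definable from $g$ and $X$; feeding these into $\Delta$ returns $g$, and assembling the resulting formulas produces the formula $\theta_G$ witnessing the composite self-interpretation. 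The main obstacle is precisely this last step together with the definability facts it rests on---definability of $G'$, of the subgroups $\langle x_1,[x_1,x_2]\rangle$, of the cyclic submodules $R[x_i,x_j]$, and of the Magnus coordinate map itself---since $G$ is not yet known to be rich, one cannot invoke uniform definability of finitely generated subgroups and must instead build these formulas by hand from the metabelian and centralizer structure of $G$, the coprimality of the elements $t_i-1$ in $R$, and the arithmetic transported through $\Z\wr\Z$.
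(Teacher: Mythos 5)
Your high-level architecture matches the paper's: use the Magnus embedding to present $G$ as $\Z^n\ltimes G'$ with $G'$ a module over the Laurent ring $R=\Z\bar G$, interpret $G$ in $\Z$ through the bi-interpretability of the finitely generated domain $R$ with $\Z$ (Theorem~\ref{thAKNS}), interpret $\Z$ back in $G$, and then make both round trips definable via the Magnus coordinates. But the step you yourself flag as ``the main obstacle'' is not a finishing detail — it is the entire technical content of the theorem, and your proposal offers no idea for it. Concretely, to make the coordinate map $g\mapsto (m_1,\dots,m_n,\alpha(h))$ definable in $G$ you must express in first-order group language (i) the exponentiation $g\mapsto g^m$ for $m$ ranging over the interpreted copy of $\Z$, on all of $G$ including $G'$, and (ii) the action $h\mapsto h^{P}$ of an arbitrary (interpreted) Laurent polynomial $P\in R$ on $h\in G'$. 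Neither is first-order definable for free: the paper devotes Sections 14.3--14.6 to exactly this, using the exponentiation identity $(vd)^k=v^kd^k[c,v]$, the uniqueness Lemma~\ref{le:identity} (an element of $G'$ of the form $[c,v]$ for \emph{every} $v\notin G'$ is trivial, proved by a support/diameter argument in the Magnus embedding) to pin down $g^m$ for $g\in G'$ by the formula $\exp_2$, and then Lemma~\ref{le:6.3}: $g^P=h$ iff for all integer tuples $\bar\alpha$ one has $g^{P(\bar\alpha)}\equiv h$ modulo the uniformly definable subgroups $(G')^{I_{\bar\alpha}}$, where $I_{\bar\alpha}=(a_1-\alpha_1,\dots,a_m-\alpha_m)$ and the specialization/discrimination argument together with the normal forms of Proposition~\ref{nf} gives the equivalence. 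Without this (or a substitute), your ``Fox-derivative coordinates are first-order definable from $g$ and $X$'' is an unproven assertion, and the bi-interpretability claim does not follow.

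A secondary problem is your route for interpreting $\Z$ in $G$ through a copy of $\Z\wr\Z$: defining the subgroup $H=\langle x_1,[x_1,x_2]\rangle$, equivalently the orbit of $c$ under the $\Z[t_1^{\pm1}]$-action, is a question of exactly the same kind as (ii) above, so you are assuming a special case of the very definability you still owe; and it is unnecessary, since the paper gets an absolute interpretation of $\Z$ (with its exponential action on $G/G'$ and $G'/G_3$) much more cheaply through the free $2$-nilpotent quotient $G/G_3$, using that $G_3$ has finite verbal width and the known $0$-interpretation of $\Z$ in free nilpotent groups of class $2$ from \cite{MS}. So the proposal is a correct skeleton with the load-bearing definability results missing.
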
 

\subsection{Preliminaries for metabelian groups}

In this section we introduce notation and describe  some   results  that we need in the sequel.  

Some notation: $G' = [G,G]$ is the commutant of $G$, $G_m$ is the $m$'th term of the lower central series of $G$,  $\langle A \rangle$ is the subgroup generated by $A \subseteq G$,  $C_G(A)$ is the centralizer of a  subset $A  \subseteq G$, if  $x, y \in G$  then  $[x,y] = x^{-1}y^{-1}xy$ is the commutator of $x$ and $y$, and $x^y = y^{-1}xy$ is the conjugate of $x$ by $y$.  The maximal root of an element $x \in G$ is an element $x_0 \in G$ such that $x_0$ is not a proper power in $G$ and $x \in \langle x_0\rangle$. We term an element  $x \in G$ a { \it root} if it is not a proper  in $G$.

\medskip \noindent
\subsubsection{Centralizers.} In \cite{Mal1} Mal'cev obtained complete description of centralizers of elements in $G$. Namely, the following holds.   Let $x \in G$. Then 
\begin{itemize}
\item [1)] if $x \in G'$ then $C_G(x) = G'$
\item [2)] if $x \not \in G'$ then $C_G(x) = \langle x_0\rangle$, where $x_0$ is the unique maximal root of $x$, i.e., $x = x_0^k$ for some $k \in \N$ and $x_0$ is not a proper power. 
\end{itemize}

It follows, in particular, that the maximal roots of elements in $G$ exist and they are unique.

\medskip \noindent
\subsubsection{A homomorphism on  commutators.} Let $v \in G\smallsetminus G'$. Define a map $\lambda_v\colon G' \to G'$ such that  for $c \in G'$ 
$ \lambda_v(c) = [v,c]$.  Then the map $\lambda_v$ is a homomorphism.  Indeed, using the commutator identity 
$$
[x,yz] = [x,z][x,y]^z,
$$
which holds for every elements $x,y,z$ of an arbitrary group, one has for $c_1,c_2 \in G'$ 
\begin{equation}\label{eq:1.2}
[v,c_1c_2] = [v,c_2][v,c_1]^{c_2} = [v,c_2][v,c_1] =  [v,c_1][v,c_2],
\end{equation}
as claimed.

Similarly, in the notation above,  the map $\mu_v\colon  c \to [c,v]$ is a homomorphism $\mu_v\colon G' \to G'$.

\medskip \noindent
\subsubsection{An exponentiation formula.} Let $v \in G\smallsetminus G'$ and $d \in G'$. Then for any $k \in \Z$ there exists $c \in G'$ such that:
$$
(vd)^k = v^kd^k[c,v].
$$
We prove first, by induction on $k$,  that 
$$
d^kv = vd^k[c,v]
$$ 
for  some $c \in G'$. Indeed, for $k = 1$ one has the standard equality $dv = vd[d,v]$. Now 
$$
d^{k+1}v = d^kdv = d^kvd[d,v] = vd^k[c_1,v]d[d,v] = vd^{k+1}[c_1,v][d,v]=vd^{k+1}[c_1d,v],
$$
the last equality comes from the property~\eqref{eq:1.2}, that the map $\mu_v$ is a homomorphism on $G'$.

Now one can finish the claim by induction on $k$ as follows (here elements $c_i \in G'$ appears as the result of application of the induction step and the claim above):
$$
(vd)^{k+1} = (vd)^kvd  = v^kd^k[c_2,v]vd = v^kd^kv[c_2,v][[c_2,v],v]d = 
$$
$$
= v^kvd^k[c_3,v][c_2,v][[c_2,v],v]d = v^{k+1}d^{k+1}[c_3c_2[c_2,v],v] =  v^{k+1}d^{k+1}[c,v]
$$
where the second to last equality comes again from the property~\eqref{eq:1.2}, and $c = c_3c_2[c_2,v]$. This proves the claim.

\medskip \noindent
\subsubsection{A characterization of the identity.} 
\begin{lemma} \label{le:identity}
Let $d \in G'$. If for any $v \in G\smallsetminus G'$ there exists $c \in G'$ such that 
$d = [c,v]$, then $d = 1$.
\end{lemma}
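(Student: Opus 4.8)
The plan is to translate the hypothesis into a divisibility statement inside a Laurent polynomial ring, via the standard module structure on the commutant. Since $G$ is metabelian, $G'=[G,G]$ is abelian, and as $[G',G']=1$, conjugation makes $G'$ a module over the integral group ring $R:=\Z[A]$ of $A:=G/G'$. Because $G$ is free metabelian of rank $n$, $A$ is free abelian of rank $n$, so $R=\Z[t_1^{\pm1},\dots,t_n^{\pm1}]$ is a Laurent polynomial ring over $\Z$, hence a Noetherian unique factorization domain. By the Magnus embedding, $G'$ embeds as an $R$-submodule of a free $R$-module $R^m$ (concretely $m=n$, via Fox derivatives); in particular $G'$ is torsion free, and divisibility in $G'$ can be checked coordinatewise in $R^m$. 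For $c\in G'$ and $v\in G$ with image $\bar v\in A$, conjugation by $v$ acts on $G'$ as multiplication by the group-like element $t^{\bar v}$, so $[c,v]=c^{-1}c^{v}=(t^{\bar v}-1)\cdot c$. Since $G\to A$ is surjective, $\bar v$ ranges over all of $A\setminus\{0\}$ as $v$ ranges over $G\setminus G'$, and the hypothesis on $d$ becomes: $d\in (t^{\alpha}-1)\cdot G'$ for every $\alpha\in A\setminus\{0\}$.

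Next I would exploit the rank $n\ge 2$ to produce an infinite family of pairwise coprime elements of the required form $t^{\alpha}-1$. Set $f_0:=t_1-1$ and $f_k:=t_1t_2^{\,k}-1$ for $k\ge 1$; each is $t^{\alpha}-1$ for a nonzero $\alpha\in A$, so from the hypothesis each coordinate $d_i\in R$ of (the image of) $d$ is divisible by every $f_k$. The $f_k$ are clearly non-units, and a short computation in $R$ shows they are pairwise coprime: for the pair $(f_0,f_k)$, reduce modulo the prime element $t_1-1$, under which $f_k\mapsto t_2^{\,k}-1\neq 0$; for $(f_j,f_k)$ with $1\le j<k$, any common irreducible factor divides $t_2^{\,k-j}f_j-f_k=1-t_2^{\,k-j}$, hence lies in $\Z[t_2^{\pm1}]$, but $f_j=t_2^{\,j}t_1-1$, viewed as a degree-one polynomial in $t_1$ over the coefficient ring $\Z[t_2^{\pm1},\dots,t_n^{\pm1}]$, is primitive and so has no non-unit divisor in that ring.

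To conclude: if some coordinate $d_i$ were nonzero, then, the $f_k$ being pairwise coprime divisors of $d_i$, the product $f_0f_1\cdots f_N$ would divide $d_i$ for every $N$, so $d_i$ would have at least $N+1$ pairwise non-associate irreducible factors for all $N$ — impossible for a nonzero element of the UFD $R$. Hence every $d_i=0$, so $d=0$ in $G'$, i.e.\ $d=1$ in $G$.

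The part requiring the most care is the opening dictionary: verifying that $[c,v]$ corresponds exactly to multiplication by $t^{\bar v}-1$ on the module $G'$, and invoking correctly the injectivity of the Magnus embedding on $G'$ (so that coordinatewise divisibility in $R^m$ is legitimate). After that the argument is elementary commutative algebra. It is worth flagging that the hypothesis $n\ge 2$ is used precisely once — to build the pairwise-coprime family $\{t_1t_2^{\,k}-1\}$; in rank one every $t_1^{\,k}-1$ is divisible by $t_1-1$, so both the argument and the statement genuinely fail there.
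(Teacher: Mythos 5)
Your proof is correct, but it takes a genuinely different route from the paper's. The paper also passes through the Magnus embedding, but then argues geometrically in the wreath product $\bar G\,wr\,A$: writing $\mu(d)=\hat d$, $\mu(c)=\hat c$, $\mu(v)=\bar v\hat v$, the identity $d=[c,v]$ forces $\hat d=\hat c^{-1}\bar v^{-1}\hat c\,\bar v$, and since conjugation by $\bar v$ translates the (finite) support of $\hat c$ by the vector $\bar v$, the right-hand side has support of diameter at least $|\bar v|$ unless $\hat c=1$; choosing a single $v$ with $|\bar v|$ larger than the diameter of $\mathrm{supp}(\hat d)$ forces $\hat c=1$ and hence $d=1$. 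So the paper needs only one well-chosen witness $v$ and no commutative algebra beyond the embedding. You instead use the full strength of the hypothesis (all $v\notin G'$), translate it into the statement that every coordinate of $d$ in the free $\Z[A]$-module is divisible by $t^{\alpha}-1$ for every $\alpha\neq 0$, and conclude from the existence of an infinite pairwise-coprime family $t_1-1,\ t_1t_2^{k}-1$ that a nonzero element of the UFD $\Z[t_1^{\pm1},\dots,t_n^{\pm1}]$ would have infinitely many non-associate irreducible divisors. Your dictionary $[c,v]=(t^{\bar v}-1)\cdot c$ and the coordinatewise divisibility are exactly right, and the coprimality computation works (with the harmless precision that a divisor of $1-t_2^{k-j}$ is only an \emph{associate} of an element of $\Z[t_2^{\pm1}]$, the units of $R$ being signed monomials). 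What your approach buys is a purely algebraic argument that sits naturally beside the module-theoretic machinery the paper develops anyway in Sections 14.5--14.6 (Proposition \ref{nf} and Lemma \ref{le:6.3} use the same kind of coprime-specialization reasoning); what the paper's buys is economy, needing a single long $v$ and no factorization theory. One small quibble with your closing aside: for the free metabelian group of rank one the commutant is trivial, so the statement is vacuously true there rather than false; what genuinely fails in rank one is only your coprime-family construction.
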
 
\begin{proof}
In \cite{Mag} (see also \cite{LR}) Magnus constructed an embedding $\mu\colon  G \to \Z^n wr \Z^n$ of $G$ into the restricted wreath product of two free abelian groups $\Z^n$ and  $\Z^n$ as follows. Recall that  $X = \{x_1, \ldots,x_n\}$ is the  basis  of $G$.  Let $A$ be a free abelian group of rank $n$ with basis $\{a_1, \ldots,a_n\}$. Denote by $ - $ the standard abelianization epimorphism: $G  \to G/G' = \bar G$.  Consider the  restricted wreath product $\bar G wr A$ (here $A$ acts on the direct product $A^{\bar G}$). The map $X \to \bar G wr A$ defined by $x_i \to {\bar x_i}a_i$ extends to a homomorphism $\mu\colon  G \to\bar G wr A$ which is injective. This is called Magnus embedding. For $g \in G$ the image $\mu(g)$ can be written as $\bar g \hat g$, where $\hat g  \in A^{\bar G}$. 

Now fix an element $d \in G'$. Suppose the equality $d = [c,v] = c^{-1} v^{-1}cv$ holds for some $v \in G\smallsetminus G'$ and $c \in G'$.  Then $\mu(d) = {\hat d}, \mu(c) = \hat c$, $\mu(v) = \bar v \hat v$, and the  equality above gives 
$$
\hat d = {\hat c}^{-1} {\hat v}^{-1} {\bar v}^{-1} {\hat c} {\bar v}{\hat v} = {\hat c}^{-1}  {\bar v}^{-1} {\hat c} {\bar v}.
$$
An element $x \in A^{\bar G}$ is  a function $x\colon  \bar G \to A$ with  finite support $supp(x) = \{\bar g \in \bar G \mid x(\bar g) \neq 1\}$.  Viewing  the group $\bar G$ as the lattice $\Z^n$ one can define the diameter $dim(\bar g)$  of an element $\bar g \in \bar G$ as the longest distance (in $\Z^n$)  between any two pairs of elements of $supp(\bar g)$.  When $\bar v$ conjugates $\hat c$, it translates  $supp(\hat c)$ by the integer vector $\bar v$.  Note that the diameter of the support of the element ${\hat c}^{-1}  {\bar v}^{-1} {\hat c} {\bar v}$ is at least the length of the vector $\bar v$, unless $\hat c = 1$. The equality $\hat d = {\hat c}^{-1}  {\bar v}^{-1} {\hat c} {\bar v}$ implies that the elements $\hat d$ and ${\hat c}^{-1}  {\bar v}^{-1} {\hat c} {\bar v}$ have the same support. But if the vector $\hat v$ is chosen such that its length  is greater then the diameter $dim(\hat d)$ the equality $\hat d = {\hat c}^{-1}  {\bar v}^{-1} {\hat c} {\bar v}$ implies that $\hat c = 1$, so $\hat d = 1$, which implies that $d = 1$, as required.

\end{proof}

\medskip \noindent
\subsubsection{Normal forms in $\Z\bar G$-module $G'$.}  \label{subsec:2.5}   The group $G$ acts by conjugation on $G'$, which gives an action of the abelianization $\bar G = G/G'$ on $G'$. This action extends by linearity to an action of the group ring $\Z\bar G$ on $G'$ and turns $G'$ into a $\Z\bar G$-module. Denote by $a_i$ the image of $x_i$ in $\bar G$, $i = 1, \ldots,n$.  The group $\bar G$ is a free abelian group with basis $a_1, \ldots,a_n$, so the group ring $\Z\bar G$ can be viewed as  the Laurent polynomial ring $A = \Z[a_1,a_1^{-1}, \ldots,a_n,a_n^{-1}]$. For the action of $A$ on $G'$ we use the exponential notation, namely, for $u\in G'$ and $a \in A$ we denote by $u^a$ the result of the action of $a$ on $u$.      Let $Y = \{ [x_i,x_j] \mid 1\leq j < i \leq n\}$.  Then the set of commutators $Y$ generates $G'$ as an $A$-module. Note, that  $G$ (as a metabelian group) satisfies the Jacobi identity, i.e, for every $u, v, w \in G$ the following equality holds 
$$
[u,v,w][v,w,u][w,u,v] = 1.
$$
In particular,  for $u = x_i, v = x_j, w = x_k$ one gets (in the module notation)
$$
[x_i,x_j]^{a_k-1}[x_j,x_k]^{a_i-1}[x_k,x_i]^{a_j-1} = 1.
$$
It shows also that commutators from $Y$ satisfy the following identity
$$
[x_i,x_j]^{a_k-1}[x_j,x_k]^{a_i-1} = [x_i,x_k]^{a_j-1},
$$
so $Y$ is not a free generating set of the module $G'$. However, there are nice normal forms of elements of the module $G'$ described in \cite{BR}.  Namely, every element $u \in G'$ can be uniquely presented as the following product
$$
u = \Pi_{1\leq j<i\leq n}[x_i,x_j]^{\beta_{ij}(a_1,\ldots,a_i)},
$$
where $\beta_{ij}(a_1,\ldots,a_i) \in \Z[a_1,a_1^{-1}, \ldots,a_i,a_i^{-1}] \leq \Z \bar G$.
The following statement  follows from normal forms. 
\begin{prop}\label{nf} $G'$ is a free module over $\mathbb Z[a_1,a_1^{-1},a_2, a_2^{-1}]$ with the basis $\{[x_i,x_j]^{a_3^{\delta _3}\ldots a_{j}^{\delta _j}}\}$ for all $1\leq i<j\leq n$, $\delta _3,\ldots,\delta _j\in\Z^{j-2}.$ 
\end{prop}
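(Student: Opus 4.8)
The plan is to deduce the proposition directly from the normal form for elements of $G'$ recorded just above (due to \cite{BR}), combined with the elementary fact that a Laurent polynomial ring is free as a module over any Laurent subring on the evident monomial basis. Throughout set $R=\Z[a_1,a_1^{-1},a_2,a_2^{-1}]$ and, for $2\le j\le n$, $A_j=\Z[a_1^{\pm1},\ldots,a_j^{\pm1}]$, so that $R=A_2\subseteq A_j\subseteq A=\Z\bar G$. Recall from~\ref{subsec:2.5} that $G'$ is a $\Z\bar G$-module written multiplicatively; in particular, since $G'$ is abelian, for a commutator $c\in G'$ and $a,b\in A$ one has $c^{a+b}=c^ac^b$, and the module addition on $G'$ is the group operation of $G'$.

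First I would restate the normal form in module language. For each pair $1\le i<j\le n$ the assignment $\beta\mapsto [x_i,x_j]^{\beta}$ defines, by the displayed identity above, a homomorphism of abelian groups $A_j\to G'$ which is $A_j$-linear; let $M_{ij}$ denote its image, an $R$-submodule of $G'$ because $R\subseteq A_j$. After replacing each $[x_j,x_i]$ in the normal form of \cite{BR} by $[x_i,x_j]^{-1}$ (a change that does not affect freeness), the \emph{existence} clause of the normal form says $G'=\sum_{i<j}M_{ij}$, and the \emph{uniqueness} clause says this sum is direct and that each map $\beta\mapsto [x_i,x_j]^{\beta}$ is injective (if $[x_i,x_j]^{\beta}=[x_i,x_j]^{\beta'}$ then $[x_i,x_j]^{\beta-\beta'}=1$, and comparing normal forms of $1$ forces $\beta=\beta'$). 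Hence
\[
G'=\bigoplus_{1\le i<j\le n} M_{ij}\quad\text{as $R$-modules},
\]
and each $M_{ij}$ is a free $A_j$-module of rank one on the generator $[x_i,x_j]$.

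Next I would invoke the change-of-rings fact: for $j\ge 2$ we have $A_j=R[a_3^{\pm1},\ldots,a_j^{\pm1}]$, which is a free $R$-module with basis $\{\,a_3^{\delta_3}\cdots a_j^{\delta_j}\mid (\delta_3,\ldots,\delta_j)\in\Z^{\,j-2}\,\}$ (for $j=2$ this basis is the single element $1$, i.e.\ the empty monomial). Since $M_{ij}\cong A_j$ as $A_j$-modules via $[x_i,x_j]\mapsto 1$, restricting scalars along $R\hookrightarrow A_j$ makes $M_{ij}$ a free $R$-module with basis $\{\,[x_i,x_j]^{a_3^{\delta_3}\cdots a_j^{\delta_j}}\mid (\delta_3,\ldots,\delta_j)\in\Z^{\,j-2}\,\}$. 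Taking the direct sum over all pairs $i<j$ yields that $G'$ is a free $R$-module with basis $\bigcup_{1\le i<j\le n}\{\,[x_i,x_j]^{a_3^{\delta_3}\cdots a_j^{\delta_j}}\,\}$, which is precisely the assertion.

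\textbf{Main point requiring care.} The only step I would write out in full detail is the passage from the group-theoretic normal form (unique factorization of each $u\in G'$ as an ordered product of powers of conjugated commutators) to the module-theoretic decomposition $G'=\bigoplus_{i<j}M_{ij}$ of $R$-modules: one checks that the factorization map $\prod_{i<j}A_j\to G'$ is additive (using that $G'$ is abelian, so the order of the product is immaterial, and that $c^{a+b}=c^ac^b$), hence an isomorphism of abelian groups by existence and uniqueness, and that it intertwines the componentwise $R$-action with the $R$-action on $G'$, hence is an isomorphism of $R$-modules; the direct-summand property and the rank-one $A_j$-freeness of each $M_{ij}$ are then immediate restatements of uniqueness. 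Everything else is routine bookkeeping with Laurent monomials, including the degenerate case $j=2$ where the commutator $[x_1,x_2]$ itself is the sole basis element of $M_{12}$.
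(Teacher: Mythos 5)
Your argument is correct and follows exactly the route the paper intends: the paper gives no written proof, stating only that the proposition ``follows from normal forms'' of \cite{BR}, and your proposal is a correct elaboration of precisely that deduction (reinterpreting the unique factorization as the $R$-module decomposition $G'=\bigoplus_{i<j}M_{ij}$ with $M_{ij}\cong A_j$, then using that $A_j$ is free over $\Z[a_1^{\pm1},a_2^{\pm1}]$ on the Laurent monomials in $a_3,\ldots,a_j$). The index-swap remark $[x_j,x_i]=[x_i,x_j]^{-1}$ and the degenerate case $j=2$ are handled correctly, so nothing is missing.
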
 

\subsection{$\Z$ is absolutely interpretable in  $G$}

The description 2.1  of the centralizers of elements in $G$ implies that the formula 
$$
\phi(x) = \forall y \forall z ([x,[y,z]] = 1)
$$
defines the commutant $G'$ in $G$. Indeed, $\phi(g)$ holds in $G$ for an element $g \in G$ if and only if $C_G(g) \geq G'$, which happens only if $g \in G'$.

The free nilpotent group $G/G_3$ of  class 2 and rank n  is $0$-interpretable in $G$. Indeed, the verbal subgroup $G_3$ has finite width in $G$ \cite{Rom}, hence it is $0$-definable in $G$. Therefore the quotient group $G/G_3$ is $0$-interpretable in $G$.

 It was shown in \cite{MS} that the ring $\Z$ and its action by exponents on free abelian groups  $G/G'$ and $G'/G_3$ are $0$-interpretable in $G/G_3$, hence, by transitivity of $0$-interpretations,  it is $0$-interpretable in $G$. We denote this interpretation of $\Z$ in $G$ by $\Z^\ast $. 

Now, we may use in our formulas  expressions of the type $y = x^m mod G'$ for $x,y \in G \smallsetminus G'$ and $m \in \Z^\ast $  viewing them as notation for the corresponding formulas of group theory language which are coming from the interpretations of $\Z^\ast $ and its actions on $G/G'$. Similarly, for $G'/G_3$.  More precisely, the interpretation $\Z^\ast $ is given by a definable in $G$ subset $A \subseteq G^k$  together  with a definable in $G$ equivalence relation $\sim$ on $A$  and formulas $\psi_+(\bar x, \bar y,\bar z), \psi_\circ(\bar x, \bar y,\bar z)$ with $k$-tuples of variables $\bar x, \bar y, \bar z$, such that  the formulas $\psi_+$ and $\psi_\circ$ define binary operations on the factor set $A/\sim$ (denoted by $+$ and $\circ$) and the structure $\langle A/\sim; +, \circ\rangle$ is a ring isomorphic to $\Z$. Furthermore, the exponentiation by $\Z^\ast $ on $G/G'$ and on $G'/G_3$ is also $0$-interpretable, which means that there are formulas in the group language, say $expnil_1(u,v,\bar x)$ and $expnil_2((u,v,\bar x)$, such that  for $g,h \in G$ and $m \in \Z^\ast $, where $m$ is the equivalence class of some tuple $ \bar a \in A$ (we write in this case  $m = [\bar a]$), one has $g^m = h  (mod \ G')$ if and only if $expnil_1(g,h,\bar a)$ holds in $G$ and also for elements $p, q \in G'$  $ p^m  = q (mod \ G_3)$ if and only if $expnil_2(p,q,\bar a)$ holds in $G$.

\subsection{Interpretation of $\Z$-exponentiation on $G$} 

Now, in the notation above,  we construct a formula $exp(u,v,\bar x)$ of the group language, where $\bar x$ is a $k$-tuple of variables, such that for $g,h \in G$ and $m \in \Z^\ast $, where $m$ is the equivalence class $[\bar a]$ of some $\bar a \in A$, the following holds
$$
g = h^m \Longleftrightarrow G  \models exp(g,h,\bar a), \ (here \ m = [\bar a]).
$$
To construct the formula $exp(u,v,\bar x)$ we consider several two cases,  for each of them build the corresponding formula $exp_i(u,v,\bar x)$, and then use them to build $exp(u,v,\bar x)$.

\medskip \noindent
 Case 1. Let $g \in G\smallsetminus G'$.  In Section 14.2 we described  a formula  $expnil_1(u,v,\bar x)$ of group language such that   for $g,h \in G$ and $m = [\bar a]  \in \Z^\ast $  one has 
$$
g^m = h (mod \ G') \Longleftrightarrow  G \models \ expnil_1(g,h,\bar a).
$$ 
Now put
$$
exp_1(u,v,\bar x) =  ([u,v] = 1 \wedge expnil_1(u,v,\bar x)).
$$
Then the formula $exp_1(u,v,\bar x)$ holds in $G$ on elements $g,h \in G$ and $m = [\bar a]$ if and only if $h = g^m (mod \ G') $ and $h \in C_G(g)$. Since the centralizer $C_G(g)$ is cyclic there is only one such $h$  and in this case $h = g^m$.  

\medskip \noindent
 Case 2. Let $g \in G'$.  Then for any $w \in G \smallsetminus G'$  and every $m \in \Z$ there exists $c \in G'$ such that the following equality holds (see 14.1.3) 
$$
(wg)^m  = w^mg^m[c,w].
$$
hence the elements $g$ and $g^m$  and $m = [\bar a] \in Z^\ast $ satisfies the following formula
$$
\exp_2(u,v,\bar x) = \forall w (w \in G\smallsetminus G' \to \exists c (c \in G' \wedge ((wu)^m  = w^mu^m[c,w])).
$$
Here, of course,  we use the formula $exp_2(u,v,\bar x) $  to write down the condition$(wu)^m  = w^mu^m[c,w]$. 

We claim that for given $g \in G'$ and $m = [\bar a] \in \Z^\ast $ the formula $\exp_2(g,v,\bar a)$ holds in $G$ only one one element~--- precisely on $g^m$. Indeed, let $h \in G$ be  such that for a given $m \in \Z$  for any $w \in G\smallsetminus G'$ there exists $c_1 \in G'$ such that 
$$
(wg)^m  = w^mh[c_1,w].
$$
Then $w^mg^m[c,w] = w^mh[c_1,w]$, so 
$$
 h^{-1}g^m= [c,w][c_1,w]^{-1} = [c,w][c_1^{-1},w]  = [cc_1^{-1},w].
$$
Now by Lemma \ref{le:identity} from 14.1.4 one gets  $h^{-1}g^m = 1$, so $h = g^m$, as claimed. This shows that the formula $\exp_2(u,v,\bar x)$ defines the exponentiation on $G'$. 

Finally, the formula 
$$
\exp(u,v,\bar x) = (u  \notin G' \to exp_1(u,v,\bar x))\wedge (u \in G' \to exp_2(u,v,\bar x))
$$
defines $\Z$-exponentiation on the whole group $G$.

\subsection{Interpretation of $ \Z[a_1,a_1^{-1}, \ldots,a_n,a_n^{-1}]$ in $\Z$}

\label{sec:5}

By Theorem \ref{thAKNS} every infinite f.g. integral domain is bi-interpretable with ${\mathbb Z}.$ Therefore $\mathbb Z\bar G$ is by-interpretable with $\mathbb Z$.

A $0$-interpretation of the ring of Laurent polynomials $A =  \Z[a_1,a_1^{-1}, \ldots,a_n,a_n^{-1}]$ in $\Z$ is described, for example, in \cite[Theorem 3]{KMga}

\subsection{Interpretation of $\Z \bar G$-module $G'$ in $G$}

In this section we interpret in $G$ the action of the ring  $\Z \bar G$ on $G'$.   We use notation from Section 14.1.5, so $x_1, \ldots,x_n$ is a basis of $G$, $a_1 = \bar x_1, \ldots, a_n =  \bar x_n$ is the basis of the abelianization $\bar G$ of $G$, the group ring $\Z \bar G$ can be viewed as the ring of Laurent polynomials $A = \Z[a_1,a_1^{-1}, \ldots,a_n,a_n^{-1}]$.  We denote by $A_0$ the subring $\Z[a_1, \ldots,a_n]$ of the standard polynomials in $A$. 

Below we show how to interpret the action of $\Z[a_1, \ldots,a_n]$ on $G'$ and then the action of the whole ring $\Z[a_1,a_1^{-1}, \ldots,a_n,a_n^{-1}]$ on $G'$. But first we need two  preliminary results.

 For a tuple $\bar \alpha  = (\alpha_1, \ldots,\alpha_m) \in \Z^m$, $m\leq n$, denote by $\lambda_{\bar \alpha}$ the homomorphim $\lambda_{\bar \alpha}\colon  \Z[a_1, \ldots,a_n] \to \Z[a_{m+1},\ldots ,a_n]$ such that $a_i \to \alpha_i, i = 1, \ldots,m$.  The kernel $I_{\bar \alpha}$ of $\lambda_{\bar \alpha}$ is the  ideal generated in  $\Z[a_1, \ldots,a_n] $  by  $\{a_1- \alpha_1, \ldots, a_m -\alpha_m\}$.
Notice, that for every polynomial $P = P(a_1, \ldots,a_m) \in \Z[a_1, \ldots,a_n]$ one has $\lambda_{\bar \alpha}(P) = P(\alpha_1, \ldots,\alpha_n)$, so 
$$
P(a_1, \ldots,a_m) = P(\alpha_1, \ldots,\alpha_m)  + \Sigma_{i=1}^m(a_i-\alpha_i)f_i,
$$
for some $f_i \in \Z[a_1, \ldots,a_n]$. 

\medskip \noindent
\subsubsection{Discrimination of $\Z[a_1, \ldots,a_n]$ by $\Z$.}

Let $A$ and $B$ be rings and $\Lambda$ a set of homomorphisms from $A$ into $B$.  Recall that $A$ is discriminated into $B$ by a set $\Lambda$ if for any finite subset $A_0 \subseteq A$ there is a homomorphism $\lambda \in \Lambda$ which is injective on $A_0$.  

The following result is known, but we need the argument from the proof in the sequel. 

\medskip \noindent
{\bf Claim 1.} The ring $\Z[a_1, \ldots,a_n]$ is discriminated into $\Z$ by the set of homomorphisms $\lambda_{\bar \alpha}, \bar \alpha   \in \Z^n$.  

\begin{proof} Since $\Z[a_1, \ldots,a_n]$  is an integral domain it suffices to show $\Lambda$ separates  $\Z[a_1, \ldots,a_n]$ into $\Z$, i.e., for every non-zero polynomial $Q \in \Z[a_1, \ldots,a_n]$ there exists $\lambda \in \Lambda$ such that $\lambda(Q) \neq 0$. Indeed, let $A_0 = \{P_1, \ldots, P_t\}$ with $P_i \neq P_j$ for $1 \leq j < i \leq t$. Put $Q_{ij} = P_i  - P_j$ and $Q  = \Pi _{1\leq j <i \leq t}Q_{ij}$. Then $Q \neq 0$. If for some $\lambda \in \Lambda$ $\lambda(Q) \neq 0$ then $\lambda$ is injective on $A_0$. 

 Now we prove by  induction on $n$ that $\Lambda$ separates  $\Z[a_1, \ldots,a_n]$ into $Z$.  If $P \in \Z[a_1]$ then $\lambda_{\alpha_1}$ for each sufficiently large $\alpha_1$ separates $P$ into $\Z$.   If $P \in \Z[a_1, \ldots,a_n]$ then for some $m \in \N$ 
 $$
 P = Q_m a_n^m + Q_{m-1}a_n^{m-1} + \ldots +Q_1a_n + Q_0,$$
where $Q_i \in \Z[a_1, \ldots,a_{n-1}]$ and $Q_m \neq 0$. By induction there is $\bar \beta  =(\beta_1, \ldots,\beta_{n-1}) \in \Z^{n-1}$ such that the homomorphism $\lambda_{\bar \beta}$ discriminates $Q_m$ into $\Z$. Then 
$$
\lambda_{\bar \beta}(Q_m)a_n^m + \lambda_{\bar \beta}(Q_{m-1})a_n^{m-1} + \ldots +\lambda_{\bar \beta}(Q_1)a_n + \lambda_{\bar \beta}(Q_0)
$$ 
is a non-zero polynomial in $\Z[a_n]$. Now one  can separate this polynomial into $\Z$ by sending $a_n$ to a large enough integer $\alpha_n$, as above.   This proves the claim.

\end{proof}

Denote by $(G')^{I_{\bar \alpha}}$ the submodule of the module  $G'$ obtained from $G'$ by the action of the ideal $I_{\bar \alpha}$.  $(G')^{I_{\bar \alpha}}$ is  an abelian subgroup of $G$  generated by the  set  $\{g^Q \mid g \in G', Q \in I_{\bar \alpha}\}$, hence by the   set $\{g^{a_i - \alpha_i} \mid g \in G', i = 1, \ldots, n\}$. 

\medskip \noindent
\subsubsection{Definability of $(G')^{I_{\bar \alpha}}$ in $G$.}
 
\medskip \noindent
 {\bf Claim 2.} For any basis $(x_1, \ldots,x_n)$ of $G$ and any tuple $(\alpha_1, \ldots,\alpha_m) \in \Z^m$ the  subgroup $(G')^{I_{\bar \alpha}} \leq G'$ is definable in $G$ uniformly in $(x_1, \ldots,x_n)$ and $(\alpha_1, \ldots,\alpha_m)$.  More precisely, let $\Z^\ast $ be $0$-interpretation of $\Z$ in $G$ from section 3.3. Then there is a formula $\phi(y,y_1,\ldots,y_n,\bar z_1, \ldots, \bar z_m)$ of group theory such that for any basis $(x_1, \ldots,x_n)$ of $G$ and any tuple $(\bar k_1,  \ldots, \bar k_m) \in (\Z^\ast )^n$  the formula 
$\phi(y,x_1,\ldots,x_n,\bar k_1, \ldots, \bar k_m)$ defines in $G$ the subgroup $(G')^{I_{\bar \alpha}}$, where $\alpha_i = \bar k_i \in \Z^\ast , i = 1, \ldots,m$.

Indeed, let $(x_1, \ldots,x_n)$ be a basis of $G$ and $(\alpha_1, \ldots,\alpha_m) \in \Z^m$.  The abelian subgroup $(G')^{I_{\bar \alpha}}$ of $G$ is generated by the   set $\{g^{a_i - \alpha_i} \mid g \in G', i = 1, \ldots, m\}$. 
 It follows that every element $u \in (G')^{I_{\bar \alpha}}$ can be presented as a product 
$$
u = g_1^{a_1-\alpha_1} \ldots g_m^{a_m-\alpha_m},
$$
for some $g_1, \ldots,g_m \in G'$, or , equivalently, 
in the form 
\begin{equation} \label{eq:6.1.1}
u = g_1^{x_1}g_1^{-\alpha_1} \ldots g_m^{x_m}g_m^{-\alpha_m}
\end{equation}
where $g_i^{x_i}$ is a conjugation of $g_i$ by $x_i$, and $g_i^{-\alpha_i}$ is the standard exponentiation of $g_i$ by the integer $-\alpha_i$, $i = 1, \ldots,n$.  It was shown in Section 14.3 that there exists a formula $\exp_2(u,v,\bar z)$ such that for any $g, h  \in G'$ and $\alpha = \bar m \in \Z^\ast $  the formula
$\exp_2(g,h,\bar m)$ holds in $G$ if and only if $g = h^{\alpha}$.    Using formula $\exp_2(u,v,\bar z)$ and definability of the commutant $G'$ in $G$ (see 3.1) one can write down the condition (\ref{eq:6.1.1}) by a group theory formula uniformly in $(x_1, \ldots,x_n)$ and $(\alpha_1, \ldots,\alpha_m)$, as claimed.

\medskip \noindent
\subsubsection{Interpretation of the action of $\Z[a_1, \ldots,a_n]$ on $G'$.}

\begin{lemma} \label{le:6.3}
Let $g, h \in G'$ and $P \in \Z[a_1, \ldots,a_m]$, $m\leq n$. Then $g^P = h$ if and only if the following condition holds:
\begin{equation} \label{eq:g-h-P}
\forall \alpha_1, \ldots \alpha_m \in \Z(g^{P(\alpha_1, \ldots,\alpha_m)} = h \ mod \ (G')^{I_{\bar \alpha}}).
\end{equation}
\end{lemma}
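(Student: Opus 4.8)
\textbf{Proof plan for Lemma~\ref{le:6.3}.}

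The plan is to prove both directions by exploiting the discrimination of $\Z[a_1,\ldots,a_n]$ into $\Z$ (Claim~1 of Section~\ref{sec:5}) together with the Taylor-type expansion $P(a_1,\ldots,a_m) = P(\alpha_1,\ldots,\alpha_m) + \sum_{i=1}^m (a_i-\alpha_i) f_i$ recorded just above, where $f_i \in \Z[a_1,\ldots,a_n]$. First I would observe that since $G'$ is a module over $A = \Z\bar G$ (Section~\ref{subsec:2.5}), for any fixed $\bar\alpha = (\alpha_1,\ldots,\alpha_m) \in \Z^m$ applying the ideal $I_{\bar\alpha}$ to the module identity $P = P(\bar\alpha) + \sum_i (a_i-\alpha_i)f_i$ gives, for every $g \in G'$,
$$
g^{P} = g^{P(\bar\alpha)} \cdot \prod_{i=1}^m \bigl(g^{f_i}\bigr)^{a_i-\alpha_i},
$$
and each factor $(g^{f_i})^{a_i-\alpha_i}$ lies in $(G')^{I_{\bar\alpha}}$ by the very description of that submodule (it is generated by $\{u^{a_i-\alpha_i}\mid u\in G', i\le m\}$). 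Hence $g^{P} \equiv g^{P(\bar\alpha)} \pmod{(G')^{I_{\bar\alpha}}}$ for every $\bar\alpha$. This immediately yields the ``only if'' direction: if $g^P = h$ then $g^{P(\bar\alpha)} \equiv h \pmod{(G')^{I_{\bar\alpha}}}$ for all $\bar\alpha \in \Z^m$.

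For the ``if'' direction, suppose condition~\eqref{eq:g-h-P} holds, i.e.\ $g^{P(\bar\alpha)} \equiv h \pmod{(G')^{I_{\bar\alpha}}}$ for all $\bar\alpha$. Set $d = h^{-1} g^{P} \in G'$; using the congruence from the previous paragraph we get $d \in (G')^{I_{\bar\alpha}}$ for every $\bar\alpha \in \Z^m$. The key step is now to show that $\bigcap_{\bar\alpha \in \Z^m} (G')^{I_{\bar\alpha}} = 1$, which forces $d = 1$, i.e.\ $g^P = h$. I expect this intersection statement to be the main obstacle, and I would prove it module-theoretically: by Proposition~\ref{nf} the module $G'$ is free (over $\Z[a_1^{\pm 1},a_2^{\pm 1}]$, and in any case is a submodule of a free $A$-module), so it embeds into a product of copies of $A = \Z[a_1^{\pm1},\ldots,a_n^{\pm1}]$; it therefore suffices to show $\bigcap_{\bar\alpha} A\cdot I_{\bar\alpha} = 0$ in $A$ (where $A I_{\bar\alpha}$ denotes the ideal of $A$ generated by $a_1-\alpha_1,\ldots,a_m-\alpha_m$). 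An element of $A I_{\bar\alpha}$ is exactly a Laurent polynomial vanishing on the coset $a_1 = \alpha_1,\ldots,a_m=\alpha_m$; a nonzero Laurent polynomial $Q$ has, after clearing denominators by a monomial, a nonzero honest polynomial multiple $Q' = a_1^{N}\cdots a_n^{N} Q$, and Claim~1 (discrimination) provides $\bar\alpha \in \Z^m$ — in fact $\bar\alpha \in \Z^n$, restricted to the first $m$ coordinates — with $Q'(\bar\alpha',\ast) \neq 0$ as a polynomial in the remaining variables, hence $Q \notin A I_{\bar\alpha}$. Thus no nonzero element lies in all $A I_{\bar\alpha}$, giving the intersection claim; alternatively one can invoke Lemma~\ref{le:identity} directly after writing $d$ in Magnus-embedding coordinates, but the module argument is cleaner and self-contained.

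Finally I would note that~\eqref{eq:g-h-P} is genuinely a first-order condition expressible in $G$: the quantifier ``$\forall \alpha_1,\ldots,\alpha_m \in \Z$'' runs over the interpreted copy $\Z^\ast$ of $\Z$ in $G$ (Section~14.2); the term $P(\alpha_1,\ldots,\alpha_m)$ is evaluated inside $\Z^\ast$ once $P$ is fixed, so its value is again an element of $\Z^\ast$; the exponentiation $g^{P(\bar\alpha)}$ is expressed by the formula $\exp_2$ from Section~14.3 since $g \in G'$; and membership in $(G')^{I_{\bar\alpha}}$, i.e.\ ``$g^{P(\bar\alpha)} h^{-1} \in (G')^{I_{\bar\alpha}}$'', is expressed by the formula $\phi$ of Claim~2. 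So Lemma~\ref{le:6.3} will in particular furnish, uniformly in the basis $(x_1,\ldots,x_n)$, a group-theory formula defining the relation $g^P = h$ for each fixed $P$, which is what the subsequent interpretation of the $\Z[a_1,\ldots,a_n]$-action on $G'$ requires; the extension to the full Laurent ring $\Z[a_1^{\pm1},\ldots,a_n^{\pm1}]$ is then handled by clearing denominators, i.e.\ writing $P = a_1^{-N}\cdots a_n^{-N} P_0$ with $P_0 \in \Z[a_1,\ldots,a_n]$ and using that the action of the monomial $a_1^{N}\cdots a_n^{N}$ (pure conjugation) is already interpretable.
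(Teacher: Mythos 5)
Your proposal is correct, and its forward direction coincides with the paper's (both rest on the expansion $P = P(\bar \alpha) + \sum_{i=1}^m (a_i-\alpha_i)f_i$, which puts $g^P g^{-P(\bar \alpha)}$ into $(G')^{I_{\bar \alpha}}$). For the converse, however, you take a genuinely different route. The paper argues by induction on the number $m$ of specialized variables, using the normal forms of Proposition~\ref{nf}: for $m=2$ it writes $g$ and $h$ in normal form over $\Z[a_1^{\pm1},a_2^{\pm1}]$, converts hypothesis~\eqref{eq:g-h-P} into polynomial identities, evaluates at all integer points and invokes discrimination (Claim~1); the inductive step shows that each basis coordinate of $g^P h^{-1}$, viewed in a free $\Z[a_m^{\pm1}]$-module, is divisible by $a_m-\beta$ for infinitely many $\beta \in \Z$ and hence vanishes. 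You instead isolate one module-theoretic statement, $\bigcap_{\bar \alpha \in \Z^m} (G')^{I_{\bar \alpha}} = 1$, and deduce it by embedding $G'$ into a free $\Z\bar G$-module and showing the corresponding intersection of ideals of the Laurent ring is zero via discrimination. This is cleaner and more transparent: it avoids the induction on $m$ and the coefficient bookkeeping, and it makes explicit exactly which property of $G'$ is used (it sits in a free module over a ring discriminated into $\Z$); what the paper's route buys is that it stays entirely inside the normal-form machinery it has already set up and never needs to analyze ideals of the Laurent ring directly.

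Three small points to tighten. First, the embedding of $G'$ into a free $\Z\bar G$-module is the Magnus embedding (as used in the proof of Lemma~\ref{le:identity} and in Section~14.6), not Proposition~\ref{nf}, which gives freeness only over the rank-two subring $\Z[a_1^{\pm1},a_2^{\pm1}]$ and is not the right structure when $m>2$; your hedge ``submodule of a free $A$-module'' is the statement you actually need, so cite it that way. Second, in the Laurent ring an entry $\alpha_i=0$ makes $a_i-\alpha_i=a_i$ a unit, so $AI_{\bar \alpha}=A$ and your description of $AI_{\bar \alpha}$ as the polynomials vanishing on the coset is valid only for tuples with nonzero entries (and then only after passing to $\Z[1/(\alpha_1\cdots\alpha_m)]$); this is harmless since discrimination supplies a tuple with all entries nonzero, but it should be said. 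Third, the aside that one could instead invoke Lemma~\ref{le:identity} directly is not right as stated: membership of $d$ in every $(G')^{I_{\bar \alpha}}$ expresses $d$ as a product $\prod_i g_i^{\,a_i-\alpha_i}$, not as a single commutator $[c,v]$ for every $v\notin G'$, so that lemma's hypothesis is not literally met; your main argument does not need it.
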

\begin{proof}
If $g^p = h$ then the condition (\ref{eq:g-h-P}) holds since 
$$
P(a_1, \ldots,a_m) = P(\alpha_1, \ldots,\alpha_m)  \ mod  \  I_{\bar \alpha}.
$$

Conversely, suppose for  $h \in G'$ the condition (\ref{eq:g-h-P}) holds. We need to show that $g^P = h$. 

We will prove this by induction on $m, 2\leq m\leq n$. 
  Suppose first that $P=P(a_1,a_2)$. By Proposition \ref{nf}, $G'$ is a free module over $\mathbb Z[a_1,a_1^{-1},a_2, a_2^{-1}]$ with the basis $\{[x_i,x_j]^{a_3^{\delta _3}\ldots a_{j}^{\delta _j}}\}$ for all $1\leq i<j\leq n$, $\delta _3,\ldots,\delta _j\in\Z^{j-2}.$ 
  Let
$$
g = \Pi [x_i,x_j]^{a_3^{\delta _3}\ldots a_j^{\delta _j}\gamma_{ij\bar\delta }(a_1, a_2)}, 
$$
$$
 h=  \Pi[x_i,x_j]^{a_3^{\delta _3}\ldots a_j^{\delta _j}\beta_{ij\bar\delta}(a_1, a_2)}.
$$

Then the condition (\ref{eq:g-h-P}) becomes

$$ [x_i,x_j]^{a_3^{\delta _3}\ldots a_j^{\delta _j}\gamma_{ij\bar\delta }(a_1, a_2)P(\alpha _1,\alpha _2)}=$$ $$ [x_i,x_j]^{a_3^{\delta _3}\ldots a_j^{\delta _j}\beta_{ij\bar\delta }(a_1, a_2)}+
[x_i,x_j]^{a_3^{\delta _3}\ldots a_j^{\delta _j}(a_1-\alpha _1)f_1+(a_2-\alpha _2)f_2},$$
for some $f_1, f_2 \in \Z[a_1,a_2]$. Note that both the left-hand and the right-hand  sides of these equalities are in the normal form, so 
the following polynomials are equal:

$${\gamma_{ij\ldots k_s}(a_1, a_2)P(\alpha _1,\alpha _2)={\beta_{ij\ldots k_s}(a_1, a_2)}+
{(a_1-\alpha _1)f_1+(a_2-\alpha _2)f_2}}$$

Evaluating these polynomials at $a_1 = \alpha_1, a_2 = \alpha_2$ one gets 
$$ \gamma_{ij\ldots k_s}(\alpha _1, \alpha_2)P(\alpha _1,\alpha _2)={\beta_{ij\ldots k_s}(\alpha_1, \alpha_2)}$$

 for any $\alpha_1, \alpha_2 \in \Z$.  Hence (see Section 14.5.1) 
 $$
 \gamma_{ij\ldots k_s}(a_1, a_2)_ P(a_1, a_2)= \beta_{ij\ldots k_s}(a_1, a_2),
 $$
so $g^P = h$, as claimed.

Now assume that the statement of Lemma 2 is true for $m-1$ and we will prove it for $m$. Condition (\ref{eq:g-h-P}) for $m$  implies that 

$$
g^{P(\alpha _1,...,\alpha _{m-1},\beta)}=hh_{\beta}^{(a_m-\beta)}\ mod \ (G')^{I_{\bar \alpha}},$$
where $h_{\beta}\in (G')^{I_{\beta}}.$ 

Note that for a given $\alpha_m \in \mathbb{Z}$ one has 
$$
g^{P(a_1,...,a_m)} = g^{P(a_1,...,a_{m-1},\alpha_m)}g^{(a_m-\alpha_m)f}
$$
for some $f \in \Z[a_1,\ldots , a_n]$. By induction $g^{P(a_1,...,a_{m-1},\beta)} = hh_1$ , so 
$$
g^{P(a_1,...,a_m)}h^{-1} = (g^{f}h_{\beta})^{(a_m-\beta)}.$$
Suppose we consider the equality above for various pair-wise distinct values of $\alpha_m$, say $\beta_1, \beta_2, \ldots$. Then
$$
g^{P(a_1,...,a_m)}h^{-1} = (g^{f_1}h_{\beta _1})^{(a_m-\beta_1)} = (g^{f_2}h_{\beta _2})^{(a_m-\beta_2)}= \ldots 
$$

It follows from Proposition \ref{nf} that $G'$ can be considered as a free module over $\Z[a_m,a_m^{-1}]$ with an infinite basis. Let $b$ be an aribitrary basis element.  Suppose $g^{P(a_1,...,a_m)}h^{-1}$ contains $b^{r(a_m,a_m^{-1})}$ and each $(g^{f_i}h_{\beta _i})$ contains $b^{r_i(a_m,a_m^{-1})}.$ Then

$$r(a_m,a_m^{-1})=r_i(a_m,a_m^{-1})(a_m-\beta _i)$$
for all $i\in\N.$ Therefore $r(a_m,a_m^{-1})=0.$ 


This shows that $g^{P(a_1,...,a_m)} = h$, as claimed.

Every element $Q\in  \Z[a_1,a_1^{-1}, \ldots,a_n,a_n^{-1}]$ can be written as
$$Q=P(a_1,...,a_m)(a_1^{k_1}\ldots a_m^{k_m})^{-1}$$ for some  $k_1,\ldots ,k_m\geq 0$.
Therefore $g^Q=h$ if and only if $g^{P(a_1,...,a_m)} = h^{(a_1^{k_1}\ldots a_m^{k_m})}.$
This gives the interpretation of the action of $\Z[a_1,a_1^{-1}, \ldots,a_n,a_n^{-1}]$ on $G'$.

\end{proof}

\subsection{Proof of Theorem \ref{metab}}

  Using the Magnus embedding 
\begin{equation*}
x_i \rightarrow
\begin{pmatrix}
a_i & 0  \\
t_i& 1 
\end{pmatrix}
,\end{equation*}
where $a_1,\ldots ,a_n$ are the generators of the free abelian group  and $\{t_1,\ldots ,t_n\}$ the base of the free  $\mathbb Z \bar G$ module $T$. Then $G'$ has 
the structure of the the submodule of $T$ generated by $t_i(x_j-1)-t_j(x_i-1), i\neq j.$ Denote this submodule by $G'_{\Z \bar G}$. 

The module $G'_{\Z \bar G}$ is a two sorted structure $(\Z \bar G, G',\delta)$ where $\delta$ is the predicate describing the action of $\Z \bar G$ on $G'$. We will show  that $G'_{\Z \bar G}$ is bi-interpretable with $\mathbb  Z$.

By Theorem \ref{thAKNS} every infinite f.g. integral domain is bi-interpretable with ${\mathbb Z}.$ Therefore $\mathbb Z\bar G$ is by-interpretable with $\mathbb Z$.  Every f.g. free  $\mathbb Z\bar G$-module is bi-interpretable with $\Z \bar G$ (with some $\Z \bar G$-basis as constants), and, therefore, it is bi-interpretable with  $\mathbb Z$. Since $\Z$ is bi-interpretable with $T$ and with the submodule $B$  generated by $[u_1,u_2]$, we can define a map 
$T\rightarrow _{\phi}\Z\rightarrow _{\psi} B\leq T$. The graph of the map $\psi\circ\phi$ is definable in $T$. The restriction of the map $\phi$ to $G'_{\Z \bar G}$, $\phi|_{G'_{\Z \bar G}}$  is the interpretation of $G'_{\Z \bar G}$ in $\Z$  and $\psi$ is the interpretation of $\Z$ in $G'_{\Z \bar G}$. Te graph of  $\psi\circ\phi|_{G'_{\Z \bar G}}$ is definable and therefore $G'_{\Z \bar G}$ is bi-interpretable with $\Z$. Denote by $\alpha$ the bijection  from $G'_{\Z \bar G}$ to $\Z$.

We have the interpretation of $G$ on the set $\Z^{m+1}$ and, therefore, in $\Z$ defined by $$\tau (u_1^{m_1}\ldots u_n^{m_n}h)=(m_1,\ldots ,m_n,\alpha (h)).$$ 
We have already shown that the exponentiation and the module structure of $G'_{\Z \bar G}$ are interpretable in $G$.

Now, given an $n+1$ tuple $(m_1,...,m_n, k)$ of integers we reconstruct the element of the group as $u_1^{m_1}\ldots u_n^{m_n}\alpha ^{-1}(k).$

This proves the theorem. 



\subsection{Applications and open problems}

Let $M_n$ be a free metabelian group of finite rank $n \geq 2$. Since $M_n$ is bi-interpretable with $\MN$ it is rich and  it satisfies all the properties of rich groups mentioned in Section \ref{sec:qfa}. Here we list only those ones that answer to some open questions.  

\begin{theorem}
For any finite $n \geq 2$ the free metabelian group $M_n$ is prime, atomic, homogeneous, and QFA.
\end{theorem}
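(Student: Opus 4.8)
The plan is to combine the bi-interpretability result just established (Theorem~\ref{metab}: $M_n$ is bi-interpretable with $\Z$) with the general facts about rich and bi-interpretable structures collected in Sections~\ref{se:bi-int}, \ref{sec:qfa}. Concretely, each of the four asserted properties~--- prime, atomic, homogeneous, QFA~--- has already been shown to follow from bi-interpretability with $\Z$, so the proof is essentially a matter of assembling the right citations in the right order. First I would recall that $\Z = \langle \Z; +,\cdot, 0, 1\rangle$ is a prime model of $Th(\Z)$ (stated in Section~\ref{sec:primal}, just after the definition of prime/atomic/homogeneous models). Since $M_n$ and $\Z$ are infinite $L$-structures bi-interpretable (with parameters) in each other, Lemma~\ref{le:prime} gives that $M_n$ is prime in $Th(M_n)$. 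By the standard model-theoretic facts recalled in the same subsection~--- a prime model is countable and atomic, and an atomic model is homogeneous~--- we conclude that $M_n$ is atomic and homogeneous as well. (Alternatively one can simply invoke the Corollary right after Lemma~\ref{le:prime}, which states exactly that a structure bi-interpretable with $\Z$ is prime, atomic, and homogeneous.)

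For QFA, I would first note that $M_n$ is a finitely generated group in the finite signature of groups. By Corollary~\ref{co:bi-int-Z}, bi-interpretability of $M_n$ with $\Z$ implies $M_n$ is rich. Moreover $M_n$ has a decidable word problem (free metabelian groups of finite rank are finitely presented in the metabelian variety and their word problem is solvable, in fact via the Magnus embedding into $\Z^n \wr \Z^n$ used in Section~\ref{se:free-metabelian}), hence a fortiori an arithmetic diagram $D_S(M_n)$ with respect to a finite generating set $S$. Thus the hypotheses of Theorem~\ref{th:QFA} (a rich structure in a finite signature, finitely generated, with arithmetic diagram) are met, and therefore $M_n$ is QFA. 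An even more direct route is to cite Theorem~\ref{Nies2007}, which states that any finitely generated structure with finite signature bi-interpretable (possibly with parameters) with $\Z$ is QFA; this applies verbatim to $M_n$.

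There is no real obstacle here: the theorem is a formal consequence of Theorem~\ref{metab} together with results already proved in the excerpt. The only point requiring a sentence of justification is that $M_n$ is finitely generated with arithmetic (indeed decidable) word problem, so that Theorem~\ref{th:QFA}/Theorem~\ref{Nies2007} applies; this is classical, following from the Magnus embedding $M_n \hookrightarrow \Z^n \wr \Z^n$ recalled above in the proof of Lemma~\ref{le:identity}, under which word equality reduces to equality of Laurent polynomials over $\Z$, which is decidable. Everything else is a citation. So the write-up would be: (1) invoke Theorem~\ref{metab} and the Corollary after Lemma~\ref{le:prime} to get prime, atomic, homogeneous; (2) invoke Theorem~\ref{Nies2007} (or Theorem~\ref{th:QFA} plus the decidability remark) to get QFA; (3) done.

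\begin{proof}
By Theorem~\ref{metab} the free metabelian group $M_n$ of rank $n\geq 2$ is bi-interpretable (with parameters) with $\Z$. Since $\Z$ is a prime model of $Th(\Z)$ and $M_n$, $\Z$ are infinite $L$-structures, Lemma~\ref{le:prime} shows that $M_n$ is a prime model of $Th(M_n)$; being prime it is countable and atomic, and being atomic it is homogeneous. Finally, $M_n$ is a finitely generated structure in the finite signature of groups, so by Theorem~\ref{Nies2007} bi-interpretability with $\Z$ implies that $M_n$ is QFA. (Equivalently, Corollary~\ref{co:bi-int-Z} shows $M_n$ is rich, and since $M_n$ is finitely generated with decidable word problem~--- hence arithmetic diagram~--- via the Magnus embedding $M_n\hookrightarrow \Z^n\,wr\,\Z^n$, Theorem~\ref{th:QFA} applies.)
\end{proof}
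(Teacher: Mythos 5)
Your proposal is correct and follows essentially the same route as the paper: Theorem~\ref{metab} plus Lemma~\ref{le:prime} (and the corollary after it) for prime/atomic/homogeneous, and richness together with the decidable (hence arithmetic) word problem feeding into Theorem~\ref{th:QFA} for QFA. The extra remark justifying decidability of the word problem via the Magnus embedding, and the alternative citation of Theorem~\ref{Nies2007}, are harmless additions to what the paper does.
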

\begin{proof}
By Theorem \ref{metab} $M_n$ is bi-interpretable with $\MN$, hence by Lemma \ref{le:prime} $M_n$ is prime. Hence it is atomic and homogeneous (see Section \ref{sec:qfa}). 
The word problem is decidable in $M_n$, hence it is arithmetical. Now by Theorem \ref{th:QFA}  the group $G$ is QFA.
\end{proof}

The following results answers in the positive to an open question  in Kourovka notebook posted by E. Timoshenko.

\atreyer{There is missed something in this theorem}
\begin{theorem}
Then the set of bases of the group $M_n$, $n \geq 2$,  is $0$-definable in $M_n$. 
\end{theorem}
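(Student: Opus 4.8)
The plan is to exploit the bi-interpretability of $M_n$ with $\MN$ (Theorem~\ref{metab}), which by Corollary~\ref{co:bi-int-Z} makes $M_n$ rich, and to express ``the tuple $(x_1,\ldots,x_n)$ is a basis of $M_n$'' by a WSOL formula, which richness then converts into a first-order formula. Recall that a tuple $\bar b = (b_1,\ldots,b_n)$ is a \emph{basis} of $M_n$ precisely when it is a generating tuple \emph{and} the assignment $x_i \mapsto b_i$ extends to an automorphism (equivalently: the natural epimorphism $M_n \to M_n$ sending the fixed basis $\bar x$ to $\bar b$ is an isomorphism). Since $M_n$ is finitely generated with decidable, hence arithmetic, word problem, both halves of this condition are expressible in WSOL. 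For the generation part one uses the uniform definability of finitely generated subgroups in rich groups (the formula $\phi_n(x_1,\ldots,x_n,y)$ from Section~\ref{wsol}, or directly the WSOL disjunction $\bigvee_{i}(y = w_i(x_1,\ldots,x_n))$ over an effective enumeration of words) together with the sentence $\forall y\, \phi_n(\bar x, y)$ asserting that every element lies in $\langle \bar x\rangle$. For the ``extends to an automorphism'' part one writes the WSOL formula stating that $\bar b$ satisfies exactly the relations of $M_n$ in the fixed basis $\bar x$, i.e.\ for every word $w$, $w(\bar b) = 1$ in $M_n$ iff $w \in W(M_n,\bar x)$; the set $W(M_n,\bar x)$ is arithmetic (it is decidable), so this is a legitimate WSOL formula, and combined with generation it forces $x_i\mapsto b_i$ to be an isomorphism because $M_n$ is Hopfian (it is finitely generated residually finite).

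Concretely, first I would fix an effective enumeration $w_0, w_1, \ldots$ of all group words in $n$ variables and the arithmetic predicate $D(m)$ defining $\nu(W(M_n,\bar x))$ in $\MN$. Then I would form the WSOL formula
$$
\mathrm{Basis}_n(x_1,\ldots,x_n) \;=\; \Big(\forall y\, \bigvee_{i\in\MN} (y = w_i(\bar x))\Big) \;\wedge\; \Big(\bigwedge_{m\in\MN}\big(w_m(\bar x) = 1 \leftrightarrow D(m)\big)\Big),
$$
interpreting $w_m(\bar x)=1$ and $D(m)$ through the bi-interpretation between $M_n$ and $\MN$ exactly as in the proof of Theorem~\ref{th:QFA} (the list superstructure $S(M_n,\MN)$ gives uniform access to word evaluation via the formula $\Psi$ of Lemma~\ref{le:S(A,N)-2}). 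The first conjunct says $\bar x$ generates; the second says the word problem of $M_n$ relative to $\bar x$ coincides with the word problem relative to the original basis. I would then verify that $M_n \models \mathrm{Basis}_n(\bar b)$ holds exactly for the bases $\bar b$: if both conjuncts hold, then $\bar b$ generates and $x_i\mapsto b_i$ induces a surjective endomorphism with trivial kernel (since no extra relations hold), hence an automorphism; conversely any basis clearly satisfies both conjuncts.

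Finally, since $M_n$ is rich, the WSOL formula $\mathrm{Basis}_n$ is equivalent over $M_n$ to a first-order formula $\mathrm{Basis}_n^{\diamond}(x_1,\ldots,x_n,\bar p)$ with parameters; and because $M_n$ is bi-interpretable with $\MZ$ in the half-absolute way described in Lemma~\ref{le:half-bi-int} (every element of $\MZ$ is $0$-definable, so the regular bi-interpretation is half-absolute), $M_n$ is in fact \emph{absolutely} rich, so the parameters can be eliminated and $\mathrm{Basis}_n^{\diamond}$ is parameter-free. This establishes that the set of bases of $M_n$ is $0$-definable.

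\textbf{Main obstacle.} The delicate point is not the richness machinery but checking that the two WSOL conjuncts together \emph{characterize} bases rather than merely generating tuples: one must invoke Hopficity of $M_n$ to upgrade ``surjective endomorphism preserving all relations'' to ``automorphism'', and one must be careful that ``$\bar b$ satisfies no relations beyond those forced'' is correctly rendered through the bi-interpretation (the arithmetic set $D$ must be the word problem in the \emph{fixed} basis, transported correctly). A secondary technical chore is confirming that $D$ is genuinely arithmetic and that the WSOL formula built from it has bounded complexity and an arithmetic code, so that it legitimately lies in $L_{WSOL}$; this is routine given that the word problem of $M_n$ is decidable.
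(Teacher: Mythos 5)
Your proposal is correct and follows essentially the paper's own proof: both express ``$(b_1,\ldots,b_n)$ is a basis'' by the WSOL conjunction of a generation formula (an infinite disjunction over an effective enumeration of words) with a formula saying the tuple has the same arithmetic word problem as a fixed basis, and then invoke richness of $M_n$, coming from its bi-interpretability with $\Z$, to replace this by a parameter-free first-order formula. Your explicit appeals to Hopficity (to upgrade the induced surjection to an automorphism) and to half-absolute bi-interpretability (to eliminate parameters) merely spell out steps the paper leaves implicit.
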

\begin{proof}
Let $a = (a_1, \ldots , a_n)$ be a basis of $M_n$. AS before, consider a computable enumeration 
$$
w_1, w_2, \ldots, w_m, \ldots
$$
of all words in the alphabet $X^{\pm 1}$, where $X= \{x_1, \ldots,x_n\}$. Since the word problem in $M_n$ with respect to the basis $a$ is decidable, the set 
$$
W(M_n) = \{n \in \MN \mid w_n(a_1, \ldots, a_n) = 1 \ in \ G\}
$$
is computably enumerable, hence  arithmetic.  Consider the following formula in WSOL 
$$
W_n(X) =  \forall y (y = 1 \leftrightarrow \bigvee_{i \in W(M_n)} h = w_i(x_1, \ldots,x_n)).
$$
It states that the subgroup generated by $X$ has the same word problem as the subgroup generated by $a_1, \ldots,a_n$.
Let 
$$
Gen_n(X) = \forall y \bigvee_{i \in \MN} y= w_i(x_1, \ldots,x_n).
$$ 
Then $Gen_n(X)$ states that $X$ generates $M_n$. 
Now put
$$B_n(X) = W_n(X) \wedge Gen_n(X).$$

Clearly, $M_n \models B_n(a_1, \ldots,a_n)$ and if $M_n \models B_n(b_1, \ldots,b_n)$ for a tuple $b = (b_1, \ldots,b_n) $ in $M_n$ then the map $a_1 \to b_1, \ldots,a_n \to b_n$ extends to an automorphism $\alpha\colon M_n \to M_n$ of $M_n$. Since $\alpha(a) = b$ it follows that $b$ is a base of $M_n$. The converse is also true, i.e., if $b = (b_1, \ldots,b_n)$ is a base of $M_n$ then there is an automorphism $\alpha$ of $M_n$ such that $\alpha(a) = b$. Hence $b$ has the  same word problem in $M_n$ as $a$ so it satisfies the formulas $W_n$ and $Gen_n$. This finishes the proof.

\end{proof}

\printindex
\end{document}